\def\cy{C^*\hspace{-.5mm}(y)}
\def\suc{\mbox{Suc}}\def\Th{\mbox{Th}}
\def\TC{\mbox{TC}}
\def\stem{\mbox{stem}}
\def\zfc{\mbox{{ZFC}}}
\def\aaq{\mbox{\tt a\hspace{-0.4pt}a}}
\def\MM{\mbox{\rm MM}}
\def\d{\delta}
\def\om{\omega}
\def\restriction{\upharpoonright}
\def\cof{\mathop{\rm cf}}
\def\len{\mathop{\rm len}}
\def\dom{\mathop{\rm dom}}
\def\P{{\cal P}}
\def\On{{\rm On}}
\def\la{\langle}
\def\ra{\rangle}
\def\Q{{\mathbb Q}}
\def\R{{\mathbb R}}
\def\mm{{\mathcal M}}
\def\mn{{\mathcal N}}
\newcommand{\gl}{\lambda}
\newcommand{\K}{{\cal K}}
\renewcommand{\t}{{{\cal T}_{\gl,\gk}}}
\newcommand{\U}{{\cal U}}
\renewcommand{\P}{{\cal P}}
\newcommand{\cf}{\mathop{\rm cf}}
\mathchardef\bfSigma="0606 \mathchardef\bfPi="0605
\mathchardef\bfDelta="0601
\newcommand{\force}{\Vdash} % This is from AMS symbols
\newcommand{\open}{\Bbb}
\newcommand{\oC}{{\open C}}
\newcommand{\oP}{{\open P}}
\newcommand{\oQ}{{\open Q}}
\newcommand{\oR}{{\open R}}
\newcommand{\ben}{\begin{enumerate}}
\newcommand{\een}{\end{enumerate}}
\newcommand{\bd}{\begin{definition}}
\newcommand{\ed}{\end{definition}}
\newcommand{\bl}{\begin{lemma}}
\newcommand{\el}{\end{lemma}}
\newcommand{\bt}{\begin{theorem}}
\newcommand{\et}{\end{theorem}}
\newcommand{\bp}{\begin{proposition}}
\newcommand{\ep}{\end{proposition}}
\def\fol{\mbox{FO}}
\def\looo{{\L}_{\omega_1\omega}}
\def\lio{\L_{\infty\omega}}
\def\loo{\L_{\omega\omega}}
\def\bracketdown#1{\mathop{\vbox{\ialign{##\crcr\noalign{\kern2\p@}
\downbracketfill\crcr\noalign{\kern2\p@\nointerlineskip}
$\hfil\displaystyle{#1}\hfil$\crcr}}}\limits}
\def\bracketup#1{\mathop{\vbox{\ialign{##\crcr\noalign{\kern1\p@}
\upbracketfill\crcr\noalign{\kern1\p@\nointerlineskip}
$\hfil\displaystyle{#1}\hfil$\crcr}}}\limits}
\def\upbracketfill{$\m@th
\makesm@sh{\llap{\vrule\@height2\p@\@width.4\p@}}%
\leaders\vrule\@height.4\p@\hfill
\makesm@sh{\rlap{\vrule\@height2\p@\@width.4\p@}}$}
\def\downbracketfill{$\m@th
\makesm@sh{\llap{\vrule\@height.4\p@\@depth1.6\p@\@width.4\p@}}%
\leaders\vrule\@height.4\p@\hfill
\makesm@sh{\rlap{\vrule\@height.4\p@\@depth1.6\p@\@width.4\p@}}$}
\newcommand{\Loo}{\L_{\omega_1\omega}}
\def\\(\two\){Eloise}
\def\x{\xi}
\def\a{\alpha}
\def\g{\gamma}
\def\b{\beta}
\def\two{\mbox{$\mathbf{II}$}}
\def\DEF{\mbox{Def}}
\def\MMa{\mbox{\tiny MM}}% This should always be used for micro...
\def\phi{\varphi}
\def\hod{\mbox{\rm HOD}}
\def\sol{\mbox{$\L^2$}}
\newcommand\cofmodel[1]{C^*_{#1}}
\def\t{\tau}
\newcommand{\psfrag}[2]{}
\DeclareMathAlphabet{\mathpzc}{OT1}{pzc}{m}{it}
 \theoremstyle{plain}% default
  \newtheorem{theorem}{Theorem}[section]
  \newtheorem{lemma}[theorem]{Lemma}
  \newtheorem*{corollary}{Corollary}
  \newtheorem{proposition}[theorem]{Proposition}
  \theoremstyle{definition}
  \newtheorem{definition}[theorem]{Definition}
  \newtheorem{example}[theorem]{Example}
  \theoremstyle{remark}
  \newtheorem*{remark}{Remark}
 \newtheorem*{claim}{Claim}
\def\L{\mathcal{L}}
\begin{document}

\author{Juliette Kennedy\thanks{Research partially supported by
grant 322488 of the Academy of Finland.}\\ Helsinki \and Menachem Magidor\thanks{Research supported by the Simons Foundation and  the Israel Science Foundation grant 817/11.}\\ Jerusalem \and Jouko V\"a\"an\"anen\thanks{Research supported by the Simons Foundation and
grant 322795 of the Academy of Finland.}\\ Helsinki and Amsterdam}

\title{Inner Models from Extended Logics: Part 1\thanks{The authors would like to thank the Isaac Newton Institute for Mathematical Sciences for its hospitality during the programme Mathematical, Foundational and Computational Aspects of the Higher Infinite supported by EPSRC Grant Number EP/K032208/1.  The authors are grateful to John Steel, Philip Welch and Hugh Woodin for comments on the results presented here.}}
  
\maketitle

{
\def\l{\lambda}

\begin{abstract}
If we replace first order logic by second order logic in the original definition of G\"odel's inner model $L$, we obtain \hod\ (\cite{MR0281603}). In this paper we consider inner models that arise if we replace first order logic by a logic that has some, but not all, of the strength of second order logic. Typical examples are the extensions of first order logic by generalized quantifiers, such as  the Magidor-Malitz quantifier (\cite{MR0453484}), the cofinality quantifier (\cite{MR0376334}), or stationary logic (\cite{MR486629}). Our first set of results show that both $L$ and \hod\ manifest
some amount of  {\em formalism freeness} in the sense that they are not very sensitive to the choice of the underlying logic. Our second set of results shows that the cofinality quantifier  gives rise to a new robust inner model between $L$ and \hod.  We show, among other things, that assuming a proper class of Woodin cardinals the regular cardinals $>\aleph_1$ of $V$ are weakly compact in the inner model arising from the cofinality quantifier and the theory of that model is (set) forcing absolute and independent of the cofinality in question. We do not know whether this model satisfies the Continuum Hypothesis, assuming large cardinals, but we can show, assuming three Woodin cardinals and a measurable above them,  that if the construction is relativized to a real, then on a cone of reals the Continuum Hypothesis is true in the relativized model.
\end{abstract}

%\tableofcontents

\section{Introduction}

%JK: L and HOD are in different typefaces!!!

Inner models, together with the forcing method, are the basic building blocks used by set theorists to prove relative consistency results on the one hand and to try to chart the ``true" universe of set theory $V$ on the other hand. 

The first and best known, also the smallest of the inner models is G\"odel's $L$, the universe of constructible sets. An important landmark among the largest inner models is the universe of hereditarily ordinal definable sets $\hod$, also introduced by G\"odel\footnote{G\"odel introduced $\hod$ in his 1946 {\em Remarks before the Princeton Bicenntenial conference on problems in mathematics} \cite{bicentennial}. The lecture was  given during a session on computability organized by Alfred Tarski, and in it 
G\"odel asks whether notions of definability and provability can be isolated in the set-theoretic formalism, which admit a form of robustness similar to that exhibited by the notion of general recursiveness:
``Tarski has stressed in his lecture the great importance (and I think justly) of the concept of general recursiveness (or Turing computability). It seems to me that this importance is largely due to the fact that with this concept one has succeeded in giving an absolute definition of an interesting epistemological notion, i.e. one not depending on the formalism chosen. In all other cases treated previously, such as definability or demonstrability, one has been able to define them only relative to a given language, and for each individual language it is not clear that the one thus obtained is not the one looked for. For the concept of computability however\ldots the situation is different\ldots This, I think, should encourage one to expect the same thing to be possible also in other cases (such as demonstrability or definability)."  

G\"odel contemplates the idea that constructibility might be a suitable analog of the notion of general recursiveness. 
G\"odel also considers the same for $\hod$, and predicts the consistency of the axiom $V=HOD + 2^{\aleph_0} > \aleph_1$ (proved later by McAloon \cite{MR0292670}). See \cite{MR3134897} for a development of G\"odel's proposal in a ``formalism free" direction.
}. In between these two extremes there is  a variety of inner models arising from enhancing G\"odel's $L$ by  normal ultrafilters on  measurable cardinals, or in a more general case extenders, something that $L$ certainly does not have itself. 

We propose a construction of inner models which arise not from adding normal ultrafilters, or extenders, to $L$, but by changing the underlying  construction of $L$. We show that the new inner models have similar forcing absoluteness properties as $L(\oR)$, but at the same time they satisfy the Axiom of Choice.

 G\"odel's  hierarchy of constructible sets is defined by reference to first order definability. Sets on a higher level are the first order definable sets of elements of lower levels. The inner model $L$ enjoys strong forcing absoluteness: truth in $L$ cannot be changed by forcing, in fact not by any method of extending the universe without adding new ordinals. Accordingly, it is usually possible to settle in $L$, one way or other, any set theoretical question which is otherwise independent of ${\zfc}$. However, the problem with $L$ is that it cannot have large cardinals on the level of the Erd\H os cardinal $\kappa(\omega_1)$ or higher. To remedy this, a variety of inner models, most notably the smallest inner model $L^\mu$ with a measurable cardinal, have been introduced (see e.g. \cite{MR2768698}). 

We investigate the question to what extent is it essential that first order definability is used in the construction of G\"odel's $L$. In particular, what would be the effect of changing first order logic to a stronger logic? In fact  there are two precedents: Scott and Myhill \cite{MR0281603} showed that if first order definability is replaced by second order definability the 
all-encompassing  class $\hod$ of hereditarily ordinal definable sets is obtained. The inner model $L$ is thus certainly sensitive to the definability concept used in its construction. The inner model $\hod$ has consistently even supercompact cardinals \cite{MR0540771}. However, $\hod$ does not solve any of the central independent statements of set theory; in particular, it does not solve the Continuum Hypothesis or the Souslin Hypothesis \cite{MR0292670}.

A second precedent is provided by  Chang \cite{MR0280357} in which first order definability was replaced by definability in the infinitary language $\L_{\omega_1\omega_1}$, obtaining what came to be known as the {\em Chang model}. Kunen \cite{MR0337603} showed that the Chang model fails to satisfy the Axiom of Choice, if the existence of uncountably many  measurable cardinals is assumed. We remark that  the inner model $L(\oR)$ arises in the same way if $\L_{\omega_1\omega}$ is used instead of  
$\L_{\omega_1\omega_1}$. Either way, the resulting inner model fails to satisfy the Axiom of Choice if enough large cardinals are assumed. This puts these inner models in a different category. On the other hand, the importance of both the Chang model and $L(\oR)$ is accentuated by the result of  Woodin \cite{MR959110} that under large cardinal assumptions the first order theory of the Chang model, as well as of $L(\oR)$, is absolute under set forcing. So there would be reasons to expect that these inner models would solve several independent statements of set theory, e.g. the CH. However, the failure of the Axiom of Choice in these inner models dims the light  such ``solutions" would shed on CH. For example, assuming large cardinals, the model $L(\oR)$ satisfies the statement ``Every uncountable set of reals contains a perfect subset", which under AC would be equivalent to $CH$.
On the other hand,  large cardinals imply that there is  in  $L(\oR)$ a surjection from $\oR$ onto $\omega_2$, which under AC would imply $\neg CH$.

In this paper we define analogs of the constructible hierarchy by replacing first order logic in G\"odel's construction by any one of a number of logics. The inner models $\hod$, $L(\oR)$ and the Chang model are special cases, obtained by replacing first order definability by definability in $\L^2$, $\looo$ and $\L_{\omega_1\omega_1}$, respectively. Our main focus is on extensions of first order logic by generalized quantifiers in the sense of Mostowski \cite{MR0089816} and Lindstr\"om \cite{MR0244012}. 
We obtain new inner models which are $L$-like in that they are models of {\zfc} and their theory  is absolute under set forcing, but at the same time these inner models contain large cardinals, or inner models with large cardinals. 
 
The resulting inner models  enable us to make distinctions in set theory that were previously unknown. However, we also think of the arising inner models as a tool to learn more about extended logics. As it turns out, for many non-equivalent logics the inner model is the same. In particular for many non-elementary logics the inner model is the same as for first order logic. We may think that such logics have some albeit distant similarity to first order logic. On the other hand, some other logics give rise to the inner model $\hod$. We may say that they bear some resemblance to second order logic.

Our main results can be summarized as follows:

\begin{description}

\item[(A)] For the logics $\L(Q_\alpha)$ we obtain just $L$, for any choice of  $\a$. If $0^\#$ exists the same is true of the Magidor-Malitz logics $\L(Q^{\MMa}_\alpha)$.

\item[(B)] If $0^\sharp$ exists the cofinality quantifier logic $\L(Q^{\mbox{\scriptsize cf}}_\omega)$ yields a proper extension ${C^*}$ of $L$. But  ${C^*}\ne HOD$  if there are uncountably many measurable cardinals.

\item[(C)] If there is a proper class of Woodin cardinals, then regular  cardinals $>\aleph_1$ are Mahlo and indiscernible in ${C^*}$, and the theory of 
${C^*}$ is invariant under (set) forcing.

\item[(D)] The Dodd-Jensen Core Model is contained in ${C^*}$. If there is an inner model with a measurable cardinal, then such an inner model is also contained in ${C^*}$.

\item[(E)] If there is a Woodin cardinal and a measurable cardinal above it, then CH is true in the version $C^*(x)$ of $C^*$, obtained by allowing a real parameter $x$, for a cone of reals $x$.

\end{description}

\section{Basic concepts}

We define an analogue of the constructible hierarchy of G\"odel by replacing first order logic in the construction by an arbitrary logic $\L^*$. We think of logics in the sense of Lindstr\"om \cite{MR0244013}, Mostowski \cite{MR0250861}, Barwise \cite{MR0376337}, and the collection \cite{MR819531}. What is essential is that a logic $\L^*$ has two components i.e. $\L^*=(S^*,T^*)$, where $S^*$ is the class of sentences of $\L^*$ and $T^*$ is the truth predicate of $\L^*$. We usually write $\phi\in\L^*$ for $\phi\in S^*$ and $\mm\models\phi$ for $T^*(\mm,\phi)$. We can talk about {\em formulas} with {\em free variables} by introducing new constant symbols and letting the constant symbols play the role of free variables. The classes $S^*$ and $T^*$ may be defined with parameters, as in the case of $\L_{\kappa\lambda}$, where $\kappa$ and $\lambda$ can be treated as parameters.
A logic $\L^*$ is a {\em sublogic} of another logic $\L^+$, $\L^*\le\L^+$, if for every $\phi\in S^*$ there is $\phi^+\in S^+$ such that for all $\mm$: $\mm\models\phi\iff\mm\models\phi^+$. We assume that our logics have first order order logic as sublogic.

\begin{example}\begin{enumerate}
\item {\em First order logic} $\loo$ (or $\fol$) is the logic  $(S^*,T^*)$, where $S^*$ is the set of first order sentences and $T^*$ is the usual truth definition for first order sentences. 

\item {\em Infinitary logic} $\L_{\kappa\lambda}$, where $\kappa$ and $\lambda\le \kappa$ are regular cardinals, is the logic $(S^*,T^*)$, where $S^*$ consists of the sentences built inductively from conjunctions and disjunctions of length $<\kappa$ of sentences of $\L_{\kappa\lambda}$, and homogeneous strings of existential and universal quantifiers of length $<\lambda$ in front of formulas of $\L_{\kappa\lambda}$. The class $T^*$ is defined in the obvious way. We allow also the case that $\kappa$ or $\lambda$ is $\infty$. We use $\L^{\omega}_{\kappa\lambda}$ to denote that class of formulae of $\L_{\kappa\lambda}$ with only finitely many free variables.

\item The logic $\L(Q)$ with a generalized quantifier  $Q$ is  the logic $(S^*,T^*)$, where $S^*$ is obtained by adding the new quantifier $Q$ to first order logic. The exact syntax depends on the type of $Q$, (see our examples below). The class $T^*$ is defined by first fixing the {\em defining  model class} $\K_Q$ of $Q$ and then defining $T^*$ by induction on formulas: $$\mm\models Qx_1,\ldots,x_n\phi(x_1,\ldots,x_n,\vec{b})\iff$$ $$(M,\{(a_1,\ldots,a_n)\in M^n:\mm\models\phi(a_1,\ldots,a_n,\vec{b})\})\in\K_Q.$$
Thought of in this way, the defining model class of the existential quantifier is the class $\K_\exists=\{(M,A): \emptyset\ne A\subseteq M\}$, and the defining model class of the universal quantifier is the class $\K_\forall=\{(M,A):  A= M\}$. Noting that the generalisations  of $\exists$ with defining class $\{(M,A): A\subseteq M, |A|\ge n\}$, where $n$ is fixed, are definable in first order logic, Mostowski \cite{MR0089816} introduced the generalisations  $Q_\alpha$ of $\exists$ with defining class $$\K_{Q_\alpha}=\{(M,A):A\subseteq M, |A|\ge\aleph_\alpha\}.$$ Many other generalized quantifiers are known today in the literature and we will introduce some important ones later. 

\item {\em Second order logic} $\L^2$ 
is  the logic $(S^*,T^*)$, where $S^*$ is obtained from first order logic by adding variables for $n$-ary relations for all $n$ and allowing existential and universal quantification over the new variables.  The class $T^*$ is defined by  the obvious induction. In this inductive definition of $T^*$ the second order variables range over all relations of the domain (and not only e.g. over definable relations). 
\end{enumerate}
\end{example}

We now define the main new concept of this paper:

\begin{definition}\label{defin}
Suppose $\L^*$ is a logic. If $M$ is a set, let $\DEF_{\L^*}(M)$ denote the set of all sets of the form $X=\{a\in M : (M,\in)\models\phi(a,\vec{b})\},$ where $\phi(x,\vec{y})$ is an arbitrary formula of the logic $\L^*$ and $\vec{b}\in M$. We define a hierarchy $(L'_\alpha)$ of {\em sets constructible using} $\L^*$ as follows:

\begin{center}
\begin{tabular}{lcl}
$L'_0$&=&$\emptyset$\\
$L'_{\alpha+1}$&$=$&$\DEF_{{\mathcal L}^*}(L'_\alpha)$\\
$L'_\nu$&$=$&$\bigcup_{\alpha<\nu}L'_\alpha\mbox{ for limit $\nu$}$\\
\end{tabular} 
\end{center}

\noindent We use $C(\L^*)$ to denote the class $\bigcup_\alpha L'_\alpha$.
\end{definition}

Thus a typical set in $L'_{\alpha+1}$ has the form 
\begin{equation}
\label{typical}X=\{a\in L'_\alpha:(L'_\alpha,\in)\models\phi(a,\vec{b})\}
\end{equation}
where $\phi(x,\vec{y})$ is a formula of $\L^*$ and $\vec{b}\in L'_\alpha$.
It is important to note that $\phi(x,\vec{y})$ is a formula of $\L^*$ {\em in the sense of $V$}, not in the sense of $C(\L^*)$, i.e. we assume $S^*(\phi(x,\vec{y}))$ is {\em true} rather than being true in $(L'_\alpha,\in)$. In extensions of first order logic of the form $\L(Q)$ this is not a problem because being a formula is absolute to high degree. For example, in a countable vocabulary we have G\"odel-numbering for the set of formulas of $\L(Q)$ which renders the set of G\"odel-numbers of formulas primitive recursive. On the other hand, the set of formulas of $\L_{\omega_1\omega}$ is highly non-absolute, because an infinite conjunction may be uncountable in $L'_\alpha$ but countable in $V$. Also, note that $(L'_\alpha,\in)\models\phi(a,\vec{b})$ refers to $T^*$ in the sense of $V$, not in the sense of $C(\L^*)$. This is a serious point. For example, if $\phi(a,\vec{b})$ compares cardinalities or cofinalities of $a,\vec{b}$ to each other, the witnessing mappings do not have to be in $L'_\alpha$.  
   
By definition, $C(\loo)=L$. Myhill-Scott \cite{MR0281603} showed that $C(\L^2)=\hod$ (See Theorem~\ref{msc} below). Chang \cite{MR0280357} considered $C(\L_{\omega_1\omega_1})$ and pointed out that this is the smallest transitive model of {\zfc} containing all ordinals and closed under countable sequences.  Kunen \cite{MR0337603} showed that $C(\L_{\omega_1\omega_1})$ fails to satisfy the Axiom of Choice, if we assume the existence of uncountably many measurable cardinals (see Theorem~\ref{knn} below). Sureson \cite{MR836429, MR991631} investigated a Covering Lemma for $C(\L_{\omega_1\omega_1})$.

\begin{proposition}
For any $\L^*$ the class $C(\L^*)$ is a transitive model of ZF containing all the ordinals. 
\end{proposition}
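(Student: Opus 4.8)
The plan is to reproduce G\"odel's verification that $L\models\mathrm{ZF}$, exploiting throughout the one structural feature we are handed for free: since $\loo\le\L^*$, every first-order definable subset of a set $M$ belongs to $\DEF_{\L^*}(M)$. First I would prove by simultaneous induction on $\alpha$ that each $L'_\alpha$ is transitive and that $\beta<\alpha$ implies $L'_\beta\subseteq L'_\alpha$. The engine is that for transitive $M$ and $a\in M$ one has $a=\{x\in M:(M,\in)\models x\in a\}\in\DEF_{\L^*}(M)$, so $M\subseteq\DEF_{\L^*}(M)$; transitivity of $L'_{\alpha+1}$ then follows because its members are subsets of the transitive set $L'_\alpha\subseteq L'_{\alpha+1}$, and the limit case is immediate, continuity at limits being built into the definition. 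A second induction shows $L'_\alpha\cap\On=\alpha$: a member of $L'_{\alpha+1}$ that is an ordinal is a subset of $L'_\alpha$, hence ${\le}\alpha$ by the inductive hypothesis, while conversely each $\beta<\alpha$ already lies in $L'_\alpha$ and $\alpha=\{x\in L'_\alpha:(L'_\alpha,\in)\models x\text{ is an ordinal}\}\in L'_{\alpha+1}$, using absoluteness of ``ordinal'' for transitive models. Thus $\alpha\in L'_{\alpha+1}$, whence $\On\subseteq C(\L^*)$ and $C(\L^*)$ is transitive.

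For the local axioms, Extensionality and Foundation hold in any transitive $\in$-class. For Pairing and Union, given $a,b\in C(\L^*)$ I would fix $\alpha$ with $a,b\in L'_\alpha$ and note that $\{a,b\}$ and $\bigcup a$ are first-order definable subsets of the transitive set $L'_\alpha$ with parameters in $L'_\alpha$, hence members of $L'_{\alpha+1}$; Infinity is witnessed by $\omega\in C(\L^*)$, already obtained above.

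Separation, Replacement and Power Set are the substantial axioms, and the key tool is that $\langle L'_\alpha:\alpha\in\On\rangle$ is increasing and continuous, so the L\'evy--Montague reflection theorem applies: for each first-order $\varphi$ there is a closed unbounded class of $\alpha$ with $\varphi$ absolute between $L'_\alpha$ and $C(\L^*)$. For Separation, given $a,\vec b\in C(\L^*)$ and first-order $\varphi$, I choose a reflecting $\alpha$ with $a,\vec b\in L'_\alpha$; then $\{x\in a:C(\L^*)\models\varphi(x,\vec b)\}=\{x\in L'_\alpha:(L'_\alpha,\in)\models x\in a\wedge\varphi(x,\vec b)\}$ is first-order definable over $L'_\alpha$, hence lies in $L'_{\alpha+1}$. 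For Replacement and Power Set the extra ingredient is Replacement in $V$: the relevant witnesses (the image of $a$ under a definable function, or the subsets of $a$ that lie in $C(\L^*)$) form a set in $V$, so the ranks $\min\{\beta:y\in L'_\beta\}$ are bounded by some $\gamma$; choosing $\gamma$ also reflecting, the required set is again first-order definable over $L'_\gamma$ and so lands in $L'_{\gamma+1}\subseteq C(\L^*)$.

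The one place that looks dangerous is the warning that $(L'_\alpha,\in)\models\phi$ refers to $T^*$ in the sense of $V$ and can behave non-absolutely. This never bites here, because the entire verification uses only the first-order fragment, whose satisfaction relation is the ordinary absolute one, and it is precisely for this fragment that reflection is invoked. The genuine obstacle is therefore the bounding step for Replacement and Power Set: it relies on $C(\L^*)$ being a class \emph{definable in $V$} through the definable hierarchy $\alpha\mapsto L'_\alpha$, so that Replacement in $V$ can be applied to the rank function and the witnesses are prevented from escaping to unboundedly high levels.
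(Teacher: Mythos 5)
Your proposal is correct and follows essentially the same route as the paper, which proves the proposition "as in the usual proof of ZF in $L$" and works out only the Comprehension Schema, using exactly your reflection argument (choose $\alpha$ with the parameters in $L'_\alpha$ and the first-order formula absolute between $L'_\alpha$ and $C(\L^*)$, then observe the resulting set is first-order definable over $L'_\alpha$). Your additional details --- transitivity and $L'_\alpha\cap\On=\alpha$ by induction, the easy axioms, and bounding via Replacement in $V$ for Replacement and Power Set --- are the standard fillings-in that the paper's "as in the usual proof" elides, and your closing remarks correctly identify why the non-absoluteness of $T^*$ is harmless here.
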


\begin{proof}
As in the usual proof of ZF in $L$. Let us prove the Comprehension Schema as an example. Suppose $A,\vec b$ are in $C(\L^*)$, $\phi(x,\vec y)$ is a  first order formula of set theory and 
$$X=\{a\in A:C(\L^*)\models\phi(a,\vec b)\}.$$ Let $\alpha$ be an ordinal such that
$A\in L'_\alpha$ and  $\phi(x,y)$ is absolute for $L'_\alpha,C(\L^*)$
(see e.g. \cite[IV.7.5]{MR2905394}). Now
$$X=\{a\in L'_\alpha : L'_\alpha\models a\in A\wedge\phi(a,\vec b)\}.$$
Hence $X\in C(\L^*)$. \end{proof}

We cannot continue and follow the usual proof of AC in $L$, because the {\em syntax} of $\L^*$ may introduce sets into $C(\L^*)$ without introducing a well-ordering for them (See Theorem~\ref{R}). Also, formulas of $\L^*$, such as $\bigvee_nx=y_n$ if in $\looo$, may  contain infinitely many free variables and that could make $C(\L^*)$ closed under $\omega$-sequences, rendering it vulnerable to the failure of AC. To overcome this difficulty, we introduce the following concept, limiting ourselves to logics in which every  formula has only finitely many free variables:

\def\sat{\mbox{Sat}}
\newcommand{\topst}[1]{\ensuremath{\ulcorner{}#1\urcorner}}

\begin{definition}\label{4r67}
 A logic $\L^*$ is {\em  adequate to truth in itself\footnote{This is a special case of a concept with the same name in \cite{MR0491139}.}} if for all finite vocabularies $K$  there is function $\phi\mapsto\topst{\phi}$ from all formulas $\phi(x_1,\ldots, x_n)\in \L^*$ in the vocabulary $K$ into $\omega$, and  a formula $\sat_{\L^*}(x,y,z)$ in $\L^*$ such that:
 
\begin{enumerate}
\item The function $\phi\mapsto\topst{\phi}$ is one to one and has a recursive range.
\item For all admissible sets\footnote{I.e. transitive models of the Kripke-Platek axioms $KP$ of set theory. The only reason why we need admissibility is that admissible sets are closed under inductive definitions of the simple kind that are used in the syntax and semantics of many logics. For more on admissibility we refer to \cite{MR0424560}.} $M$, formulas $\phi$ of $\L^*$ in the vocabulary $K$,  structures $\mn\in M$ in the vocabulary $K$, and $a_1,\ldots,a_n\in N$ the following conditions are equivalent:
\begin{enumerate}
 
\item $ M\models\sat_{\L^*}(\mn,\topst{\phi},\langle a_1,\ldots, a_n\rangle)$
\item $\mn\models\phi(a_1,\ldots, a_n).$
\end{enumerate}
We may admit ordinal parameters in this definition.
\end{enumerate}
\end{definition}

Most logics that one encounters in textbooks and research articles of logic are adequate to truth in themselves. To find counter examples one has to consider e.g. logics with infinitely many generalized quantifiers.  

\begin{example}
First order logic $\loo$ and 
the logic $\L(Q_\alpha)$ are adequate to truth in themselves.
Also second order logic is adequate to truth in itself in the slightly weaker sense that $M$ has to be of size $\ge 2^{|N|}$ because we also have second order variables. Infinitary logics are for obvious reasons (cannot use natural numbers for G\"odel-numbering) not adequate to truth in themselves, but there is a more general notion which applies to them (see \cite{MR0491139,MR819548}). In infinitary logic what accounts as a formula depends on set theory. For example, in the case of $\L_{\omega_1\omega}$ the formulas essentially code in their syntax all reals.

\end{example}

The following proposition is instrumental in showing that $C(\L^*)$, for certain $\L^*$, satisfies the Axiom of Choice:

\begin{proposition}\label{wpoa}
 If $\L^*$ is adequate to truth in itself, there are formulas $\Phi_{\L^*}(x)$ and $\Psi_{\L^*}(x,y)$ of $\L^*$ in the vocabulary $\{\in\}$ such that if $M$ is an admissible set and $\a=M\cap \On$, then:
 
\begin{enumerate}
\item $\{a\in M : (M,\in)\models\Phi_{\L^*}(a)\}=L'_\a\cap M.$
\item $\{(a,b)\in M\times M : (M,\in)\models\Psi_{\L^*}(a,b)\}$ is a well-order $<'_\a$ the field of which is $L'_\a\cap M$. 
\end{enumerate}
\end{proposition}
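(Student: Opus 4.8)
The plan is to imitate the classical proof that the levels $L_\alpha$ of G\"odel's hierarchy and their canonical well-orders are uniformly definable over admissible sets, replacing the first order satisfaction predicate everywhere by the predicate $\sat_{\L^*}$ furnished by Definition~\ref{4r67}. First I would turn the defining operation $\DEF_{\L^*}$ into a formula. Using the recursive coding $\phi\mapsto\topst{\phi}$ together with $\sat_{\L^*}$, the relation ``$Y=\DEF_{\L^*}(X)$'' is expressed by an $\L^*$-formula $D(X,Y)$ in the vocabulary $\{\in\}$ asserting that $Z\in Y$ iff there is an $n$ in the range of $\topst{\cdot}$ coding some $\phi(x,\vec{y})$ and a tuple $\vec{b}\in X$ of matching length with $Z=\{a\in X:\sat_{\L^*}((X,\in),n,\langle a\rangle^\frown\vec{b})\}$. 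The crucial point is that when $D$ is evaluated in an admissible $M$ on an argument $X\in M$, clause~2 of Definition~\ref{4r67} (applied to the admissible set $M$ itself and to the structure $(X,\in)\in M$) guarantees that $\sat_{\L^*}$ computes genuine $\L^*$-truth in $(X,\in)$, so $D$ really defines the true operation $\DEF_{\L^*}$ restricted to $M$.

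Next I would let $\Phi_{\L^*}(x)$ be the $\L^*$-formula saying ``there is a function $f$ whose domain is an ordinal $\beta+1$, with $f(0)=\emptyset$, $D(f(\gamma),f(\gamma+1))$ for every $\gamma<\beta$, $f(\lambda)=\bigcup_{\gamma<\lambda}f(\gamma)$ at limits, and $x\in f(\beta)$''. Because the recursion has a unique solution, any such $f\in M$ satisfies $f(\gamma)=L'_\gamma$ for all $\gamma\le\beta$, so immediately $\{a\in M:(M,\in)\models\Phi_{\L^*}(a)\}\subseteq L'_\a\cap M$. For the reverse inclusion I need that each initial segment $\langle L'_\gamma:\gamma\le\beta\rangle$ belongs to $M$ for $\beta<\a$; this is where admissibility enters, via closure under $\Sigma$-recursion. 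The formula $D$ exhibits $\DEF_{\L^*}$ as a $\Sigma$ operation over $M$ (the ``inductive definitions of the simple kind'' of the admissibility footnote), and a transfinite induction on $\beta<\a=M\cap\On$, using $\Sigma$-replacement at limits, places every such sequence in $M$. Granting this, every $a\in L'_\a\cap M$ lies in some $L'_\beta$ whose witnessing sequence is in $M$, so $(M,\in)\models\Phi_{\L^*}(a)$, giving the equality in part~1.

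For part~2 I would build, in the same recursive style, the canonical well-order alongside the hierarchy. Together with $f$ I construct a coherent sequence of well-orders $<'_\gamma$ of $L'_\gamma$: an element $a$ first appearing at stage $\beta+1$ is assigned its least \emph{blueprint} $(n,\vec{b})$, where $n=\topst{\phi}$ and $a=\{c\in L'_\beta:(L'_\beta,\in)\models\phi(c,\vec{b})\}$, the minimum taken in the order of $\omega\times(L'_\beta)^{<\omega}$ obtained from the natural order of $\omega$ on the codes and from $<'_\beta$ on the parameter tuples; elements are then compared first by their stage of appearance and then by their blueprints. Here the injectivity and recursive range of $\topst{\cdot}$ (clause~1 of Definition~\ref{4r67}) are exactly what make the comparison of codes well defined and absolute. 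Writing this recursion out with $D$ and $\sat_{\L^*}$ yields $\Psi_{\L^*}(x,y)$, and the same admissibility argument shows the approximating well-orders lie in $M$, so that $\{(a,b)\in M\times M:(M,\in)\models\Psi_{\L^*}(a,b)\}$ is a well-order with field $L'_\a\cap M$.

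The step I expect to be the main obstacle is the membership claim $\langle L'_\gamma:\gamma\le\beta\rangle\in M$ for $\beta<\a$, that is, the genuine closure of $M$ under $\DEF_{\L^*}$. The delicate issue is that $\sat_{\L^*}$ is guaranteed only to be a formula \emph{of $\L^*$}, so one must verify that the operation $D$ it defines is tame enough (a $\Sigma$ operation over $M$) for Kripke--Platek $\Sigma$-recursion and $\Sigma$-replacement to apply. This is precisely the role of admissibility flagged in the footnote, and it is what must be checked with care, rather than the essentially mechanical construction of $\Phi_{\L^*}$ and $\Psi_{\L^*}$ themselves.
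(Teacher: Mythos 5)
The paper states Proposition~\ref{wpoa} without giving any proof, so there is no official argument to compare against line by line; your reconstruction is the standard one and is surely what the authors intend: code the operation $X\mapsto\DEF_{\L^*}(X)$ by a formula $D$ built from $\sat_{\L^*}$ and the G\"odel numbering, let $\Phi_{\L^*}$ assert the existence of a witnessing hierarchy-building function, and obtain $\Psi_{\L^*}$ from the usual stage-of-appearance/least-blueprint ordering. The correctness of any witness $f\in M$ (hence the inclusion of $\{a:(M,\in)\models\Phi_{\L^*}(a)\}$ in $L'_\a\cap M$) is handled exactly as you say, by clause~2 of Definition~\ref{4r67} applied to the admissible set $M$ itself.

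The step you flag --- that $\langle L'_\gamma:\gamma\le\beta\rangle\in M$ for every $\beta<\a$ --- is indeed where all the content lies, and as you leave it, it is a real gap: it does \emph{not} follow from ``adequate to truth in itself'' plus admissibility alone. The KP machinery you invoke ($\Sigma$-recursion, $\Sigma$-replacement, $\Delta_1$-separation) applies to first order $\Sigma$-operations, whereas Definition~\ref{4r67} only guarantees that $\sat_{\L^*}$ is a formula of $\L^*$; nothing forces even a single definable subset $\{a\in X:(X,\in)\models\phi(a,\vec b)\}$, let alone $\DEF_{\L^*}(X)$ or the whole sequence of levels, to be an \emph{element} of an arbitrary admissible $M$. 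To close the argument one needs something extra: either that the satisfaction and syntax predicates of $\L^*$ are $\Delta_1^{KP}$ in the relevant parameters --- which is what the paper's footnote about admissible sets being closed under ``inductive definitions of the simple kind used in the syntax and semantics of many logics'' gestures at, and which can be checked for the concrete logics $\L(Q_\alpha)$, $\L(Q^{\cof}_\kappa)$, $\L(Q^{\MMa,n}_\kappa)$ to which the proposition is applied --- or that $M$ is already closed under $\DEF_{\L^*}$, as it is in the paper's actual uses (Proposition~\ref{axc} and Theorem~\ref{5930b} take $M$ to be a level $L'_\kappa$ of the hierarchy itself, or a structure agreeing with $V$ on the relevant semantic facts). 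Your proof should either carry out this verification for the logic at hand or state the needed absoluteness of $\sat_{\L^*}$ as an explicit hypothesis; with that added, the rest of your argument goes through.
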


It is important to note that the formulas $\Phi_{\L^*}(x)$ and $\Psi_{\L^*}(x,y)$ are in the extended logic $\L^*$, not necessarily in first order logic.

Recall that we have defined the logic $\L^*$ as a pair $(S^*,T^*)$. We can use the set-theoretical predicates $S^*$ and $T^*$ to write  $``(M,\in)\models\Phi_{\L^*}(a)"$ and $``(M,\in)\models\Psi_{\L^*}(x,y)"$ of Proposition~\ref{wpoa} as formulas
$\tilde{\Phi}_{\L^*}(M,x)$ and $\tilde{\Psi}_{\L^*}(M,x,y)$ of the first order language of set theory, such that for all $M$ with $\a=M\cap\On$ and $a,b\in M$:
 
\begin{enumerate}
\item $\tilde{\Phi}_{\L^*}(M,a)\leftrightarrow[(M,\in)\models\Phi_{\L^*}(a)]\leftrightarrow a\in L'_\a$.
\item $\tilde{\Psi}_{\L^*}(M,a,b)\leftrightarrow[(M,\in)\models\Psi_{\L^*}(a,b)]\leftrightarrow a<'_\a b$.
\end{enumerate}

\begin{proposition}\label{axc}
 If $\L^*$ is adequate to truth in itself, then $C(\L^*)$ satisfies  the Axiom of Choice.
\end{proposition}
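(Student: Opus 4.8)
The plan is to show that every set in $C(\L^*)$ can be well-ordered \emph{inside} $C(\L^*)$; since $C(\L^*)$ is a transitive model of ZF, this yields the Axiom of Choice. The well-orderings will come from Proposition~\ref{wpoa} applied to the levels $L'_\gamma$ themselves, together with the satisfaction predicate $\sat_{\L^*}$ furnished by adequacy to truth in itself.

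First I would record two preliminary facts. One, $L'_\gamma\cap\On=\gamma$ for every $\gamma$: this follows by induction exactly as for $L$, using that $\L^*$ extends first order logic and that any ordinal in $L'_{\gamma+1}$ is a subset of $L'_\gamma\cap\On$. Two, the class of $\gamma$ with $L'_\gamma$ admissible is closed and unbounded; this is the usual closure argument, since the hierarchy $(L'_\alpha)$ is continuous at limits and uniformly definable, so cofinally many levels are $\Sigma_1$-elementary in $C(\L^*)$ and hence models of KP. Fix such a $\gamma$. Taking $M=L'_\gamma$ in Proposition~\ref{wpoa}, we have $\alpha=M\cap\On=\gamma$ and $L'_\alpha\cap M=L'_\gamma$, so $\Psi_{\L^*}$ defines over $(L'_\gamma,\in)$ a genuine well-ordering $<'_\gamma$ of all of $L'_\gamma$.

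Now let $x\in C(\L^*)$ be arbitrary. Choose an admissible $\gamma$ with $x\subseteq L'_\gamma$ and then an admissible $\delta>\gamma$; then $<'_\gamma\rest x$ is a well-ordering of $x$ in the sense of $V$, and the heart of the argument is to show $<'_\gamma\rest x\in C(\L^*)$. Here I would invoke adequacy to truth in itself: since $L'_\delta$ is admissible and $(L'_\gamma,\in)\in L'_\delta$, the formula $\sat_{\L^*}$ satisfies
\[
L'_\delta\models\sat_{\L^*}\bigl((L'_\gamma,\in),\topst{\Psi_{\L^*}},\langle a,b\rangle\bigr)\iff (L'_\gamma,\in)\models\Psi_{\L^*}(a,b)\iff a<'_\gamma b .
\]
Consequently there is an $\L^*$-formula $\psi$ and parameters $x,(L'_\gamma,\in),\topst{\Psi_{\L^*}}\in L'_\delta$ such that $\{p\in L'_\delta:(L'_\delta,\in)\models\psi(p,\dots)\}$ is precisely the set of ordered pairs $\langle a,b\rangle$ with $a,b\in x$ and $a<'_\gamma b$ (forming and decoding Kuratowski pairs is first order and $\L^*$ extends first order logic). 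Since ordered pairs of elements of $L'_\gamma$ already lie in $L'_\delta$, this set equals $<'_\gamma\rest x$ and therefore belongs to $L'_{\delta+1}\subseteq C(\L^*)$.

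Finally, being a linear order is absolute and, as $C(\L^*)\models$ ZF, well-foundedness of a set relation is absolute for $C(\L^*)$; hence $C(\L^*)$ recognizes $<'_\gamma\rest x$ as a well-ordering of $x$. As $x$ was arbitrary, every set is well-orderable in $C(\L^*)$, so $C(\L^*)\models$ AC. The one genuinely delicate step is the third paragraph: converting the external well-ordering $<'_\gamma$, which is defined by the $\L^*$-formula $\Psi_{\L^*}$ over $L'_\gamma$ using $T^*$ in the sense of $V$, into a set that is itself $\L^*$-definable at a later stage. This is exactly what ``adequate to truth in itself'' buys, and without an internal satisfaction predicate there would be no evident reason for the well-ordering to land back inside $C(\L^*)$.
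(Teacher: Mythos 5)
Your proof is correct and follows essentially the paper's route: both arguments reduce everything to Proposition~\ref{wpoa}, which supplies the $\L^*$-formula $\Psi_{\L^*}$ defining a well-order of a level $L'_\gamma$ containing the given set, and then observe that this well-order is itself a set of the model. The only real difference is that your third paragraph is an unnecessary detour: since $\Psi_{\L^*}$ is a formula of $\L^*$, the relation $\{(a,b)\in L'_\gamma\times L'_\gamma : (L'_\gamma,\in)\models\Psi_{\L^*}(a,b)\}$ already lies in $L'_{\gamma+1}=\DEF_{\L^*}(L'_\gamma)$ by the very definition of the hierarchy (the paper simply takes $\gamma=|\alpha|^+$), so no pass through $\sat_{\L^*}$ at a later admissible level $\delta$ is needed --- the satisfaction predicate does its work inside the proof of Proposition~\ref{wpoa}, not here.
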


\begin{proof} Let us fix $\a$ and show that there is a well-order of $L'_\a$ in $C(\L^*)$. Let $\kappa=|\a|^+$. Then $\Psi_{\L^*}(x,y)$ defines on $L'_{\kappa}$ a well-order 
$<'_{\kappa}$ of $L'_{\kappa}$. The relation $<'_\kappa$ is in  $L'_{\kappa+1}\subseteq C(\L^*)$ by the definition of $C(\L^*)$. \end{proof}

There need not be a first order definable well-order of the class $C(\L^*)$ 
(see the proof of Theorem~\ref{wrep} for an example) although there always is in $V$ a definable relation which well-orders $C(\L^*)$. Of course, in this case $V\ne C(\L^*)$. Proposition~\ref{axc} holds also for second order logic, even though it is only adequate to truth in itself in a slight weaker sense.

Note that trivially  $$\L^*\le \L^+\mbox{ implies } C(\L^*)\subseteq C(\L^+).$$
Thus varying the logic $\L^*$ we get a whole hierarchy of inner models $C(\L^*)$.
Many questions can be asked about these inner models. For example we can ask: (1) can all the known inner models be obtained in this way, (2) under which conditions do these inner models satisfy GCH, (3) do inner models obtained in this way have other characterisations (such as $L$, $\hod$ and $C(\L_{\omega_1\omega_1})$ have), etc.

\begin{definition}
A set $a$ is {\em ordinal definable} if there is a formula $\phi(x,y_1,\ldots,y_n)$ and ordinals $\alpha_1,\ldots,\alpha_n$ such that \begin{equation}\label{od}\forall x(x\in a\iff\phi(x,\alpha_1,\ldots,\alpha_n)).\end{equation} A set $a$ is {\em hereditarily ordinal definable} if $a$ itself and also every element of $\TC(a)$ is ordinal definable. 

\end{definition}

When we look at the construction of $C(\L^*)$ we can observe that sets in $C(\L^*)$ are always hereditarily ordinal definable when the formulas of $\L^*$ are finite (more generally, the formulas may be hereditarily ordinal definable):

\begin{proposition}
If $\L^*$ is any logic such that the formulas $S^*$ and $T^*$ do not contain parameters (except hereditarily ordinal definable ones) and in addition every formula of $\L^*$ (i.e. element of the class $S^*$) is a finite string of symbols (or more generally hereditarily ordinal definable, with only finitely many free variables), then every set in $C(\L^*)$ is hereditarily ordinal definable.
\end{proposition}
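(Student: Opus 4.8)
The plan is to strengthen the conclusion to the simpler statement that \emph{every set in $C(\L^*)$ is ordinal definable}, and then to recover hereditary ordinal definability essentially for free. Indeed, by the previous proposition $C(\L^*)$ is transitive, so for any $a\in C(\L^*)$ we have $\TC(a)\subseteq C(\L^*)$; once every member of $C(\L^*)$ is known to be ordinal definable, $a$ together with every element of $\TC(a)$ is ordinal definable, which is exactly what hereditary ordinal definability requires.

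First I would record that the operation $M\mapsto\DEF_{\L^*}(M)$ is a class function of $V$ definable from the predicates $S^*$ and $T^*$ alone: for $X\subseteq M$ we have $X\in\DEF_{\L^*}(M)$ iff there are $\phi\in S^*$ and a finite tuple $\vec b\in M$ with $X=\{a\in M:T^*((M,\in),\phi(a,\vec b))\}$. Since by hypothesis $S^*$ and $T^*$ carry only hereditarily ordinal definable parameters, $\DEF_{\L^*}$ is definable in $V$ with such parameters; the hierarchy $(L'_\alpha)$ is then obtained by a definable transfinite recursion, so the map $\alpha\mapsto L'_\alpha$ is definable in $V$ with hereditarily ordinal definable parameters. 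In particular each $L'_\alpha$ is ordinal definable from $\alpha$.

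Now I would prove by induction on the least stage $\rho(a)$ at which $a$ appears that every $a\in C(\L^*)$ is ordinal definable. Such a least stage is always a successor $\alpha+1$, so $a=\{c\in L'_\alpha:(L'_\alpha,\in)\models\phi(c,\vec b)\}$ for some formula $\phi$ of $\L^*$ and some $\vec b\in L'_\alpha$, where ``$(L'_\alpha,\in)\models\phi(c,\vec b)$'' is read as $T^*((L'_\alpha,\in),\topst{\phi},\langle c,\vec b\rangle)$ in the sense of $V$. The set $a$ is thereby defined from four pieces of data: the ordinal $\alpha$; the set $L'_\alpha$, which is ordinal definable from $\alpha$ by the previous paragraph; the formula $\phi$, whose G\"odel code $\topst{\phi}$ is a natural number and hence ordinal definable (in the more general case $\phi$ is itself hereditarily ordinal definable, hence ordinal definable); and the tuple $\vec b$. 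Here the finiteness hypotheses enter decisively: since $\phi$ has only finitely many free variables, $\vec b=(b_1,\dots,b_k)$ is a \emph{finite} tuple whose entries satisfy $\rho(b_i)\le\alpha<\rho(a)$, so by the induction hypothesis each $b_i$ is ordinal definable, and a finite tuple of ordinal definable sets is again ordinal definable. Assembling these, $a$ is defined by a first order formula of set theory from ordinals together with the fixed hereditarily ordinal definable parameters of $S^*$ and $T^*$, so $a$ is ordinal definable. This completes the induction, and with the first paragraph the proposition follows.

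The main obstacle, and the exact place where the hypotheses are used, is the passage from ``each $b_i$ is ordinal definable'' to ``$\vec b$ is ordinal definable''. This is harmless for a finite tuple, but it fails in general for an infinite sequence of ordinal definable sets (a Cohen-generic real is a sequence of ordinal definable bits that is not itself ordinal definable). This is precisely why the proposition restricts to logics all of whose formulas have only finitely many free variables and are hereditarily ordinal definable; dropping finiteness of the free variables is exactly what lets $C(\L_{\omega_1\omega_1})$ and $C(\looo)$ escape $\hod$.
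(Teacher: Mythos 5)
Your proof is correct and follows essentially the same route as the paper's: both argue by induction along the hierarchy that each set appearing at a successor stage is ordinal definable, being definable from the (uniformly ordinal definable) level $L'_\alpha$, the hereditarily ordinal definable formula $\phi$, and the finite parameter tuple $\vec b$, and then pass to hereditary ordinal definability via transitivity of the model. You are in fact more explicit than the paper at the one delicate point — that $\vec b$ is a \emph{finite} tuple of previously constructed, hence inductively ordinal definable, sets — which the paper's phrase ``if $L'_\alpha$ is ordinal definable, then so is $X$'' leaves implicit.
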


\begin{proof} 
Recall the  construction of the successor stage of $C(\L^*)$: 
$X\in L'_{\alpha+1}$ 
if and only if for some $\phi(x,\vec{y})\in \L^*$ and some $\vec{b}\in L'_\alpha$
  $$X=\{x\in L'_\alpha : (L'_\alpha,\in)\models\phi(x,\vec{b})\}.$$ Now we can note that 
  $$X=\{x\in L'_\alpha : T^*((L'_\alpha,\in),\phi(x,\vec{b}))\}.$$ %
Thus if $L'_\alpha$ is ordinal definable, then so is $X$. Moreover,
$$\forall z(z\in L'_{\alpha+1}\iff\exists \phi(x,\vec{u})( S^*(\phi(x,\vec{u}))\wedge$$
$$\forall y(y\in z\iff y\in L'_\alpha  \wedge  T^*((L'_\alpha,\in),\phi(x,\vec{u}))),$$ or in short
$$\forall z(z\in L'_{\alpha+1}\iff\psi(z,L'_\alpha)),$$ where $\psi(z,w)$ is a first order  formula
in the language of set theory. When we compare this with (\ref{od}) we see that if $L'_\alpha$ is ordinal definable and if the (first order) set-theoretical formulas $S^*$ and $T^*$ have no parameters, then also $L'_{\alpha+1}$ is ordinal definable.  It  follows that the class $\la L'_\a : a\in\On\ra$ is ordinal definable, whence $\la L'_\a : \a<\nu\ra$, and thereby also $L'_\nu$, is in $\hod$  for all limit $\nu$.
%
%Now the limit case: It suffices to show that for all limit ordinals $\nu$ the sequence $\la L'_\a : a<\nu\ra$ is in $\hod$.
\end{proof}

 Thus, unless the formulas of the logic $\L^*$ are syntactically complex (as happens in the case of infinitary logics like $\looo$ and $\L_{\omega_1\omega_1}$, where a formula can code an arbitrary real), the hereditarily ordinal definable sets form a firm ceiling for  the inner models  $C(\L^*)$.

%
%
%
%
%\section{Infinitary logics}
%
%
%
%

\begin{theorem}\label{V}
$C(\lio)=V.$
\end{theorem}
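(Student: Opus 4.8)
The inclusion $C(\lio)\subseteq V$ is immediate, since each level $L'_\alpha$ is a set and hence $C(\lio)$ is a subclass of $V$. So the whole content is the reverse inclusion $V\subseteq C(\lio)$, which I would prove by $\in$-induction (equivalently, induction on rank): assuming every set of rank below $\rank(a)$ already lies in $C(\lio)$, I would show $a\in C(\lio)$. The one tool needed is that in $\lio$ each individual set can be pinned down by a \emph{parameter-free} formula, so that a set can be defined over a level $L'_\beta$ without having to list its (possibly infinitely many) elements as parameters --- which the definition of $\DEF_{\lio}$ forbids, as it permits only the finitely many parameters $\vec b$ matching the finitely many free variables of $\phi$.

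To that end I would attach to every set $a$ a canonical formula $\theta_a(x)$ of $\lio$ in the vocabulary $\{\in\}$, defined by $\in$-recursion:
\[
\theta_a(x)\;:=\;\forall y\Big(y\in x\ \leftrightarrow\ \bigvee_{b\in a}\theta_b(y)\Big).
\]
This is a legitimate $\lio$-formula: the disjunction is indexed by the set $a$ (hence set-sized, as allowed when the first subscript is $\infty$), only finite quantification is used, and $\theta_a$ has the single free variable $x$; the recursion terminates because $\in$ is well-founded, so $\theta_a$ is generated in $\rank(a)$ steps. The key lemma, proved by induction on $\rank(a)$, is absoluteness: for any transitive set $M$ with $\TC(\{a\})\subseteq M$ and any $x\in M$,
\[
(M,\in)\models\theta_a(x)\quad\Longleftrightarrow\quad x=a.
\]
Indeed, assuming this for every $b\in a$, the inner disjunction $\bigvee_{b\in a}\theta_b(y)$ holds in $(M,\in)$ exactly when $y\in a$, so $\theta_a(x)$ asserts that $x$ and $a$ have the same elements inside $M$; by transitivity $x,a\subseteq M$, and extensionality gives $x=a$.

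With $\theta_a$ in hand the induction is straightforward. Given $a$, each $b\in a$ lies in $C(\lio)$ by the induction hypothesis, say $b\in L'_{\beta_b}$; put $\beta=\sup_{b\in a}(\beta_b+1)$, so that $a\subseteq L'_\beta$, and since $L'_\beta$ is transitive also $\TC(a)\subseteq L'_\beta$. Applying the absoluteness lemma with $M=L'_\beta$ yields
\[
a=\{x\in L'_\beta:(L'_\beta,\in)\models\textstyle\bigvee_{b\in a}\theta_b(x)\},
\]
and the defining formula is a parameter-free $\lio$-formula, so $a\in\DEF_{\lio}(L'_\beta)=L'_{\beta+1}\subseteq C(\lio)$. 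This completes the induction and hence the proof. I expect the only real obstacle to be the point flagged above: because $\DEF_{\lio}$ allows only finitely many parameters, one cannot simply enumerate the elements of $a$, and the whole argument turns on constructing the parameter-free formulas $\theta_a$ and checking their absoluteness over the transitive levels $L'_\beta$; once that is done, everything else is bookkeeping.
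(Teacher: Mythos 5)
Your proof is correct and takes essentially the same route as the paper's: both rest on the parameter-free canonical formulas $\theta_a(x)$ defined by $\in$-recursion together with their absoluteness over transitive levels $L'_\beta$, so that a set $a\subseteq L'_\beta$ is carved out by the parameter-free disjunction $\bigvee_{b\in a}\theta_b(x)$. The only differences are cosmetic: the paper organizes the induction as $V_\alpha\subseteq C(\lio)$ by induction on $\alpha$ (rather than $\in$-induction on individual sets) and writes $\theta_a$ as a conjunction of two implications instead of your single biconditional.
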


\begin{proof} Let $(L'_\alpha)_\alpha$ be the hierarchy behind $C(\lio)$, as in Definition~\ref{defin}. We  show $V_\alpha\subseteq C(\lio)$ by induction on $\alpha$.
For any set $a$ let the formulas $\theta_a(x)$ of set theory be defined by the following transfinite recursion: $$\theta_a(x)=\bigwedge_{b\in a}\exists y(yEx\wedge\theta_b(y))\wedge
\forall y(yEx\to\bigvee_{b\in a}\theta_b(y)).$$ Note that in any transitive set $M$ containing $a$:$$(M,\in)\models\forall x(\theta_a(x)\iff x=a).$$ Let us assume $V_\alpha\subseteq C(\lio)$, or more exactly, $V_\alpha\in L'_\beta$. Let $X\subseteq V_\alpha$. Then $$X=\{a\in L'_\beta : L'_\beta\models a\in V_\alpha\wedge\bigvee_{b\in X}\theta_b(a)\}\in L'_{\beta+1}.$$   \end{proof}

Note that the proof actually shows $C(\L^\omega_{\infty\omega})=V$.

\begin{theorem}\label{R}
$C(\L^\omega_{\omega_1\omega})=L(\oR).$
\end{theorem}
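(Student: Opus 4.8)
The plan is to prove the two inclusions separately. For $L(\oR)\subseteq C(\L^\omega_{\omega_1\omega})$, I would first invoke the Proposition that $C(\L^\omega_{\omega_1\omega})$ is a transitive model of $\mathrm{ZF}$ containing all ordinals; since $L(\oR)$ is the smallest such inner model containing $\oR$, it suffices to show every real lies in $C(\L^\omega_{\omega_1\omega})$. Here I would use that at finite stages the infinitary connectives collapse to finite ones, so $L'_n=L_n=V_n$ and $L'_\omega=V_\omega$ is the set of hereditarily finite sets, in which each natural number $n$ has a first order definition $\theta_n(x)$. Then for any $X\subseteq\omega$,
$$X=\{a\in L'_\omega:(L'_\omega,\in)\models\textstyle\bigvee_{n\in X}\theta_n(a)\},$$
and as $\bigvee_{n\in X}\theta_n$ is a countable disjunction, hence a formula of $\L^\omega_{\omega_1\omega}$, we get $X\in L'_{\omega+1}$. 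Thus $\oR\subseteq C(\L^\omega_{\omega_1\omega})$; since $C(\L^\omega_{\omega_1\omega})\models\mathrm{ZF}$ is transitive, $\mathcal{P}(\omega)^{C(\L^\omega_{\omega_1\omega})}=\mathcal{P}(\omega)^V$, so in fact $\oR$ is an \emph{element} of $C(\L^\omega_{\omega_1\omega})$, whence $L(\oR)^{C(\L^\omega_{\omega_1\omega})}=L(\oR)\subseteq C(\L^\omega_{\omega_1\omega})$.

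For the reverse inclusion I would show by induction on $\alpha$ that $L'_\alpha\in L(\oR)$; transitivity of $L(\oR)$ then gives $L'_\alpha\subseteq L(\oR)$ and hence $C(\L^\omega_{\omega_1\omega})\subseteq L(\oR)$. The point is that the entire hierarchy of Definition~\ref{defin} can be recomputed inside $L(\oR)$. Two absoluteness facts drive this. First, a formula of $\L_{\omega_1\omega}$ in a fixed finite vocabulary is coded by a real (essentially a countable well-founded labelled tree), and ``being such a code'' is a $\gPy$ property (well-foundedness), hence absolute between $V$ and $L(\oR)$ by Mostowski absoluteness; since $L(\oR)$ contains every real, the class of formulas is literally the same in $V$ and in $L(\oR)$. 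Second, the relation $(L'_\alpha,\in)\models\phi(a,\vec b)$ is defined by recursion on the well-founded tree of $\phi$, and such a recursion on a set-sized well-founded relation is absolute between transitive models of $\mathrm{ZF}$ containing the structure and the code of $\phi$; as $L(\oR)$ contains all reals and (inductively) contains $L'_\alpha$, the value it computes agrees with that of $V$.

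Granting these, the successor step runs as follows. Assuming $L'_\alpha\in L(\oR)$, every element of $L'_{\alpha+1}$ is $\{a\in L'_\alpha:(L'_\alpha,\in)\models\phi(a,\vec b)\}$ for $\phi$ coded by a real and $\vec b\in L'_\alpha$; by the two facts this set is computed correctly inside $L(\oR)$, so it is an element of $L(\oR)$, and the collection of all such sets (as $\phi$ ranges over the absolutely-defined set of formula codes and $\vec b$ over $L'_\alpha$) is the image of a definable class function, hence a set of $L(\oR)$ by Replacement. Therefore $L'_{\alpha+1}=\DEF_{\L^\omega_{\omega_1\omega}}(L'_\alpha)\in L(\oR)$, with the same value as in $V$. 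Limit stages are immediate once the recursion $\alpha\mapsto L'_\alpha$ is carried out inside $L(\oR)$ by the $\mathrm{ZF}$ recursion theorem, so that each initial segment, and hence each union $L'_\nu=\bigcup_{\beta<\nu}L'_\beta$, lies in $L(\oR)$.

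The main obstacle is the second absoluteness fact: one must verify carefully that $\L_{\omega_1\omega}$-satisfaction is genuinely absolute between $V$ and $L(\oR)$, which is precisely the subtle point flagged after Definition~\ref{defin}, namely that $T^*$ is to be read in the sense of $V$. The resolution is that, unlike comparisons of cardinality or cofinality, each instance of $\L_{\omega_1\omega}$-satisfaction is a well-founded recursion living on a countable tree coded by a real; since all reals lie in $L(\oR)$ and well-founded recursion is absolute for transitive models of $\mathrm{ZF}$, no witnessing data escapes $L(\oR)$. This is also exactly what distinguishes the present case from Theorem~\ref{V}: for the full logic $\lio$ the formula trees need not be coded by reals, and the analogous argument instead recovers all of $V$.
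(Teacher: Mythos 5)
Your proposal is correct and follows essentially the same route as the paper: the forward inclusion via countable disjunctions defining each real (you work at stage $\omega$, the paper at an arbitrary stage $\alpha$ with the conjunct $a\in\omega$, an immaterial difference), and the reverse inclusion by induction on $\alpha$ using the coding of $\L^\omega_{\omega_1\omega}$-formulas by reals and the absoluteness of infinitary satisfaction to $L(\oR)$. The only difference is one of exposition: you spell out the absoluteness of the satisfaction relation (Mostowski absoluteness for formula codes, well-founded recursion on the formula tree) where the paper compresses this into a citation of Barwise for the defining formula $\Phi$.
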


\begin{proof} Let $(L'_\alpha)_\alpha$ be the hierarchy behind $C(\looo^\omega)$. We first show $L(\oR)\subseteq C(\looo^\omega)$.
Since $C(\looo^\omega)$ is clearly a transitive model of ZF it suffices to show that $\oR\subseteq C(\looo^\omega)$. Let $X\subseteq\omega$. Let $\phi_n(x)$ be a formula of set theory which defines the natural number $n$ in the obvious way. Then $$X=\{a\in L'_\alpha : L'_\alpha\models a\in\omega\wedge\bigvee_{n\in X}\phi_n(a)\}\in L'_{\alpha+1}.$$ Next we show $C(\looo^\omega)\subseteq L(\oR)$. We prove by induction on $\alpha$ that $L'_\alpha\subseteq L(\oR)$. Suppose this has been proved for $\alpha$ and $L'_\alpha\in L_\beta(\oR)$. Suppose $X\in L'_{\alpha+1}$. This means that there is a formula $\phi(x,\vec{y})$ of $\looo^\omega$ and a finite sequence $\vec{b}\in L'_{\alpha}$ such that $$X=\{a\in L'_\alpha : L'_\alpha\models\phi(a,\vec{b})\}.$$ 
It is possible (see e.g. \cite[page 83]{MR0424560}) to write  a first order formula $\Phi$ of set theory such that 
$$X=\{a\in L_\beta(\oR) : L_\beta(\oR)\models\Phi(a,L'_{\alpha},\phi,\vec{b})\}.$$
Since there is a canonical coding of formulas of $\looo^\omega$ by reals we can consider $\phi$ as a real parameter. Thus $X\in L_{\beta+1}(\oR)$. \end{proof}

\begin{theorem}
 $C(\L_{\omega_1\omega})=C(\L_{\omega_1\omega_1})\hspace{2mm}(=\mbox{Chang model}).$
\end{theorem}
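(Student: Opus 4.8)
The plan is to prove the two inclusions separately, obtaining the nontrivial one from the known characterization of the Chang model as the smallest inner model closed under countable sequences. The inclusion $C(\L_{\omega_1\omega})\subseteq C(\L_{\omega_1\omega_1})$ is immediate from the monotonicity remark $\L^*\le\L^+\Rightarrow C(\L^*)\subseteq C(\L^+)$, since every formula of $\L_{\omega_1\omega}$ is already a formula of $\L_{\omega_1\omega_1}$. The substance of the theorem is therefore the reverse inclusion $C(\L_{\omega_1\omega_1})\subseteq C(\L_{\omega_1\omega})$.

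For this I would invoke Chang's characterization, recalled above, that $C(\L_{\omega_1\omega_1})$ is the smallest transitive model of ZF containing all ordinals and closed under countable sequences (${}^\omega M\subseteq M$). By the earlier Proposition, $C(\L_{\omega_1\omega})$ is already a transitive model of ZF containing all ordinals, so it suffices to show that it too is closed under countable sequences; minimality of the Chang model then yields $C(\L_{\omega_1\omega_1})\subseteq C(\L_{\omega_1\omega})$, and together with the first inclusion this gives equality.

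The heart of the matter — and the point where the distinction between $\L_{\omega_1\omega}$ and the finite-free-variable fragment $\L^\omega_{\omega_1\omega}$ of Theorem~\ref{R} becomes essential — is this closure. Let $(L'_\alpha)$ be the hierarchy behind $C(\L_{\omega_1\omega})$ and let $\langle a_n:n<\omega\rangle$ have each $a_n\in C(\L_{\omega_1\omega})$. Choose $\alpha$ with all $a_n\in L'_\alpha$; as the finite ordinals also lie in $L'_\alpha$, the Kuratowski pairs $c_n=\langle n,a_n\rangle$ are obtained from $n,a_n\in L'_\alpha$ by two applications of first-order pairing, so setting $\beta=\alpha+2$ we have each $c_n\in L'_\beta$. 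Now consider the $\L_{\omega_1\omega}$ formula $\bigvee_n(x=y_n)$, which has the countably many free variables $x,y_0,y_1,\ldots$ and is a legitimate formula of $\L_{\omega_1\omega}$ precisely because this logic, unlike $\L^\omega_{\omega_1\omega}$, permits infinitely many free variables. Taking the infinite parameter tuple $\vec{b}=\langle c_n:n<\omega\rangle$ with each $c_n\in L'_\beta$, we get
$$\Big\{x\in L'_\beta:(L'_\beta,\in)\models\textstyle\bigvee_n(x=c_n)\Big\}=\{c_n:n<\omega\}=\langle a_n:n<\omega\rangle,$$
so the sequence belongs to $L'_{\beta+1}\subseteq C(\L_{\omega_1\omega})$.

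The only genuine obstacle is conceptual rather than computational: one must be sure that the defining scheme of Definition~\ref{defin} really does allow a formula with infinitely many free variables together with infinitely many parameters $\langle c_n\rangle$ drawn from $L'_\beta$, the tuple itself not being required to lie in $L'_\beta$. This is exactly the phenomenon flagged after Proposition~\ref{axc}, where $\bigvee_n x=y_n$ is noted to make $C(\L^*)$ closed under $\omega$-sequences, and it is also why $C(\L_{\omega_1\omega})$, like the Chang model, need not satisfy AC. Once this is granted the displayed computation is routine, and no absoluteness difficulty arises, since the matrix of the disjunction is mere equality, whose truth in $(L'_\beta,\in)$ agrees with its truth in $V$.
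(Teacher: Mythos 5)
Your proposal is correct and follows essentially the same route as the paper: the paper likewise reduces everything to showing that $C(\L_{\omega_1\omega})$ is closed under countable sequences, using an $\L_{\omega_1\omega}$-formula with the countably many parameters $\langle n,a_n\rangle$ to define the sequence at the next level, and then invokes Chang's characterization of $C(\L_{\omega_1\omega_1})$ as the smallest transitive ZF-model containing the ordinals and closed under $\omega$-sequences. Your version is if anything slightly more careful, in spelling out the easy inclusion via monotonicity and in defining the sequence directly as $\{x: \bigvee_n x=c_n\}$ rather than via a descriptive wrapper.
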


\begin{proof}
 The model $C(\L_{\omega_1\omega})$ is closed under countable sequences, for if $a_n\in C(\L_{\omega_1\omega})$ for $n<\omega$, then the $\L_{\omega_1\omega}$-formula
 $$\forall y(y\in x\leftrightarrow\bigvee_n y=\langle n,a_n\rangle).$$
defines the sequence $\la a_n:n<\omega\ra$. Since the Chang model is the smallest transitive model of $ZF$ closed under countable sequences, the claim follows.\end{proof}

We already known that several familiar inner models ($L$ itself, $L(\oR)$, Chang model) can be recovered in the form $C(\L^*)$. We can also recover the inner model $L^\mu$ of one measurable cardinal as a model of the form $C(\L^*)$ in the following somewhat artificial way:
 
\begin{definition} Suppose $U$ is a normal ultrafilter on $\kappa$. We define a generalised quantifier $Q^\kappa_U$ as follows:
 $$\mm\models Q_\kappa^Uwxyv\theta(w)\phi(x,y)\psi(v)\iff$$
 $$\exists\pi:(S,R)\cong(\kappa,<)\wedge\pi''A\in U,$$ where
\begin{eqnarray*}
S&=&\{a\in M : \mm\models\theta(a)\}\\
R&=&\{(a,b)\in M^2 : \mm\models\phi(a,b)\}\\
A&=&\{a\in M : \mm\models\psi(a)\} 
\end{eqnarray*}

\end{definition}

\begin{theorem}
 $C(Q_{\kappa}^U)=L^U$.
\end{theorem}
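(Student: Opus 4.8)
The plan is to prove the two inclusions $C(Q^U_\kappa)\subseteq L^U$ and $L^U\subseteq C(Q^U_\kappa)$ separately, where $L^U$ denotes the inner model built from the normal ultrafilter $U$ on $\kappa$. The key observation making the quantifier $Q^U_\kappa$ useful is that, applied to formulas defining a set $S$, a relation $R$, and a set $A$ inside a structure $\mm$, it asserts precisely that $(S,R)$ is isomorphic to $(\kappa,<)$ and that the image of $A$ under the (unique, since $(\kappa,<)$ is rigid) isomorphism lies in $U$. Thus $Q^U_\kappa$ lets us \emph{read off membership in $U$} by definable means, provided we can first definably pin down a copy of $(\kappa,<)$ inside the relevant level $L'_\alpha$.

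For the inclusion $L^U\subseteq C(Q^U_\kappa)$, I would follow the template of Proposition~\ref{axc} and the earlier absoluteness arguments. One builds the $L^U$-hierarchy inside $C(Q^U_\kappa)$ by simultaneous transfinite recursion. At a stage where $\kappa$ and a definable well-ordered copy of $(\kappa,<)$ are already available in $L'_\alpha$, one can use $Q^U_\kappa$ to define, for each $A\subseteq\kappa$ coded at that level, whether $A\in U$: namely $A\in U$ iff $\mm\models Q^U_\kappa wxyv\,\theta(w)\phi(x,y)\psi(v)$ where $\theta,\phi$ define the canonical copy of $(\kappa,<)$ and $\psi$ defines $A$. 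Since $L^U$ is generated from $U$ by first-order definability over initial segments, and since $U\cap L'_\alpha$ is thereby uniformly definable in $L'_\alpha$ using $Q^U_\kappa$, each level of the $L^U$-hierarchy appears in some $L'_\beta$. The main technical point is the \emph{absoluteness of the isomorphism with $(\kappa,<)$}: one must ensure that once a structure coding $(\kappa,<)$ appears, the quantifier genuinely detects the true $U$ and not some distorted version, which follows because $\kappa$ is a cardinal and the well-order is absolute between $L'_\alpha$ and $V$.

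For the converse $C(Q^U_\kappa)\subseteq L^U$, I would argue by induction on $\alpha$ that $L'_\alpha\in L^U$, mirroring the proof of Theorem~\ref{R}. The only new ingredient over the proof for first-order logic is that the definition of $L'_{\alpha+1}$ from $L'_\alpha$ now invokes the truth predicate $T^*$ for $\L(Q^U_\kappa)$, whose only nonfirstorder clause is the one for $Q^U_\kappa$. But evaluating that clause over a structure $\mm\in L'_\alpha$ requires only deciding, for a definable $A\subseteq\kappa$, whether $A\in U$ — and since $L^U$ contains $U$ and is closed under first-order definability, the entire satisfaction relation for $\L(Q^U_\kappa)$ over structures in $L^U$ is itself definable in $L^U$ with parameter $U$. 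Hence $L'_{\alpha+1}$, being defined from $L'_\alpha$ by a formula whose interpretation is computable inside $L^U$, again lies in $L^U$.

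The hard part will be the forward direction's subtlety that $T^*$ for $Q^U_\kappa$ refers to the \emph{real} $U$ of $V$, not a version internal to $L'_\alpha$ (exactly the ``serious point'' flagged after Definition~\ref{defin}): the witnessing isomorphism $\pi$ and the ultrafilter membership are evaluated in $V$, so I must check that $U\cap P(\kappa)^{L^U}$, which is all that is ever queried, coincides with what $L^U$ computes — this holds because $L^U$ is built from $U$ and $U\cap L^U \in L^U$. Establishing that this localized ultrafilter suffices to reconstruct every level, and conversely that no information beyond $U\cap L^U$ leaks into $C(Q^U_\kappa)$, is where the two inclusions meet and where I would spend the most care.
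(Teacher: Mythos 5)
Your proposal is correct and follows essentially the same route as the paper: both inclusions are proved by induction on the hierarchy levels (with a sub-induction on formula complexity for $C(Q^U_\kappa)\subseteq L^U$), and the pivotal observation in each direction is exactly yours — that evaluating $Q^U_\kappa$ over a level of the hierarchy only ever consults $U\cap L^U$, which is itself a member of $L^U$, while conversely the quantifier lets $C(Q^U_\kappa)$ recover $U\cap L^U_\alpha$ level by level. Your explicit remarks on the rigidity of $(\kappa,<)$ and the absoluteness of the witnessing isomorphism make precise a point the paper leaves implicit, but do not change the argument.
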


\begin{proof}Let $(L'_\a)$ be the hierarchy that defines $C(Q_{\kappa}^U)$.
 We prove for all $a$: $L'_\a=L^U_\a$. We use induction on $\a$.
Suppose the claim is true up to $\a$.
   Suppose $X\in L'_{\a+1}$, e.g.
$$X=\{a\in L'_\a : (L'_a,\in)\models\phi(a,\vec{b})\},$$
where $\phi(x,\vec{y})\in FO(Q_{\kappa}^U)$ and $\vec{b}\in L'_\a$. We show $X\in L_\a^U$. To prove this  we use induction on $\phi(x,\vec{y})$. Suppose  
$$X=\{a\in L'_\a : (L'_\a,\in)\models Q_{\kappa}^Uwxyv\theta(z,a,\vec{b})\phi(x,y,a,\vec{b})\psi(v,a,\vec{b})\}$$ and the claim has been proved for  $\theta$, $\phi$ and $\psi$. Let 
$$Y_a=\{c\in L'_\a : (L'_\a,\in)\models \theta(c,a,\vec{b})\},$$
$$R_a=\{(c,d)\in L'^2_\a : (L'_\a,\in)\models \phi(c,d,a,\vec{b})\},$$
and 
$$S_a=\{c\in L'_\a : (L'_\a,\in)\models \psi(c,a,\vec{b})\}.$$
Thus
$$X=\{a\in L'_\a : \exists \pi:(Y_a,R_a)\cong (\kappa,<)\wedge \pi''S_a\in U\}.$$
But now 
$$X=\{a\in L_\a^U : \exists \pi:(Y_a,R_a)\cong (\kappa,<)\wedge \pi''S_a\in U\cap L^U\},$$ so $X\in L^U$.
\medskip

\noindent
{\bf Claim 2:\ } For all $a$: $L^U_\a\in C(Q_{\kappa}^U)$. We use induction on $\a$. It suffices to prove for all $\a$: $U\cap L^U_\a\in C(Q_{\kappa}^U)$. Suppose the claim is true up to $\a$. We show $U\cap L^U_{\a+1}\in C(Q_{\kappa}^U)$. Now
$$U\cap L^U_{\a+1}=U\cap \DEF(L^U_\a,\in,U\cap L^U_\a)$$

$$=\{X\subseteq L^U_\a : X\in U\wedge X\in \DEF(L^U_\a,\in,U\cap L^U_\a)\}$$

\end{proof}

\section{Absolute logics}

The concept of an absolute logic attempts to capture the first-order content of $\loo$. Is it possible that  logics that are ``first order" in the  way  $\loo$ is turn out to be substitutable with $\loo$ in the definition of the constructible hierarchy?

 Barwise writes in \cite[pp. 311-312]{MR0337483}: \begin{quote}``Imagine a logician $\mathpzc{k}$  using $T$ as his metatheory for defining the basic notions of a particular logic $\L^*$. When is it reasonable for us, as outsiders looking on, to call $\L^*$ a ``first order" logic? If the words ``first order" have any intuitive content it is that the truth or falsity of $\mm\models^*\phi$
should depend only on $\phi$ and $\mm$, not on what subsets of $M$ may or may not exist in $\mathpzc{k}$'s model of his set theory $T$. In other words, the relation  $\models^*$ should be absolute for models of $T$. What about the predicate
$\phi\in \L^*$
of $\phi$? To keep from ruling out $L_{\omega_1\omega}$ (the predicate $\phi\in L_{\omega_1\omega}$ is not absolute since the notion of countable is not absolute) we demand only that the notion of $\L^*$-sentence be persistent for models of $T$: i.e. that if $\phi\in \L^*$ holds in $\mathpzc{k}$'s model of $T$ then it should hold in any end extension of it."
 \end{quote}
Using absoluteness as a guideline, Barwise \cite{MR0337483} introduced the concept of an absolute logic:
\begin{definition}\label{wqe}
Suppose $A$ is any class and $T$ is any theory in the language of set theory. A logic $\L^*$ is {\em $T$-absolute} if there are a $\Sigma_1$-predicate $S_1(x)$, a $\Sigma_1$-predicate $T_1(x,y)$, and a $\Pi_1$-predicate $T'_1(x,y)$ such that $\phi\in \L^*\iff S_1(\phi)$, $M\models\phi\iff T_1(M,\phi)$ and $T\vdash\forall x\forall y(S_1(x)\to (T_1(x,y)\leftrightarrow T'_1(x,y)))$. If parameters from a class $A$ are allowed, we say that $\L^*$ is {\em absolute with parameters from $A$}.
\end{definition}

Note that the stronger $T$ is, the weaker the notion of $T$-absoluteness is. 
Barwise \cite{MR0337483} calls KP\footnote{Kripke-Platek set theory.}-absolute logics {\em strictly} absolute.

As Theorems \ref{V} and $\ref{R}$ demonstrate, absolute logics (such as $\looo^\omega$) may be very strong from the point of view of the inner model construction. However, this is  so only because of  the potentially complex syntax of the absolute logics, as is the case with $\looo$. Accordingly we introduce the following notion: 

\begin{definition}
 An absolute logic $\L^*$ has  {\em $T$-absolute syntax} if its sentences are (coded as) natural numbers and there is a $\Pi_1$-predicate $S'_1(x)$ such that $T\vdash\forall x(S_1(x)\leftrightarrow S'_1(x))$.  We may allow parameters, as in Definition \ref{wqe}.
\end{definition}

In other words, to say that a logic $\L^*$ has ``absolute syntax" means that the class of $\L^*$-formulas has a $\Delta_1^T$-definition. Obviously, $\looo$ does not satisfy this condition.  On the other hand, many absolute logics, such as $\loo$, $\L(Q_0)$, weak second order logic, $\L_{\mbox{\tiny HYP}}$, etc have absolute syntax.

The original definition of absolute logics does not allow parameters. Still there are many logics that are absolute apart from dependence on a parameter. In our context it turns out that we can and should allow parameters.

The {\bf cardinality quantifier} $Q_\alpha$ is defined as follows: $$\mm\models Q_\alpha x\phi(x,\vec{b})\iff
|\{a\in M:\mm\models\phi(a,\vec{b})\}|\ge\aleph_\alpha.$$ A slightly stronger quantifier is
$$\mm\models Q^E_\alpha x,y\phi(x,y,\vec{c})\iff
\{(a,b)\in M^2:\mm\models\phi(a,b,\vec{c})\}\mbox{ is an}$$
$$\mbox{\hspace{2in} equivalence relation with $\ge \aleph_\alpha$ classes.}$$
 
\begin{example}
\begin{enumerate}
\item $\lio$ is KP-absolute \cite{MR0337483}.
\item $\L(Q_\alpha)$ is {\zfc}-absolute with $\omega_\alpha$ as parameter.
\item $\L(Q^E_\alpha)$ is {\zfc}-absolute with $\omega_\alpha$ as parameter.
\end{enumerate}
\end{example}

\begin{theorem}\label{abs}
Suppose $\L^*$ is {\zfc}+V=L-absolute with parameters from $L$, and 
the syntax of $\L^*$ is ({\zfc}+V=L)-absolute with parameters from  $L$. Then $C(\L^*)=L$.
\end{theorem}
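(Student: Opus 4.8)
The plan is to prove the two inclusions separately. The inclusion $L\subseteq C(\L^*)$ is immediate: since we assume first order logic is a sublogic of $\L^*$, monotonicity gives $C(\loo)\subseteq C(\L^*)$, and $C(\loo)=L$. So all the content is in the reverse inclusion $C(\L^*)\subseteq L$, which I would obtain by proving, by induction on $\alpha$, that every level $L'_\alpha$ of the hierarchy defining $C(\L^*)$ lies in $L$. The base and limit steps are trivial, so everything reduces to the successor step: assuming $L'_\alpha\in L$, show $\DEF_{\L^*}(L'_\alpha)\subseteq L$, i.e.\ that each set $X=\{a\in L'_\alpha:(L'_\alpha,\in)\models\phi(a,\vec{b})\}$ with $\phi\in\L^*$ \emph{in the sense of $V$} and $\vec{b}\in L'_\alpha$ belongs to $L$. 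Note this will interleave the two hierarchies rather than identify them level by level: an $\L^*$-definable subset of $L'_\alpha$ need not be in the very next constructible level, only in some later one.

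The heart of the matter is a truth-absoluteness lemma. Using condensation and reflection inside $L$, I would fix a limit ordinal $\gamma$ with $L'_\alpha,\vec{b}\in L_\gamma$ and $L_\gamma\models\zfc+V{=}L$ (and, where needed, $L_\gamma\prec_{\Sigma_1}L$). Because $L_\gamma$ models the theory, the defining equivalence $S_1(\phi)\to(T_1\leftrightarrow T'_1)$ holds there, as does the syntax equivalence $S_1\leftrightarrow S'_1$; since formulas are coded by natural numbers and we have both the $\Sigma_1$-predicate $S_1$ and the $\Pi_1$-predicate $S'_1$, the code $\phi$ is recognized as an $\L^*$-formula in $L_\gamma$ exactly when it is in $V$, so formula-hood is absolute. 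Writing $\tilde T'_1$ for the first order set-theoretic rendering of the $\Pi_1$-predicate $T'_1$ (as in the passage around Proposition~\ref{wpoa}), I would then aim to show
$$X=\{a\in L'_\alpha:L_\gamma\models\tilde T'_1(L'_\alpha,\phi,a,\vec{b})\},$$
which exhibits $X$ as a first order definable subset of $L_\gamma$, whence $X\in\DEF_{\loo}(L_\gamma)\subseteq L$ and the induction closes.

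The main obstacle is exactly this last equality, namely matching the honest $V$-truth (given by the $\Sigma_1$-predicate $T_1$) with the $L_\gamma$-evaluation of $T'_1$. One direction is the easy, upward one: if $L_\gamma\models T_1(\ldots)$, then $T_1$ being $\Sigma_1$ and $L_\gamma$ transitive gives $V\models T_1(\ldots)$, so $a\in X$. The delicate direction is downward: from $V\models T_1(\ldots)$ I must produce $L_\gamma\models T'_1(\ldots)$, i.e.\ certify that an arbitrary $V$-witness for the $\Sigma_1$ truth can be replaced by a constructible one. This is precisely what the $\Pi_1$-alternative $T'_1$, together with the provability of $T_1\leftrightarrow T'_1$ from $\zfc+V{=}L$, is designed to supply: the strategy is to reflect the $\Sigma_1$ statement $V\models T_1$, pass to $L_\gamma$, and use $T_1\to T'_1$ there together with the $\Pi_1$-downward and $\Sigma_1$-upward absoluteness of $T'_1$ and $\neg T'_1$ between $L_\gamma$ and $V$ to move back and forth. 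I expect essentially all the difficulty to be concentrated here, in ruling out that the truth of $\phi$ in the constructible structure $(L'_\alpha,\in)$ depends on non-constructible witnesses. The cardinality quantifier $\L(Q_\alpha)$ with parameter $\omega_\alpha$ is the guiding example: ``there is an injection of $\omega_\alpha$ into the defined set $A$'' is visibly insensitive to passing between $L$ and $V$ once $A$ (and hence $A$'s ordinal size) is constructible, and the abstract absoluteness hypotheses are exactly what is needed to make this phenomenon uniform across all formulas of $\L^*$.
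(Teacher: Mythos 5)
Your proposal follows essentially the same route as the paper's proof: induction on $\alpha$, reflection inside $L$ to a level $L_\gamma$ satisfying a sufficiently large finite fragment of $\zfc+V{=}L$ with $L'_\alpha,\vec b,\phi\in L_\gamma$, absoluteness of the syntax to recognize $\phi$ inside $L_\gamma$, and the $\Sigma_1$/$\Pi_1$ pair $T_1,T_1'$ to exhibit $X$ as the first order definable set $\{a\in L_{\gamma}\,:\,(L_\gamma,\in)\models a\in L'_\alpha\wedge S_1(\phi)\wedge T_1(L'_{\alpha},\phi(a,\vec{b}))\}\in L_{\gamma+1}$ (your use of the $\Pi_1$ form $T'_1$ in the defining formula is immaterial, since $L_\gamma$ models the theory that proves the two forms equivalent). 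At the one delicate point you isolate --- matching $V$-truth of $T_1$ with its $L_\gamma$-evaluation --- the paper is in fact terser than you are, disposing of it with the phrase ``by the definition of absoluteness,'' so your more explicit discussion of the upward $\Sigma_1$ and downward $\Pi_1$ directions is entirely in the spirit of, and a slight elaboration of, its argument.
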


\begin{proof} 
 We use induction on $\alpha$ to prove that $L'_\alpha\subseteq L$. We suppose $L'_\alpha\subseteq L$ and  that ${\zfc}_n$ is a finite part of ${\zfc}$ so that $\L^*$ is ${\zfc}_n+V=L$-absolute. Then $L'_\alpha\in L_\gamma$ for some  $\gamma$ such that $L_\gamma\models {\zfc}_n$. We show that $L'_{\alpha+1}\subseteq L_{\gamma+1}$. Suppose $X\in L'_{\alpha+1}$. Then $X$ is of the form $$X=\{a\in L'_{\alpha}: (L'_{\alpha},\in)\models\phi(a,\vec{b})\},$$ where $\phi(x,\vec{y})\in \L^*$ and $\vec{b}\in L'_{\alpha}$. W.l.o.g., $\phi(x,\vec{y})\in L_\gamma$. By the definition of absoluteness,
$$X=\{a\in L_{\gamma}: (L_\gamma,\in)\models a\in L'_\alpha\wedge S_1(\phi(x,\vec{y}))\wedge T_1(L'_{\alpha},\phi(a,\vec{b}))\}.$$
Hence $X\in L_{\gamma+1}$. This also shows  that $\la L'_\a : \a<\nu\ra\in L$, and thereby $L'_\nu\in L$, for limit ordinals $\nu$.
\end{proof}
A consequence of the Theorem \ref{abs} is the following:
\medskip

\noindent{\bf Conclusion:} The constructible hierarchy $L$ is unaffected if first order logic is exchanged in the construction of $L$ for any of the following, simultaneously or separately:
\begin{itemize}
\item Recursive infinite conjunctions $\bigwedge_{n=0}^\infty\phi_n$ and disjunctions $\bigvee_{n=0}^\infty\phi_n$.

\item Cardinality quantifiers $Q_\alpha$, $\alpha\in On$.

\item Equivalence quantifiers $Q^E_\alpha$, $\alpha\in On$.

\item Well-ordering quantifier\footnote{This quantifier is absolute because the well-foundedness of a linear order $<$  is equivalent to the existence of a function from the tree of strictly $<$-decreasing sequences into the ordinals such that a strictly longer sequence is always mapped to a strictly  smaller ordinal.} $$\mm\models W x,y\phi(x,y)\iff$$
$$\{(a,b)\in M^2:\mm\models\phi(a,b)\}\mbox{ is a well-ordering}.$$

\item Recursive game quantifiers $$\forall x_0\exists y_0\forall x_1\exists y_1\ldots\bigwedge_{n=0}^\infty\phi_n(x_0,y_0,\ldots,x_n,y_n),$$
$$\forall x_0\exists y_0\forall x_1\exists y_1\ldots\bigvee_{n=0}^\infty\phi_n(x_0,y_0,\ldots,x_n,y_n).$$

\item Magidor-Malitz quantifiers\footnote{\label{dqd}This quantifier is absolute because the existence of an infinite set $X$ as above is equivalent to the non-well-foundedness of the tree of strictly $\subset$-increasing sequences $(s_0,\ldots,s_m)$ of finite subsets of the model with the property that $\mm\models\phi(a_1,\ldots,a_n)$ holds for all $a_1,\ldots,a_n\in s_m$.} at $\aleph_0$ $$\mm\models Q^{\MMa,n}_0 x_1,\ldots,x_n\phi(x_1,\ldots,x_n)\iff$$
$$\exists X\subseteq M(|X|\ge\aleph_0\wedge \forall a_1,\ldots,a_n\in X:\mm\models\phi(a_1,\ldots,a_n)).$$

\end{itemize}
Thus G\"odel's $L=C(\loo)$ exhibits some robustness with respect  to the choice of the logic.

\section{The Magidor-Malitz quantifier}
The Magidor-Malitz quantifier at $\aleph_1$ \cite{MR0453484} extends $Q_1$ by allowing us to say that there is an uncountable set such that, not only every  {\em element} of the set satisfies a given formula $\phi(x)$, but even any {\em pair of elements} from the set  satisfy a given formula $\psi(x,y)$. Much more is expressible with the Magidor-Malitz quantifier than with $Q_1$, e.g. the existence of a long branch or of a long antichain in a tree, but this quantifier is still axiomatizable  if one assumes $\diamondsuit$. On the other hand, the price we pay for the increased expressive power is that it is consistent, relative to the consistency of ZF, that  Magidor-Malitz logic is very badly incompact \cite{MR1197203}. We show that while it is consistent, relative to the consistency of ZF, that the Magidor-Malitz logic generates an inner model different from $L$, if we assume $0^\sharp$, the inner model collapses to $L$.  
This is a bit surprising, because the existence of $0^\sharp$ implies that $L$ is very ``slim", in the sense that it is not something that an a priori bigger inner model would collapse to. The key to this riddle is that under $0^\sharp$ the Magidor-Malitz logic itself loses its ``sharpness" and becomes in a sense absolute between $V$ and $L$.

\begin{definition}The
Magidor-Malitz quantifier in dimension $n$ is the following: $$\mm\models Q^{\MMa,n}_\alpha x_1,\ldots,x_n\phi(x_1,\ldots,x_n)\iff$$
$$\exists X\subseteq M(|X|\ge\aleph_\alpha\wedge \forall a_1,\ldots,a_n\in X:\mm\models\phi(a_1,\ldots,a_n)).$$
\end{definition}
The original Magidor-Malitz quantifier had dimension $2$ and $\alpha=1$:
$$\mm\models Q^{\MMa}_1 x_1,x_2\phi(x_1,x_2)\iff$$
$$\exists X\subseteq M(|X|\ge\aleph_1\wedge \forall a_1,a_2\in X:\mm\models\phi(a_1,a_2)).$$
%
%Finally, $$\L^{\MMa,<\omega}_\kappa=\L(Q^{\MMa,n}_\kappa)_{n<\omega}.$$

The logics $\L(Q^{\MMa,<\omega}_\kappa)$ and $\L(Q^{\MMa,n}_\kappa)$ are adequate to truth in themselves (recall Definition~\ref{4r67}), with $\kappa$ as a parameter. 

Note that putting $n=1$ gives us $Q_1$: 
$$\mm\models Q_1 x\phi(x)\iff$$
$$\exists X\subseteq M(|X|\ge\aleph_1\wedge \forall a\in X:\mm\models\phi(a)).$$
We have already noted in Footnote~\ref{dqd} that for $\alpha=0$ this quantifier is absolute.
%because the non-existence of  $X$ can be reduced to the well-foundedness of a certain definable tree.

\begin{theorem}
If $0^\sharp$ exists, then $C(Q^{\MMa,<\omega}_\alpha)=L$.
\end{theorem}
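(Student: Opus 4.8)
The plan is to prove the two inclusions separately. The inclusion $L\subseteq C(Q^{\MMa,<\omega}_\alpha)$ is immediate and uses nothing: first order logic is a sublogic of $\L(Q^{\MMa,<\omega}_\alpha)$, so by the monotonicity remark $L=C(\loo)\subseteq C(Q^{\MMa,<\omega}_\alpha)$. Everything lies in the reverse inclusion $C(Q^{\MMa,<\omega}_\alpha)\subseteq L$, which I would prove exactly as in Theorem~\ref{abs}, by induction on $\beta$ showing $L'_\beta\in L$ for the hierarchy $(L'_\beta)$ defining $C(Q^{\MMa,<\omega}_\alpha)$ (the limit stages and the fact that $\la L'_\xi:\xi<\nu\ra\in L$ being routine). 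At a successor stage, writing $M=L'_\beta$, I must show that for every $\phi(x,\vec y)\in\L(Q^{\MMa,<\omega}_\alpha)$ and $\vec b\in M$ the set $X=\{a\in M:(M,\in)\models\phi(a,\vec b)\}$, with the Magidor--Malitz quantifiers read in the sense of $V$, lies in $L$. Note that the only parameter of the logic is the ordinal $\aleph_\alpha$, which is an element of $L$ and, being a cardinal of $V$, is also a cardinal of $L$; this will be used below. By induction on the complexity of $\phi$ the problem reduces to a single quantifier step: understanding, for an $L$-definable relation $R_a(\bar x)$ given by a subformula $\psi(\bar x,a,\vec b)$, the truth value in $V$ of $Q^{\MMa,n}_\alpha\,\bar x\,\psi$ over $(M,\in)$ as a function of $a$.

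The point where $0^\sharp$ enters is the equivalence, for $M\in L$ and $R_a$ definable over $(M,\in)$ from parameters in $M$,
$$\big(V\models Q^{\MMa,n}_\alpha\,\bar x\,\psi(\bar x,a,\vec b)\big)\ \Longleftrightarrow\ \big(|M|\ge\aleph_\alpha\ \wedge\ R_a\mbox{ has an infinite homogeneous set}\big),$$
i.e. over an $L$-structure the Magidor--Malitz threshold $\aleph_\alpha$ collapses to $\aleph_0$. Granting this, both conjuncts on the right are absolute between $V$ and $L$: the comparison $|M|\ge\aleph_\alpha$ is absolute because $M\in L$ and $\aleph_\alpha$ is a cardinal of both $V$ and $L$ (the same $\Sigma_1$/$\Pi_1$ computation that shows $\L(Q_\alpha)$ absolute), while ``$R_a$ has an infinite homogeneous set'' is the quantifier $Q^{\MMa,n}_0$, absolute by the well-foundedness-of-a-tree criterion of Footnote~\ref{dqd}. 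Hence, replacing each occurrence of $Q^{\MMa,n}_\alpha$ in $\phi$ by ``$|M|\ge\aleph_\alpha\wedge Q^{\MMa,n}_0$'', the set $X$ is definable over $(M,\in)$ \emph{inside $L$}, so $X\in L$; the induction then closes and gives $C(Q^{\MMa,<\omega}_\alpha)\subseteq L$.

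The substance is the displayed equivalence, and here is how I would prove it. The direction $(\Leftarrow)$ when $|M|<\aleph_\alpha$ is vacuous and the implication $(\Rightarrow)$ is trivial, so assume $|M|\ge\aleph_\alpha$ and that $R_a$ has an infinite homogeneous set $H\subseteq M$. Using $0^\sharp$, $L$ is the Skolem hull of the class $I$ of Silver indiscernibles, so each $h\in H$ is $t_h^L(\bar c_h)$ for a canonical Skolem term and a finite increasing tuple $\bar c_h$ from $I$; fix a finite $e\subseteq I$ carrying $\vec b$, $a$ and $M$. By the infinite $\Delta$-system lemma together with pigeonhole on the (countably many) terms, thin $H$ to an infinite $H'=\{t^L(r,\bar d_i):i<\omega\}$ with a common term $t$, a common root $r$, and free parts $\bar d_i$ that are increasing blocks of indiscernibles above $e$. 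Now stretch: since every uncountable cardinal is a limit of Silver indiscernibles, there are at least $\aleph_\alpha$ indiscernibles below $\rank(M)$, so one can choose $\aleph_\alpha$-many increasing blocks $\bar\jmath_\xi$ below $\rank(M)$ realizing, over $e\cup r$, the same types as the $\bar d_i$ (and the same $n$-block types as the tuples from $H'$). Putting $h_\xi=t^L(r,\bar\jmath_\xi)$, indiscernibility guarantees $h_\xi\in M$ and that every $n$-tuple from $\{h_\xi:\xi<\aleph_\alpha\}$, in every arrangement, satisfies $\psi$, because all of these are facts about indiscernible types already witnessed in $H'$. Thus $R_a$ has a homogeneous set of size $\ge\aleph_\alpha$, proving $(\Leftarrow)$.

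The main obstacle is exactly this stretching argument. Two points need care: first, Magidor--Malitz homogeneity demands that \emph{all} $n$-tuples from the homogeneous set satisfy $\psi$, not only the increasing ones, so I must reproduce every arrangement pattern of the support blocks; this is possible precisely because each such pattern is already realized positively inside $H'$ and indiscernibility transfers it. Second, one must know that $\rank(M)$ admits $\ge\aleph_\alpha$ indiscernibles, which is where the hypothesis $0^\sharp$ (every uncountable cardinal being a limit of indiscernibles) is indispensable; the bookkeeping that distinct blocks yield distinct $h_\xi$ and that $|M|\ge\aleph_\alpha$ forces $\rank(M)$ to be large enough is routine by comparison. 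Once the equivalence is established the induction of the first paragraph yields $C(Q^{\MMa,<\omega}_\alpha)=L$; conceptually, $0^\sharp$ has made the Magidor--Malitz logic revert to its absolute $\aleph_0$-version over $L$-structures, so that the argument of Theorem~\ref{abs} applies just as it does for an absolute logic.
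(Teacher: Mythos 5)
Your overall frame---the trivial inclusion $L\subseteq C(Q^{\MMa,<\omega}_\alpha)$, the induction on the hierarchy as in Theorem~\ref{abs}, the reduction to a single quantifier step handled via Skolem terms and a $\Delta$-system on indiscernible supports---is indeed the shape of the paper's argument. But the key equivalence on which your proof rests is false, and this is a genuine gap. You claim that over $M\in L$ with $R_a\in L$, the quantifier $Q^{\MMa,n}_\alpha$ reduces to ``$|M|\ge\aleph_\alpha$ and $R_a$ has an infinite homogeneous set''. Take $\alpha=1$, $n=2$, and let $A\in L$ be a Sierpi\'nski colouring of $[\omega_1^L]^2$ (in $L$, compare a well-order of a set of reals of order type $\omega_1^L$ with the usual order of the reals). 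Since $0^\sharp$ exists, $\omega_1^L$ is countable in $V$, so $A$ has no uncountable homogeneous set in $V$ at all; yet it has infinite homogeneous sets by Ramsey's theorem. Taking $M=L'_\beta$ for $\beta$ large (so $|M|\ge\aleph_1$ and $A\in M$) and $\psi(x,y)$ the formula ``$\{x,y\}\in A$'', the right-hand side of your equivalence holds while $Q^{\MMa,2}_1xy\,\psi$ fails in $(M,\in)$, so your substitution computes the wrong set $X$ at the successor stage. The same example shows where your stretching breaks: the new blocks $\bar\jmath_\xi$ must realize the same type over $e\cup r$ as the leaves $\bar d_i$, hence must sit in the same gaps of $e\cup r$, and those gaps may contain only countably many indiscernibles (here all values $t^L(r,\bar d_i)$ lie below $\omega_1^L$, so no choice of blocks can yield $\aleph_1$ distinct values). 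The existence of $\aleph_\alpha$ indiscernibles below $\rank(M)$ is beside the point, and the claim that distinct blocks give distinct values is not ``routine bookkeeping''---it is exactly what fails.

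What the paper proves instead is a downward transfer at the \emph{same} cardinality: if $A\in L$, $A\subseteq[\eta]^2$, and there is an uncountable $B$ with $[B]^2\subseteq A$ in $V$, then there is such a $B$ in $L$; one then replaces ``$\exists Y(|Y|\ge\aleph_1\wedge\dots)$'' by ``$\exists Y\in L_\gamma(\dots)$'' using $L_\gamma\prec L$, which is first-order over $L_\gamma$. The proof of that lemma starts the way yours does (Skolem terms, $\Delta$-system), but the uncountability of $B$ is used essentially: it forces the suprema $\gamma_j$ of the leaf coordinates to be limits of uncountably many indiscernibles, so the canonical $L$-ultrafilters $\U_{\gamma_j}$ are available, and the Rowbottom property applied to the set $C$ of homogeneity-witnessing leaf configurations produces sets $B_j\in\U_{\gamma_j}\cap L$ whose product of blocks yields an uncountable homogeneous $B^*\in L$. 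Nothing of the sort can be extracted from a merely infinite homogeneous set, so the argument needs to be reorganized around a lemma of this downward-absoluteness form rather than around a reduction of $Q^{\MMa,n}_\alpha$ to $Q^{\MMa,n}_0$.
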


\begin{proof}
We treat only the case $n=2, \alpha=1$. The general case is treated similarly, using induction on $n$. The proof hinges on the following lemma:

\begin{lemma} Suppose $0^\sharp$ exists and
 $A\in L$, $A\subseteq [\eta]^2$. If there is an uncountable $B$ such that $[B]^2\subseteq A$, then there is  such a set $B$ in $L$.
\end{lemma}

\begin{proof}
Let us first see how the Lemma helps us to prove the theorem. We will use induction on $\alpha$ to prove that $L'_\alpha\subseteq L$. We suppose $L'_\alpha\subseteq L$, and hence $L'_\alpha\in L_\gamma$ for some canonical indiscernible $\gamma$. We show that $L'_{\alpha+1}\subseteq L_{\gamma+1}$. Suppose $X\in L'_{\alpha+1}$. Then $X$ is of the form $$X=\{a\in L'_{\alpha}: (L'_{\alpha},\in)\models\phi(a,\vec{b})\},$$ where $\phi(x,\vec{y})\in \L(Q^{\MMa}_1)$ and $\vec{b}\in L'_{\alpha}$. For simplicity we suppress the mention of $\vec{b}$. Since we can use induction on $\phi$, the only interesting case is
$$X=\{a\in L'_{\alpha}: \exists Y(|Y|\ge\aleph_1\wedge\forall 
x,y\in Y:(L'_{\alpha},\in)\models \psi(x,y,a))\},$$ where we already have
for each $a\in L'_\alpha$
$$A=\{\{c,d\}\in [L'_{\alpha}]^2: (L'_{\alpha},\in)\models \psi(c,d,a)\}\in L.$$ Now the Lemma implies
$$X=\{a\in L'_{\alpha}: \exists Y\in L(|Y|\ge\aleph_1\wedge\forall 
x,y\in Y:(L'_{\alpha},\in)\models \psi(x,y,a))\}.$$ 
Since $L_\gamma\prec L$, we have
$$X=\{a\in L'_{\alpha}: \exists Y\in L_\gamma(|Y|\ge\aleph_1\wedge\forall 
x,y\in Y:(L'_{\alpha},\in)\models \psi(x,y,a))\}.$$ 
Finally, 
$$X=\{a\in L_{\gamma}: (L_\gamma,\in)\models a\in L'_\alpha\wedge$$
$$\exists z(``\exists f:(\aleph_1)^V\stackrel{{\tiny 1-1}}{\to} z"\wedge\forall 
x,y\in z \psi(x,y,a)^{(L'_{\alpha},\in)})\}.$$
\end{proof}
\def\g{\gamma}\def\t{\tau}\def\a{\alpha}\def\b{\beta}\def\d{\delta}\def\U{\mathcal{U}}
Now we prove the Lemma. W.l.o.g. the set $B$ of the lemma satisfies $|B|=\aleph_1$, say $B=\{\delta_i:i<\omega_1\}$ in increasing order. Let $I$ be the canonical closed unbounded class of indiscernibles for $L$. Let $\delta_i=\t_i(\a^i_0,\ldots,\a^i_{k_i})$, where $\a^i_0,\ldots,\a^i_k\in I$. W.l.o.g., $\t_i$ is a fixed term $\t$. Thus also $k_i$ is a fixed number $k$. By the $\Delta$-lemma, by thinning $I$ if necessary, we may assume that the finite sets $\{\a^i_0,\ldots,\a^i_k\}, i<\omega_1$, form a $\Delta$-system with a root $\{\a_0,\ldots,\a_n\}$ and leaves $\{\b^i_0,\ldots,\b^i_k\}$, $i<\omega_1$.  
W.l.o.g. the mapping $i\mapsto\b^i_0$ is strictly increasing in $i$. Let $\g_0=\sup\{\b^i_0:i<\omega_1\}$.
W.l.o.g., the mapping $i\mapsto\b^i_1$ is also strictly increasing in $i$. Let $\g'_1=\sup\{\b^i_1:i<\omega_1\}$. It may happen that $\g_1=\g_0$. Then we continue to $\b^i_2$, $\b^i_3$, etc until we get $\g'_{k_0}=\sup\{\b^i_{k_0}:i<\omega_1\}>\g_0$. Then we let $\g_1=\g'_{k_0}$. We continue in this way until we have $\g_0<...<\g_{k_s-1}$, all limit points of $I$.

Recall that whenever $\g$ is a limit point of the set $I$ there is a natural $L$-ultrafilter $\U_\g\subseteq L$ on $\g$, namely $A\in\U_\g\iff\exists\delta<\g((I\setminus\delta)\cap\g\subseteq A)$.
Recall also the following property of the $L$-ultrafilters $\U_\g$:
\begin{itemize}
\item {\bf Rowbottom Property}: Suppose $\g_1<...<\g_n$ are limits of indiscernibles and 
$\U_{\g_1}$,\ldots,$\U_{\g_n}$ are the corresponding $L$-ultrafilters. Suppose $C\subseteq [\g_1]^{n_1}\times...\times [\g_l]^{n_l}$, where $C\in L$. Then there are $B_1\in\U_{\g_1},\ldots,B_l\in\U_{\g_l}$ such that 
\begin{equation}
\label{rp}
[B_1]^{n_1}\times...\times [B_l]^{n_l}\subseteq C\mbox{ or }[B_1]^{n_1}\times...\times [B_l]^{n_l}\cap C=\emptyset.\end{equation}
\end{itemize}

We apply this to the ordinals $\g_1,\ldots,\g_{k_s-1}$ and to a set $C$ of sequences 
\begin{equation}\label{seq}
(\zeta^0_0,\ldots,\zeta^0_{k_0-1},\eta^0_{k_{s-1}},\ldots,\eta^0_{k_0-1},\ldots,
\zeta^s_{k_{s-1}},\ldots,\zeta^s_{k_s-1},\eta^s_{k_{s-1}},\ldots,\eta^s_{k_s-1})
\end{equation} such that 
\begin{equation}\label{C}
\begin{array}{l}
\{\tau(\alpha_0,\ldots,\alpha_n,\zeta^0_0,\ldots,\zeta^0_{k_0-1},\ldots,
\zeta^s_{k_{s-1}},\ldots,\zeta^s_{k_s-1}),\\
\hspace{2mm}\tau(\alpha_0,\ldots,\alpha_n,\eta^0_0,\ldots,\eta^0_{k_0-1},\ldots,
\eta^s_{k_{s-1}},\ldots,\eta^s_{k_s-1})\}\in A
\end{array}\end{equation} Since $A\in L$, also $C\in L$. Note that 
$$C\subseteq [\g_1]^{2k_0}\times...\times [\g_s]^{2k_s}$$
By the Rowbottom Property there are $B_0\in\U_{\g_0},\ldots,B_s\in\U_{\g_s}$ such that 
\begin{equation}
\label{rp1}
[B_1]^{2k_0}\times...\times [B_s]^{2k_s}\subseteq C\mbox{ or }[B_1]^{2k_0}\times...\times [B_s]^{2k_s}\cap C=\emptyset.\end{equation}
\medskip

\noindent{\bf Claim: }\ $[B_1]^{2k_0}\times...\times [B_s]^{2k_s}\subseteq C$. 
\medskip

To prove the claim suppose $[B_1]^{2k_0}\times...\times [B_s]^{2k_s}\cap C=\emptyset$.
Since $B_j\in\U_{\g_j}$, there is $\xi_j<\g_j$ such that $(I\setminus\xi_j)\cap\g_j\subseteq B_j$. We can now find $i_1,i_2<\omega_1$ such that in the sequence 
$$\b^{i_l}_0,\ldots,\b^{i_l}_{k_0-1},\ldots,\b^{i_l}_{k_{s-1}},\ldots,\b^{i_l}_{k_s-1}, l\in\{1,2\},$$
where  $$\b^{i_l}_0,\ldots,\b^{i_l}_{k_0-1}<\g_0\mbox{ and }\b^{i_l}_{k_{j-1}},\ldots,\b^{i_l}_{k_j-1}<\g_j \mbox{ for all $j$},$$
we actually have 
$$\xi_0<\b^{i_l}_0,\ldots,\b^{i_l}_{k_0-1}<\g_0\mbox{ and for all $j$: }\x_j<\b^{i_l}_{k_{j-1}},\ldots,\b^i_{k_j-1}<\g_j, l\in\{1,2\}.$$
Then since $$\tau(\alpha_0,\ldots,\alpha_n,\b^{i_l}_0,\ldots,\b^{i_l}_{k_0-1},\ldots,\b^{i_l}_{k_{s-1}},\ldots,\b^{i_l}_{k_s-1})\in B,$$ and $[B]^2\subseteq A$, we have 
$$\begin{array}{l}
\{\tau(\alpha_0,\ldots,\alpha_n,\b^{i_1}_0,\ldots,\b^{i_1}_{k_0-1},\ldots,
\b^{i_1}_{k_{s-1}},\ldots,\b^{i_1}_{k_s-1}),\\
\hspace{2mm}\tau(\alpha_0,\ldots,\alpha_n,\b^{i_2}_0,\ldots,\b^{i_2}_{k_0-1},\ldots,
\b^{i_2}_{k_{s-1}},\ldots,\b^{i_2}_{k_s-1})\}\in A
\end{array}$$
Hence
\begin{equation}\label{seq2}
(\b^{l_1}_0,\ldots,\b^{l_1}_{k_0-1},\b^{l_2}_0,\ldots,\b^{l_2}_{k_0-1},\ldots,
\b^{l_1}_{k_{s-1}},\ldots,\b^{l_1}_{k_s-1},\b^{l_2}_{k_{s-1}},\ldots,\b^{l_2}_{k_s-1})\in C
\end{equation} contrary to the assumption $[B_1]^{2k_0}\times...\times [B_s]^{2k_s}\cap C=\emptyset$. We have proved the claim.

Now we define
\begin{equation}
\begin{array}{l}
B^*=\{\tau(\alpha_0,\ldots,\alpha_n,\zeta^0_0,\ldots,\zeta^0_{k_0-1},\ldots,
\zeta^s_{k_{s-1}},\ldots,\zeta^s_{k_s-1}):\\
\hspace{15mm}(\zeta^0_0,\ldots,\zeta^0_{k_0-1})\in B_0^{k_0},
\ldots,
(\zeta^s_{k_{s-1}},\ldots,\zeta^s_{k_s-1})\in  B_s^{k_s}\}.\end{array}
\end{equation} Then $B^*\in L, |B^*|=\aleph_1$ and $[B^*]^2\subseteq A$.  
\end{proof}

What if we do not assume $0^\sharp$? We show that if we start from $L$ and use forcing we can obtain a model in which $C(Q^{\MMa,2}_{\omega_1})\ne L$. 

\begin{theorem}
If Con(ZF), then Con({\zfc}+$C(Q^{\MMa,2}_{\omega_1})\ne L$).
\end{theorem}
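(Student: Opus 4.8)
The plan is to force over $L$ so that a \emph{non-constructible} real becomes definable over a suitable level of the new hierarchy by a single application of the Magidor--Malitz quantifier, exploiting that $Q^{\MMa,2}_{\omega_1}xy\,(x,y\text{ comparable})$ asserts the existence of an uncountable chain, and that an uncountable chain through a tree of height $\omega_1$ with countable levels is a cofinal branch. Assuming $\mathrm{Con}(\mathrm{ZF})$, hence $\mathrm{Con}(\zfc)$, we may work over a model of $\zfc+V=L$, in which $\diamondsuit$ holds. In $L$ I would first build an \emph{independent} sequence $\langle T_n:n<\omega\rangle$ of Suslin trees: every finite product $\prod_{m\in s}T_m$ ($s\subseteq\omega$ finite) is ccc, and, crucially, forcing with any finite-support product $\prod_{m\in A}T_m$ over $L$ preserves ``$T_n$ is Suslin'' — in particular adds no cofinal branch to $T_n$ — whenever $n\notin A$. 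Such a sequence is produced by the standard $\diamondsuit$-guided recursion in $L$, sealing at each step the potential uncountable antichains of the finite products and the potential cofinal-branch names.

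Now force with the finite-support ``selection'' poset $\mathbb{P}$, whose conditions decide, for finitely many coordinates $n$, either to thread a branch through $T_n$ (specifying a node of $T_n$) or to permanently omit $T_n$; write $r=\{n<\omega: \text{the generic threads a branch at }n\}$. Since all finite subproducts are ccc, the finite-support product over $r$ is ccc and hence $\mathbb{P}$ is ccc; therefore all cardinals and cofinalities are preserved and $V[G]\models\zfc$, with $\aleph_1^{V[G]}=\aleph_1^{L}$. The ``which coordinates'' part of $\mathbb{P}$ is just Cohen forcing, so $r$ is Cohen-generic over $L$ and $r\notin L$.

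The decisive forcing fact is that in $V[G]$, for every $n<\omega$, the tree $T_n$ has an uncountable chain if and only if $n\in r$: for $n\in r$ the threaded generic branch is cofinal, hence an uncountable chain; for $n\notin r$ the independence of the sequence guarantees that the branches threaded at the coordinates of $r$ cannot create a cofinal branch in $T_n$, so $T_n$ remains Suslin and has no uncountable chain in $V[G]$.

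Finally I would read $r$ off inside $C(Q^{\MMa,2}_{\omega_1})$. Since $\loo\le\L(Q^{\MMa,2}_{\omega_1})$ we have $L\subseteq C(Q^{\MMa,2}_{\omega_1})$, so the parameter $\vec T=\langle T_n:n<\omega\rangle\in L$ enters the hierarchy at some stage; fix $\alpha$ with $\vec T\in L'_\alpha$ and $T_n\subseteq L'_\alpha$ for all $n$ (any $\alpha$ slightly above $\omega_1$ works, as each $T_n\subseteq\omega_1$). Let $\psi(x,y,z,\vec T)$ be the first-order formula over $(L'_\alpha,\in)$ expressing that $x$ and $y$ lie in the $z$-th tree of $\vec T$ and are comparable there. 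Because the truth predicate of $\L(Q^{\MMa,2}_{\omega_1})$ is evaluated \emph{in $V[G]$} — so the witnessing uncountable chain need not belong to $C(Q^{\MMa,2}_{\omega_1})$ — the forcing fact yields, for each $n<\omega$,
\[
(L'_\alpha,\in)\models Q^{\MMa,2}_{\omega_1}xy\,\psi(x,y,n,\vec T)\iff T_n\text{ has an uncountable chain in }V[G]\iff n\in r.
\]
Hence $r=\{\,n\in\omega:(L'_\alpha,\in)\models Q^{\MMa,2}_{\omega_1}xy\,\psi(x,y,n,\vec T)\,\}\in L'_{\alpha+1}\subseteq C(Q^{\MMa,2}_{\omega_1})$, whereas $r\notin L$; thus $C(Q^{\MMa,2}_{\omega_1})\ne L$ in $V[G]$, which gives the desired relative consistency. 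The main obstacle is the very first step: one must construct in $L$ the Suslin sequence whose products are ccc \emph{and}, above all, for which adding branches at the coordinates in $r$ provably leaves every $T_n$ with $n\notin r$ branchless. This orthogonality is exactly what makes $r$ coincide with the set of ``branch'' coordinates, and verifying it (the product-preservation of Suslinity) is where the work lies; the remaining steps are routine bookkeeping.
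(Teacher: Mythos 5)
Your proposal is correct in outline and follows the same overall strategy as the paper: code a non-constructible real into the pattern of which $L$-Suslin trees acquire uncountable branches in a ccc extension, and then recover that real inside $C(Q^{\MMa,2}_{\omega_1})$ because the quantifier, evaluated in $V[G]$, detects uncountable chains whose witnesses need not lie in the inner model. The difference is in how the combinatorial core is supplied. The paper simply cites Jensen--Johnsbr\aa ten, who construct in $L$ a specific sequence $\la T_n : n<\omega\ra$ of Suslin trees and a ccc forcing making $\{n : T_n \mbox{ is Souslin}\}$ non-constructible, and then reads that set off using the expressibility of Suslinity in $\L(Q^{\MMa,2}_{\omega_1})$ (citing Magidor--Malitz); you instead design your own forcing from a ``free'' sequence of Suslin trees and a selection poset, and you only need the easier direction of expressibility (existence of an uncountable chain, which is literally $Q^{\MMa,2}_{\omega_1}xy(x\le_T y\vee y\le_T x)$) rather than full Suslinity. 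Your reading-off step is carefully and correctly argued, including the crucial point that the truth predicate is computed in $V[G]$ and that ccc-ness keeps $\aleph_1$ fixed so that ``uncountable chain'' means ``cofinal branch.''

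The one soft spot is exactly the one you flag: the existence in $L$ of a sequence $\la T_n:n<\omega\ra$ of Suslin trees such that every finite product is ccc and such that threading branches through any subfamily provably leaves the remaining trees Suslin. This is a genuine piece of work --- it is essentially the content of Jensen's free Suslin tree construction from $\diamondsuit$ (or of the Jensen--Johnsbr\aa ten sequence the paper cites) --- and your proof is not complete until it is either carried out or properly cited. Two smaller points you should make explicit: first, that $r\notin L$ follows from a density argument over all constructible subsets of $\omega$ (your remark that the coordinate part ``is just Cohen forcing'' is a little too quick given that conditions also carry tree nodes, though the conclusion is right); second, that the product-ccc hypothesis really does rule out new branches in the untouched coordinates (ccc-ness of $\prod_{m\in A}T_m\times T_n$ for $n\notin A$ follows from the finite case by a $\Delta$-system argument, and a ccc, everywhere-branching $\omega_1$-tree with countable levels cannot acquire a cofinal branch without acquiring an uncountable antichain). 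With those details in place your argument is a complete and self-contained alternative to the paper's citation-based proof.
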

\begin{proof}
Assume $V=L$. Jensen and Johnsbr\aa ten \cite{MR0419229} define a sequence $T_n$ of Souslin trees in $L$ and a CCC forcing notion $\oP$ which forces the set $a$ of $n$ such that $\check{T_n}\mbox{ is Souslin}$ to be non-constructible. But $a\in C(Q^{\MMa,2}_{\omega_1})$
since the trees $T_n$ are in $C(Q^{\MMa,2}_{\omega_1})$ and Sousliness of a tree can be expressed in $\L(Q^{\MMa,2}_{\omega_1})$ by \cite[page 223]{MR0453484}.  So we are done.
\end{proof}

This result can be strengthened in a number of ways.  In \cite{MR1197203} an $\omega_1$-sequence of Souslin trees is constructed from $\diamondsuit$ giving rise to forcing extensions in which $\L(Q^{\MMa,2}_{\omega_1})$ can express some ostensibly second order properties, and 
$C(Q^{\MMa,2}_{\omega_1})$ is very different from $L$.

There are several stronger versions of $Q^{\MMa,<\omega}_\kappa$, for example
$$Q^{\mbox{\tiny MR}}_\kappa x_1,x_2,x_3\psi(x_1,x_2,x_3)\iff$$
$$\exists \mathcal{X}(\forall X_1,X_2\in\mathcal{X})(\forall x_1,x_2\in X_1)(\forall x_3\in X_2)\psi(x_1,x_2,x_3,\vec{y}),$$ where $X_1,X_2$ range over sets of size $\kappa$ and $\mathcal{X}$ ranges over families of size $\kappa$ of sets of size $\kappa$ (\cite{MR573950}). The above is actually just one of the various forms of similar quantifiers that $\L(Q^{\mbox{\tiny MR}}_{\kappa})$ has. The logic $\L(Q^{\mbox{\tiny MR}}_{\aleph_1})$ is still countably compact assuming $\diamondsuit$. We do not know whether $0^\sharp$ implies $C(Q^{\mbox{\tiny MR}}_{\kappa})=L$.

\section{The Cofinality Quantifier}

The cofinality quantifier of Shelah \cite{MR0376334} says that a given linear order has cofinality $\kappa$. Its main importance lies in the fact that it satisfies the compactness theorem irrespective of the cardinality of the vocabulary. Such logics are called {\em fully compact}. This logic has also a natural complete axiomatization, provably in {\zfc}. This makes the cofinality quantifier particularly appealing in this project, even though we do not have a clear picture yet of the connection between model theoretic properties of logics $\L^*$and set theoretic properties of $C(\L^*)$.

The cofinality quantifier $Q^{\cf}_\kappa$ for a regular $\kappa$ is defined as follows:
\begin{eqnarray*}
\mm\models Q^{\cof}_\kappa xy\phi(x,y,\vec{a})&\iff&\{(c,d):\mm\models\phi(c,d,\vec{a})\}\\
&&\mbox{ is a linear order of cofinality $\kappa$.}
\end{eqnarray*}

%\begin{proposition}\label{ac}
%$\cofmodel{\kappa}\models AC$ for every regular $\kappa$.
%\end{proposition}

%\begin{proof}

We will denote by ${C^*_{\kappa}}$ the inner model $C(Q^{\cof}_\kappa)$. Note  that $\cofmodel{\kappa}$ need not compute cofinality $\kappa$ correctly, it just knows which ordinals have cofinality $\kappa$ in $V$. The model knows this as if the model had an oracle for
 exactly this but nothing else. Thus while many more ordinals may have cofinality $\kappa$ in $V$ than in ${C^*_{\kappa}}$, still the property of an ordinal having cofinality $\kappa$ in $V$ is recognised in ${C^*_{\kappa}}$ in the sense that for all $\beta$ and $A,R\in {C^*_{\kappa}}$:
\begin{itemize}
 \item $\{\alpha<\beta : \cof^V(\alpha)=\kappa\}\in {C^*_{\kappa}}$
 \item $\{\alpha<\beta : \cof^V(\alpha)\ne\kappa\}\in {C^*_{\kappa}}$ 
 \item $\{\alpha<\beta : \cof^V(\alpha)=\kappa\iff \cof^{{C^*_{\kappa}}}(\alpha)=\kappa\}\in {C^*_{\kappa}}$ 
 \item $\{a\in A : \{(b,c):(a,b,c)\in R\}$  is a linear order on $A$ 
 with cofinality (in $V$) 
 equal to $\kappa\}\in {C^*_{\kappa}}$.
\end{itemize}

Let $\On_\kappa$ be the class of ordinals of cofinality $\kappa$.  Let $L(\On_\kappa)$ be $L$ defined in the expanded language $\{\in, \On_\kappa\}$. Now $L(\On_\kappa)\subseteq \cofmodel{\kappa}$ because we can use the equivalence of $\On_\kappa(\beta)$ with 
$Q^{\cof}_\kappa xy(x\in y\wedge y\in \beta)$. Conversely,
$\cofmodel{\kappa}\subseteq L(\On_\kappa)$ because if $E$ is a club of $\beta$ such that  for every linear order $R\in L_\beta(\On_\kappa)$ there is an ordinal $\gamma<\beta$ and a function $f\in L_\beta(\On_\kappa)$ mapping  $\gamma$ cofinally into $R$, then $L'_\alpha\subseteq L_\beta(\On_\kappa)$ whenever $\alpha\le\beta\in E$. 
We have proved $$\cofmodel{\kappa}=L(\On_\kappa).$$ 

We use ${C^*}$ to denote ${C^*_{\omega}}$. 

The following related quantifier turns out to be useful, too:
\begin{eqnarray*}
\mm\models Q^{\cof}_{<\kappa} xy\phi(x,y,\vec{a})&\iff&\{(c,d):\mm\models\phi(c,d,\vec{a})\}\\
&&\mbox{ is a linear order of cofinality $<\kappa$.}
\end{eqnarray*}
We use 
${C^*_{\kappa,\l}}$ to denote $C(Q^{\cof}_{\kappa},Q^{\cof}_{\l})$ and
 ${C^*_{<\kappa}}$ to denote $C(Q^{\cof}_{<\kappa})$. Respectively, ${C^*_{\le\kappa}}$
 denotes $C(Q^{\cof}_{\le\kappa})$.

Our results show that the inner models ${C^*_{<\kappa}}$ all resemble ${C^*}$ in many ways (see e.g. Theorem~\ref{515}), and accordingly we indeed focus mostly on ${C^*}$.

The logics $C^*_{\kappa,\l}$ and ${C^*_{<\kappa}}$ are adequate to truth in themselves (recall Definition~\ref{4r67}), with $\kappa,\lambda$ as parameters, whence these inner models satisfy AC. 

We can translate the formulas $\Phi_{\L(Q^{\cof}_{\kappa})}(x)$ and $\Psi_{\L(Q^{\cof}_{\kappa})}(x,y)$, introduced in Proposition~\ref{wpoa}, into  $\hat{\Phi}_{\L(Q^{\cof}_{\kappa})}(x,\kappa)$ and $\hat{\Psi}_{\L(Q^{\cof}_{\kappa})}(x,y,\kappa)$
in the first order language of set theory by systematically replacing
$$Q^{\cof}_\kappa xy\phi(x,y,\vec{a})$$
by 
the canonical set-theoretic formula saying the same thing.
Then
 for all $M$ with $\a=M\cap\On$ and $a,b\in M$:
 
\begin{enumerate}
\item $\hat{\Phi}_{\L(Q^{\cof}_{\kappa})}(a,\kappa)\leftrightarrow[(M,\in)\models\Phi_{\L(Q^{\cof}_{\kappa})}(a)]\leftrightarrow a\in C^*_\kappa$.
\item $\hat{\Psi}_{\L(Q^{\cof}_{\kappa})}(a,b,\kappa)\leftrightarrow[(M,\in)\models\Psi_{\L(Q^{\cof}_{\kappa})}(a,b)]\leftrightarrow a<'_\a b$.
\end{enumerate}

\begin{lemma}\label{nice}
If $M_1$ and $M_2$ are two transitive models of {\zfc} such that
for all
% apart from  finitely many 
$\alpha$: 
$$M_1\models\cof(\alpha)=\kappa\iff M_2\models\cof(\alpha)=\kappa,$$ then $$(\cofmodel{\kappa})^{M_1}=(\cofmodel{\kappa})^{M_2}.$$ 

\end{lemma}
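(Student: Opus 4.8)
The plan is to reduce the statement to the characterization $\cofmodel{\kappa}=L(\On_\kappa)$ proved just above, together with the standard absoluteness of relative constructibility from a fixed class of ordinals. Since $\cofmodel{\kappa}=L(\On_\kappa)$ is a theorem of {\zfc}, it holds inside each $M_i$, so that $(\cofmodel{\kappa})^{M_i}=(L(\On_\kappa))^{M_i}$, where the right-hand side is the constructible hierarchy built inside $M_i$ relative to the predicate $\On_\kappa^{M_i}=\{\alpha:M_i\models\cof(\alpha)=\kappa\}$. The hypothesis says precisely that $\On_\kappa^{M_1}=\On_\kappa^{M_2}$ as classes of ordinals (I work under the standing assumption that $M_1$ and $M_2$ contain the same ordinals, as in the intended forcing application $M_1=V$, $M_2=V[G]$). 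Hence it suffices to show that for a fixed class $A\subseteq\On$, the model $L(A)$ is computed identically in any two transitive models of {\zfc} with the same ordinals that both see the parameter $A$.

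For this I would prove by induction on $\alpha$ that $(L_\alpha(\On_\kappa))^{M_1}=(L_\alpha(\On_\kappa))^{M_2}$. The base case $\alpha=0$ and the limit case are immediate. At a successor, writing $N:=(L_\alpha(\On_\kappa))^{M_1}=(L_\alpha(\On_\kappa))^{M_2}$ by the induction hypothesis, the next level is $\DEF_{\{\in,\On_\kappa\}}(N)$ computed in each $M_i$. First-order definability over the fixed set $N$ using the predicate $\On_\kappa$ depends only on $N$ and on $\On_\kappa\cap N$; and $\On_\kappa\cap N=\On_\kappa^{M_1}\cap N=\On_\kappa^{M_2}\cap N$, since these are the ordinals in $N$ that lie in the two (coinciding) classes. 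As first-order satisfaction over a fixed structure is absolute, the two successor levels coincide, and taking the union over all (common) ordinals yields $(\cofmodel{\kappa})^{M_1}=(\cofmodel{\kappa})^{M_2}$.

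The conceptual obstacle is hidden in the first step. The interpretation of $Q^{\cof}_\kappa$ inside $M_i$ refers to the $V$-cofinality as computed in $M_i$, and for a \emph{general} linear order $R$ these interpretations need not agree between $M_1$ and $M_2$: a cofinal sequence of order type $\kappa$ witnessing $\cof(R)=\kappa$ in $M_1$ may simply be absent from $M_2$. The entire force of the identity $\cofmodel{\kappa}=L(\On_\kappa)$ is that this cofinality is nevertheless recoverable from the single predicate $\On_\kappa$ — the club/reflection argument in its proof shows that the required cofinal maps already appear at bounded levels $L_\beta(\On_\kappa)$, hence live in both models. Thus the main difficulty is entirely absorbed into the already-established characterization, and only the routine $L[A]$-absoluteness above remains. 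Were one to attempt instead a direct induction on the $C(Q^{\cof}_\kappa)$-hierarchy, bypassing that characterization, the non-shared-witnesses phenomenon would become the crux and would in effect require reproving $\cofmodel{\kappa}=L(\On_\kappa)$.
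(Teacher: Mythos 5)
Your proof is correct, but it takes a visibly different route from the paper's. The paper's own proof is a one‑line direct induction on the two $\DEF_{\L(Q^{\cof}_\kappa)}$-hierarchies $(L'_\alpha)$ and $(L''_\alpha)$, whereas you first relativize the identity $\cofmodel{\kappa}=L(\On_\kappa)$ (established just before the lemma) to each $M_i$ and then appeal to the absoluteness of constructibility from the common predicate $\On_\kappa^{M_1}=\On_\kappa^{M_2}$. The trade-off is exactly the one you diagnose: in the direct induction the only nontrivial successor case is a formula $Q^{\cof}_\kappa xy\,\psi$ defining a linear order $R$ over $N=L'_\alpha=L''_\alpha$, and one must check that $M_1$ and $M_2$ agree on whether $\cof(R)=\kappa$. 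When one model is contained in the other this is immediate --- take a cofinal well-ordered subset of $R$ in each model, interleave them inside the larger model to see that the two order types have the same cofinality there, and then apply the hypothesis to a single ordinal --- and such a containment holds in every application the paper makes of the lemma. For incomparable $M_1,M_2$ the direct induction genuinely needs the reflection argument behind $\cofmodel{\kappa}\subseteq L(\On_\kappa)$, which ensures the witnessing cofinal maps already occur inside the hierarchy and are therefore shared; your formulation absorbs that work into the already-proved characterization and yields the lemma for arbitrary pairs of transitive inner models. Your standing assumption that $M_1$ and $M_2$ have the same ordinals is harmless, since it is automatic for transitive proper class models of $\zfc$, which is the intended reading of the statement.
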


\begin{proof}%By assumption $\On_\kappa^{M_1}=\On_\kappa^{M_2}$. Thus $L(\On_\kappa)^{M_1}= L(\On_\kappa)^{M_2}$.
Let $(L'_\a)$ be the hierarchy defining $(\cofmodel{\kappa})^{M_1}$ and 
$(L''_\a)$ be the hierarchy defining $(\cofmodel{\kappa})^{M_2}$. By induction, $L'_\a=L''_\a$ for all $\a$. 
%We show that $L'_\a\subseteq\cofmodek{\kappa}$ for all $\a$. Suppose this holds for $\a$  and $X\in L'_{\a+1}$. 
\end{proof}

By letting $M_2=V$ in Proposition~\ref{nice}  we get

\begin{corollary}
Suppose $M$ is a transitive model of {\zfc} such that for all $\a$: $$\cof(\alpha)=\kappa\iff M\models\cof(\alpha)=\kappa,$$ then $$(\cofmodel{\kappa})^{M}=\cofmodel{\kappa}.$$ 

\end{corollary}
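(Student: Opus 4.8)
The plan is to read off the Corollary as the instance $M_2 = V$ of Lemma~\ref{nice}. Take $M_1 = M$ and $M_2 = V$. The hypothesis of Lemma~\ref{nice} for this pair reads: for all $\alpha$, $M \models \cof(\alpha) = \kappa \iff V \models \cof(\alpha) = \kappa$. Since $V \models \cof(\alpha) = \kappa$ is literally the assertion $\cof(\alpha) = \kappa$, this is precisely the hypothesis assumed in the Corollary. The conclusion of the Lemma is $(\cofmodel{\kappa})^{M} = (\cofmodel{\kappa})^{V}$, and $(\cofmodel{\kappa})^{V} = \cofmodel{\kappa}$ by the very definition of $\cofmodel{\kappa}$ (the construction of Definition~\ref{defin} carried out in $V$). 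Composing these two equalities yields $(\cofmodel{\kappa})^{M} = \cofmodel{\kappa}$, the desired conclusion.

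The one point that deserves a word is that Lemma~\ref{nice} is phrased for two transitive models $M_1,M_2$, whereas here $M_2 = V$ is the entire universe rather than a set. I would check that the proof of the Lemma, namely the induction establishing $L'_\alpha = L''_\alpha$ stage by stage, is insensitive to this: with $M_2 = V$ the hierarchy $(L''_\alpha)$ is simply the genuine defining hierarchy of $\cofmodel{\kappa}$, and at each successor stage one compares, over the common structure $(L'_\alpha,\in) = (L''_\alpha,\in)$, the truth value of a formula $\phi$ of $\L(Q^{\cof}_\kappa)$ as computed in $M$ against its value as computed in $V$. These can diverge only where the quantifier $Q^{\cof}_\kappa$ is applied, i.e. in deciding whether a definable linear order on $L'_\alpha$ has cofinality $\kappa$.

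The main thing to secure is therefore that $M$ and $V$ agree on the cofinality-$\kappa$ status not merely of ordinals but of all the linear orders to which $Q^{\cof}_\kappa$ is applied during the construction. I would avoid analysing arbitrary linear orders directly and instead invoke the already-established identity $\cofmodel{\kappa} = L(\On_\kappa)$. Relativised to $M$ this gives $(\cofmodel{\kappa})^{M} = L((\On_\kappa)^{M})$, since the $L(A)$-construction over a class $A$ of ordinals is absolute for transitive class models containing all the ordinals, and $M$ is such a model. By the Corollary's hypothesis $(\On_\kappa)^{M} = \{\alpha : M\models\cof(\alpha)=\kappa\} = \{\alpha : \cof(\alpha)=\kappa\} = \On_\kappa$, so $L((\On_\kappa)^{M}) = L(\On_\kappa) = \cofmodel{\kappa}$. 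This reduces the whole matter to agreement on a predicate of ordinals, which is exactly what is assumed, and bypasses the need to transfer cofinalities of general linear orders. I expect this reduction to the ordinal predicate $\On_\kappa$ to be the only genuinely load-bearing step; everything else is bookkeeping.
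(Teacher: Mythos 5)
Your proposal is correct and follows the paper's own route exactly: the paper derives this Corollary in one line by setting $M_2=V$ in Lemma~\ref{nice}, which is precisely your first paragraph. Your additional care about transferring cofinality-agreement from ordinals to the definable linear orders met during the construction is a legitimate point that the paper leaves implicit, and your way of discharging it via the identity $\cofmodel{\kappa}=L(\On_\kappa)$ (established in the paper just before the Lemma) is sound and arguably cleaner than chasing the induction directly.
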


This is a useful criterion. Note that $(\cofmodel{\kappa})^{M}\ne\cofmodel{\kappa}$ is a perfectly possible situation: In Theorem~\ref{2479} below we construct a model $M$ in which CH is false in $C^*$. 
So $(C^*)^M\ne L$. Thus in $M$ it is true 
that ${(C^*)}^{L}\ne{C^*}$. 
${(\cofmodel{\kappa})}^{M}\ne\cofmodel{\kappa}$ also if $\kappa=\omega$, $V=L^\mu$ and $M=C^*$
 (see the below Theorem~\ref{wepe}). In this respect $\cofmodel{\kappa}$ resembles $\hod$. There are other respects in which $\cofmodel{\kappa}$ resembles $L$.

\begin{lemma}
Suppose $(L'_\a)$ is the hierarchy forming $\cofmodel{\kappa}$. Then for $\a<\kappa$
we have $L'_\a=L_\a$.\end{lemma}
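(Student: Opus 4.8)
The plan is to prove $L'_\alpha=L_\alpha$ by induction on $\alpha<\kappa$, the guiding idea being that the quantifier $Q^{\cof}_\kappa$ is \emph{vacuous} over every structure of size $<\kappa$, so that at all these stages the operator $\DEF_{\L(Q^{\cof}_\kappa)}$ collapses to the ordinary first order $\DEF_{\loo}$. The base case $L'_0=\emptyset=L_0$ is immediate, and for a limit $\nu<\kappa$ we get $L'_\nu=\bigcup_{\alpha<\nu}L'_\alpha=\bigcup_{\alpha<\nu}L_\alpha=L_\nu$ directly from the induction hypothesis, so everything reduces to the successor step.

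For the successor step, assume $L'_\alpha=L_\alpha$ with $\alpha<\kappa$. Since $\kappa$ is a regular cardinal it is in particular a limit ordinal, so $\alpha+1<\kappa$ as well, and $|L_\alpha|<\kappa$ (for infinite $\alpha<\kappa$ this is $|L_\alpha|=|\alpha|<\kappa$ because $\kappa$ is a cardinal, while $L_\alpha$ is finite for finite $\alpha$, which also covers the case $\kappa=\omega$). The key observation is now that any binary relation definable over $(L_\alpha,\in)$ has field contained in $L_\alpha$, hence of cardinality $<\kappa$; and a linear order on a set of cardinality $\mu$ has cofinality at most $\mu$, so its cofinality is $<\kappa$ and in particular $\ne\kappa$. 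Thus, for \emph{every} choice of parameters $\vec b\in L_\alpha$, we have $(L_\alpha,\in)\not\models Q^{\cof}_\kappa xy\,\phi(x,y,\vec b)$.

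The remaining step is to turn this into a quantifier elimination: by induction on the complexity of an $\L(Q^{\cof}_\kappa)$-formula $\phi(x,\vec y)$ one produces a first order formula $\phi^*(x,\vec y)$ with $(L_\alpha,\in)\models\phi(a,\vec b)\iff(L_\alpha,\in)\models\phi^*(a,\vec b)$ for all $a,\vec b\in L_\alpha$. Atomic formulas are left unchanged, the connectives and first order quantifiers are handled through the induction hypothesis applied to subformulas, and any subformula of the form $Q^{\cof}_\kappa uv\,\psi$ is replaced by a fixed false first order formula such as $x\ne x$. It then follows that $\DEF_{\L(Q^{\cof}_\kappa)}(L_\alpha)=\DEF_{\loo}(L_\alpha)=L_{\alpha+1}$, so $L'_{\alpha+1}=L_{\alpha+1}$, which closes the induction. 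The argument has no serious obstacle; the only points requiring care are that the bound $|L_\alpha|<\kappa$ genuinely uses that $\kappa$ is a cardinal, and that the cofinality quantifier must be shown false \emph{uniformly} in the parameters $\vec b$ (not merely for some fixed values), since this uniformity is exactly what legitimizes the blanket replacement by $x\ne x$ inside nested formulas.
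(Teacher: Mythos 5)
Your proof is correct; the paper states this lemma without proof, and your argument --- that $Q^{\cof}_\kappa$ is uniformly false over any structure of cardinality $<\kappa$ (every definable binary relation has field of size $<\kappa$, hence no definable linear order can have cofinality $\kappa$), so that $\DEF_{\L(Q^{\cof}_\kappa)}(L_\alpha)=\DEF_{\loo}(L_\alpha)$ at every level $\alpha<\kappa$ --- is clearly the intended one. The two points you flag as needing care, namely that $|L_\alpha|<\kappa$ genuinely uses that $\kappa$ is a cardinal and that the quantifier's falsity must be uniform in the parameters to justify the blanket replacement by a false formula inside nested subformulas, are exactly the right ones.
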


We can relativize ${C^*}$ to a set $X$ of ordinals as follows. Let us define a new generalized quantifier as follows:

\begin{eqnarray*}
\mm\models Q_X xy\phi(x,y,\vec{a})&\iff&\{(c,d):\mm\models\phi(c,d,\vec{a})\}\\
&&\mbox{ is a well-order of type $\in X$.}
\end{eqnarray*}

\noindent We define $C^*\hspace{-.5mm}(X)$ as $C(Q^{\cof}_\omega,Q_X)$. Of course, $C^*\hspace{-.5mm}(X)=L(\On_\omega,X)$.

%\subsection{Core model}

We will prove a stronger form of the next Proposition in the next Theorem, but we include this here for completeness:

\begin{proposition}\label{cof}
If $0^\sharp$ exists, then $0^\sharp\in C(Q^{\cf}_\kappa)$.
\end{proposition}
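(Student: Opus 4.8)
The goal is to show that if $0^\sharp$ exists, then $0^\sharp \in C(Q^{\cf}_\kappa)$. Since $0^\sharp$ is essentially coded by the set of (Gödel numbers of) formulas true of the Silver indiscernibles for $L$, the plan is to show that $C^*_\kappa = C(Q^{\cf}_\kappa)$ can recognize the canonical class $I$ of indiscernibles, or at least reconstruct enough of the indiscernibility theory to compute $0^\sharp$.

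The key observation is that the cofinality quantifier gives the inner model access to which ordinals have cofinality $\kappa$ in $V$, and the existence of $0^\sharp$ imposes strong structural constraints relating cofinalities to the club class $I$ of Silver indiscernibles. First I would recall that $C^*_\kappa = L(\On_\kappa)$, as established earlier in the excerpt, so it suffices to show $0^\sharp \in L(\On_\kappa)$, i.e. that $0^\sharp$ is definable in $L$ augmented by a predicate for the ordinals of cofinality $\kappa$. The main point is that $I$, being a closed unbounded class of indiscernibles, has limit points that are detectable: a limit point of $I$ of the appropriate kind will have cofinality equal to $\kappa$ (or will be recognizable via the cofinality predicate) in a way that $L$ itself cannot see but $L(\On_\kappa)$ can. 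The plan is to argue that from $\On_\kappa$ together with the $L$-structure one can define a closed unbounded class of ordinals that are genuinely $L$-indiscernible, and then invoke the fact that $0^\sharp$ is computable from any club class of $L$-indiscernibles (the theory they realize is the same regardless of which club of indiscernibles one picks).

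Concretely, I would proceed as follows. Since $0^\sharp$ exists, let $I$ be the canonical club class of Silver indiscernibles. I would identify a definable (in $L(\On_\kappa)$) class $J$ of ordinals that is club and consists of $L$-indiscernibles: the candidates are limit points of $I$ of cofinality $\kappa$, which form a club in the relevant range and which $L(\On_\kappa)$ can define because membership in $\On_\kappa$ is available as a predicate and "being a limit point of indiscernibles witnessed by cofinality structure" can be captured. The delicate step is that $L(\On_\kappa)$ does not a priori have $I$ itself, only $\On_\kappa$; so I must show that $\On_\kappa \cap (\text{suitable range})$ already picks out a subclass of limit points of $I$, from which indiscernibility can be verified internally. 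Once a club $J \subseteq I$ of true indiscernibles is definably available, the theory $\Th(L, \in, j_0, j_1, \ldots)$ for increasing $j_i \in J$ is the same as $0^\sharp$ by the homogeneity of the indiscernibility theory, and this theory is a definable subset of $\omega$, hence an element of $L(\On_\kappa) = C^*_\kappa$.

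The main obstacle will be showing that $\On_\kappa$ genuinely suffices to isolate a club of bona fide $L$-indiscernibles, rather than merely ordinals of the right cofinality that happen not to be indiscernible. The cofinality predicate alone distinguishes cofinality-$\kappa$ ordinals, but not all such ordinals are limit points of $I$; I expect the resolution to use that limit points of $I$ of cofinality $\kappa$ can be characterized among all cofinality-$\kappa$ ordinals by an $L(\On_\kappa)$-definable condition (for instance, via the reflection of the $L$-hierarchy, or by noting that the set of cofinality-$\kappa$ ordinals below a given bound, intersected with the structure of $L$-cardinals and $L$-stable ordinals visible in $L(\On_\kappa)$, pins down exactly the indiscernibles). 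Since the next theorem is promised to prove a stronger statement, I would expect the present proof to be a relatively short bootstrap: reduce to $L(\On_\kappa)$, extract a definable club of indiscernibles using the cofinality predicate, and read off $0^\sharp$ as the (definable, hence internal) common theory of increasing tuples from that club.
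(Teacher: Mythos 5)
Your overall skeleton (reduce to $L(\On_\kappa)$, isolate a definable set of genuine Silver indiscernibles, read off $0^\sharp$ as the common theory of increasing tuples) matches the paper's, but the step you yourself flag as ``the main obstacle'' is precisely the content of the proof, and your candidate for resolving it does not work as stated. You propose to take the limit points of $I$ of cofinality $\kappa$; but $L(\On_\kappa)$ has no access to $I$, so ``limit point of $I$'' is not an available predicate, and among arbitrary ordinals of cofinality $\kappa$ there is no reason a priori that any definable subcollection consists of indiscernibles. The paper closes this gap with a concrete lemma: \emph{every ordinal $\xi$ that is a regular cardinal in $L$ and has uncountable cofinality in $V$ is a Silver indiscernible.} The proof is a Skolem-hull computation: if $\xi\notin I$, let $\delta=\max(I\cap\xi)$ and fix indiscernibles $\lambda_1<\lambda_2<\cdots$ above $\xi$; every $\alpha<\xi$ is a Skolem term value $\tau_n(\vec\gamma,\vec\lambda)$ with $\vec\gamma\le\delta$, each set $A_n$ of such values is in $L$ of $L$-cardinality $\le|\delta|^L<\xi$, so $\sup(A_n\cap\xi)<\xi$ by $L$-regularity, and then $\cof^V(\xi)>\omega$ bounds $\sup_n\sup(A_n\cap\xi)$ below $\xi$, a contradiction. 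Both halves of the hypothesis of this lemma are expressible in the inner model: ``regular in $L$'' because $L$ is definable there, and ``uncountable $V$-cofinality'' via $\neg Q^{\cf}_\omega$ (for $\kappa=\omega$) or via $Q^{\cf}_\kappa$ itself (for $\kappa>\omega$). Your vaguer alternatives (``reflection of the $L$-hierarchy,'' ``$L$-stable ordinals'') are not developed into anything provable, so as written the argument has a genuine hole at its centre.

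Two further points. First, you are working harder than necessary in asking for a \emph{club} of indiscernibles: by indiscernibility, the theory of an increasing $n$-tuple from $I$ is the same for all such tuples, so any \emph{infinite} subset of $I$ lying in the inner model suffices to compute $0^\sharp$; the paper only produces the infinite set $X$ of ordinals below $\aleph_\omega$ (resp.\ $\aleph_{\alpha+\omega}$) that are regular in $L$ and of the right cofinality. Second, your parenthetical ``pins down exactly the indiscernibles'' is not what one should aim for and is false in general --- the definable set is a proper subset of $I$ --- but, again, a subset is all that is needed.
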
 

\begin{proof}Let $I$ be the canonical set of indiscernibles obtained from $0^\sharp$.
Let us first prove that ordinals $\xi$ which are regular cardinals in $L$ and have cofinality $>\omega$ in $V$ are in $I$. Suppose $\xi\notin I$. Note that $\xi>\min(I)$. Let $\delta$ be the largest element of $I\cap\xi$. Let $\lambda_1<\lambda_2<...$ be an infinite sequence of elements of $I$ above $\xi$. Let $$\tau_{n}(x_1,\ldots,x_{k_n}), n<\omega,$$ be a list of all the Skolem terms of the language of set theory relative to the theory $\zfc+V=L$. If $\alpha<\xi$, then $$\alpha=\tau_{n_\alpha}(\gamma_1,\ldots,\gamma_{m_n},\lambda_1,\ldots,\lambda_{l_n})$$ for some $\gamma_1,\ldots,\gamma_{m_n}\in I\cap\delta$ and some $l_n<\omega$. Let us fix $n$ for a moment and consider the set $$A_n=\{\tau_{n_\alpha}(\beta_1,\ldots,\beta_{m_n},\lambda_1,\ldots,\lambda_{l_n}): \beta_1,\ldots,\beta_n<\delta\}.$$ Note that $A_n\in L$ and $|A_n|^L\le|\delta|^L<\xi$, because $\xi$ is a cardinal in $L$. Let $\eta_n=\sup(A_n)$. Since $\xi$ is regular in $L$, $\eta_n<\xi$. Since $\xi$ has cofinality $>\omega$ in $V$, $\eta=\sup_n\eta_n<\xi$. But we have now proved that every $\alpha<\xi$ is below $\eta$, a contradiction. So we may conclude that necessarily $\xi\in I$.

Suppose now $\kappa=\omega$. Let 
$$X=\{\xi\in L'_{\aleph_\omega}: (L'_{\aleph_\omega},\in)\models ``\xi\mbox{ is regular  in $L$}"\wedge \neg Q^{cf}_\kappa xy(x\in y\wedge y\in \xi)\}$$
Now $X$ is an infinite subset of $I$ and $X\in  C(Q^{\cf}_\kappa)$. Hence $0^\sharp
\in C(Q^{\cf}_\kappa)$: 
$$0^\sharp=\{\ulcorner\phi(x_1,\ldots,x_n)\urcorner:(L_{\aleph_\omega},\in)
\models\phi(\gamma_1,\ldots,\gamma_n)\mbox{ for some $\gamma_1<...<\gamma_n$ in $X$}\}.$$ 
If $\kappa=\aleph_\alpha>\omega$, then we use
$$X=\{\xi\in L'_{\aleph_{\alpha+\omega}}: (L'_{\aleph_{\alpha+\omega}},\in)\models ``\xi\mbox{ is regular  in $L$}"\wedge  Q^{cf}_\kappa xy(x\in y\wedge y\in \xi)\}$$
and argue as above that $0^\sharp\in 
C(Q^{\cf}_\kappa)$. \end{proof}

More generally, the above argument shows that $x^\sharp\in C^*\hspace{-.5mm}(x)$ for any $x$ such that $x^\sharp$ exists. Hence ${C^*}\ne L(x)$ whenever  $x$ is a set of ordinals such that $x^\sharp$ exists in $V$ (see Theorem~\ref{2976543}).

\begin{theorem}\label{2976543}
Exactly one of the following always holds:

\begin{enumerate}
\item $C^*$ is closed under sharps, (equivalently, $x^\sharp$ exists for all $x\subset On$ such that $x\in C^*$).
\item $C^*$ is not closed under sharps and moreover $C^*= L(x)$ for some set $x\subset On$. (Equivalently, there is $x\subset On$ such that $x\in C^*$ but $x^\sharp$ does not exist.)
\end{enumerate}
 
\end{theorem}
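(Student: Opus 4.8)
The plan is to reduce the theorem to a single nontrivial implication and then prove that implication by a covering argument over an inner model $L[x]$. The leading clauses of the two alternatives---``$C^*$ is closed under sharps'' and ``$C^*$ is not closed under sharps''---are literally complementary, so trivially exactly one of them holds; alternative (2) merely adds to the second clause the assertion $C^* = L(x)$ for a set $x \subseteq \On$. The alternatives cannot hold simultaneously: if $C^* = L(x)$ with $x \in C^*$ and $x^\sharp$ existed, then by the remark preceding the theorem $x^\sharp \in C^*(x) = C^* = L(x)$, contradicting $x^\sharp \notin L(x)$ (this same remark also yields the parenthetical equivalences, since $x \in C^*$ with $x^\sharp$ existing forces $x^\sharp \in C^*$). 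So everything reduces to proving: \emph{if $C^*$ is not closed under sharps, then $C^* = L(x)$ for some set of ordinals $x$.}

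Fix $x \subseteq \On$ with $x \in C^*$ and $x^\sharp$ not existing in $V$. Recall $C^* = L(\On_\omega)$, where $\On_\omega = \{\alpha : \cf^V(\alpha) = \omega\}$. Since $x^\sharp$ does not exist, Jensen's covering lemma holds over $L[x]$: every uncountable set of ordinals is contained in a set of $L[x]$ of the same cardinality. The strategy is to show $\On_\omega$ is definable over $L[x]$ from a single \emph{set} parameter, so that $L(\On_\omega) = L[x,T]$ for a set $T$, whence $C^* = L(y)$ for $y$ coding $(x,T)$. To this end put
$$T = \{\theta : \theta \text{ is an uncountable regular cardinal of } L[x] \text{ and } \cf^V(\theta) = \omega\}.$$
For a limit $\alpha$, writing $\theta = \cf^{L[x]}(\alpha)$ there is a cofinal map $\theta \to \alpha$ in $L[x] \subseteq V$, so $\cf^V(\alpha) = \cf^V(\theta)$; hence $\alpha \in \On_\omega$ iff $\theta = \omega$ or $\theta \in T$. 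Thus $\On_\omega$ is definable over $L[x]$ with parameter $T$, the map $\alpha \mapsto \cf^{L[x]}(\alpha)$ being definable over $L[x]$ and hence from the set $x$.

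The crux is to show $T$ is a \emph{set}, and I claim $T \subseteq \omega_2^V$. Suppose toward a contradiction $\theta \in T$ with $\theta \ge \omega_2^V$. Then $|\theta|^V \ge \aleph_1$, so since $\cf^V(\theta) = \omega$ we may pick a cofinal $X \subseteq \theta$ with $|X| = \aleph_1$. By covering over $L[x]$ there is $Y \in L[x]$ with $X \subseteq Y \subseteq \theta$ and $|Y|^V = \aleph_1$; then $Y$ is cofinal in $\theta$, and $\mathrm{ot}(Y)$, being an ordinal of $V$-cardinality $\aleph_1$, satisfies $\mathrm{ot}(Y) < \omega_2^V \le \theta$. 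Enumerating $Y$ increasingly inside $L[x]$ gives $\cf^{L[x]}(\theta) \le \mathrm{ot}(Y) < \theta$, contradicting the regularity of $\theta$ in $L[x]$. Hence $T \subseteq \omega_2^V$ is a set. I expect this boundedness step to be the main obstacle: it is precisely where the hypothesis $\neg x^\sharp$ (hence covering) is used, and the passage through an uncountable cofinal set of size $\aleph_1$ is what prevents $T$ from being a proper class, even though $L[x]$-regular cardinals arbitrarily high up could a priori be singularized to cofinality $\omega$ in $V$.

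Finally I assemble the conclusion. Since $T$ equals the intersection of $\On_\omega$ with the $C^*$-definable class of uncountable regular cardinals of $L[x]$, and it is bounded, we have $T \in C^*$, and of course $x \in C^*$. As $\On_\omega$ is definable over $L[x]$ from $T$, every initial segment $\On_\omega \cap \gamma$ lies in $L[x,T]$, so $L(\On_\omega) \subseteq L[x,T]$; conversely $x,T \in L(\On_\omega) = C^*$ gives $L[x,T] \subseteq C^*$. Therefore $C^* = L[x,T] = L(y)$, where $y \subseteq \On$ codes the pair $(x,T)$. This establishes alternative (2) whenever $C^*$ is not closed under sharps, and together with the trivial complementarity of the two leading clauses it proves that exactly one alternative always holds.
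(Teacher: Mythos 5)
Your proof is correct and follows essentially the same route as the paper's: both reduce to showing that if some $x\in C^*$ has no sharp, then Jensen covering over $L[x]$ lets a bounded set of ordinals (your $T$ of $L[x]$-regular cardinals of $V$-cofinality $\omega$, the paper's $S=\{\alpha<\lambda^+:\cof^V(\alpha)=\omega\}$) together with $x$ compute the class $\On_\omega$, whence $C^*=L(\On_\omega)=L(y)$ for a set $y$. If anything, your writeup of the boundedness step and of the reduction via $\cf^{L[x]}$ is more explicit than the paper's rather terse argument.
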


\begin{proof}

Suppose (1) does not hold. Suppose $a\subseteq \l$, $\l>\omega_1$, such that $a\in C^*$ but $a^\sharp$ does not exist. Let $S=\{\a<\l^+ : \cof^V(\a)=\omega\}$. We show that $C^*=L(a,S)$. Trivially,  
$C^*\supseteq L(a,S)$. For $C^*\subseteq L(a,S)$ it is enough to show that one can detect in $L(a,S)$ whether a given $\delta\in On$
has cofinality $\omega$ (in $V$) or not. If $\cof(\delta)=\omega$, and $c\subseteq\delta$ is a cofinal $\omega$-sequence in $\delta$, then the Covering Theorem for $L(a)$ gives a set $b\in L(a)$ such that $c\subseteq b\subset\lambda$, $\sup(c)=\sup(b)$ and $|b|=\l$. The order type of $b$ is in $S$. Hence whether $\delta$ has cofinality $\omega$ or not can be detected in $L(a,S)$.
\end{proof}

\begin{corollary}
 If $x^\sharp$  does not exist for some $x\in C^*$, then there is $\l$ such that $C^*\models 2^\kappa=\kappa^+$ for all $\kappa\ge\l$.
\end{corollary}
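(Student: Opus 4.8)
The plan is to use the preceding Theorem~\ref{2976543}, which we may assume. Under the hypothesis that $x^\sharp$ does not exist for some $x\in C^*$, alternative (1) of that theorem fails, so alternative (2) must hold: there is a set of ordinals $x\subset\On$ with $C^*=L(x)$. The strategy is then to reduce the corollary to the classical fact that $L(x)$, being of the form $L$ relativized to a single set of ordinals, satisfies GCH above the cardinality of that set.

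First I would invoke Theorem~\ref{2976543} to fix $x\subset\On$ with $C^*=L(x)$, and set $\l=|x|^{+}$ computed in $C^*$ (or equivalently $\l=(|\gamma|^+)^{C^*}$ where $\gamma$ is an ordinal such that $x\subseteq\gamma$). Next I would argue inside $C^*=L(x)$ using the standard condensation argument for relativized constructibility: for $\kappa\ge\l$, every subset of $\kappa$ that lies in $L(x)$ appears at some stage $L_\beta(x)$, and a Skolem hull / condensation argument shows it already appears at a stage $\beta$ of cardinality $\kappa$ in $L(x)$; since $x$ has size $<\l\le\kappa$, the number of such subsets is at most $|L_\kappa(x)|=\kappa$ many stages each contributing $\le\kappa$ sets coded in the hull, giving $2^{\kappa}=\kappa^{+}$ in $L(x)$. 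Thus $C^*\models 2^\kappa=\kappa^+$ for all $\kappa\ge\l$, which is exactly the claim.

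The one point requiring care is that the GCH argument is carried out \emph{internally} in $C^*$: the relevant $2^\kappa$ is $(2^\kappa)^{C^*}$ and the condensation must be performed with respect to the structure $L(x)$ as computed in $C^*$ itself, not in $V$. Since $C^*=L(x)$ is a transitive model of {\zfc} containing all ordinals, the usual proof that $L(A)\models \mathrm{GCH}$ above $|\TC(\{A\})|$ applies verbatim with $A=x$, and no appeal to absoluteness between $C^*$ and $V$ is needed.

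The main obstacle is not the GCH computation, which is the textbook relativized condensation argument, but rather ensuring the clean extraction of the representation $C^*=L(x)$ from Theorem~\ref{2976543}; once that is in hand the corollary is essentially immediate. I would therefore spend the bulk of the write-up stating the reduction to $L(x)$ and then simply cite the standard fact that $L(x)\models 2^\kappa=\kappa^+$ for all $\kappa\ge|x|^+$.
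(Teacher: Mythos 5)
Your proposal is correct and is exactly the argument the paper intends: the corollary is stated without proof as an immediate consequence of Theorem~\ref{2976543}(2), namely that $C^*=L(x)$ for a set of ordinals $x$, combined with the standard condensation fact that $L(x)$ satisfies $2^\kappa=\kappa^+$ above (the successor of) the cardinality of an ordinal bounding $x$. Your parenthetical correction of $\l=|x|^+$ to $\l=(|\gamma|^+)^{C^*}$ with $x\subseteq\gamma$ is the right form of the bound, and the remark that the computation is internal to $C^*$ but unproblematic by absoluteness of the relativized $L$-construction is accurate.
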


\begin{theorem}\label{core}
The Dodd-Jensen Core model is contained in ${C^*}$.
\end{theorem}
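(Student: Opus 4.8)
Write $K$ for the Dodd--Jensen core model, and recall that $K$ is the union of all (sound, iterable) mice. Since $C^*$ is a transitive model of $\zfc$, it suffices to show that every mouse $M$ is an element of $C^*$: then $M\subseteq C^*$ by transitivity, and as every element of $K$ lies in some mouse we obtain $K\subseteq C^*$. The plan is to split on the dichotomy of Theorem~\ref{2976543}, handling the easy half by covering and the substantial half by generalizing the reconstruction of $0^\sharp$ carried out in Proposition~\ref{cof}.

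Suppose first that $C^*$ is \emph{not} closed under sharps. By Theorem~\ref{2976543} we have $C^*=L(x)$ for some set $x\subseteq\On$, and $x^\sharp$ does not exist --- were $x^\sharp$ to exist, the remark preceding Theorem~\ref{2976543} would yield $C^*\ne L(x)$. Since $x$ is a set whose sharp fails to exist, the Dodd--Jensen covering lemma holds over $L[x]$; by standard core model theory this bounds $K$ inside $L[x]$ (any mouse not already in $L[x]$ would, upon iterating its top measure cofinally past $\sup(x)$, furnish a class of $L[x]$-indiscernibles, i.e.\ produce $x^\sharp$). Hence $K\subseteq L[x]\subseteq L(x)=C^*$, settling this case.

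The substantial case is that $C^*$ \emph{is} closed under sharps, where I would reconstruct each mouse inside $C^*$ by the method of Proposition~\ref{cof}. Fix a mouse $M$ and iterate its measures linearly through all ordinals, producing a continuous increasing closed unbounded critical sequence $\langle\kappa_\xi:\xi\in\On\rangle$ of indiscernibles for the direct limit. At each limit index $\xi$ one has $\kappa_\xi=\sup_{\eta<\xi}\kappa_\eta$, whence $\cf^V(\kappa_\xi)=\cf^V(\xi)$; in particular those $\kappa_\xi$ of uncountable $V$-cofinality form a cofinal subclass that, exactly as the $L$-regular cardinals did in Proposition~\ref{cof}, is recognizable in $C^*$ through the predicate $Q^{\cof}_\omega$. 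From a sufficiently long initial segment of these indiscernibles one then recovers $M$ as the Skolem-hull theory of the iterate over them, just as $0^\sharp$ was recovered. To make the phrase ``recognizable in $C^*$'' precise I would induct along the mouse (comparison) order: granting that every mouse strictly below $M$ already lies in $C^*$, the portion of the iteration below $\kappa_0$ is assembled from those smaller mice and is therefore available in $C^*$, which is what lets $C^*$ single out the critical sequence of $M$ and reconstruct $M$ itself.

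The heart of the matter --- and the step I expect to be hardest --- is this identification inside $C^*$ of the critical sequence of a general mouse. Unlike the case of $0^\sharp$, whose indiscernibles coincide with the absolutely recognizable class of $L$-regular cardinals, for an arbitrary mouse one must both invoke the inductive hypothesis along the mouse order and use the covering properties of $K$ to exclude spurious ordinals of the same cofinality. A subsidiary technical point is that a mouse may carry several measures, so that the relevant objects are the generators of an iteration rather than a single critical sequence; one has to verify that a cofinal family of these generators still has $V$-cofinalities dictated by their indices, so that the predicate $Q^{\cof}_\omega$ of $C^*$ continues to detect them.
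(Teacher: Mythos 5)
Your plan diverges substantially from the paper's proof, and the divergence leaves a genuine gap at exactly the step you yourself flag as hardest: you never supply a working test by which $C^*$ isolates the critical sequence, and the test you do sketch fails. The paper argues by contradiction with the \emph{minimal} mouse $M_0$ missing from $K^{C^*}$, sets $\delta=\cof^V\bigl((\kappa_0^+)^{M_0}\bigr)$, and proves (its Claim 2) that every ordinal in $(\kappa_0,\kappa_\beta)$ that is regular in the iterate $M_\beta$ is either a critical point $\kappa_\gamma$ or has $V$-cofinality exactly $\delta$; minimality gives $\mathcal{P}(\kappa_\beta)\cap K^{C^*}=\mathcal{P}(\kappa_\beta)\cap M_\beta$, so ``regular in $K^{C^*}$'' --- a predicate available inside $C^*$ --- agrees with ``regular in $M_\beta$'' there, and intersecting with (or removing) the $\omega$-cofinal ordinals according to whether $\delta=\omega$ produces a cofinal $D\subseteq\{\kappa_\beta:\beta<\lambda\}$ lying in $C^*$. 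Your filter ``critical points of uncountable $V$-cofinality'' cannot do this job: when $\delta>\omega$ the ordinals $\xi_\gamma=(\kappa_\gamma^+)^{M_\beta}$ (and the successor-stage critical points $\kappa_{\gamma+1}$, which also have $V$-cofinality $\delta$ rather than $\cof^V(\gamma+1)$) are $K$-regular of uncountable $V$-cofinality, so your class is contaminated by non-indiscernibles and the ``Skolem-hull theory over them'' computes the wrong object. Note also that a Dodd--Jensen mouse is not recovered from a theory of indiscernibles the way $0^\sharp$ is; the paper instead rebuilds the iterate directly as $J^{F_D}_\alpha$ from the filter generated by final segments of $D$ and only then pulls $M_0$ back into $C^*$ by core-model techniques, reaching the desired contradiction.

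The sharp dichotomy is both unnecessary (the paper's argument is uniform) and unsound as you use it. The lemma underlying your Case 1 --- if $x^\sharp$ does not exist then every mouse lies in $L[x]$ --- is not what iteration gives you: iterating a \emph{pure} mouse past $\sup(x)$ yields indiscernibles for $L$ (and for $L[y]$ for $y$ captured by the iterates), not for $L[x]$, since $x$ need not belong to any iterate; one needs an $x$-mouse to manufacture $x^\sharp$. Indeed one can have $0^\sharp\in K$ while $x^\sharp$ fails for some $x$ with $0^\sharp\notin L[x]$. For the particular $x$ with $C^*=L(x)$ coming from Theorem~\ref{2976543}, the question of whether $K\subseteq L[x]$ is essentially the theorem itself, so this case gains nothing. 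I would redo the proof along the paper's lines: minimal missing mouse, the cofinality analysis of its iterates, and the reconstruction from the filter $F_D$.
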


\begin{proof}Let $K$ be the Dodd-Jensen Core model of $C^\ast$. We show that $K$ is the core model of $V$.
 Assume otherwise and let $M_0$ be the minimal Dodd-Jensen mouse missing from $K$. (Minimality here means in the canonical pre-well ordering of mice.)
  Let $\kappa_0$ be the cardinal of $M_0$ on which $M_0$ has the $M_0$-normal measure. Denote this normal measure by $U_0$. Note that $M_0=J_\alpha^{U_0}$
  for some $\alpha $. $J_\alpha[U_0]$ is the Jensen $J$-hierarchy of constructibility from $U_0$, where $J_\alpha[U_0]= \bigcup_{\beta<\omega\alpha }
  S_\beta^{U_0}$,  where $S_\beta ^{U_0}$ is the finer $S$-hierarchy.

   Let $\xi_0 $ be  $(\kappa_0^+)^{M_0}$.  (If $(\kappa_0^+)^{M_0}$ does not exist in $M_0$ put $\xi_0=M\cap ON$.). Let $\delta=\cof^V(\xi_0)$.

   For an ordinal $\beta$ let $M_\beta$ be the $\beta$'th iterated ultrapower of $M_0$ where for $\beta\leq\gamma$ let
   $j_{\beta,\gamma}:M_\beta\rightarrow
   M_\gamma$ be the canonical ultrapower embedding. $j_{\beta,\gamma}$ is a $\Sigma_0$-embedding. Let $\kappa_\beta=j_{0\beta}(\kappa_0),
   U_\beta=j_{0\beta}(U_0),\xi_\beta=j_{0\beta}(\xi_0)$. (In case $(\kappa_0^+)^{M_0}$ does not exist we put $\xi_\beta=M_\beta\cap ON $.).
   $\kappa_\beta$ is the critical point of $j_{\beta\gamma}$ for $\beta<\gamma$.  For a limit $\beta$ and $A\in M_\beta,A\subseteq \kappa_\beta$ $A\in
   U_\beta$ iff $\kappa_\gamma\in A$ for large enough $\gamma<\beta$.

   \begin{claim} 1. For every $\beta$ we have  $\xi_\beta=\sup j"_{0\beta}(\xi_0)$. Hence $\cof^V(\xi_\beta)=\delta$.
   \end{claim}
   \begin{proof} Every $\eta<\xi_\beta$ is of the form $j_{0\beta}(f)(\kappa_{\gamma_0}\ldots,\kappa_{\gamma_{n-1}})$ for some
   $\gamma_0<\gamma_1\ldots<\gamma_{n-1}<\beta$ and for some $f\in M_0,f:\kappa_0^n\rightarrow\xi_0$. By definition of $\xi_0$ there is $\rho<\xi_0$ such that
   $f(\alpha_0,\ldots\alpha_{n-1})<\rho$ for every $\langle \alpha_0,\ldots\alpha_{n-1}\rangle\in \kappa_0^n$. Hence it follows that every value of
   $j_{0\beta}(f)$ is bounded by $j_{0\beta}(\rho)$. So $\eta<j_{0\beta}(\rho)$, which proves the claim.
   \end{proof}

   The usual proof of GCH in $L[U]$ shows that  $\kappa_\beta^{\kappa_\beta}\cap M_\beta\subseteq J_{\xi_\beta}^{U_\beta}$ and that $J^{U_\beta}_{\xi_\beta}$ is
   the increasing union of $\delta$ members of $M_\beta$, each one having cardinality $\kappa_\beta$ in $M_\beta$.

   \begin{claim} 2. Let $\kappa_0<\eta<\kappa_\beta$ be such that $M_\beta\models \mbox{$\eta$ is regular}$, then either there is $\gamma<\beta $
   such that $\eta=\kappa_\gamma$ or $\cof^V(\eta)=\delta$.
   \end{claim}
   \begin{proof} By induction on $\beta$. The claim is vacuously true for $\beta=0$. For $\beta$ limit $\kappa_\beta=\sup\{\kappa_\gamma|\gamma<\beta\}$. Hence
   there is $\alpha<\beta $ such that $\eta<\kappa_\alpha$. $j_{\alpha\beta}(\eta)=\eta) $ so $M_\alpha\models\eta\mbox{ is regular}$. So the claim in this
   case follows from the induction assumption.

   We are left with the case that $\beta=\alpha+1$. If $\eta\leq\kappa_\alpha$ the claim follows from the inductive assumption for $\alpha$ as in the
   limit case. So we are left with the case $\kappa_\alpha<\eta<\kappa_\beta$. $M_\beta$ is the ultrapower of $M_\alpha$ by $U_\alpha$, so $\eta$ is
   represented in this ultrapower by a function $f\in M_\alpha$ whose domain is $\kappa_\alpha$. By the assumption
   $\eta<\kappa_\beta=j_{\alpha\beta}(\kappa_\alpha)$ we can assume $f(\rho)<\kappa_\alpha$ for every $\rho<\kappa_\alpha$. By the assumption
   $\kappa_\alpha<\eta$ we can assume that $\rho<f(\rho)$ for every $\rho<\kappa_\alpha$ and by the assumption that $\eta$ is regular in $M_\beta$ we can
   assume that $f(\rho)$ is regular in $M_\alpha$ for every $\rho<\kappa_\alpha$. In order to simplify notation put
   $M=M_\alpha,\kappa=\kappa_\alpha,U=U_\alpha, \mbox{ and }\xi=\xi_\alpha$.

   In order to show that $\cof^V(\eta)=\delta$ we shall define (in $V$)   a sequence $\langle g_\nu|\nu<\delta \rangle$ of functions in $\kappa^\kappa\cap M$  such
   that :
   \begin{enumerate}
     \item The sequence is increasing modulo $U$.
     \item For every $\rho<\kappa$, $g_\nu (\rho)<f(\rho)$.
     \item The ordinals represented by these functions in the ultrapower of $M$ by $U$ are cofinal in $\eta$.
   \end{enumerate}

 By the definition of $\xi$ and the previous claim  we can represent $\kappa^\kappa\cap M$ as an increasing union $\bigcup_{\psi<\delta} F_\psi$ where for
 every $\psi<\delta$, $F_\psi\in M$ and $F_\psi$ has cardinality $\kappa$ in $M$. For $\psi<\delta$ fix an enumeration in $M$ of $\langle h^\psi_\rho
 |\rho<\kappa \rangle $ of the set $G_\psi=\{h\in F_\psi | \forall\rho<\kappa (h(\rho)<f(\rho)) \}$. Let $f_\psi\in \kappa^\kappa$ be defined by
 $f_\psi(\rho)=\sup(\{h^\psi_\mu(\rho) |\mu<\rho\})$. Clearly $f_\psi\in M$ and $f_\psi$ bounds all the functions in $G_\psi$ modulo $U$. Also since for
 all $\rho<\kappa$ and $h\in G_\psi$ $h(\rho)<f(\rho)$ we obtain $f_\psi(\rho)<f(\rho)$. (Recall that $f(\rho)>\rho$, $f(\rho)$ is regular in $M$ and
 $f_\psi(\rho)$ is the sup of a set in $M$ whose cardinality in $M$ is $\rho$. ).

 Define $g_\nu$ by induction on $\nu<\delta$. By induction we shall also define an increasing sequence $\langle \psi_\nu |\nu<\delta\rangle $ such that
 $\psi_\nu<\delta$ and $g_\nu\in G_{\psi_\nu}$. Given $\langle \psi_\mu | \mu <\nu \rangle $ let $\sigma$ be their sup. Let $g_\nu$ be $f_\sigma$ and let
 $\psi_\nu$ be the minimal member of $\delta-\sigma$ such that $f_\sigma\in G_{\psi_\nu}$. The induction assumptions on $g_\mu,\psi_\mu$ for $\mu<\nu$
 and
 the properties of $f_\sigma$ yields that $g_\nu$ and $\psi_\nu$ also satisfy the required inductive assumption.

 The fact that the sequence of ordinals represented by $\langle g_\nu |\nu<\delta \rangle $ in the ultrapower of $M$ by $U$ is cofinal  in  $\eta$ follows
 from the fact that  every ordinal below $\eta$ is represented by some function $h$ which is bounded everywhere by $f$,  hence it belongs to $G_\psi$ for
 some $\psi<\delta$. There is $\nu$ such that $\psi<\psi_\nu$ and then $g_{\nu+1}$ will bound $h$ modulo $U$.
 \end{proof}

 The minimality of $M_0$ (hence the minimality of the equivalent $M_\beta$) implies that for every $\beta$, $\mathcal{P}(\kappa_\beta)\cap
 K=\mathcal{P}(\kappa_\beta)\cap M_\beta$.  It follows that  $\rho\leq\kappa_\beta$ is regular in $K$ iff  it is regular in $M_\beta$. In particular for every $\beta$, $\kappa_\beta$ is regular in $K$ since it is regular in $M_\beta$.
 
 \begin{claim} 3. Let $\lambda$ be a regular cardinal greater than $\max(|M_0|,\delta)$.  Then there there is $D\in C^\ast$,  $D\subseteq E=
 \{\kappa_\beta|\beta<\lambda\}$  which is cofinal in $\lambda$.
 \end{claim}
 \begin{proof} Note that $\lambda>|M_0|$ implies that the set $E=\{\kappa_\beta |\beta<\lambda\}$ is a club in $\lambda$.   Let $S^\lambda_0$ be the set of ordinals in $\lambda$ whose cofinality (in $V$ ) is $\omega$. Obviously both $E-S^\lambda_0$ and
 $E\cap S^\lambda_0$ are unbounded in $\lambda$. Let $C$ be the set of the ordinals of $\lambda-\kappa_0$ which are regular in $K$. By the definition of $C^\ast$
 and $K$ both $S^\lambda_0$ and $C$ are in $C^\ast$. Also $E\subseteq C$ since $\kappa_\beta$ is regular in $M_\beta$, hence regular in $K$.

 If $\delta\neq\omega$ then we can take $D=C\cap S^\lambda_0$ which by Claim 2  is a subset of $E$ which is unbounded in $\lambda$. If
 $\delta=\omega$ then similarly we can take  $D=C - S^\lambda_0$.  In both cases $D\in C^\ast$.
 \end{proof}

 Pick $\lambda,E$ as in the Claim above and let $D\subseteq\lambda$ be the witness to the claim. It is well known that  for every $X\in M_\lambda$
 $X\in
 U_\lambda$ iff $X\subseteq\lambda$ and $X$ contains a final segment of $E$. Since $U_\lambda$ is an ultrafilter on $\lambda$ in $M_\lambda$ we get that
 for
 $X\in M_\lambda,  X\subseteq\lambda$ $X\in U_\beta$ iff $X$ contains a final segment of $D$. Let $F_D$ be the filter on $\lambda$ generated by final
 segments of $D$. $D\in C^\ast$ implies that $L(F_D)\subseteq C^\ast$. $M_\lambda=J^{U_\lambda}_\alpha$ for some ordinal $\alpha$. But since
 $U_\lambda=F_D\cap M_\lambda$ we get that $M_\lambda=J^{F_D}_\alpha$. Now this implies that $M_\lambda\in C^\ast$. This is because $C^\ast$ contains an iterate of the
 mouse $M_0$ and then by standard Dodd-Jensen Core model techniques $M_0\in C^\ast$, which is clearly a contradiction.
 \end{proof}

\begin{theorem}\label{lofmu}
Suppose  an inner model with a measurable cardinal exists. Then ${C^*}$ contains some inner model $L^\nu$ for a measurable cardinal.
\end{theorem}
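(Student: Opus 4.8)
The plan is to re-run the iterated-ultrapower analysis of Theorem~\ref{core}, but now embracing as the conclusion what was there a contradiction. By hypothesis there is a mouse carrying a total measure; let $M_0$ be the minimal such Dodd--Jensen mouse, with measurable $\kappa_0$, normal measure $U_0$, $\xi_0=(\kappa_0^+)^{M_0}$, and $\delta=\cof^V(\xi_0)$. Form the linear iteration $\langle M_\beta, j_{\beta\gamma}\rangle$ of $M_0$, with critical points $\kappa_\beta=j_{0\beta}(\kappa_0)$ and $U_\beta=j_{0\beta}(U_0)$. The two internal claims of Theorem~\ref{core} apply verbatim to this $M_0$: $\xi_\beta=\sup j_{0\beta}''\xi_0$, so $\cof^V(\xi_\beta)=\delta$; and for $\kappa_0<\eta<\kappa_\beta$ regular in $M_\beta$, either $\eta=\kappa_\gamma$ for some $\gamma<\beta$ or $\cof^V(\eta)=\delta$. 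Moreover $\beta\mapsto\kappa_\beta$ is continuous at limits, so its fixed points form a club.

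First I would fix a regular $\lambda>\max(|M_0|,\delta)$. Then $E=\{\kappa_\beta:\beta<\lambda\}$ is club in $\lambda$, $\kappa_\lambda=\lambda$, and $M_\lambda=J^{U_\lambda}_{\alpha_\lambda}$ is a mouse in which $\lambda$ is measurable, with the measure $U_\lambda$ characterised by: for $X\in M_\lambda$ with $X\subseteq\lambda$, one has $X\in U_\lambda$ iff $X$ contains a final segment of $E$. This is the standard fact that iterating $L^{U}$ turns the image measure into the tail filter of the critical sequence.

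The heart of the matter, exactly as in the third claim of Theorem~\ref{core}, is to pin down a cofinal subset of $E$ already inside ${C^*}$. Let $K$ be the Dodd--Jensen core model; by Theorem~\ref{core} we have $K\subseteq{C^*}$, and by the minimality of $M_0$ (which yields $\mathcal P(\kappa_\beta)\cap K=\mathcal P(\kappa_\beta)\cap M_\beta$) each $\kappa_\beta$ is regular in $K$. Hence $C=\{\eta<\lambda:\kappa_0<\eta\ \mbox{and}\ \eta\ \mbox{is regular in}\ K\}$ lies in ${C^*}$ and contains $E$, while $S=\{\eta<\lambda:\cof^V(\eta)=\omega\}$ lies in ${C^*}$ because recognising cofinality $\omega$ is precisely what $Q^{\cof}_\omega$ does. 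By the cofinality claim above, every element of $C\setminus E$ has $V$-cofinality $\delta$; therefore $D=C\cap S$ (if $\delta\neq\omega$) or $D=C\setminus S$ (if $\delta=\omega$) is a subset of $E$. In each case $D$ is cofinal in $\lambda$ and $D\in{C^*}$.

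Finally I would recover the measure. Let $F_D\in{C^*}$ be the filter generated by the final segments of $D$. Because $D\subseteq E$ is cofinal and $U_\lambda$ is an ultrafilter on $M_\lambda$, one checks both inclusions that for $X\in M_\lambda$ with $X\subseteq\lambda$, $X\in U_\lambda$ iff $X$ contains a tail of $D$; thus $U_\lambda=F_D\cap M_\lambda$. Consequently $M_\lambda=J^{U_\lambda}_{\alpha_\lambda}=J^{F_D}_{\alpha_\lambda}$ is a mouse lying in ${C^*}$ and $U_\lambda=F_D\cap M_\lambda\in{C^*}$; building $L$ relative to this set measure inside ${C^*}$ produces an inner model $L^\nu\subseteq{C^*}$, with $\nu=U_\lambda$, in which $\lambda$ is measurable. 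I expect the main obstacle to be the verification that the \emph{sparse} set $D$ still generates the whole measure $U_\lambda$ on $M_\lambda$ (the nontrivial inclusion uses the ultrafilter property of $U_\lambda$ together with $D\subseteq E$), and the bookkeeping showing that $M_\lambda$ is genuinely computed from $F_D$, so that it, and hence $L^\nu$, lands in ${C^*}$; the regularity of the $\kappa_\beta$ in $K$ is the other delicate point, and it is exactly where the minimality of $M_0$ imported from Theorem~\ref{core} is needed.
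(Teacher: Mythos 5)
Your proposal is correct and follows essentially the same route as the paper's own proof: iterate the measure out to a regular $\lambda>\max(|M_0|,\delta)$, use the two cofinality claims together with $K\subseteq C^*$ and the $\cof=\omega$ predicate to isolate a cofinal $D\subseteq E$ inside $C^*$, and recover $M_\lambda$ as $J^{F_D}_\alpha$ from the tail filter of $D$. The only cosmetic difference is that you argue directly from a minimal mouse with a total measure where the paper argues by contradiction starting from $M_0=L^\mu$; the substance is identical.
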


%%%%%%%%%%%%%%%%
%%%%%%%%%%%%%%%%

\begin{proof}This is as the proof of Theorem~\ref{core}. 
Suppose $L^\mu$ exists, but does not exist in $C^*$.
  Let $\kappa_0$ be the cardinal of $M_0=L^\mu$ on which $L^\mu$ has the normal measure. Denote this normal measure by $U_0$.    Let $\xi_0 $ be  $(\kappa_0^+)^{M_0}$  and let $\delta=\cof^V(\xi_0)$.
  
   For an ordinal $\beta$ let $M_\beta$ be the $\beta$'th iterated ultrapower of $M_0$ and for $\beta\leq\gamma$ let
   $j_{\beta,\gamma}:M_\beta\rightarrow
   M_\gamma$ be the canonical ultrapower embedding. $j_{\beta\gamma}$ is a $\Sigma_0$-embedding. Let $\kappa_\beta=j_{0\beta}(\kappa_0),
   U_\beta=j_{0\beta}(U_0),\xi_\beta=j_{0\beta}(\xi_0)$. 
   $\kappa_\beta$ is the critical point of $j_{\beta\gamma}$ for $\beta<\gamma$.  For a limit $\beta$ and $A\in M_\beta,A\subseteq \kappa_\beta$, $A\in
   U_\beta$ iff $\kappa_\gamma\in A$ for large enough $\gamma<\beta$.

   \begin{claim} 1. For every $\beta$, $\xi_\beta=\sup j_{0\beta}"(\xi_0)$. Hence $\cof^V(\xi_\beta)=\delta$.
   \end{claim}
   \begin{proof} Every $\eta<\xi_\beta$ is of the form $j_{0\beta}(f)(\kappa_{\gamma_0}\ldots,\kappa_{\gamma_{n-1}})$ for some
   $\gamma_0<\gamma_1\ldots<\gamma_{n-1}<\beta$ and for some $f\in M_0,f:\kappa_0^n\rightarrow\xi_0$. By definition of $\xi_0$ there is $\rho<\xi_0$ such that
   $f(\alpha_0,\ldots\alpha_{n-1})<\rho$ for every $\langle \alpha_0,\ldots\alpha_{n-1}\rangle\in \kappa_0^n$. Hence it follows that every value of
   $j_{0\beta}(f)$ is bounded by $j_{0\beta}(\rho)$. So $\eta<j_{0\beta}(\rho)$, which proves the claim.
   \end{proof}

   The usual proof of GCH in $L[U]$ shows that  $\kappa_\beta^{\kappa_\beta}\cap M_\beta\subseteq J_{\xi_\beta}^{U_\beta}$ and that $J^{U_\beta}_{\xi_\beta}$ is
   the increasing union of $\delta$ members of $M_\beta$, each one having cardinality $\kappa_\beta$ in $M_\beta$.

   \begin{claim} 2. Let $\kappa_0<\eta<\kappa_\beta$ be such that $M_\beta\models \mbox{$\eta$ is regular}$, then either there is $\gamma<\beta $
   such that $\eta=\kappa_\gamma$ or $\cof^V(\eta)=\delta$.
   \end{claim}
   
   \begin{proof} By induction on $\beta$. The claim is vacuously true for $\beta=0$. For $\beta$ limit $\kappa_\beta=\sup\{\kappa_\gamma|\gamma<\beta\}$. Hence
   there is $\alpha<\beta $ such that $\eta<\kappa_\alpha$. $j_{\alpha\beta}(\eta)=\eta $ so $M_\alpha\models\eta\mbox{ is regular}$. So the claim in this
   case follows from the induction assumption.

   We are left with the case that $\beta=\alpha+1$. If $\eta\leq\kappa_\alpha$ the claim follows from the inductive assumption for $\alpha$ as in the
   limit case. So we are left with the case $\kappa_\alpha<\eta<\kappa_\beta$. $M_\beta$ is the ultrapower of $M_\alpha$ by $U_\alpha$, so $\eta$ is
   represented in this ultrapower by a function $f\in M_\alpha$ whose domain is $\kappa_\alpha$. By the assumption
   $\eta<\kappa_\beta=j_{\alpha\beta}(\kappa_\alpha)$ we can assume that
    $f(\rho)<\kappa_\alpha$  for every $\rho<\kappa_\alpha$. By the assumption
   $\kappa_\alpha<\eta$ we can assume that $\rho<f(\rho)$ for every $\rho<\kappa_\alpha$ and by the assumption that $\eta$ is regular in $M_\beta$ we can
   assume that $f(\rho)$ is regular in $M_\alpha$ for every $\rho<\kappa_\alpha$. In order to simplify notation put
   $M=M_\alpha,\kappa=\kappa_\alpha,U=U_\alpha, \mbox{ and }\xi=\xi_\alpha$.

   In order to show that $\cof^V(\eta)=\delta$ we shall define (in $V$)   a sequence $\langle g_\nu|\nu<\delta \rangle$ of functions in $\kappa^\kappa\cap M$  such
   that :
   \begin{enumerate}
     \item The sequence is increasing modulo $U$.
     \item For every $\rho<\kappa$, $g_\nu (\rho)<f(\rho)$.
     \item The ordinals represented by these functions in the ultrapower of $M$ by $U$ are cofinal in $\eta$.
   \end{enumerate}

 By the definition of $\xi$ and by the previous claim  we can represent $\kappa^\kappa\cap M$ as an increasing union $\bigcup_{\psi<\delta} F_\psi$ where for
 every $\psi<\delta$ $F_\psi\in M$ and $F_\psi$ has cardinality $\kappa$ in $M$. For $\psi<\delta$ fix an enumeration in $M$ of $\langle h^\psi_\rho
 |\rho<\kappa \rangle $ of the set $G_\psi=\{h\in F_\psi | \forall\rho<\kappa (h(\rho)<f(\rho)) \}$. Let $f_\psi\in \kappa^\kappa$ be defined by
 $f_\psi(\rho)=\sup(\{h^\psi_\mu(\rho) |\mu<\rho\})$. Clearly $f_\psi\in M$ and $f_\psi$ bounds all the functions in $G_\psi$ modulo $U$. Also because for
 all $\rho<\kappa$ and $h\in G_\psi$ $h(\rho)<f(\rho)$ we get that $f_\psi(\rho)<f(\rho)$. (Recall that $f(\rho)>\rho$, $f(\rho)$ is regular in $M$ and
 $f_\psi(\rho)$ is the sup of a set in $M$ whose cardinality in $M$ is $\rho$.).

 Define $g_\nu$ by induction on $\nu<\delta$. By induction we shall also define an increasing sequence $\langle \psi_\nu |\nu<\delta\rangle $ such that
 $\psi_\nu<\delta$ and $g_\nu\in G_{\psi_\nu}$. Given $\langle \psi_\mu | \mu <\nu \rangle $ let $\sigma$ be their sup. Let $g_\nu$ be $f_\sigma$ and let
 $\psi_\nu$ be the minimal member of $\delta-\sigma$ such that $f_\sigma\in G_{\psi_\nu}$. The induction assumptions on $g_\mu,\psi_\mu$ for $\mu<\nu$
 and
 the properties of $f_\sigma$ yields that $g_\nu$ and $\psi_\nu$ also satisfy the required inductive assumption.

 The fact that the sequence of ordinals represented by $\langle g_\nu |\nu<\delta \rangle $ in the ultrapower of $M$ by $U$ is cofinal  in  $\eta$ follows
 from the fact that  every ordinal bellow $\eta$ is represented by some function $h$ which is bounded everywhere by $f$, hence it belongs to $G_\psi$ for
 some $\psi<\delta$. There is $\nu$ such that $\psi<\psi_\nu$. Then $g_{\nu+1}$ will bound $h$ modulo $U$.
 \end{proof}

 We know already that $K\subseteq C^*$. Since
 $\mathcal{P}(\kappa_\beta)\cap
 K=\mathcal{P}(\kappa_\beta)\cap M_\beta$
 for every $\beta$,  it follows that  $\rho\leq\kappa_\beta$ is regular in $K$ iff  it is regular in $M_\beta$. In particular for every $\beta$, $\kappa_\beta$ is regular in $K$ since it is regular in $M_\beta$.
 
 \medskip
 
 \begin{claim} 3. Let $\lambda$ be a regular cardinal greater than $\max(|M_0|,\delta)$. Then there there is $D\in C^\ast$,  $D\subseteq E=
 \{\kappa_\beta|\beta<\lambda\}$  which is cofinal in $\lambda$.
 \end{claim}
\medskip

\noindent Proof of the Claim: Note that $\lambda>|M_0|$ implies that the set $E=\{\kappa_\beta |\beta<\lambda\}$ is a club in $\lambda$.   Let $S^\lambda_0$ be the set of ordinals in $\lambda$ whose cofinality (in $V$ ) is $\omega$. Obviously both $E-S^\lambda_0$ and
 $E\cap S^\lambda_0$ are unbounded in $\lambda$. Let $C$ be the set of the ordinals of $\lambda-\kappa_0$ which are regular in $K$. By definition of $C^\ast$
 and $K$ both $S^\lambda_0$ and $C$ are in $C^\ast$. Also $E\subseteq C$ since $\kappa_\beta$ is regular in $M_\beta$, hence regular in $K$.

 If $\delta\neq\omega$ then we can take $D=C\cap S^\lambda_0$ which by Claim 2  is a subset of $E$ which is unbounded in $\lambda$. If
 $\delta=\omega$ then similarly we can take  $D=C -  S^\lambda_0$.  In both cases $D\in C^\ast$.
The Claim is proved.
\medskip

 Pick $\lambda,E$ as in the Claim above and let $D\subseteq\lambda$ be the witness to the claim. It is well known that  for every $X\in M_\lambda$
 $X\in
 U_\lambda$ iff $X\subseteq\lambda$ and $X$ contains a final segment of $E$. Since $U_\lambda$ is an ultrafilter on $\lambda$ in $M_\lambda$ we get that
 for
 $X\in M_\lambda,  X\subseteq\lambda$ $X\in U_\beta$ iff $X$ contains a final segment of $D$. Let $F_D$ be the filter on $\lambda$ generated by final
 segments of $D$. $D\in C^\ast$ implies that $L(F_D)\subseteq C^\ast$. $M_\lambda=J^{U_\lambda}_\alpha$ for some ordinal $\alpha$. But since
 $U_\lambda=F_D\cap M_\lambda$ we get that $M_\lambda=J^{F_D}_\alpha$. Thus $M_\lambda\in C^\ast$, i.e. $C^\ast$ contains an iterate of $M_0$. Hence $C^*$ contains an inner model with a measurable cardinal.
  \end{proof}

Below (Theorem~\ref{wepe}) we will show that if $L^\mu$ exists, then $({C^*})^{L^\mu}$ can be obtained by adding to the $\omega^2$th iterate of $L^\mu$ the sequence $\{\kappa_{\omega\cdot n}: n<\omega\}$.

In the presence of large cardinals, even with just uncountably many measurable cardinals, we can separate ${C^*}$ from both $L$ and $\hod$.
We first observe that in the special case that $V={C^*}$, there cannot exist even a single measurable cardinal. The proof is similar to Scott's proof that measurable cardinals violate $V=L$:

\begin{theorem}\label{wpoi}
If there is a measurable cardinal $\kappa$, then $V\ne\cofmodel{\lambda}$ for all $\lambda<\kappa$.
\end{theorem}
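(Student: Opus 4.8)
The plan is to mimic Scott's argument via an ultrapower, replacing the role of ``$M=L$'' by the correctness of the cofinality-$\lambda$ predicate. Fix a regular $\lambda<\kappa$ and suppose toward a contradiction that $V=\cofmodel{\lambda}$. Let $U$ be a normal measure on $\kappa$ and let $j\colon V\to M=\mathrm{Ult}(V,U)$ be the associated ultrapower embedding, so that $M$ is a transitive model of {\zfc} containing all ordinals, $\mathrm{crit}(j)=\kappa$, and $j(\lambda)=\lambda$ since $\lambda<\kappa$.

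First I would check that $M$ and $V$ assign the same ordinals cofinality $\lambda$. Since $U$ is $\kappa$-complete, the ultrapower is closed under $<\kappa$-sequences, so in particular ${}^\mu M\subseteq M$ for every $\mu<\kappa$. Consequently, for every ordinal $\alpha$ and every $\mu<\kappa$, a cofinal map of length $\mu$ into $\alpha$ exists in $M$ iff it exists in $V$: any such map in $V$ already lies in $M$ by closure, and any such map in $M$ lies in $V$ because $M\subseteq V$. Hence $M$ and $V$ compute the same cofinalities below $\kappa$; in particular $\{\alpha:\cof^V(\alpha)=\lambda\}=\{\alpha:\cof^M(\alpha)=\lambda\}$, as $\lambda<\kappa$.

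With this agreement in hand, the Corollary to Lemma~\ref{nice} applies to the transitive {\zfc}-model $M$ and yields $(\cofmodel{\lambda})^{M}=\cofmodel{\lambda}$; by our assumption the right-hand side is $V$. On the other hand, ``$V=\cofmodel{\lambda}$'' is the first-order statement $\forall x\,\hat{\Phi}_{\L(Q^{\cof}_{\lambda})}(x,\lambda)$, so elementarity of $j$ together with $j(\lambda)=\lambda$ gives $M\models\forall x\,\hat{\Phi}_{\L(Q^{\cof}_{\lambda})}(x,\lambda)$, i.e. $(\cofmodel{\lambda})^{M}=M$. Combining the three equalities yields $M=(\cofmodel{\lambda})^{M}=\cofmodel{\lambda}=V$.

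It remains to see that $M=V$ is impossible, which is precisely where the measurability of $\kappa$ enters, just as in Scott's proof. This is the standard fact that a normal measure is never captured by its own ultrapower, $U\notin M$ (equivalently, the critical point $\kappa$ fails to be measurable in $M$); alternatively, $M=V$ would make $j\colon V\to V$ a nontrivial elementary embedding, contradicting Kunen's theorem. Either way we obtain a contradiction, completing the proof. I expect the only genuinely delicate point to be the cofinality-agreement step: one must verify that $<\kappa$-closure of the ultrapower forces agreement on cofinality \emph{exactly} $\lambda$ (both inequalities), not merely an inequality in one direction; the rest is bookkeeping together with the cited standard facts about measurable ultrapowers.
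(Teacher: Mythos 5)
Your proposal is correct and follows essentially the same route as the paper: pass to the ultrapower $j\colon V\to M$, use closure of $M$ under short sequences to see that $M$ and $V$ agree on which ordinals have cofinality $\lambda$, invoke the corollary to Lemma~\ref{nice} to get $(\cofmodel{\lambda})^{M}=\cofmodel{\lambda}=V$, conclude $M=V$, and contradict Kunen. The only cosmetic difference is that the paper gets $M=V$ directly from $V=(\cofmodel{\lambda})^{M}\subseteq M\subseteq V$ rather than via elementarity of the statement $\forall x\,\hat{\Phi}_{\L(Q^{\cof}_{\lambda})}(x,\lambda)$; both are fine, and your attention to the two-sided cofinality agreement is exactly the point the paper leaves implicit.
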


\begin{proof}
Suppose $V=\cofmodel{\lambda}$ but $\kappa>\lambda$ is a measurable cardinal. Let $i:V\to M$ with critical point $\kappa$ and $M^{\kappa}\subseteq M$. 
%By Lemma~\ref{same}, 
Now $(\cofmodel{\lambda})^M=(\cofmodel{\lambda})^V=V$, whence $M=V$. This contradicts Kunen's result \cite{MR0311478} that there cannot be a non-trivial $i:V\to V$.
\end{proof}

We can strengthen this as follows, at least for $\lambda=\omega$. Recall that \emph{covering} is said to hold for a inner model $M$ if for every  set $X$  of ordinals there is a set $Y\supseteq X$ of ordinals  such that $Y\in M$ and $|Y|\le |X|+\aleph_1$. We can show that if there is a measurable cardinal, then not only $V\ne C^*$, but we do not even have covering for $C^*$:

\begin{theorem}
If there is a measurable cardinal then covering fails for $C^*$.
\end{theorem}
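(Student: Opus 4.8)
The plan is to use the measurable cardinal to manufacture a nontrivial elementary embedding of $C^*$ into itself, and then to run, for $C^*$, the analogue of the classical argument that $0^\sharp$ refutes covering for $L$. First I fix a measurable cardinal $\kappa$, a normal measure $U$ on $\kappa$, and the ultrapower $j\colon V\to M$ with critical point $\kappa$ and ${}^\kappa M\subseteq M$. Since $\omega<\kappa$ and $M$ is closed under $\omega$-sequences, an ordinal has a cofinal $\omega$-sequence in $V$ iff it has one in $M$, so $V$ and $M$ assign cofinality $\omega$ to exactly the same ordinals. Lemma~\ref{nice} then gives $(C^*)^M=(C^*)^V=C^*$, whence $j\restriction C^*\colon C^*\to C^*$ is a nontrivial elementary embedding with critical point $\kappa$; in particular $\kappa$ is regular, indeed inaccessible, in $C^*$. (This is also the starting point of Theorem~\ref{wpoi}, where Kunen's theorem is used to finish; note however that the mere existence of a self-embedding cannot by itself refute covering, since $L^\mu$ over itself has such embeddings while covering trivially holds. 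The extra input must be that here $V\neq C^*$.)

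The failure of covering will follow once I produce a single ordinal $\mu$ with $\cof^V(\mu)=\omega$ whose cofinality in $C^*$ is larger than $\aleph_1$. Indeed, given such a $\mu$, pick $X\subseteq\mu$ cofinal of order type $\omega$, so $|X|=\aleph_0$. If covering held there would be $Y\in C^*$ with $X\subseteq Y$ and $|Y|\le\aleph_0+\aleph_1=\aleph_1$; then $Y\cap\mu\in C^*$ is cofinal in $\mu$ of order type below $\omega_2<\mu$, so $\cof^{C^*}(\mu)\le\aleph_1$, a contradiction. Thus $X$ witnesses the failure of covering.

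To build $\mu$ I iterate the single embedding $j\restriction C^*$ externally, putting $\kappa_n=j^n(\kappa)$ and $\mu=\sup_n\kappa_n$. The sequence $\langle\kappa_n\rangle$ is strictly increasing (as $j(\alpha)>\alpha$ for $\alpha\ge\kappa$) and cofinal in $\mu$, so $\cof^V(\mu)=\omega$, while each $\kappa_n$ is regular in $C^*$ by iterated elementarity. The point is to show $\cof^{C^*}(\mu)>\aleph_1$, i.e. that $C^*$ contains no short cofinal sequence in $\mu$. This is transparent when $V=L^\mu$: by Theorem~\ref{wepe}, $(C^*)^{L^\mu}$ is an iterate together with the sequence $\langle\kappa_{\omega\cdot n}:n<\omega\rangle$, and a suitable limit-of-critical-points is inaccessible in the iterate and is not singularized by the added sequence, which is cofinal only higher up; so it is regular in $C^*$ though of $V$-cofinality $\omega$. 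In the general case the base model is the Dodd--Jensen core model $K\subseteq C^*$ of Theorem~\ref{core}, in which each $\kappa_n$ is regular, and one must check that passing from $K$ to $C^*=L(\On_\omega)$ does not feed a cofinal sequence of length $\le\aleph_1$ into $\mu$.

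The main obstacle is precisely this last step. Unlike $L$, the model $C^*$ is large — it contains inner models with measurable cardinals by Theorem~\ref{lofmu} — so one cannot simply assert that $V$-singular cardinals are $C^*$-inaccessible; everything hinges on controlling which $\omega$-cofinal sequences are readable from the predicate $\On_\omega$. The intended mechanism is a reflection argument: assuming toward a contradiction that $\cof^{C^*}(\mu)\le\aleph_1$, one has $\cof^{C^*}(\mu)<\kappa$, hence $j$ is continuous at $\mu$ and $j(\mu)=\sup_n j(\kappa_n)=\mu$; a cofinal map $g\in C^*$ of length $\le\aleph_1$ into $\mu$ together with $j(g)$ should then violate the regularity and indiscernibility of the critical points $\kappa_n$ over $C^*$. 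An alternative route is to show that covering for $C^*$ would allow the derived $C^*$-ultrafilter $U\cap C^*$ to be captured inside $C^*$, reducing matters to the critical sequence of an internal iteration. Making either reflection argument precise — and thereby establishing $\cof^{C^*}(\mu)>\aleph_1$ — is where the real work lies.
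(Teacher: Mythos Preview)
Your setup matches the paper exactly: $j\restriction C^*\colon C^*\to C^*$ elementary with critical point $\kappa$, target ordinal $\mu=\sup_n j^n(\kappa)$, $\cof^V(\mu)=\omega$, $j(\mu)=\mu$, and no fixed points of $j$ in $[\kappa,\mu)$. But you stop precisely where the paper's argument begins, and the missing step is short and purely combinatorial --- no core-model analysis is needed, and your detours through $K$, $L^\mu$, and derived ultrafilters are unnecessary.

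The idea you are missing is that $C^*$ carries a parameter-free definable well-ordering (Proposition~\ref{wpoa}), which is therefore fixed by $j$. The paper in fact proves the stronger statement that $\mu$ is \emph{regular} in $C^*$. Suppose not; its $C^*$-cofinality $\alpha$ is fixed by $j$ (being definable from the fixed point $\mu$), so $\alpha<\kappa$. Take $Z\in C^*$ to be the least, in the canonical well-order, cofinal subset of $\mu$ of order type $\alpha$; minimality gives $j(Z)=Z$. Let $\delta=\sup(Z\cap\kappa)$; then $\delta<\kappa$ (as $\kappa$ is regular and $|Z|<\kappa$), so $j(\delta)=\delta$. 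The least element $\delta^*$ of $Z$ above $\delta$ is definable from the fixed data $Z$ and $\delta$, hence $j(\delta^*)=\delta^*$; but $\delta^*\in[\kappa,\mu)$, contradicting the absence of fixed points there. This is exactly the analogue of the $0^\sharp$ argument you allude to, and the definable well-order is what makes it go through --- your ``reflection argument'' sketch was pointing in the right direction but never invoked this.
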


\begin{proof} Let $i:V\to M$ with critical point $\kappa$ and $M^{\kappa}\subseteq M$. As above,  $i$ is an embedding of $C^*$ into $C^*$. Let  $\kappa_n$ be $i^n(\kappa)$ and $\kappa_\omega= \sup_n \kappa_n$. Clearly $i(\kappa_\omega)=\kappa_\omega$ and there are no fixed points of $i$ on the interval $[\kappa,\kappa_\omega)$.
 We prove that covering fails for $C^*$ by showing that the singular cardinal $\kappa_\omega$ is regular in $C^*$. Assume otherwise. Then the cofinality  $\alpha$ of $\kappa_\omega$ in $C^*$ is, by elementarity, a fixed point of $i$. Hence  $\alpha<\kappa$.  Let $Z$ be  a be a subset of $\kappa_\omega$ in $C^*$ witnessing the fact that the cofinality of $\kappa_\omega$ in $C^*$ is  $\alpha$. W.l.o.g., $Z$ is the minimal such set in the canonical wellordering of $C^*$. Hence $i(Z)=Z$. Let $\delta=\sup(Z\cap\kappa)$. Since $\kappa$ is regular, $\delta<\kappa$. Hence $i(\delta)=\delta$.
Let $\delta^*$ be the minimal member of $Z$ above $\delta$. Then $\delta^*\ge\kappa$ and $i(\delta^*)=\delta^*$. But there are no fixed points of $i$ on the interval $[\kappa,\kappa_\omega)$. We have reached a contradiction.
\end{proof}

On the other hand we will now use known results to show that we cannot fail covering for $C^*$ without an inner model for a measurable cardinal. It is curious that covering for $C^*$ is in this way entangled with measurable cardinals.

\begin{theorem}
If there is no inner model with a measurable cardinal then covering holds for $C^*$. 
\end{theorem}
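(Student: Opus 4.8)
The plan is to derive covering for $C^*$ directly from the Dodd--Jensen covering theorem together with the inclusion $K\subseteq C^*$ established in Theorem~\ref{core}. The observation that makes this essentially immediate is that covering is monotone with respect to inclusion of inner models: if $M_1\subseteq M_2$ are both inner models and covering holds for $M_1$, then covering holds for $M_2$ as well, since a set $Y\in M_1$ witnessing covering for a given $X$ automatically lies in $M_2$ and witnesses covering there. Thus it suffices to exhibit \emph{some} inner model contained in $C^*$ for which covering is already known, and the core model is the natural candidate.

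First I would invoke the hypothesis: assuming there is no inner model with a measurable cardinal, the Dodd--Jensen core model $K$ is defined and the Dodd--Jensen covering theorem applies, yielding that covering holds for $K$. That is, for every set $X$ of ordinals there is $Y\in K$ with $X\subseteq Y$ and $|Y|\le|X|+\aleph_1$. The $+\aleph_1$ clause accounts, exactly as in the Jensen covering lemma for $L$, for the small sets $X$ that need not be covered by a set in $K$ of the same cardinality.

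Next I would apply Theorem~\ref{core}, which gives $K\subseteq C^*$. Combining this with covering for $K$ finishes the argument: given any set $X$ of ordinals, take the witness $Y\in K$ from the previous paragraph; then $Y\in C^*$, $X\subseteq Y$, and $|Y|\le|X|+\aleph_1$. Hence covering holds for $C^*$, which is what was to be shown.

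There is no substantial calculation here; the content lies entirely in the two cited results. The only points requiring care are the precise formulation of the Dodd--Jensen covering theorem together with the matching of its cardinality bound to the paper's definition of covering (the $|X|+\aleph_1$ clause), and the elementary but crucial monotonicity remark that covering transfers upward along inclusions of inner models. This last point is what converts covering for the \emph{smaller} model $K$ into covering for the larger $C^*$, and it is the conceptual heart of the proof even though it is technically trivial.
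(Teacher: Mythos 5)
Your proof is correct and is essentially identical to the paper's: both invoke the Dodd--Jensen covering theorem for $K$ under the no-inner-model-with-a-measurable hypothesis, then pass covering upward through the inclusion $K\subseteq C^*$ from Theorem~\ref{core}. The paper compresses the upward-monotonicity of covering into the phrase ``all the more,'' which you have simply spelled out.
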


\begin{proof}
By Theorem~\ref{core}, $K
  \subseteq C^*$. If there is no  inner model with a measurable cardinal, then $K$ satisfies covering by \cite{MR661475}. Hence all the more we have covering for $C^*$.
\end{proof}

 Kunen \cite{MR0337603} proved  that if there are uncountably many measurable cardinals, then AC fails in Chang's model $C(\L_{\omega_1\omega_1})$. Recall that Chang's model contains ${C^*}$ and  ${C^*}$ {\em does} satisfy AC.

\begin{theorem}\label{knn}
If $\la \kappa_n : n<\omega\ra$ is any sequence of measurable cardinals (in $V$) $>\lambda$, then  $\la \kappa_n : n<\omega\ra\notin\cofmodel{\lambda}$ and  $\cofmodel{\lambda}\ne\hod$.
\end{theorem}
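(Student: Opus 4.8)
The second assertion reduces to the first. By the proposition showing that every set in $C(\L^*)$ is hereditarily ordinal definable whenever the formulas of $\L^*$ are finite (the only parameter of $Q^{\cof}_\lambda$ being the ordinal $\lambda$), we have $\cofmodel{\lambda}\subseteq\hod$. On the other hand, the property ``$\kappa$ is measurable'' is definable without parameters, so the increasing enumeration $\langle\mu_n:n<\omega\rangle$ of the first $\omega$ measurable cardinals above $\lambda$ is ordinal definable, whence $\{\mu_n:n<\omega\}\in\hod$. Thus, once the first assertion is established (applied to this particular sequence), $\{\mu_n:n<\omega\}$ witnesses $\cofmodel{\lambda}\ne\hod$.

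For the first assertion, fix for each $n$ a normal measure $U_n$ on $\kappa_n$ and let $j_n\colon V\to M_n$ be the ultrapower, so $\mathrm{crit}(j_n)=\kappa_n$ and $M_n^{\kappa_n}\subseteq M_n$. The first step is to check that $V$ and $M_n$ assign cofinality $\lambda$ to exactly the same ordinals: $\lambda<\kappa_n$ is regular, $M_n\subseteq V$ gives $\cof^{M_n}(\alpha)\ge\cof^V(\alpha)$, and any cofinal map of length $\lambda$ lies in $M_n$ by $\kappa_n$-closure; a short computation then yields $\cof^V(\alpha)=\lambda\iff\cof^{M_n}(\alpha)=\lambda$ for all $\alpha$. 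By Lemma~\ref{nice} (with $M_2=V$), $(\cofmodel{\lambda})^{M_n}=\cofmodel{\lambda}$, so $j_n$ restricts to a nontrivial elementary self-embedding of $(\cofmodel{\lambda},\in,\On_\lambda)$ with critical point $\kappa_n$ that preserves the predicate $\On_\lambda$ (the agreement on cofinality $\lambda$ gives $\alpha\in\On_\lambda\iff j_n(\alpha)\in\On_\lambda$). Since $|R|=\omega<\kappa_n$ for $R=\{\kappa_m:m<\omega\}$, and each $\kappa_m$ with $m\ne n$ is either below $\kappa_n$ or inaccessible above it, hence fixed, we get $j_n(R)=(R\setminus\{\kappa_n\})\cup\{j_n(\kappa_n)\}$ with $j_n(\kappa_n)>\kappa_n$; in particular $j_n(R)\ne R$.

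Now suppose toward a contradiction that $R\in\cofmodel{\lambda}$. Since $\cofmodel{\lambda}=L(\On_\lambda)$ carries a definable well-ordering, every element is definable over $(\cofmodel{\lambda},\in,\On_\lambda)$ from finitely many ordinal parameters; say $R$ is defined from $\vec\xi$. If every entry of $\vec\xi$ were below some $\kappa_n$, then $j_n$ would fix $\vec\xi$ together with the predicate $\On_\lambda$, and hence fix $R$, contradicting $j_n(R)\ne R$. This already rules out every $R$ definable over $(\cofmodel{\lambda},\in,\On_\lambda)$ from parameters below $\kappa_\omega=\sup_n\kappa_n$.

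The remaining, and main, difficulty is to discharge the possibility that \emph{every} definition of $R$ requires parameters at or above $\kappa_\omega$ — that is, to bring the parameters below one of the $\kappa_n$. Here one must use the measures in full: the $U_n$ may be iterated independently, and for each assignment of iteration lengths one obtains an embedding $j_{\vec\gamma}\colon V\to M_{\vec\gamma}$ that still fixes $\On_\lambda$ and so, again by Lemma~\ref{nice}, restricts to $\cofmodel{\lambda}$, while moving the coordinates of $R$ along their critical sequences. The plan is to exploit the resulting homogeneity, in the spirit of Kunen's analysis of measures in the Chang model, to show that no single $\omega$-sequence of $V$-measurables can be singled out as a member of the rigid, well-ordered model $\cofmodel{\lambda}$; concretely, one reflects a definition of $R$ through such an embedding to obtain defining parameters below $\kappa_\omega$ and then applies the self-embedding argument above. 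Making this symmetry precise — and reconciling it with the fact that $\cofmodel{\lambda}$ \emph{does} satisfy AC, so that, unlike in Kunen's theorem, the conclusion is the non-membership of $R$ rather than a failure of choice — is the crux of the proof and the step I expect to be hardest.
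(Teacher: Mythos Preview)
Your setup is right and matches the paper: each $j_n$ restricts to an elementary self-map of $(\cofmodel{\lambda},\in,\On_\lambda)$, and $j_n(R)\ne R$. The gap is exactly where you flag it --- you never discharge the case of large defining parameters, and your sketch of ``iterating the measures and reflecting a definition down'' is not carried out. There is a much simpler fix, and it is what the paper does.

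Do not try to find a definition of $R$ with parameters below $\kappa_\omega$. Instead, use a single ordinal parameter that is automatically handled by a lemma of Kunen. Let $\mu=\sup_n\kappa_n$ and let $\prec$ be the \emph{least} well-order of $\mu^\omega$ in the canonical well-order of $\cofmodel{\lambda}$. Since $j_n(\mu)=\mu$ (each $\kappa_m$ with $m\ne n$ is fixed, and $j_n(\kappa_n)<\kappa_{n+1}$) and $j_n$ is elementary on $\cofmodel{\lambda}$, the description ``least well-order of $\mu^\omega$'' is preserved, so $j_n(\prec)=\prec$ for every $n$. Now let $\eta$ be the rank of $R=\langle\kappa_n:n<\omega\rangle$ in $\prec$. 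Kunen's lemma (Lemma~2 of \cite{MR0337603}) says that any fixed ordinal is moved by only finitely many measurable-cardinal ultrapower embeddings; pick $n$ with $j_n(\eta)=\eta$. Then $j_n(R)$ is the $\eta$th element of $j_n(\prec)=\prec$, i.e.\ $j_n(R)=R$, contradicting $j_n(\kappa_n)\ne\kappa_n$.

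So the missing idea is twofold: replace ``parameters $\vec\xi$'' by the single ordinal rank $\eta$ in a canonical well-order that all the $j_n$ fix, and invoke Kunen's finiteness lemma to get one $j_n$ fixing $\eta$. Your reduction of the second assertion to the first is the same as the paper's.
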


\begin{proof}
We proceed as in  Kunen's proof (\cite{MR0337603}) that  AC fails in the Chang model if there are uncountably many measurable cardinals, except that we only use infinitely many measurable cardinals. Suppose $\kappa_n$, $n<\omega$, are measurable $>\lambda$. 
Let $\mu=\sup_n\kappa_n$.
Let $\prec$ be the first  well-order of $\mu^\omega$ in $\cofmodel{\lambda}$ in the canonical well-order of $\cofmodel{\lambda}$. Suppose 
$\la \kappa_n : n<\omega\ra\in\cofmodel{\lambda}$. Then for some $\eta$  it is the $\eta$th element in the well-order $\prec$. By \cite[Lemma 2]{MR0337603} there are only finitely many measurable cardinals  $\xi$ such that $\eta$ is moved by the ultrapower embedding of a normal ultrafilter on $\xi$. Let $n$ be such that the ultrapower embedding $j:V\to M$ by the normal ultrafilter on $\kappa_n$ does not move $\eta$. Since $\kappa_n>\lambda$, $(\cofmodel{\lambda})^M=\cofmodel{\lambda}$. Since $\mu$ is a strong limit cardinal $>\lambda$, $j(\mu)=\mu$. Since the construction of $\cofmodel{\lambda}$ proceeds in $M$ exactly as it does in $V$, $j(\prec)$ is also in $M$ the first well-ordering of $\mu^\omega$ that appears in $\cofmodel{\lambda}$. Hence $j(\prec)=\prec$. Since $j(\eta)=\eta$, the sequence $\la \kappa_n : n<\omega\ra$ is fixed by $j$. But this contradicts the fact that $j$ moves $\kappa_n$. 

If the $\kappa_n$ are the first $\omega$ measurable cardinals above $\lambda$, then the sequence $\la\kappa_n : n<\omega\ra$ is in $\hod$ and hence $\cofmodel{\lambda}\ne\hod$.
\end{proof}

\begin{definition}
 The {\em weak Chang model} is the model $C^\omega_{\omega_1}=C(\L^\omega_{\omega_1\omega_1})$.
\end{definition}

We can make the following observations about the relationship between the weak Chang model and the (full) Chang model. The weak Chang model clearly contains $C^*$ and $L(\R)$, as it contains
$C(\L^\omega_{\omega_1\omega})$. It is a potentially interesting intermediate model between $L(\R)$ and the (full) Chang model. If there is a measurable Woodin cardinal, then the Chang model satisfies AD, whence the weak model cannot satisfy AC, as the even bigger (full) Chang model cannot contain a well-ordering of all the reals.

\begin{theorem}
\begin{enumerate}
 \item  If $V=L^\mu$, then $C^\omega_{\omega_1}\ne L(\R)$. 
\item If $V$ is the inner model for $\omega_1$ measurable cardinals, then 
$C^\omega_{\omega_1}\ne $ Chang model.\end{enumerate}
\end{theorem}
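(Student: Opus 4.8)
The plan is to treat the two inequalities separately, using throughout that
$$L(\R)=C(\L^\omega_{\omega_1\omega})\ \subseteq\ C^\omega_{\omega_1}=C(\L^\omega_{\omega_1\omega_1})\ \subseteq\ C(\L_{\omega_1\omega_1}),$$
which holds by Theorem~\ref{R} and the sublogic relations $\L^\omega_{\omega_1\omega}\le\L^\omega_{\omega_1\omega_1}\le\L_{\omega_1\omega_1}$. For (1) I must exhibit a member of $C^\omega_{\omega_1}$ lying outside $L(\R)$, and for (2) a member of the Chang model lying outside $C^\omega_{\omega_1}$. The leverage is always the one extra feature of $\L^\omega_{\omega_1\omega_1}$ over $\L^\omega_{\omega_1\omega}$, namely countable blocks of quantifiers: these let $\L^\omega_{\omega_1\omega_1}$ say ``there is a cofinal increasing $\omega$-sequence'', so that $C^*=C(Q^{\cof}_\omega)\subseteq C^\omega_{\omega_1}$; dually, the one feature separating $C^\omega_{\omega_1}$ from the full Chang model is closure under arbitrary countable sequences, which $C^\omega_{\omega_1}$ lacks because its formulas have only finitely many free variables.

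For (1), assume $V=L^\mu$. Since $C^*\subseteq C^\omega_{\omega_1}$ and an inner model with a measurable cardinal exists, Theorem~\ref{lofmu} places a proper class inner model $L^\nu$ with a measurable cardinal inside $C^*$, hence inside $C^\omega_{\omega_1}$; the forthcoming Theorem~\ref{wepe} even identifies $(C^*)^{L^\mu}$ explicitly as an iterate of $L^\mu$ carrying a genuine normal measure on an uncountable cardinal of $V$. I would then finish by showing that $L(\R)^{L^\mu}$ contains no proper class inner model with a measurable cardinal whatsoever, which gives $C^\omega_{\omega_1}\ne L(\R)$ at once. The main obstacle is this last claim. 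The point to establish is that $L(\R)^{L^\mu}=L(\R^{L^\mu})$ is too thin to carry a measure: it satisfies $\mathrm{AC}$ (the reals of $L^\mu$ have a definable well-order, so $L(\R)^{L^\mu}\models\mathrm{AC}$) and each of its sets is built from a single real, whereas the normal measure witnessing measurability lives on an uncountable cardinal and is coded by no real of $L^\mu$. Concretely, the only mice with a total measure available in $L^\mu$ are set-sized iterates of the minimal mouse, whose linear iteration to length $\On$ collapses to the measure-free Dodd--Jensen core model $K$ (cf.\ Theorem~\ref{core}); so no real of $L^\mu$ generates a proper class inner model with a measurable, and in particular $L^\mu$ is not of the form $L[x]$ for a real $x$. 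Turning this informal dichotomy into a proof---e.g.\ via the Covering Lemma for $K$, or via comparison of a putative inner model's core model with the iterates $M_\alpha$ of $L^\mu$---is where the work lies.

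For (2), assume $V$ is the inner model $L[\vec U]$ for $\omega_1$ measurable cardinals. Here I would separate the two models by the Axiom of Choice. On one side, Kunen's theorem \cite{MR0337603} (the argument behind Theorem~\ref{knn}, which needs only $\omega_1$ many measurables) shows that $\mathrm{AC}$ fails in the full Chang model $C(\L_{\omega_1\omega_1})$. On the other side I would show $C^\omega_{\omega_1}\models\mathrm{AC}$, whence the two models differ. Although $\L^\omega_{\omega_1\omega_1}$ is not adequate to truth in itself (Definition~\ref{4r67}), so Proposition~\ref{axc} does not apply verbatim, its formulas---having finitely many free variables together with countable conjunctions and countable quantifier strings---are each coded by a \emph{real}. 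In $V=L[\vec U]$ the reals carry a definable fine-structural well-order, so one can well-order the formula-codes and then run the usual level-by-level well-ordering of the constructible hierarchy: order $L'_{\alpha+1}$ lexicographically by (formula-code, parameter-tuple), using the well-order of $L'_\alpha$ inductively. This yields a well-order of $C^\omega_{\omega_1}$.

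The main obstacle in (2) is to make this well-order \emph{internal}: I must check that the definable well-order of the reals of $L[\vec U]$ is itself expressible in $\L^\omega_{\omega_1\omega_1}$ inside each structure $(L'_\alpha,\in)$, referring only to that structure's own sets. This is exactly the role played by adequacy to truth in Propositions~\ref{wpoa} and~\ref{axc}, and it is here that the fine structure of $L[\vec U]$ must be invoked---the well-order must be computed correctly at the levels $L'_\alpha$ rather than only in $V$, precisely the subtlety flagged after Definition~\ref{defin}. Should this internality step resist, the fallback is a direct route: show that $\la\kappa_n:n<\omega\ra$, the first $\omega$ measurables, lies in the Chang model by closure under $\omega$-sequences but is absent from $C^\omega_{\omega_1}$, by combining a Lemma~\ref{nice}-style invariance of $C^\omega_{\omega_1}$ under the ultrapower maps $j_{U_m}$ with the fact that $j_{U_m}$ moves $\kappa_m$ while it would have to fix $\la\kappa_n:n<\omega\ra$. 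That fallback, however, again needs a canonical well-order of $C^\omega_{\omega_1}$ to locate the fixed point, so the $\mathrm{AC}$ question is the genuine crux of (2) under either approach.
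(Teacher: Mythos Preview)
Your overall strategy for both parts matches the paper's: for (1) you separate via an inner model with a measurable cardinal present in $C^\omega_{\omega_1}$ but absent from $L(\R)$; for (2) you separate via the Axiom of Choice. But in each case you are missing the technical move that makes the argument close, and your proposed workarounds are vaguer than necessary.

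For (1), the paper does not attempt a general argument that $L(\R)^{L^\mu}$ carries no inner model with a measurable. Instead it first reduces $L(\R)$ to the form $L(A)$ for some $A\subseteq\omega_1$: under $V=L^\mu$ there is a $\Sigma^1_3$ well-order of the reals of order type $\omega_1$, so $L(\R)=L(A)$ with $A\subseteq\omega_1$. Now if $C^\omega_{\omega_1}=L(\R)=L(A)$, then by Theorem~\ref{lofmu} there is inside $C^\omega_{\omega_1}$ an inner model $L^\nu$ with a measurable $\delta$; since $\kappa$ is least such that any inner model has a measurable at $\kappa$, we get $\delta\ge\kappa>\omega_1$, hence $A\subseteq\delta$, and Scott's classical result \cite{MR0143710} forbids an inner model with a measurable $\delta$ inside $L(A)$ when $A\subseteq\delta$. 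Your gestures toward covering and mouse comparison could presumably be made to work, but the reduction to $L(A)$ plus Scott is the clean route.

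For (2), your worry about making the well-order of $C^\omega_{\omega_1}$ \emph{internal} to $C^\omega_{\omega_1}$ is exactly what the paper sidesteps. The paper does not prove $C^\omega_{\omega_1}\models\mathrm{AC}$. It argues inside the Chang model: the $\Sigma^1_3$ well-order of $\R$ (available in the inner model for $\omega_1$ measurables) is visible in the Chang model, so the Chang model can well-order all $\L^\omega_{\omega_1\omega_1}$-formulas (each coded by a real), and hence can well-order $C^\omega_{\omega_1}$ level by level. If the two models coincided, the Chang model would well-order itself, contradicting Kunen. So the well-order only needs to live in the Chang model, not in $C^\omega_{\omega_1}$, and the internality obstacle you identified simply does not arise.
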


\begin{proof}
For (1), suppose $V=L^\mu$, where $\mu$ is a normal measure on $\kappa$. Let us first note that  all the reals are in $C^\ast$, because under the assumption $V=L^\mu$ all the reals are in the Dodd-Jensen core model, which by our Theorem 5.5 is contained in $C^\ast$. Thus all the reals are in $C^\omega_{\omega_1}$. Also under the same assumption we have in $L(\mathbb{R})$ a $\Sigma^1_3$ well ordering of the reals of order type $\omega_1$. Hence $L(\mathbb{R})=L(A)$ for some $A\subseteq\omega_1$. Suppose now  $C^\omega_{\omega_1}= L(\R)$. Then there is an $A\subseteq\omega_1$ such that
 $C^\omega_{\omega_1}=L(A)$. By Theorem~\ref{lofmu} there is  in $C^*$, hence in $C^\omega_{\omega_1}$,  an inner model $L^\nu$ with a measurable cardinal $\delta$. But $\kappa$ is in $L^\mu$ the smallest ordinal which is measurable in an inner model. Hence $\kappa\le\delta$ and $A\subseteq\delta$. 
 But by \cite{MR0143710} there cannot be an inner model with a  measurable cardinal  $\delta$ in $L(A)$, where $A\subseteq\delta$. Therefore we must have $C^\omega_{\omega_1}\ne  L(\R)$.

For (2), we commence by noting that in the inner model for $\omega_1$ measurable cardinals there is a $\Sigma^1_3$-well-order of $\R$  \cite{MR0299469}. By means of this well-order we can well-order the formulas of $\L_{\omega_1\omega_1}^\omega$ in the Chang model. In this way we can define a well-order of
$C^\omega_{\omega_1}$ in the Chang model. However, since we assume uncountably many measurable cardinals, the Chang model does not satisfy AC \cite{MR0337603} (see also Theorem~\ref{knn}). Hence it must be that $C^\omega_{\omega_1}\ne $ Chang model. 
\end{proof}

If there is a Woodin cardinal, then ${C^*}\ne V$ in the strong sense that $\aleph_1$ is a large cardinal in ${C^*}$. So not only are there countable sequences of measurable cardinals which are not in  ${C^*}$ but there are even reals which are not in ${C^*}$:

\begin{theorem}\label{mahlo}
If there is a Woodin cardinal, then $\omega_1$ is (strongly) Mahlo in ${C^*}$.
\end{theorem}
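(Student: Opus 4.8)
The plan is to derive both halves of strong Mahloness of $\omega_1 = \omega_1^V$ in $C^*$ from a single application of Woodin's stationary tower at the given Woodin cardinal $\delta$, transporting large-cardinal strength down to $\omega_1$ along a generic elementary embedding. First I record the cheap facts. The ordinal $\omega_1$ is automatically regular in $C^*$, since a cofinal map of type $<\omega_1$ lying in $C^*\subseteq V$ would witness $\cf^V(\omega_1)\le\aleph_0$. For the stationarity clause it suffices to prove that $S=\{\alpha<\omega_1:\alpha\text{ is strongly inaccessible in }C^*\}$ is stationary in $V$, because every club of $\omega_1$ belonging to $C^*$ is, being closed and unbounded, already a club in $V$. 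So it remains to show (i) $\omega_1$ is strongly inaccessible in $C^*$ and (ii) $S$ is stationary in $V$. I stress that a naive Löwenheim--Skolem/condensation argument is hopeless here: $Q^{\cof}_\omega$ reads off \emph{true} cofinality, so collapsing a countable hull of a level $L'_\gamma$ corrupts the predicate $\On_\omega$ (an ordinal of uncountable $V$-cofinality is sent to a countable limit, whose correct predicate-value flips), which is exactly the non-absoluteness flagged after Definition~\ref{defin}. This is why a large cardinal is needed.

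The main device is the countable stationary tower $\mathbb P_{<\delta}$: forcing with it yields a generic $j\colon V\to N$ with $\mathrm{crit}(j)=\omega_1$, $j(\omega_1)=\delta$, and ${}^{\omega}N\cap V[G]\subseteq N$. Since $C^*=L(\On_\omega)$ is a class defined uniformly by the parameter-free formula ``$\cf(\cdot)=\omega$'', $j$ restricts to an elementary embedding $j\restriction C^*\colon C^*\to (C^*)^N$. The $\omega$-closure of $N$ is what rescues the computation: $N$ and $V[G]$ assign $\On_\omega$ to exactly the same ordinals, and because the hierarchy $(L'_\alpha)$ of Definition~\ref{defin} is computed absolutely from that predicate together with the ordinals (this is the mechanism behind Lemma~\ref{nice}), we obtain $(C^*)^N=(C^*)^{V[G]}$.

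Now I pull the two goals back along $j$. By elementarity of $j\restriction C^*$ together with $j(\omega_1)=\delta$, statement (i) is equivalent to ``$\delta$ is strongly inaccessible in $(C^*)^N$''. For (ii) I would use critical-point reflection: \emph{if} $\omega_1=\mathrm{crit}(j)$ is strongly inaccessible in $(C^*)^N=j(C^*)$, then $\omega_1\in j(S)$; and since for any club $C\subseteq\omega_1$ in $V$ one has $\omega_1\in j(C)$ (as $j(C)$ is closed in $\delta$ with $\sup(C)=\omega_1$), it follows that $\omega_1\in j(C)\cap j(S)=j(C\cap S)$, whence $C\cap S\ne\emptyset$. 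As this holds for every $V$-club $C$, $S$ is stationary in $V$, hence in $C^*$. Thus both (i) and (ii) reduce to a single model: I must show that $\delta$ and that $\omega_1^V$ are each strongly inaccessible in $(C^*)^{V[G]}$.

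The main obstacle is precisely this last computation, and it is where the Woodinness of $\delta$ is spent in earnest. Concretely one must show that $(C^*)^{V[G]}$ has thin power sets below $\delta$ --- e.g.\ that $\mathbb R\cap(C^*)^{V[G]}$ is countable in $V[G]$, and more generally that every bounded subset of $\delta$ lying in $(C^*)^{V[G]}$ is constructed before the critical stage --- so that neither $\omega_1^V$ nor $\delta$ can be a successor or fail to be a strong limit there. Below the critical point this is clean: $C^*$ and $(C^*)^{V[G]}$ share the level $L'_{\omega_1}$, since every cofinality test occurring there is on a countable order and hence absolute. The difficulty is controlling subsets that might first appear at late stages. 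Here I would exploit that $N$ still carries the Woodin cardinal $j(\delta)>\delta$, together with the genericity and homogeneity of the tower, to argue that any such subset is captured generically over an initial segment and is therefore forced to appear early; combined with $\mathrm{crit}(j)=\omega_1$ and the downward absoluteness of regularity and of the power-set operation from $V[G]$ to the inner model $(C^*)^{V[G]}$, this would yield strong inaccessibility of both $\omega_1^V$ and $\delta$ in $(C^*)^{V[G]}$. Establishing this thinness rigorously --- taming the reals and bounded subsets of $C^*$ with a single Woodin cardinal --- is the crux of the theorem; everything else is the bookkeeping of the stationary-tower transfer described above.
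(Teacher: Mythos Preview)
Your setup via the countable stationary tower is exactly right, and you correctly identify $(C^*)^N=(C^*)^{V[G]}$ from the $\omega$-closure of $N$. But you stop one step too early, and that is precisely where your admitted gap lies. The point you are missing is the further identification
\[
(C^*)^{V[G]}=(C^*_{<\lambda})^V,
\]
which holds because an ordinal has cofinality $\omega$ in $V[G]$ iff it has cofinality $<\lambda$ in $V$ (a standard property of $\mathbb Q_{<\lambda}$). Thus $(C^*)^N$ is not just some model living in $V[G]$; it is an inner model of $V$ itself. This is the ``key lemma'' you were looking for, and it makes the homogeneity/genericity considerations you gesture at unnecessary.

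With this in hand, the paper does \emph{not} try to prove that $\lambda$ is inaccessible in $(C^*)^N$; that reduction of yours leads into a wall. Instead it argues (i) directly by contradiction: if $\omega_1$ fails to be a strong limit in $C^*$, pick $\alpha<\omega_1$ and a one-to-one $f:\omega_1\to (2^\alpha)^{C^*}$ in $C^*$, and look at $a=j(f)(\omega_1^V)\subseteq\alpha$. On one hand $a\in (C^*)^N=(C^*_{<\lambda})^V\subseteq V$, so $j(a)=a$ (as $a\subseteq\alpha<\mathrm{crit}(j)$); elementarity then forces $a=f(\delta)$ for some $\delta<\omega_1$, and injectivity of $j(f)$ gives $\delta=\omega_1^V$, a contradiction. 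No analysis of power sets in $(C^*)^{V[G]}$ is needed.

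For Mahlo the paper proceeds as you do: for a club $D\in C^*$ on $\omega_1$, one has $\omega_1^V\in j(D)$. The remaining point---that $\omega_1^V$ is regular (indeed inaccessible) in $(C^*)^N$---again falls out immediately from $(C^*)^N=(C^*_{<\lambda})^V\subseteq V$: regularity is downward absolute, and the strong-limit argument above applies verbatim with $C^*_{<\lambda}$ in place of $C^*$. So your reflection scheme is sound; what was lacking was the single observation that $(C^*)^N$ sits inside $V$.
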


\begin{proof} 
 To prove that $\omega_1$ is strongly inaccessible in $C^*$ suppose 
 $\a<\aleph_1$ and $$f:\omega_1\to (2^\a)^{C^*}$$ is 1-1.
 Let $\lambda$ be Woodin,  $\Q_{<\lambda}$ the countable stationary tower forcing and $G$ generic for this forcing. 
In $V[G]$ there is $j:V\to M$ such that $V[G]\models M^{\omega}\subset M$ and $j(\omega_1)=\lambda$. Thus $$j(f):\l\to ((2^\a)^{C^*})^{M}.$$
Let $a=j(f)(\omega_1^V)$. If $a\in V$, then $j(a)=a$, whence, as $a$ i.e. $j(a)$ is in the range of $j(f)$,  $a=f(\delta)$ for some $\delta<\omega_1$. But then 
$$a=j(a)=j(f)(j(\delta))=j(f)(\delta),$$
contradicting the fact that $a=j(f)(\omega_1)$. Hence $a\notin V$. However, $$(C^*)^M=(C^*_{<\l})^V,$$ since by general properties of this forcing, an ordinal has cofinality $\omega$ in $M$ iff it has cofinality $<\l$ in $V$. Hence $a\in C^*_{<\l}\subseteq V$, a contradiction.

To see that $\omega_1$ is Mahlo in ${C^*}$, suppose $D$ is a club on $\omega_1^V$, $D\in {C^*}$. Let $j$ and $M$ be as above. Then $j(D)$ is a club on $\lambda$ in $({C^*})^M$. Since $\omega_1^V$ is the critical point of $j$, $j(D)\cap\omega_1^V=D$. Since $j(D)$ is closed, $\omega_1^V\in j(D)$.  

\end{proof}

\begin{remark}
 In the previous  theorem we can replace the assumption of a Woodin cardinal by $\MM^{++}$.
\end{remark}

For cardinals $>\omega_1$ we have an even better result:

\begin{theorem}
Suppose there is a Woodin cardinal $\l$. Then every regular cardinal $\kappa$ such that $\omega_1<\kappa<\l$ is  weakly compact in $C^*$.
%D87 
\end{theorem}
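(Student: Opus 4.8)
The plan is to run the argument of Theorem~\ref{mahlo}, but with the countable stationary tower replaced by the stationary tower $\Q^{<\kappa}_{<\lambda}$ whose conditions are stationary sets of subsets of size ${<}\kappa$. Forcing with this tower below the Woodin cardinal $\lambda$ produces, in a generic extension $V[G]$, an elementary embedding $j\colon V\to M$ with critical point $\kappa$, $j(\kappa)=\lambda$, and $M^{<\kappa}\subseteq M$ (computed in $V[G]$); moreover, for $\kappa>\omega_1$ regular this tower is ${<}\kappa$-closed, so it adds no new $\omega$-sequences of ordinals. The first and decisive step is to compute $C^*$ inside $M$. Since the forcing adds no new countable sequences, cofinality $\omega$ is absolute between $V$ and $V[G]$; since $M^\omega\subseteq M$, it is absolute between $V[G]$ and $M$. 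Hence for every ordinal $\gamma$ we have $\cof^M(\gamma)=\omega\iff\cof^V(\gamma)=\omega$, and Lemma~\ref{nice}, applied in $V[G]$ to the two transitive models $M$ and $V$, gives $(C^*)^M=C^*$. Consequently $j\restriction C^*$ is an elementary embedding of $C^*$ into itself with critical point $\kappa$ and $j(\kappa)=\lambda$. (This is the one place where the ${<}\kappa$-tower is genuinely needed: the countable tower of Theorem~\ref{mahlo} only yields $(C^*)^M=(C^*_{<\lambda})^V$, which is too coarse to keep branches inside $C^*$.)

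Next I would show $\kappa$ is strongly inaccessible in $C^*$. For regularity, a cofinal map $f\colon\mu\to\kappa$ in $C^*$ with $\mu<\kappa$ would satisfy $j(f)=f$, since each value $f(\xi)<\kappa$ is fixed and $\dom(f)=\mu<\kappa$; but by elementarity $j(f)$ is cofinal in $j(\kappa)=\lambda$, a contradiction. For the strong limit property, suppose toward a contradiction that $\mu<\kappa$ and there is in $C^*$ an injection $f\colon\kappa\to\mathcal{P}(\mu)^{C^*}$. Put $a=j(f)(\kappa)$; then $a\in\mathcal{P}(\mu)^{(C^*)^M}=\mathcal{P}(\mu)^{C^*}$, so $a\subseteq\mu<\kappa$ and hence $j(a)=a$. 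Since $a=j(a)$ lies in $\rng(j(f))=j(\rng(f))$, elementarity gives $a\in\rng(f)$, say $a=f(\delta)$ with $\delta<\kappa$; but then $a=j(a)=j(f)(j(\delta))=j(f)(\delta)$, while $a=j(f)(\kappa)$ and $\delta\ne\kappa$, contradicting that $j(f)$ is injective. Thus $(2^\mu)^{C^*}<\kappa$ for all $\mu<\kappa$.

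The heart of the matter is the tree property. Let $T\in C^*$ be a tree of height $\kappa$ whose levels have size ${<}\kappa$ in $C^*$; we may assume $T$ is a tree on $\kappa$, so (as $\kappa$ is regular) each of its levels is a bounded subset of $\kappa$, hence lies in $V_\kappa$ and is fixed by $j$, whence $j(T)\restriction\kappa=T$. Applying $j\restriction C^*$, the structure $j(T)$ is, in $C^*=(C^*)^M$, a tree of height $\lambda$ with levels of size ${<}\lambda$. Working inside $C^*$, choose any node $b$ on level $\kappa$ of $j(T)$ (which is nonempty since $j(T)$ has height $\lambda>\kappa$); then $B=\{t\in j(T):t<_{j(T)}b\}$ is definable in $C^*$ from $j(T)$ and $b$, so $B\in C^*$, and since $j(T)\restriction\kappa=T$ the set $B$ is a cofinal branch of $T$. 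Hence every $\kappa$-tree in $C^*$ has a cofinal branch in $C^*$.

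Putting these together, $\kappa$ is strongly inaccessible in $C^*$ and has the tree property there, so $\kappa$ is weakly compact in $C^*$. The main obstacle I anticipate is the very first step: securing a stationary-tower forcing with critical point exactly the given regular $\kappa$ (which is \emph{not} assumed inaccessible in $V$, just as $\omega_1$ in Theorem~\ref{mahlo} is a successor cardinal) that is sufficiently closed to preserve cofinality $\omega$, so that $(C^*)^M=C^*$ rather than the coarser $(C^*_{<\lambda})^V$. Granting this, the inaccessibility and tree-property arguments are direct adaptations of the proof of Theorem~\ref{mahlo}.
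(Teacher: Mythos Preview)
Your overall strategy is exactly the paper's: force with a suitable stationary tower below $\lambda$ to get $j\colon V\to M$ with critical point $\kappa$, $j(\kappa)=\lambda$, and $M^\omega\subseteq M$ in a way that adds no new $\omega$-sequences, so that $(C^*)^M=C^*$; then read off inaccessibility and the tree property from the elementary embedding $j\restriction C^*\colon C^*\to C^*$. Your inaccessibility and tree-property arguments are correct and are spelled out in more detail than in the paper.

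The one genuine issue is the tower you name. The tower $\Q^{<\kappa}_{<\lambda}$ ``whose conditions are stationary sets of subsets of size ${<}\kappa$'' is \emph{not} ${<}\kappa$-closed; by analogy, the countable stationary tower (the case $\kappa=\omega_1$) is not countably closed and in fact adds reals. So your justification for ``no new $\omega$-sequences'' does not go through for that tower. The paper instead invokes the \emph{$\le\omega$-closed stationary tower} of Foreman--Magidor \cite[Section~1]{MR1359154}: one restricts to conditions $S$ each of whose elements is closed under $\omega$-sequences. That restriction is what makes the forcing countably closed (hence adds no $\omega$-sequences), while still allowing one to work below a condition forcing the critical point to be the given regular $\kappa>\omega_1$ with $j(\kappa)=\lambda$. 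Note also that you only need countable closure, not ${<}\kappa$-closure: preservation of cofinality $\omega$ is all that is required to conclude $(C^*)^M=C^*$ via Lemma~\ref{nice}. You correctly flagged this step as the main obstacle; replacing your tower by the Foreman--Magidor $\le\omega$-closed tower resolves it, and then the rest of your argument goes through verbatim.
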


\begin{proof} Suppose $\l$ is a Woodin cardinal,  $\kappa>\omega_1$ is regular and $<\l$.  To prove that $\kappa$ is strongly inaccessible in $C^*$ we  use the ``$\le\omega$-closed" stationary tower forcing from \cite[Section 1]{MR1359154}. With this forcing, cofinality $\omega$ is not changed, whence $(C^*)^M=C^*$, so the proof of Theorem~\ref{mahlo} can be repeated mutatis mutandis. Thus we need only prove the tree property.
 Let the forcing, $j$ and $M$ be as above, in Theorem~\ref{mahlo}, with $j(\kappa)=\l$.  Suppose $T$ is a $\kappa$-tree in $C^*$. Then $j(T)$ is a $\l$-tree in $(C^*)^M=C^*$. We may assume $j(T\restriction\kappa)=T\restriction\kappa$. Let $t\in j(T)$ be of height $\kappa$ and $b=\{u\in j(T) : u<t\}=\{u\in T : u<t\}$. Now $b$ is a $\kappa$-branch of $T$ in $C^*$.
\end{proof}

As a further application of  $\omega$-closed stationary tower forcing we extend the above result as follows:

\begin{theorem}\label{ind}
If there is a proper class of Woodin cardinals, then the regular cardinals $\ge \aleph_2$ are  indiscernible\footnote{The cardinals are indiscernible even if the quantifier $Q^{\cof}_\omega$ is added to the language of set theory.} 
in ${C^*}$.
\end{theorem}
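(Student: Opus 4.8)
The plan is to derive indiscernibility from the stationary tower embeddings already used in Theorem~\ref{mahlo} and the subsequent weakly-compact theorem. The single input I will rely on is exactly what those proofs produced: for every regular cardinal $\kappa$ with $\aleph_2\le\kappa<\lambda$, $\lambda$ Woodin, the $\le\omega$-closed stationary tower forcing of \cite{MR1359154} yields, in a generic extension $V[G]$, an embedding $j:V\to M$ with critical point $\kappa$, $j(\kappa)=\lambda$, and cofinality $\omega$ unchanged, so that $(\cofmodel{\omega})^M=\cofmodel{\omega}={C^*}$ and $\hat\jmath=j\restriction{C^*}:{C^*}\to{C^*}$ is elementary. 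Since a proper class of Woodins is available, such a $\lambda$ exists above any prescribed bound. Two absoluteness remarks make these $V[G]$-embeddings usable: first, because the forcing does not change which ordinals have cofinality $\omega$, both ${C^*}$ and the interpretation of $Q^{\cof}_\omega$ are computed the same in $V$ and in $V[G]$, so ``${C^*}\models\varphi(\vec\alpha)$'' is absolute and $\hat\jmath$ is moreover elementary for the language $\{\in,Q^{\cof}_\omega\}$ (this yields the footnote); second, $\hat\jmath$ maps regular cardinals of ${C^*}$ to regular cardinals of ${C^*}$ by elementarity.

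First I would record the one-variable case, call it $(\ast)$: if $\vec\theta$ is a finite increasing tuple of regular cardinals and $\mu,\mu'$ are regular cardinals above $\max\vec\theta$, then $\mu$ and $\mu'$ realize the same type over $\vec\theta$ in ${C^*}$. To see this pick a Woodin $\lambda>\max(\mu,\mu')$ and apply the embedding with critical point $\mu$ sending $\mu\mapsto\lambda$: it fixes $\vec\theta$ pointwise, so ${C^*}\models\varphi(\vec\theta,\mu)\iff{C^*}\models\varphi(\vec\theta,\lambda)$; the symmetric embedding with critical point $\mu'$ gives ${C^*}\models\varphi(\vec\theta,\mu')\iff{C^*}\models\varphi(\vec\theta,\lambda)$, and the two equivalences combine.

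The heart of the argument is then a Reduction Lemma, proved by induction on $n$: for any increasing tuple $\vec\theta$ of regular cardinals, any increasing tuple $\vec\alpha=(\alpha_1<\dots<\alpha_n)$ of regular cardinals above $\max\vec\theta$, and any increasing tuple $\vec\delta=(\delta_1<\dots<\delta_n)$ of Woodin cardinals with $\delta_1$ above $\alpha_n$, one has $\vec\alpha\equiv_{\vec\theta}\vec\delta$ in ${C^*}$. The case $n=1$ is $(\ast)$ with the target chosen to be a Woodin. For the step, apply $\hat\jmath$ with critical point $\alpha_1$ and $\hat\jmath(\alpha_1)=\delta_1$; it fixes $\vec\theta$, so $\vec\alpha\equiv_{\vec\theta}(\delta_1,\hat\jmath(\alpha_2),\dots,\hat\jmath(\alpha_{n+1}))$, where the tail entries are regular cardinals of ${C^*}$ lying above $\delta_1$. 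Appending $\delta_1$ to the parameters and applying the induction hypothesis to the tail, with fresh Woodin targets above $\hat\jmath(\alpha_{n+1})$, collapses the tail to $(\delta_2,\dots,\delta_{n+1})$ over $\vec\theta,\delta_1$, giving $\vec\alpha\equiv_{\vec\theta}\vec\delta$. To prove the theorem itself, given two increasing tuples $\vec\alpha,\vec\beta$ of regular cardinals $\ge\aleph_2$ I choose one tuple $\vec\delta$ of Woodins above both and apply the Lemma with $\vec\theta$ empty: $\vec\alpha\equiv\vec\delta\equiv\vec\beta$.

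The step I expect to be the main obstacle is keeping the induction inside the class of admissible critical points. The Reduction Lemma feeds its induction hypothesis the tail $(\hat\jmath(\alpha_2),\dots)$, and the proof of that instance in turn uses $\hat\jmath(\alpha_2)$ as the critical point of a further tower embedding; this needs the images of regular cardinals under $\hat\jmath$ to be not merely regular in ${C^*}$ (automatic by elementarity) but regular in $V$, so that the $\le\omega$-closed tower with that critical point is available. The crux is therefore to check that these embeddings send $V$-regular cardinals above the critical point to $V$-regular cardinals; I would extract this from the closure of $M$ in $V[G]$ together with the fact that $j(\alpha)$ is regular in $M$ and that $M$ computes the relevant cofinalities correctly. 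Once this preservation is secured the induction goes through verbatim, and, since $\hat\jmath$ also respects $Q^{\cof}_\omega$, the same argument yields indiscernibility in the language expanded by the cofinality quantifier.
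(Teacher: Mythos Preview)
Your overall architecture matches the paper's: reduce arbitrary tuples of regular cardinals to tuples of Woodin cardinals via the $\le\omega$-closed stationary tower, and induct on the length of the tuple. The difference is in the order of the two moves inside the inductive step, and that difference is exactly the obstacle you flag at the end.

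You apply the embedding first and the inductive hypothesis second: send $\alpha_1\mapsto\delta_1$, then try to reduce the tail $(\hat\jmath(\alpha_2),\ldots,\hat\jmath(\alpha_{n+1}))$ to Woodins. This requires the $\hat\jmath(\alpha_i)$ to be $V$-regular so that they can serve as critical points of further tower embeddings. Your proposed extraction from the closure of $M$ does not work: $M$ is only closed under $\omega$-sequences in $V[G]$, so $M$ computes cofinality $\omega$ correctly but nothing more, and regularity in $M$ of $j(\alpha_i)$ gives no control over $\cof^V(j(\alpha_i))$ beyond ruling out $\omega$. In general $j(\alpha_i)$ need not be a $V$-cardinal at all. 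So the gap is genuine and your suggested patch does not close it.

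The paper avoids this by reversing the order. Given $\kappa_1<\ldots<\kappa_k$ regular and Woodins $\lambda_1<\ldots<\lambda_k$ above $\kappa_k$, it \emph{first} applies the inductive hypothesis (with $\kappa_1$ absorbed into the parameters) to replace $(\kappa_2,\ldots,\kappa_k)$ by $(\lambda_2,\ldots,\lambda_k)$; these are the original $V$-regular cardinals, so no question of regularity arises. \emph{Then} it applies a tower embedding $j$ with critical point $\kappa_1$, $j(\kappa_1)=\lambda_1$, and---the point your ordering never needs and therefore never invokes---$j(\lambda_i)=\lambda_i$ for $i\ge 2$. That one can arrange the tower embedding to fix prescribed Woodin cardinals above its target is the extra ingredient that makes the reversed order go through cleanly. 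Once you swap the order and add this fixing, your argument and the paper's coincide.
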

\def\bl{\bar{\lambda}}

\begin{proof}
We use the $\omega$-closed stationary tower  forcing of \cite{MR1359154}. Let us first prove an auxiliary claim:

\medskip

\noindent{\bf Claim 1:} If $\gl_1<\ldots<\gl_k$ and $\bl_1<\ldots<\bl_k$ are Woodin cardinals, and $\beta_1,\ldots,\beta_l<\min(\gl_1,\bl_1)$, then 
$${C^*}\models\Phi(\beta_1,\ldots,\beta_l,\gl_1,\ldots,\gl_k)\leftrightarrow
\Phi(\beta_1,\ldots,\beta_l,\bl_1,\ldots,\bl_k)$$ for all formulas $\Phi(x_1,\ldots,x_l,y_1,\ldots,y_k)$ of set theory. 
\medskip

To prove Claim 1, assume w.l.o.g. $\bl_1>\gl_1$. The proof proceeds by induction on $k$. The case $k=0$ is clear. Let us then assume the claim for $k-1$. Let $G$ be generic for the $\le\omega$-closed stationary tower forcing of \cite[Section 1]{MR1359154} with the generic embedding $$j:V\to M, M^\omega\subseteq M, j(\gl_1)=\bl_1, j(\bl_i)=\bl_i\mbox{ for $i>1$}.$$ A special feature of the $\omega$-closed stationary tower forcing of \cite{MR1359154} is that it does not introduce new ordinals of cofinality $\omega$. Thus $${{C^*}}^{V}={{C^*}}^{V[G]}={{C^*}}^{M}.$$
Suppose now $${C^*}\models\Phi(\beta_1,\ldots,\beta_l,\gl_1,\gl_2,\ldots,\gl_k).$$ By the induction hypothesis, in $M$, applied to $\gl_2,\ldots,\l_k$ and $\bl_2,\ldots,\bl_k$,
$${C^*}\models\Phi(\beta_1,\ldots,\beta_l,\l_1,\bl_2,\ldots,\bl_k).$$
Since $j$ is an elementary embedding, 
$${C^*}\models\Phi(\beta_1,\ldots,\beta_l,\bl_1,\bl_2,\ldots,\bl_k).$$
Claim 1 is proved. 

\medskip

\noindent{\bf Claim 2:} If $\l_1<\ldots<\l_k$  are Woodin cardinals, $\kappa_1<\ldots<\kappa_k$ are regular cardinals $>\aleph_1$, $\l_1>\max(\kappa_1,\ldots,\kappa_k)$, and $\beta_1,\ldots,\beta_l<\kappa_1$, then 
$${C^*}\models\Phi(\beta_1,\ldots,\beta_l,\kappa_1,\ldots,\kappa_k)\leftrightarrow
\Phi(\beta_1,\ldots,\beta_l,\l_1,\ldots,\l_k)$$ for all formulas $\Phi(x_1,\ldots,x_l,y_1,\ldots,y_k)$ of set theory. 
\medskip

We use induction on $k$ to prove the claim. The case $k=0$ is clear. Let us assume the claim for $k-1$. Using $\omega$-stationary tower forcing we can find $$j:V\to M, M^\omega\subseteq M, j(\kappa_1)=\l_1, j(\l_i)=\l_i\mbox{ for $i>1$}.$$
Now we use the Claim to prove the theorem. 
Suppose now $${C^*}\models\Phi(\beta_1,\ldots,\beta_l,\kappa_1,\kappa_2,\ldots,\kappa_k).$$ By the induction hypothesis applied to $\kappa_2,\ldots,\kappa_k$ and $\l_2,\ldots,\l_k$,
$${C^*}\models\Phi(\beta_1,\ldots,\beta_l,\kappa_1,\l_2,\ldots,\l_k).$$
Since $j$ is an elementary embedding, 
$${C^*}\models\Phi(\beta_1,\ldots,\beta_l,\l_1,\l_2,\ldots,\l_k).$$
Claim 2 is proved.

The  theorem follows now immediately from Claim 2.
\end{proof}

Note that we cannot extend Theorem~\ref{ind} to $\aleph_1$, for $\aleph_1$ has the following property, recognizable in ${C^*}$, which no other uncountable cardinal has: it is has uncountable cofinality but all of its (limit) elements have countable cofinality.

\begin{theorem}\label{wepe}
If $V=L^\mu$, then ${C^*}$ is exactly the inner model $M_{\omega^2}[E]$, where $M_{\omega^2}$ is the $\omega^2$th iterate of $V$ and $E=\{\kappa_{\omega\cdot n}:n<\omega\}$.
\end{theorem}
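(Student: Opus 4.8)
The plan is to use the characterisation $C^*=L(\On_\omega)$, where $\On_\omega=\{\alpha:\cof^V(\alpha)=\omega\}$, together with the iterated ultrapower $\langle M_\beta, j_{\alpha\beta},\kappa_\beta\rangle$ of $M_0=V=L^\mu$ by its normal measure; here the critical sequence $\langle\kappa_\beta\rangle$ is continuous and increasing, hence club in $\On$, $M_{\omega^2}$ is the direct limit at $\omega^2$, and $\kappa_{\omega^2}=\sup E$ with $E=\{\kappa_{\omega n}:n<\omega\}$. First I would reduce the target to $C^*=L[E]$: since any two final segments of $E$ meet and $E$ is cofinal in $\kappa_{\omega^2}$ through the critical points that generate $U_{\omega^2}$, a set $X\in M_{\omega^2}$ lies in $U_{\omega^2}$ iff $X$ contains a final segment of $E$; thus $U_{\omega^2}$ is the trace on $M_{\omega^2}$ of the tail filter $F_E$ definable from $E$, so $M_{\omega^2}=L[U_{\omega^2}]=L[F_E]\subseteq L[E]$ and $M_{\omega^2}[E]=L[E]$.

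For the inclusion $M_{\omega^2}[E]\subseteq C^*$ it then suffices to show $E\in C^*$. Here I would invoke the earlier theorems: by Theorem~\ref{core} the Dodd--Jensen core model $K$ is contained in $C^*$ and is correctly computed there, and by (the proof of) Theorem~\ref{lofmu} some iterate $M_\lambda$ with $\lambda>\omega^2$ belongs to $C^*$. Working inside $C^*$, let $\kappa_0$ be the least measurable cardinal of $K$ (the genuine $\kappa_0$), and form $P=\{\rho\in(\kappa_0,\kappa_\lambda):\rho\text{ is regular in }K\text{ and }\rho\in\On_\omega\}$, which is available in $C^*$ since $\On_\omega$, $K$ and $\kappa_\lambda$ are. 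By the dichotomy behind Claims~1--2 in the proof of Theorem~\ref{lofmu} (a $K$-regular ordinal in this range is either a critical point $\kappa_\beta$ or has $V$-cofinality $\delta\neq\omega$), $P$ is exactly the set $\{\kappa_\beta:\beta<\lambda,\ \cof(\beta)=\omega\}$ of cofinality-$\omega$ limit critical points. Since the limit ordinals below $\omega^2$ of cofinality $\omega$ are precisely $\{\omega\cdot n:1\le n<\omega\}$, these index the first $\omega$ elements of $P$ in increasing order; taking that initial segment recovers $\{\kappa_{\omega n}:1\le n<\omega\}$, and adjoining $\kappa_0$ gives $E\in C^*$, whence $L[E]\subseteq C^*$.

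For the reverse inclusion $C^*\subseteq M_{\omega^2}[E]$, since $C^*=L(\On_\omega)$ it is enough to define the predicate $\On_\omega$ over $M_{\omega^2}[E]$. I would define it by recursion on $\alpha$: $\cof^V(\alpha)=\omega$ iff $\cof^{M_{\omega^2}}(\alpha)=\omega$, or the uncountable $M_{\omega^2}$-regular ordinal $c=\cof^{M_{\omega^2}}(\alpha)$ again has $V$-cofinality $\omega$. The base cases are the uncountable $M_{\omega^2}$-regular ordinals with $V$-cofinality $\omega$: above $\kappa_{\omega^2}$ these are the critical points $\kappa_\theta$ of the self-iteration of $M_{\omega^2}$ by $U_{\omega^2}$ (definable inside $M_{\omega^2}$), whose $V$-cofinality is that of the strictly smaller index $\theta$, so the recursion descends; below $\kappa_{\omega^2}$ the seed $E$ supplies exactly the cofinality-$\omega$ limit critical points, while the remaining cofinalities are read off from the indiscernibles of $0^\sharp$, which are present in $M_{\omega^2}$ because it carries a measurable. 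This recursion is well-founded and internal to $M_{\omega^2}[E]=L[E]$, so $\On_\omega\in M_{\omega^2}[E]$ and $L(\On_\omega)\subseteq M_{\omega^2}[E]$.

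The main obstacle, in both directions, is the cofinality fine-structure of $L^\mu$: one must verify precisely, via the iteration maps $j_{\gamma+1,\omega^2}$ and the measure on $\kappa_{\omega^2}$, that $M_{\omega^2}$ together with the seed $E$ reproduces the $V$-cofinality-$\omega$ relation on all ordinals, separating the genuine cofinality-$\omega$ critical points from the many $L$-regular ordinals that $0^\sharp$ singularises to cofinality $\omega$; dually one must confirm that ``regular in $K$ together with $V$-cofinality $\omega$'' isolates exactly the critical points. Making this recursion demonstrably internal to $M_{\omega^2}[E]$, and pinning down $\kappa_0$ as the least measurable of $K$ inside $C^*$, is where the real work lies.
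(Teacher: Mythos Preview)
Your reduction $M_{\omega^2}[E]=L[E]$ and the direction $M_{\omega^2}[E]\subseteq C^*$ are essentially right and match the paper's argument (you can work directly at $\kappa_{\omega^2}$ rather than passing to a larger $M_\lambda$, but this is cosmetic). The real problem is in the other direction, specifically in your base-case analysis of the recursion.

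Your recursion is sound in principle: since $\cof^V(\alpha)=\cof^V(\cof^{M_{\omega^2}}(\alpha))$, the question reduces to determining which \emph{$M_{\omega^2}$-regular} ordinals $c$ have $\cof^V(c)=\omega$. But your description of this base case is wrong in two places. First, your claim that above $\kappa_{\omega^2}$ the relevant ordinals are ``critical points $\kappa_\theta$ of the self-iteration of $M_{\omega^2}$'' is incorrect: those later critical points $\kappa_{\omega^2+1},\kappa_{\omega^2+2},\dots$ are \emph{not} regular in $M_{\omega^2}$ (for instance $\kappa_{\omega^2+1}$ has $M_{\omega^2}$-cofinality $(\kappa_{\omega^2}^+)^{M_{\omega^2}}$), so they never arise as values of $\cof^{M_{\omega^2}}$. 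The correct statement --- and this is the substantive content you are missing --- is that there are \emph{no} $M_{\omega^2}$-regular $c>\kappa_{\omega^2}$ with $\cof^V(c)<\kappa_0$ at all. Proving this requires showing that any such $c$ is covered by a bounded $M_{\omega^2}$-set built from a short $V$-sequence of functions $\kappa_0^{<\omega}\to\On$, contradicting $M_{\omega^2}$-regularity. Second, the appeal to $0^\sharp$ for ``remaining cofinalities'' below $\kappa_{\omega^2}$ is a red herring: $0^\sharp$ speaks about $L$-regularity, not about $V$-cofinalities inside $L^\mu$, and plays no role here.

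The paper's route is to prove directly, via a three-case lemma (below $\kappa_0$; between $\kappa_0$ and $\kappa_{\omega^2}$; above $\kappa_{\omega^2}$), that $\cof^V(\delta)=\omega$ iff $\cof^{M_{\omega^2}}(\delta)\in\{\omega\}\cup\{\kappa_\gamma:\gamma\le\omega^2\text{ limit}\}=\{\omega\}\cup E\cup\{\sup E\}$, and then to observe that adjoining the Prikry-generic $E$ changes only the cofinality of $\kappa_{\omega^2}$ itself. The middle case reuses the analysis behind Claim~2 of Theorems~\ref{core} and \ref{lofmu}; the upper case is exactly the nontrivial piece you flagged but then mis-described.
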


\begin{proof}
 In order to prove the theorem, we have to show that in $M_{\omega^2}$ we can recognize which ordinals have cofinality $\omega$ in $V$. The following lemma gives a general analysis about the relation between the cofinality of the ordinal in a universe and its cofinality in an iterated ultrapower of it. 
  
 \begin{lemma} Let $M$ be a transitive model of ZFC+GCH with a measurable cardinal $\kappa$ which is iterable. (Namely the iterated ultrapowers by a normal ultrafilter on $\kappa$ are all well founded.)  For $\beta\in On$ let $M_\beta$ be the $\beta$-th iterate of  $M$. Then for every ordinal $\delta\in M_\beta$ if  $\cof^M(\delta)<\kappa$ then either $\cof^{M_\beta}(\delta)=\cof^M(\delta)$ or there is a limit  $\gamma\leq\beta$ such that $\cof^{M_\beta}(\delta)=\cof^M(\gamma)$.
  \end{lemma}
  
  \begin{proof} Let $j_{\beta,\gamma}$ be the canonical embedding $j_{\beta\gamma}:M_\beta\rightarrow M_\gamma$ and let $\kappa_\beta=j_{0\beta}(\kappa)$. Let $\xi_\beta=(\kappa_\beta^+)^{M_\beta}$ and let $\eta=\cof^M(\xi_0)$.
As in claim 1 of the proof of Theorem 5.5 we can show by induction that $\cof^M(\xi_\beta)=\eta$. 
   \begin{claim} $\cof^M(\kappa_{\beta+1})=\eta$.
   \end{claim}
   \begin{proof}
   $M_{\beta+1}$ is the ultrapower of $M_\beta$ by a normal ultrafilter on $\kappa_\beta$. Since $M_\beta\models GCH$, it is well known that $ \cof^{M_\beta}(j_{\beta,\beta+1}(\kappa_\beta))=\cof^{M_\beta}(\xi_\beta)$ but since we have $\cof^M(\xi_\beta)=\eta $ we get $\cof^M(\kappa_{\beta+1})=\eta$.
   \end{proof}

   Without loss of generality we can assume that $\delta$ (in the formulation of the lemma) is regular in $M_\beta$. We distinguish several cases :
   \begin{description}
     \item[$\delta\leq\kappa$] We know that in this case the iterated ultrapower does not change that cofinality of  $\delta$. Hence $\cof^M(\delta)=\cof^{M_\beta}(\delta)$.
     \item[$\kappa<\delta\leq\kappa_\beta$] An argument like in the proof of claim 2 of the proof of Theorem 5.5 will show that either $\cof^{M}(\delta)=\eta$ or there is $\gamma\leq\beta$ such that $\delta=\kappa_\gamma$. The first case cannot occur since we assumed that $\cof^M(\delta)<\kappa<\eta$. In the second case, if $\gamma$ is successor or $0$ again we get by the previous claim that $\cof^M(\delta)\geq\kappa$,  contradicting again  the assumption. If $\gamma$ is limit, the lemma is verified.
     \item[$\kappa_\beta<\delta$] For simplifying notation let $j=j_{0\beta}$. Every ordinal in $M_\beta$ is of the form $j(F)(\kappa_{\gamma_0}\ldots \kappa_{\gamma_k})$ for some $\gamma_0<\ldots\gamma_k<\beta$ and  $F\in M$ an ordinal valued  function defined on  $\kappa^{k+1}$. In particular for every ordinal $\nu$ in $M_\beta$ there is a function $F\in M$, $F:\kappa^{<\omega}\rightarrow On$, such that $\nu\in j(F)"j(\kappa)^{<\omega}$. Since $\cof^M(\delta)<\kappa$ there is in $M$ an ordinal $\mu<\kappa$ and  a sequence  $\langle F_\eta |\eta<\mu\rangle$ such that for $\eta<\mu$ $F_\eta$ is a function from $\kappa^{<\omega}$ such that the union of the ranges of $\langle j(F_\eta)\cap\delta| \eta<\mu \rangle $ is cofinal in $\delta$ . But $\langle j(F_\eta)\cap\delta| \eta<\mu \rangle=j(\langle F_\eta |\eta<\mu\rangle)\in M_\beta$. But the union of the ranges is the union of $\mu$ sets each of cardinality $\leq j(\kappa)=\kappa_\beta$. In $M_\beta$ $\delta$ is a regular cardinal above $j(\kappa)$, hence this union is bounded in $\delta$. A contradiction.

      \end{description} \end{proof}

   \begin{corollary} If $V\models GCH$, $\kappa$ measurable,  then an ordinal has cofinality $\omega$ in $V$ iff its cofinality in $M_{\omega^2}$ is either $\omega$ or of the form $\kappa_\gamma$ for some limit  $\gamma\leq\omega^2$.
   \end{corollary}
   
   From the point of $M_{\omega^2}$ $E$ is a Prikry generic sequence with respect to the image  of $\mu$.  Hence the only cardinal of $M_{\omega^2}$ that changes its cofinality is $\kappa_{\omega^2}$. So in $M_{\omega^2}$ it is still true that ordinal has cofinality $\omega$ in $V$ iff its cofinality in $M_{\omega^2}[E]$ is in $\{\omega\}\cup E \cup \{\sup(E)\}$. It follows that $C^\ast\subseteq M_{\omega^2}[E]$.
   
   For the other direction, let $\mu'$ be the image of $\mu$ in $M_{\omega^2}$. By Theorem 5.5 we know that the Dodd-Jensen Core model, $K^{DJ}$ is the same as the Dodd-Jensen core model of $C^\ast$. $V=L^\mu$. Hence by claim  2 of the proof of theorem 5.6,  for  $\eta$ regular in $K^{DJ}$,  $\kappa<\eta\leq\kappa_{\omega^2}$, $\cof(\eta)=\omega$ iff $\eta=\kappa_\gamma$ for some $\gamma\leq\omega^2$. By the above lemma we know that for successor $\gamma$ the ordinal $\kappa_\gamma$ has cofinality $\kappa^+$. Hence $E$ is exactly the set of ordinals $\eta$ which are regular in $K^{DJ}$,  $\kappa\leq\eta\leq\kappa_{\omega^2}$ and $\cof(\eta)=\omega$. This shows that $E\in C^\ast$.
   
   It is well known  that if we define the filter $F$ on $\kappa_{\omega^2}$ generated by final segment of $E$ then $M_{\omega^2}=L^{\mu'}=L[F]$ \cite{MR0277346}. Therefore $M_{\omega^2}$ is a definable class in $C^\ast$. We conclude $M_{\omega^2}\subseteq C^\ast$ and hence finally, $M_{\omega^2}[E]\subseteq C^\ast$.
 \end{proof}

The situation is similar with the inner model for two measurable cardinals: To get ${C^*}$ we first iterate the first measurable $\omega^2$ times, then the second $\omega^2$ times, and in the end take two Prikry sequences.

We now prove the important property of $C^*$ that its truth is invariant under (set) forcing. We have to assume large cardinals because conceivably $C^*$ could satisfy $V=L$ but in a (set) forcing extension $C^*$ would violate $V=L$ (see Section~\ref{namba} below).

\begin{theorem}\label{515}
Suppose there is a proper class of Woodin cardinals. Suppose $\oP$ is a forcing notion and $G\subseteq\oP$ is generic. 
Then $$\Th(({C^*})^V)=\Th(({C^*})^{V[G]}).$$ 
Moreover, the theory $\Th({C^*})$ is independent of the cofinality used\footnote{I.e. $\Th(C^*)=\Th(C^*_{<\kappa})$ for all regular $\kappa$.}, and forcing does not change the reals of these models.

\end{theorem}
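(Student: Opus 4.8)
The plan is to reduce all three assertions to the single mechanism already used in Theorem~\ref{mahlo}: the countable stationary tower forcing turns $V$-cofinality $<\lambda$ into $M$-cofinality $\omega$. Fix the forcing $\oP$ and, using the proper class of Woodins, choose a Woodin cardinal $\lambda>|\oP|$; since $\lambda$ is inaccessible and $|\oP|<\lambda$, $\lambda$ remains Woodin in $V[G]$. Forcing over $V$ with the countable stationary tower $\Q_{<\lambda}$ of \cite{MR1359154} gives a generic $j\colon V\to M$ with $M^{\omega}\subseteq M$, $j(\omega_1)=\lambda$ and, for every ordinal $\alpha$, $\cof^M(\alpha)=\omega\iff\cof^V(\alpha)<\lambda$. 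As $C^*=C(Q^{\cof}_\omega)$ is a uniformly definable inner model whose only parameter is the fixed ordinal $\omega$, the embedding $j$ restricts to an elementary embedding $j\colon (C^*)^V\to (C^*)^M$; and by the argument of Lemma~\ref{nice}, together with the displayed cofinality equivalence, the predicate sets coincide and so $(C^*)^M=(C^*_{<\lambda})^V$. Hence $\Th((C^*)^V)=\Th((C^*_{<\lambda})^V)$. Since the critical point of $j$ is $\omega_1$, $j$ fixes every real, so a real lies in $(C^*)^V$ iff it lies in $(C^*)^M=(C^*_{<\lambda})^V$; thus $(C^*)^V$ and $(C^*_{<\lambda})^V$ have the same reals.

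Next I would show that $C^*_{<\lambda}$ is computed identically in $V$ and in $V[G]$. Because $|\oP|<\lambda$ and $\lambda$ is regular, $V$ and $V[G]$ have exactly the same ordinals of cofinality $<\lambda$: if $\cof^V(\alpha)<\lambda$ a short cofinal map survives into $V[G]$, while if $\cof^{V[G]}(\alpha)=\rho<\lambda$ then, covering a $V[G]$-cofinal map $\rho\to\alpha$ coordinatewise by nice names, one obtains a set in $V$ of size $\le\rho\cdot|\oP|<\lambda$ that is cofinal in $\alpha$, forcing $\cof^V(\alpha)<\lambda$. By the analogue of Lemma~\ref{nice} for $C^*_{<\lambda}$ this agreement gives $(C^*_{<\lambda})^V=(C^*_{<\lambda})^{V[G]}$ as one and the same class, in particular with the same reals.

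Running the tower argument of the first paragraph over $V[G]$ (legitimate, as $\lambda$ is still Woodin there) yields $\Th((C^*)^{V[G]})=\Th((C^*_{<\lambda})^{V[G]})$ and the matching equality of reals. Combining the three identifications,
\[
\Th\big((C^*)^V\big)=\Th\big((C^*_{<\lambda})^V\big)=\Th\big((C^*_{<\lambda})^{V[G]}\big)=\Th\big((C^*)^{V[G]}\big),
\]
and likewise the reals of $(C^*)^V$, of the fixed class $C^*_{<\lambda}$, and of $(C^*)^{V[G]}$ all coincide. This proves the forcing invariance of both $\Th(C^*)$ and the reals of $C^*$.

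Finally, for independence of the cofinality I would invoke the invariance just proved. Given a regular $\kappa$, let $G_\kappa$ be generic for the Levy collapse $\mathrm{Coll}(\omega,<\kappa)$; this collapses every $V$-cofinality $<\kappa$ to $\omega$ while preserving cofinalities $\ge\kappa$, so
\[
\{\alpha:\cof^{V[G_\kappa]}(\alpha)=\omega\}=\{\alpha:\cof^V(\alpha)<\kappa\},
\]
whence $(C^*)^{V[G_\kappa]}=(C^*_{<\kappa})^V$ by the argument of Lemma~\ref{nice}. Applying the forcing invariance to the set forcing $\mathrm{Coll}(\omega,<\kappa)$ gives $\Th((C^*)^V)=\Th((C^*)^{V[G_\kappa]})=\Th((C^*_{<\kappa})^V)$; as $\kappa$ was an arbitrary regular cardinal, $\Th(C^*)$ does not depend on the cofinality used. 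I expect the genuine obstacle to be the two cofinality computations carrying the large-cardinal content — the tower equivalence $\cof^M(\alpha)=\omega\iff\cof^V(\alpha)<\lambda$ (and the verification that $j$ restricts to an elementary embedding of $C^*$ fixing the reals) and the covering argument for $<\lambda$-agreement — rather than the bookkeeping that assembles them.
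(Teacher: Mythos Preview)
Your argument is correct and, for the main forcing-invariance claim, follows the same architecture as the paper: run the countable stationary tower over $V$ to get $\Th((C^*)^V)=\Th((C^*_{<\lambda})^V)$, observe that $(C^*_{<\lambda})^V=(C^*_{<\lambda})^{V[G]}$ since $|\oP|<\lambda$, and run the tower again over $V[G]$.

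There are two places where your route diverges slightly from the paper's, both in your favor. For the preservation of reals, the paper invokes Theorem~\ref{mahlo} to say that $C^*$ has only countably many reals and then argues that $j_1$ adds no new ones; your argument via elementarity and $\mathrm{crit}(j)=\omega_1$ is cleaner and does not need that input. For the cofinality-independence clause $\Th(C^*)=\Th(C^*_{<\kappa})$, the paper re-runs a stationary tower argument directly, this time choosing a tower with critical point $\kappa$ and $j(\kappa)=\lambda$ Woodin, so that $j$ restricts to an elementary map $(C^*_{<\kappa})^V\to (C^*_{<\lambda})^V$. Your reduction via $\mathrm{Coll}(\omega,<\kappa)$ is a genuinely different and more economical move: once forcing invariance is in hand, the Levy collapse converts $C^*_{<\kappa}$ into the $C^*$ of a generic extension, and you are done. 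The paper's approach has the mild advantage of giving the cofinality-independence without first establishing full forcing invariance, but yours avoids introducing a second tower variant.
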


\begin{proof}
 Let $G$ be $\oP$-generic.
Let us choose a Woodin cardinal $\lambda >|\oP|$. Let $H_1$ be generic for the countable stationary tower forcing $\Q_{<\lambda}$.  In $V[H_1]$ there is a generic embedding  $j_1:V\to M_1$ such that $V[H_1]\models M_1^\omega\subseteq M_1$ and $j(\omega_1)=\lambda$. Hence $({C^*})^{V[H_1]}=({C^*})^{M_1}$   and
$$j_1:({C^*})^{V}\to ({C^*})^{M_1}=({C^*})^{V[H_1]}=(C^*_{<\lambda})^{V}.$$
The last equality uses the fact that an ordinal has cofinality $\omega$ in $V[H_1]$ iff it has cofinality $<\lambda$ in $V$. Now by elementarity $\Th(({C^*})^V)=\Th((C^*_{<\lambda})^{V})$.

Since $|\oP|<\lambda$, $\lambda$ is still Woodin in $V[G]$. Let $H_2$ be generic for the countable stationary tower forcing $\Q_{<\lambda}$ over $V[G]$.   Let $j_2:V[G]\to M_2$ be the generic embedding. Now $V[G,H_2]\models M_2^{\omega}\subseteq M_2$ and $j_2(\omega_1)=\lambda$.
Hence $$j_2:({C^*})^{V[G]}\to ({C^*})^{M_2}=({C^*})^{V[G,H_2]}=(C^*_{<\lambda})^{V[G]}=(C^*_{<\lambda})^{V}.$$
and therefore by elementarity $({C^*})^V\equiv(C^*_{<\lambda})^V\equiv({C^*})^{V[G]}$.

   We know  (Theorem~\ref{mahlo}) that under the existence of a Woodin cardinal the set of reals of $C^\ast$ is countable (in $V$). Hence when we define $j_1:V\rightarrow M_1$ there are no new reals added to the $C^\ast$ of the corresponding models. Hence the reals of $(C^\ast)^V$ are the same as the reals of $(C^\ast)^{M_1}$. We argued that the last model is exactly $(C^\ast_{<\lambda})^V$. The same is true in $V[G]$. But $C^\ast_{<\lambda}$ does not change when we move from $V$ to $V[G]$. So $(C^\ast)^V$ and $(C^\ast)^{V[G]}$ have the same reals.
      
     The argument for the elementary equivalence of $C^\ast$ and $C^\ast_{<\kappa}$ for $\kappa$ regular, proceeds in a similar     manner. We use  stationary tower forcing which produces an elementary embedding $j$ with critical point $\kappa$ such that $j(\kappa)=\lambda$, where $\lambda$ is a Woodin cardinal above $\kappa$. Then we argue that $j(C^\ast_{<\kappa})$ is $(C^\ast_{<\lambda})^V$.
  
\end{proof}

We may ask, for which $\lambda$ and $\mu$ is $\cofmodel{<\l}=\cofmodel{<\mu}$?
Observations:

\begin{itemize}

%\item If $V=L^\mu$, then $C^*=\cofmodel{<\l}$ for all $\l$. This follows from Theorem~\ref{wepe}. 

\item It is possible that $C^*\ne\cofmodel{<\omega_2}$. Let us use the $\le\omega$-closed stationary tower  forcing of \cite[Section 1]{MR1359154} to map $\omega_3$ to $\lambda$. In this model $V_1$ the inner model ${C^*}$ is preserved. It is easy to see  that in the extension the set $A$ of ordinals  below $\l$ of cofinality $\omega_1$ is not in $V$. If $C^*=\cofmodel{<\omega_2}$, then $A$ is in $\cofmodel{<\omega_2}$. We are done.

\item It is possible that $C^*$ changes. Extend the previous model $V_1$ to $V_2$ by collapsing $\omega_1$ to $\omega$. Then $(C^*)^{V_2}=(\cofmodel{<\omega_2})^{V_1}\ne (C^*)^{V_1}$. So $C^*$ has changed.

\item Question: Does a Woodin cardinal imply $\cofmodel{<\omega_2}\ne\cofmodel{\omega,\omega_1}$?

\end{itemize}

%\subsection{The Continuum Hypothesis}

We do not know whether the CH is true or false in ${C^*}$. Forcing absoluteness of the theory of ${C^*}$ under the hypothesis of large cardinals implies, however, that large cardinals decide the CH in $C^*$ in forcing extensions. This would seem to give strong encouragement to try to solve the problem of CH in $C^*$. The situation is in sharp contrast to $V$ itself where we know that large cardinals definitely do not decide CH \cite{MR0224458}. 
We can at the moment only prove that the size of the continuum of $C^*$ is at most $\omega_2^V$. In the presence of a  Woodin cardinal this tells us absolutely nothing, as then $\omega_2^V$ is (strongly) Mahlo in ${C^*}$ (Theorem~\ref{mahlo}), and hence certainly far above the continuum of $C^*$. So the below result is mainly interesting because it is a provable result of ZFC, independent of whether we assume the existence of Woodin cardinals. 
However, we show later that in the presence of large cardinals 
there is a cone of reals $x$ such that the relativized version of ${C^*}$, 
$C^*\hspace{-.5mm}(x)$, satisfies $CH$. In the light of this it is tempting to conjecture that CH is indeed true in $C^*$, assuming again the existence of sufficiently large cardinals.

\begin{theorem}\label{5930a}
%Let $\kappa$ be a regular cardinal and $\delta$ an ordinal. Then
$|\P(\omega)\cap{C^*}|\le \aleph_2.$
\end{theorem}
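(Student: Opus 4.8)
The plan is to reduce the statement to the assertion that every real of ${C^*}$ already appears at some stage $<\omega_2$ of the hierarchy $(L'_\alpha)$ defining ${C^*}$, i.e. $\P(\omega)\cap{C^*}\subseteq\bigcup_{\gamma<\omega_2}L'_\gamma$. Granting this, the bound is immediate by counting: the usual induction gives $|L'_\gamma|\le|\gamma|+\aleph_0\le\aleph_1$ for $\gamma<\omega_2$ (only countably many $\L(Q^{\cof}_\omega)$-formulas and finite parameter tuples enter each successor step), so $|\bigcup_{\gamma<\omega_2}L'_\gamma|\le\aleph_2\cdot\aleph_1=\aleph_2$. So fix $x\in\P(\omega)\cap{C^*}$, say $x\in L'_{\alpha+1}$, and write $x=\{n<\omega:(L'_\alpha,\in)\models\phi(n,\vec b)\}$ with $\phi\in\L(Q^{\cof}_\omega)$ and $\vec b\in L'_\alpha$. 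The approach is a condensation argument, but it must be set up carefully because the collapse of an elementary submodel need not preserve the predicate $\On_\omega$.

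I would pick a regular $\theta$ with $L'_\alpha\in H_\theta$, fix a well-order $<^*$ of $H_\theta$, and build an $\in$-increasing continuous chain $\langle Y_\xi:\xi<\omega_1\rangle$ of countable elementary submodels of $(H_\theta,\in,<^*)$ with $x,\alpha,\vec b\in Y_0$ and, crucially, $\langle Y_\eta:\eta\le\xi\rangle\in Y_{\xi+1}$ for all $\xi$ (an internally approachable chain). Put $Y=\bigcup_{\xi<\omega_1}Y_\xi$, so $|Y|=\aleph_1$ and $\omega\subseteq Y$, and let $\pi:Y\cong\bar Y$ be the transitive collapse, with $\bar Y\cap\On<\omega_2$. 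Since $\omega\subseteq Y$ we have $\pi(x)=x$.

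The heart of the argument is that $\bar Y$ computes $\On_\omega$ correctly: for every $\delta\in\bar Y$, $\bar Y\models\cof(\delta)=\omega\iff\cof^V(\delta)=\omega$; equivalently, for every linear order $R\in Y$, $\cof^V(R)=\omega\iff\cof^V(\pi(R))=\omega$. One implication is easy: if $\cof^V(R)=\omega$ then by elementarity $Y$ contains a cofinal $\omega$-sequence in $R$, whose $\pi$-image witnesses $\cof^V(\pi(R))=\omega$. The delicate implication, which is the main obstacle, is that ordinals of uncountable $V$-cofinality must not collapse to cofinality $\omega$, and this is exactly where internal approachability is used. Suppose $R\in Y$ has $\cof^V(R)\ge\aleph_1$ and, toward a contradiction, that $\langle z_n:n<\omega\rangle$ is cofinal in $Y\cap\mathrm{field}(R)$. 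Each $z_n\in Y_{\xi_n}$, so all $z_n\in Y_\xi$ for $\xi=\sup_n\xi_n<\omega_1$. Now $Y_\xi\cap\mathrm{field}(R)$ is countable and $\cof^V(R)\ge\aleph_1$, so it is bounded in $R$; and since $\langle Y_\eta:\eta\le\xi\rangle\in Y_{\xi+1}$ and $R\in Y_{\xi+1}$, the model $Y_{\xi+1}$ sees such a bound $w\in\mathrm{field}(R)$. Then $w\in Y\cap\mathrm{field}(R)$ lies $R$-above every $z_n$, contradicting cofinality of $\langle z_n\rangle$. Hence $\cof^V(\pi(R))\ne\omega$, and the desired equivalence follows.

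With the predicate correctly computed, $\bar Y$ and $V$ agree on $\On_\omega$ for all ordinals of $\bar Y$, so by the level-by-level induction of Lemma~\ref{nice} the $C^*$-hierarchy built inside $\bar Y$ coincides with the true one below $\bar Y\cap\On$: $(L'_\beta)^{\bar Y}=L'_\beta$ for all $\beta<\bar Y\cap\On$. Applying $\pi$ to the statement ``$x=\{n:(L'_\alpha,\in)\models\phi(n,\vec b)\}$'', which is first order over $H_\theta$ via the set-theoretic predicate for $\cof=\omega$, elementarity gives in $\bar Y$ that $x=\pi(x)=\{n:(L'_{\bar\alpha},\in)\models\phi(n,\pi(\vec b))\}$ with $\bar\alpha=\pi(\alpha)$. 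Since $L'_{\bar\alpha}$ is a genuine level and the $\L(Q^{\cof}_\omega)$-satisfaction over it is absolute between $\bar Y$ and $V$ (all relevant linear orders lie in $L'_{\bar\alpha}\in\bar Y$, where cofinality $\omega$ is computed correctly), the displayed set is a true element of $L'_{\bar\alpha+1}$. Thus $x\in L'_{\bar\alpha+1}$ with $\bar\alpha+1<\omega_2$, which completes the reduction and hence the proof.
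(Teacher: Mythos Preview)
Your proof is correct and follows essentially the same route as the paper: build an $\omega_1$-chain of countable elementary submodels of a large $H_\theta$, arrange that the union $Y$ (hence its transitive collapse $\bar Y$) computes the predicate $\On_\omega$ correctly, and conclude that $(L'_\beta)^{\bar Y}=L'_\beta$ for $\beta<\bar Y\cap\On$, so the real already appears below $\omega_2$. The only cosmetic difference is your bookkeeping device: you use an internally approachable chain ($\langle Y_\eta:\eta\le\xi\rangle\in Y_{\xi+1}$) to guarantee that ordinals of uncountable $V$-cofinality keep uncountable cofinality after collapsing, whereas the paper spells this out as two explicit closure conditions on the chain (adding witnessing $\omega$-sequences for ordinals of cofinality $\omega$, and at unboundedly many stages adding an element strictly above the current supremum for ordinals of uncountable cofinality). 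These are interchangeable; your approachability argument that a putative cofinal $\omega$-sequence in $Y\cap\beta$ already lies in some $Y_\xi$, whose sup is visibly bounded from $Y_{\xi+1}$, is exactly the mechanism behind the paper's condition.
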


\begin{proof} We use the notation of Definition~\ref{defin}. 
%Let $\nu=\max(|\delta|,\kappa)$.
Suppose $a\subseteq\omega$ and $a\in L'_\xi$ for some $\xi$. Let $\mu>\xi$ be a sufficiently large cardinal. We  build an  increasing elementary chain $(M_\alpha)_{\alpha<\omega_1}$ such that 
\begin{enumerate}
\item $a\in M_0$ and $M_0\models a\in C^*$.
%\item $\{\delta,a\}\subseteq M_0$.
\item $|M_\alpha|\le\omega$.
\item $M_\alpha\prec H(\mu)$.

\item $M_\gamma=\bigcup_{\alpha<\gamma}M_\alpha$, if $\gamma=\cup\gamma$.

\item If $\beta\in M_\alpha$ and $\cof^V(\beta)=\omega$, then $M_{\alpha+1}$ contains 
 an $\omega$-sequence from $H(\mu)$, cofinal in $\beta$.
\item If $\beta\in M_\alpha$ and $\cof^V(\beta)>\omega$ then for unboundedly many $\gamma<\omega_1$ there is $\rho\in M_{\gamma+1}$ with
$$\sup(\bigcup_{\xi<\gamma}(M_\xi\cap\beta))<\rho<\beta.$$
\end{enumerate}
Let $M$ be $\bigcup_{\alpha<\omega_1}M_\alpha$,  $N$  the transitive collapse of $M$, and  $\zeta$ the ordinal $N\cap On$. Note that $|N|\le\omega_1$, whence $\zeta<\omega_2$. By construction, an ordinal in $N$ has  cofinality $\omega$ in $V$ if and only if it  has cofinality $\omega$ in $N$. Thus $(L'_\xi)^N=L'_\xi$ for all $\xi<\zeta$.  Since $N\models a\in C^*$, we have $a\in L'_\zeta$. The claim follows.
\end{proof}

The proof of Theorem~\ref{5930a} gives the following more general result:

\begin{theorem}\label{5930}Let $\kappa$ be a regular cardinal and $\delta$ an ordinal. Then
$$|\P(\delta)\cap{\cofmodel{\kappa}}|\le (|\delta|\cdot\kappa^+)^{+}.$$
\end{theorem}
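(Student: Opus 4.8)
The plan is to rerun the argument of Theorem~\ref{5930a}, adjusting the length of the elementary chain and the size of its members to $\kappa$ and $\delta$. Write $\theta=|\delta|\cdot\kappa^+$ (so $\theta=\max(|\delta|,\kappa^+)$ when $\delta$ is infinite; the finite case is trivial). Fix $a\in\P(\delta)\cap\cofmodel{\kappa}$, say $a\in L'_\eta$, and a large regular $\mu$. I would build a continuous increasing elementary chain $(M_\alpha)_{\alpha<\kappa^+}$ of submodels $M_\alpha\prec H(\mu)$, each of size $|\delta|\cdot\kappa$, with $(\max(\kappa,\delta)+1)\cup\{a,\eta\}\subseteq M_0$ and $M_0\models a\in\cofmodel{\kappa}$. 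The two closure requirements replacing properties (5)--(6) of Theorem~\ref{5930a} are: (a) if $\beta\in M_\alpha$ and $\cof^V(\beta)\le\kappa$, then $M_{\alpha+1}$ contains a cofinal map of length $\cof^V(\beta)$ into $\beta$; (b) if $\beta\in M_\alpha$ and $\sup(M_\alpha\cap\beta)<\beta$, then $M_{\alpha+1}$ contains some $\rho$ with $\sup(M_\alpha\cap\beta)<\rho<\beta$. At each successor stage there are at most $|\delta|\cdot\kappa$ ordinals to treat, and each requirement adds at most $\kappa$ points, so the sizes stay $\le|\delta|\cdot\kappa$; hence $M=\bigcup_{\alpha<\kappa^+}M_\alpha$ has size $\le\kappa^+\cdot|\delta|\cdot\kappa=\theta$.

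Let $N$ be the transitive collapse of $M$, with $\pi\colon M\cong N$ and $\zeta=N\cap\On$; then $\zeta<\theta^+$. Since $\delta\subseteq M$ and $\kappa\subseteq M$, the map $\pi$ is the identity below $\delta$ and fixes $\kappa$, so $\pi(a)=a$ and $N\models a\in\cofmodel{\kappa}$. The heart of the argument is the cofinality-correctness of $N$: for every $\beta\in M$, writing $\bar\beta=\pi(\beta)=\mathrm{ot}(M\cap\beta)$, I claim
$$\cof^V(\bar\beta)=\kappa\iff\cof^V(\beta)=\kappa\iff\cof^N(\bar\beta)=\kappa.$$
The second equivalence is immediate from $M\prec H(\mu)$ together with the fact that $\pi$ fixes $\kappa$. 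For the first, requirements (a)--(b) yield a trichotomy. If $\cof^V(\beta)\le\kappa$, then by (a) the set $M\cap\beta$ is cofinal in $\beta$ and $\cof^V(\bar\beta)=\cof^V(\beta)$. If $\cof^V(\beta)>\kappa$, then either $M\cap\beta$ is cofinal in $\beta$, whence again $\cof^V(\bar\beta)=\cof^V(\beta)\ne\kappa$, or it is bounded in $\beta$, in which case (b) together with continuity of the chain forces the suprema $\sup(M_\alpha\cap\beta)$ to form a strictly increasing continuous sequence of length $\kappa^+$, so $\cof^V(\bar\beta)=\cof(\kappa^+)=\kappa^+\ne\kappa$.

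With cofinality $\kappa$ computed correctly in $N$, the proof of Lemma~\ref{nice} (by induction on $\xi$) gives $(L'_\xi)^N=L'_\xi$ for all $\xi<\zeta$, so $(\cofmodel{\kappa})^N=L'_\zeta$. Since $N\models a\in\cofmodel{\kappa}$ and $\pi(a)=a$, I conclude $a\in L'_\zeta$ with $\zeta<\theta^+$. Thus every member of $\P(\delta)\cap\cofmodel{\kappa}$ lies in $L'_{\theta^+}=\bigcup_{\zeta<\theta^+}L'_\zeta$; as $\L(Q^{\cof}_\kappa)$ has only countably many formulas (with the single parameter $\kappa$) applied to finite tuples of parameters, $|L'_{\theta^+}|=\theta^+$, whence $|\P(\delta)\cap\cofmodel{\kappa}|\le\theta^+=(|\delta|\cdot\kappa^+)^+$.

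The step that needs the most care is the cofinality-correctness claim in the regime $\kappa<\cof^V(\beta)\le|\delta|\cdot\kappa$, which does not occur in Theorem~\ref{5930a}: there $\kappa=\omega$ and the countable models are automatically bounded in every $\beta$ of uncountable cofinality, so only the analogue of (b) is ever invoked. Here a model $M_\alpha$ of size $|\delta|\cdot\kappa$ may be cofinal in such a $\beta$, so I must genuinely split into the ``cofinal'' and ``bounded'' subcases and verify that in the bounded subcase (b) fires at every successor stage, delivering cofinality exactly $\kappa^+$. I would also check that (a) for $\cof^V(\beta)<\kappa$ is compatible with (b) --- once (a) makes $M\cap\beta$ cofinal, the hypothesis $\sup(M_\alpha\cap\beta)<\beta$ of (b) fails --- and that $\kappa\subseteq M$ is used in both the capture of the cofinal $\kappa$-sequences and the second displayed equivalence.
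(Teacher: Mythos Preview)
Your proof is correct and is precisely the generalization the paper has in mind; the paper itself gives no separate argument for Theorem~\ref{5930}, merely remarking that the proof of Theorem~\ref{5930a} yields it. Your identification of the extra subcase $\kappa<\cof^V(\beta)\le|\delta|\cdot\kappa$ (where $M\cap\beta$ may happen to be cofinal even though $\cof^V(\beta)>\kappa$) is exactly the new wrinkle that does not arise when $\kappa=\omega$ and the chain members are countable, and your handling of it is clean.
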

\begin{corollary}\label{lsthm} If $\delta\ge\kappa^+$ is a cardinal in $\cofmodel{\kappa}$ and $\l=|\delta|^+$, then $\cofmodel{\kappa}\models 2^\delta\le\l$. 
\end{corollary}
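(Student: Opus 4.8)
The plan is to derive the bound from Theorem~\ref{5930}, but paying close attention to the distinction between cardinalities computed in $V$ and in $\cofmodel{\kappa}$. First I would carry out the cardinal arithmetic: since $\delta\ge\kappa^+$ as ordinals and $\kappa^+$ is a cardinal (here $\kappa$ is regular), we have $|\delta|\ge|\kappa^+|=\kappa^+$, so $|\delta|\cdot\kappa^+=|\delta|$ and hence $(|\delta|\cdot\kappa^+)^+=|\delta|^+=\lambda$. Thus Theorem~\ref{5930} yields $|\P(\delta)\cap\cofmodel{\kappa}|\le\lambda$, where this cardinality is computed in $V$.

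The main point, and what I expect to be the main obstacle, is that this external cardinality bound does not by itself give the internal statement $\cofmodel{\kappa}\models 2^\delta\le\lambda$: a priori $\cofmodel{\kappa}$ could assign $\P(\delta)$ a cardinality $\theta$ with $\lambda<\theta<(\lambda^+)^V$, since nothing established so far forces $\cofmodel{\kappa}$ to compute $\lambda^+$ correctly. To bridge this gap I would extract from the proof of Theorem~\ref{5930} (which refines that of Theorem~\ref{5930a}) the stronger conclusion that every $a\in\P(\delta)\cap\cofmodel{\kappa}$ already appears at a bounded construction level: the transitive collapse $N$ of the elementary chain built there has $|N|\le|\delta|\cdot\kappa^+=|\delta|$, so its ordinal height $\zeta$ satisfies $\zeta<|\delta|^+=\lambda$ and $a\in L'_\zeta$. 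Hence $\P(\delta)\cap\cofmodel{\kappa}\subseteq L'_\lambda$.

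Next I would observe that $\lambda$, being a cardinal of $V$, remains a cardinal of the inner model $\cofmodel{\kappa}$, since any collapsing bijection belonging to $\cofmodel{\kappa}$ would already lie in $V$. Then I would bound the internal size of the level $L'_\lambda$: because the logic $\L(Q^{\cof}_\kappa)$ has only finitely many free variables and finitary, G\"odel-numberable formulas over a finite vocabulary, each successor stage satisfies $|L'_{\alpha+1}|^{\cofmodel{\kappa}}\le\max(|L'_\alpha|^{\cofmodel{\kappa}},\aleph_0)$ (there are countably many formulas and at most $|L'_\alpha|^{<\omega}$ parameter tuples), so by transfinite induction $|L'_\alpha|^{\cofmodel{\kappa}}\le\max(|\alpha|^{\cofmodel{\kappa}},\aleph_0)$. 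In particular $|L'_\lambda|^{\cofmodel{\kappa}}\le|\lambda|^{\cofmodel{\kappa}}=\lambda$.

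Finally, combining these observations, inside $\cofmodel{\kappa}$ we have $\P(\delta)=\P(\delta)\cap\cofmodel{\kappa}\subseteq L'_\lambda$, so $(2^\delta)^{\cofmodel{\kappa}}\le|L'_\lambda|^{\cofmodel{\kappa}}\le\lambda$, which is exactly $\cofmodel{\kappa}\models 2^\delta\le\lambda$. The delicate step is the second paragraph: justifying that the witnesses lie below level $\lambda$, that is, reading the required bound on the collapse height out of the construction in Theorem~\ref{5930} rather than merely invoking its stated cardinality conclusion, and then using the preservation of $\lambda$ as a cardinal of $\cofmodel{\kappa}$ to turn a level bound into the internal power-set bound.
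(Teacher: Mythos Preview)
Your proof is correct and, in fact, more careful than the paper itself: the paper states this as an immediate corollary of Theorem~\ref{5930} with no argument given. You are right that the external bound $|\P(\delta)\cap\cofmodel{\kappa}|^V\le\lambda$ does not literally yield the internal statement, since a priori $(2^\delta)^{\cofmodel{\kappa}}$ could be a $\cofmodel{\kappa}$-cardinal strictly between $\lambda$ and $(\lambda^+)^V$; your two-step fix (first $\P(\delta)\cap\cofmodel{\kappa}\subseteq L'_\lambda$, then $|L'_\lambda|^{\cofmodel{\kappa}}\le\lambda$) is exactly what is needed.

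One small point worth making explicit: in your inductive bound $|L'_{\alpha+1}|^{\cofmodel{\kappa}}\le\max(|L'_\alpha|^{\cofmodel{\kappa}},\aleph_0)$, the surjection from $\omega\times(L'_\alpha)^{<\omega}$ onto $L'_{\alpha+1}$ must itself lie in $\cofmodel{\kappa}$, not merely in $V$, and likewise the sequence $\langle L'_\alpha:\alpha<\lambda\rangle$ must be available in $\cofmodel{\kappa}$ for the limit step. Both hold because $\L(Q^{\cof}_\kappa)$ is adequate to truth in itself (Definition~\ref{4r67} and Proposition~\ref{wpoa}): the formula $\sat_{\L^*}$ lets one define the surjection over a later level, and the formulas $\Phi_{\L^*},\Psi_{\L^*}$ uniformly pick out the levels and their canonical well-orders. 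You implicitly rely on this, and it is worth saying so, since without it the inductive step would only be an argument in $V$.
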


\begin{corollary}
Suppose $V=C^*$. Then $2^{\aleph_\a}=\aleph_{\a+1}$ for $\a\ge 1$, and $2^{\aleph_0}=\aleph_1$ or  $2^{\aleph_0}=\aleph_2$.
\end{corollary}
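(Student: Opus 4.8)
The plan is to read off both clauses directly from the two cardinal-arithmetic bounds already established, specializing to $\kappa=\omega$ (recall $C^*=\cofmodel{\omega}$) and using the hypothesis $V=C^*$ to identify the ambient cardinals with those computed inside $C^*$. No new machinery should be needed; the whole content is an instantiation of Corollary~\ref{lsthm} together with Theorem~\ref{5930a}.

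First, for the case $\alpha\ge 1$ I would invoke Corollary~\ref{lsthm} with $\kappa=\omega$, so that $\kappa^+=\aleph_1$. Fix $\alpha\ge 1$ and set $\delta=\aleph_\alpha$. Since $V=C^*$, the cardinal $\aleph_\alpha$ is a cardinal of $C^*=\cofmodel{\omega}$ and satisfies $\delta\ge\aleph_1=\kappa^+$; moreover $|\delta|=\aleph_\alpha$ because $\delta$ is itself a cardinal, so $\lambda=|\delta|^+=\aleph_{\alpha+1}$. Corollary~\ref{lsthm} then yields $C^*\models 2^{\aleph_\alpha}\le\aleph_{\alpha+1}$, which under $V=C^*$ is simply $2^{\aleph_\alpha}\le\aleph_{\alpha+1}$. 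Combining this with Cantor's inequality $2^{\aleph_\alpha}\ge\aleph_{\alpha+1}$, valid in any model of ZFC, gives the equality $2^{\aleph_\alpha}=\aleph_{\alpha+1}$ for all $\alpha\ge 1$.

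For the continuum I cannot apply Corollary~\ref{lsthm}, since its hypothesis $\delta\ge\kappa^+=\aleph_1$ excludes $\delta=\aleph_0$; this is the one genuinely separate point and the step I expect to be the main obstacle, in that it forces us to fall back on a different bound. Here I would appeal to Theorem~\ref{5930a}, which asserts $|\P(\omega)\cap C^*|\le\aleph_2$. Under $V=C^*$ we have $\P(\omega)\cap C^*=\P(\omega)$ and $\aleph_2^V=\aleph_2^{C^*}$, so the bound reads $2^{\aleph_0}=|\P(\omega)|\le\aleph_2$; together with $2^{\aleph_0}\ge\aleph_1$ this forces $2^{\aleph_0}$ to be either $\aleph_1$ or $\aleph_2$, completing the argument. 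The only subtlety worth recording is the coincidence of cardinals between $V$ and $C^*$: both bounds are stated with cardinality and $\aleph$'s computed in $V$, but the assumption $V=C^*$ makes these computations internal to $C^*$ as well, so no absoluteness argument beyond the identification $V=C^*$ is required.
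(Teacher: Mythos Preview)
Your proposal is correct and matches the paper's intended argument: the paper gives no explicit proof for this corollary, treating it as an immediate consequence of Theorem~\ref{5930a} and Corollary~\ref{lsthm}, which is exactly the route you take.
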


\begin{theorem}\label{5930b}
%Suppose $\aleph_2^V$ is not inaccessible in ${C^*}$. Then 
Suppose $E=\{\alpha<\omega_2^V:\cof^V(\alpha)=\omega_1^V\}$. Then
$\diamondsuit_{\aleph^V_2}(E)$ holds in ${C^*}$.
\end{theorem}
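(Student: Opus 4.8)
The plan is to run Jensen's condensation proof of $\diamondsuit$ inside $C^*$, using the elementary-chain/collapse machinery already developed for Theorem~\ref{5930a} to supply the required condensation. First I would record two preliminaries. (i) $E\in C^*$ and $E$ is stationary in $\omega_2^V$ in the sense of $C^*$: every $\alpha$ with $\omega_1^V\le\alpha<\omega_2^V$ has $\cof^V(\alpha)\in\{\omega,\omega_1\}$, so $E=\{\alpha:\omega_1^V\le\alpha<\omega_2^V,\ \alpha\text{ limit},\ \cof^V(\alpha)\ne\omega\}$ is definable from $\omega_1^V,\omega_2^V$ and the predicate $\On_\omega$, hence lies in $C^*=L(\On_\omega)$; and since any club of $\omega_2^V$ lying in $C^*$ is a genuine club in $V$, stationarity of $E$ in $V$ yields stationarity in $C^*$. (ii) Let $<^*$ be the canonical well-order of $C^*$ coming from $\hat\Psi_{\L(Q^{\cof}_\omega)}$ (Proposition~\ref{wpoa}); I would use that it is compatible with the hierarchy, i.e.\ $x<^*y$ implies that $x$ first appears in $(L'_\alpha)$ no later than $y$, because the canonical well-order orders first by level of appearance.

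Next I would define the candidate sequence in $C^*$ by the usual minimal-counterexample recursion: for $\alpha\in E$ let $(A_\alpha,C_\alpha)$ be the $<^*$-least pair with $A_\alpha\subseteq\alpha$, $C_\alpha$ club in $\alpha$, and $A_\beta\ne A_\alpha\cap\beta$ for all $\beta\in C_\alpha\cap E$, if such a pair exists in $C^*$, and $(\emptyset,\emptyset)$ otherwise. Then $\langle A_\alpha:\alpha\in E\rangle\in C^*$. Suppose toward a contradiction that it is not a $\diamondsuit_{\aleph_2^V}(E)$-sequence, and let $(A,C)\in C^*$ be the $<^*$-least counterexample, so $A,C\subseteq\omega_2^V$, $C$ is club, and $A\cap\alpha\ne A_\alpha$ for every $\alpha\in C\cap E$.

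The heart of the argument is to reflect $(A,C)$ to a point of $E$. Fix a large regular $\mu$ and, as in the proof of Theorem~\ref{5930a}, build a continuous increasing elementary chain $\langle M_i:i<\omega_1\rangle$ of submodels of $(H(\mu),\in,<^*)$ with $A,C,\langle A_\alpha\rangle,\omega_1^V,\omega_2^V\in M_0$, each $|M_i|=\omega_1$ and $\omega_1^V\subseteq M_0$, arranged so that each $M_i\cap\omega_2^V$ is an ordinal $\delta_i$, the $\delta_i$ are strictly increasing and continuous, and the two cofinality-closure clauses analogous to conditions (5) and (6) of Theorem~\ref{5930a} hold: whenever $\beta\in M_i$ has $\cof^V(\beta)=\omega$ a cofinal $\omega$-sequence is thrown into $M_{i+1}$, and whenever $\cof^V(\beta)>\omega$ new points below $\beta$ are added cofinally often along the chain. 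Put $M=\bigcup_iM_i$ and $\delta=M\cap\omega_2^V=\sup_i\delta_i$. Continuity of $\langle\delta_i\rangle$ gives $\cof^V(\delta)=\omega_1$, so $\delta\in E$; and since $C\cap M$ is cofinal in $\delta$ and $C$ is closed, $\delta\in C$. Let $\pi:M\to N$ be the transitive collapse. The closure clauses guarantee that $N$ computes $\cof^V(\cdot)=\omega$ correctly for its ordinals, so by Lemma~\ref{nice} (in the form used at the end of the proof of Theorem~\ref{5930a}) we get $(C^*)^N=L'_{\bar\theta}$ with $\bar\theta=N\cap\On<\omega_2^V$ a genuine initial segment; moreover $\pi\restriction\delta=\mathrm{id}$, $\pi(\omega_2^V)=\delta$, $\pi(A)=A\cap\delta$, $\pi(C)=C\cap\delta$ and $N$ computes $\langle A_\alpha:\alpha\in E\cap\delta\rangle$ correctly.

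Finally I would close the argument in the standard way. By elementarity, in $N$ the pair $(A\cap\delta,C\cap\delta)$ is the $<^*$-least counterexample at $\delta$. Since $\delta\in C$, the restriction $(A\cap\delta,C\cap\delta)$ is genuinely a counterexample at $\delta$, so the true value satisfies $A_\delta\le^* A\cap\delta$; as $A\cap\delta=\pi(A)\in(C^*)^N=L'_{\bar\theta}$, the compatibility of $<^*$ with the hierarchy forces every $<^*$-predecessor of $A\cap\delta$ — in particular $A_\delta$ — to lie in $L'_{\bar\theta}=(C^*)^N$. Because ``being a counterexample at $\delta$'' refers only to $\langle A_\alpha:\alpha<\delta\rangle$, $\On_\omega\cap\delta$ and subsets of $\delta$, all correctly computed in $N$, the model $N$ would recognize $A_\delta$ as a $<^*$-smaller counterexample unless $A_\delta=A\cap\delta$. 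Hence $A_\delta=A\cap\delta$, contradicting $\delta\in C\cap E$ together with the choice of $(A,C)$. I expect the main obstacle to be precisely the condensation step: verifying that the collapse $N$ computes cofinality $\omega$ correctly, so that $(C^*)^N$ is truly an initial segment $L'_{\bar\theta}$. This is why the construction must carry the cofinality-closure clauses of Theorem~\ref{5930a}, here along an $\omega_1$-chain so as to land inside $E$.
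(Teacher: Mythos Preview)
Your proposal is correct and follows essentially the same approach as the paper: define the diamond sequence by the Jensen minimal-counterexample recursion using the canonical well-order of $C^*$, take a least counterexample, and reflect it to a point of $E$ via an elementary submodel built with the cofinality-closure clauses from the proof of Theorem~\ref{5930a}, so that the transitive collapse computes the $C^*$-hierarchy correctly. Your write-up is in fact more explicit than the paper's in two places---the continuous $\omega_1$-chain construction guaranteeing $\delta\in E$, and the hierarchy-compatibility argument showing $(A_\delta,C_\delta)$ lies in $(C^*)^N$---where the paper instead restricts the recursion to $L'_{\aleph_2^V}$ from the outset and invokes a ``w.l.o.g.'' reduction.
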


%dec 31, 29.40

\begin{proof}The proof is as the standard proof of $\diamondsuit_{\aleph_2}(E)$ in $L$, with a small necessary patch. 
We  construct a sequence $s=\{(S_\alpha, D_\alpha):\alpha<\aleph_2^V\}$ taking always for limit $\alpha$ the pair $(S_\alpha, D_\alpha)$ to be the least $(S, D)\in L'_{\aleph^V_2}$  in the well-order (see Proposition~\ref{wpoa}) 
$$R=\{(a,b)\in (L'_{\aleph^V_2})^2\ : 
\ L'_{\aleph^V_2}\models \Psi_{\L(Q^{\cof}_{\omega})}(a,b)\}$$ 
such that  $S\subseteq\a$, $D\subseteq\a$ a club, and $S\cap\beta\ne S_\beta$ for $\beta\in D$, if any exists, and $S_\a=D_\a=\a$ otherwise. 
Note that $s\in C^*$. We show that the sequence $s$ is a diamond sequence in $C^*$. Suppose it is not and $(S,D)\in C^*$ is a counter-example, $S\subseteq\aleph_2^V$ and $D\subseteq\aleph_2^V$ club such that $S\cap\beta\ne S_\beta$ for all $\beta\in D$. As in the proof of Theorem~\ref{5930a} we can construct $M\prec H(\mu)$ such that $|M|=\aleph^V_1$, the order-type of $M\cap \aleph_2^V$ is in $E$, $\{s,(S,D)\}\subset M$, and if $N$ is the transitive collapse of $M$, with ordinal $\delta\in E$, then $\{s\restriction\delta,(S\cap\delta,D\cap\delta)\}\subset N$
and $(L'_\xi)^N=L'_\xi$ for all $\xi<\delta$. Because of the way $M$ is constructed, the well-order $R$ restricted to $L'_\delta$ is defined in $M$ on $L'_{\aleph^V_2}$ by the same formula 
${\Psi}_{\L(Q^{\cf}_\kappa)}(x,y)$ as $R$ is defined on $L'_{\aleph^V_2}$ in $H(\mu)$.
Since $S\cap\delta\in L'_{\aleph^V_2}$ and $S\cap\beta\ne S_\beta$ for $\beta\in D\cap\delta$, we may  assume, w.l.o.g., that
$(S,D)\in L'_{\aleph^V_2}$. Furthermore, we may assume, w.l.o.g.,  that $(S,D)$ is the $R$-least counter-example to $s$ being a diamond sequence. Thus the pair $(S\cap\delta,D\cap\delta)$ is the $R$-least $(S', D')$ such that  $S'\subseteq\delta$, $D'\subseteq\delta$ a club, and $S'\cap\beta\ne S'_\beta$ for $\beta\in D'$. It follows that  $(S',D')=(S_\delta,D_\delta)$ and, since $\delta\in D$, a contradiction.

\end{proof}

A problem in using condensation type arguments, such as we used in the proofs of Theorem~\ref{5930a} and Theorem~\ref{5930b} above, is the non-absoluteness of 
${C^*}$. There is no reason to believe that $({C^*})^{{C^*}}=C^*$ in general
 (see Theorem~\ref{2479}). Moreover, we prove in Theorem~\ref{chfalse} the consistency of $C^*$ failing to satisfy CH, relative to the consistency of an inaccessible cardinal.

We now prove that CH holds in $\cy$ for a cone of $y$. By $\cy$ we mean the extension of $C^*$ in which the real $y$ is allowed as a parameter throughout the construction. 

Suppose $N$ is a well-founded model of ${\zfc}^-$ and $N$ thinks that $\l\in M$ is a Woodin cardinal. We say that $N$ is {\bf iterable}, if  all countable iterations of forming  generic ultrapowers of $N$ by  stationary tower forcing at $\l$ are well-founded. If $N\prec H(\theta)$  for large enough $\theta$ and $N$ contains a  measurable cardinal  (of $V$) above the Woodin cardinal, then it is iterable for the following reason: Suppose $\alpha<\omega_1$.
Suppose $N^*$ is an iteration of $N$ at the measurable cardinal until $\alpha\le N^*\cap \On$.
This is well-founded because it can be embedded into a long enough iteration of $V$ at the measurable cardinal.   It is well known that  an iteration of length $\alpha\le N^*\cap \On$ of forming generic ultrapowers of $N^*$  by stationary tower forcing  is  well founded (see e.g. \cite[Lemma 4.5]{MR2723878}). Now the iteration of forming generic ultrapowers of $N$ by stationary tower forcing can be embedded to the corresponding iteration of $N^*$. Since the latter iteration is well-founded, so is the former.

%\end{definition}
%
We use the notation $L'_\a(y)$ for the levels of the construction of ${\cy}$.

\begin{lemma}\label{sublemma}Suppose there is  Woodin cardinal and a measurable cardinal above it.
Suppose $a\subseteq\gamma<\omega_1^V$.
Then the following conditions are equivalent:

\begin{description}
\item[(i)]  $a\in C^*\hspace{-.5mm}(y)$. 

\item[(ii)]  There is a countable transitive iterable model $N$ of $ZFC^-$+``{\it there is a Woodin cardinal}" such that  $\{a,y\}\subset N$, $\gamma<\omega_1^N$, and $N\models ``a\in {\cy}"$.
\end{description}

\end{lemma}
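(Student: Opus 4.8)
The plan is to prove the two implications separately; the direction (i)$\Rightarrow$(ii) is a Löwenheim--Skolem argument, and the substance lies in (ii)$\Rightarrow$(i).

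For (i)$\Rightarrow$(ii) I would assume $a\in\cy$, fix (by hypothesis) a Woodin cardinal $\lambda$ and a measurable $\mu>\lambda$, and build for large regular $\theta$ a countable $N_0\prec H(\theta)$ with $\{a,y,\gamma,\lambda,\mu\}\subset N_0$. Letting $N$ be the transitive collapse of $N_0$, the collapse fixes $a,y,\gamma$ (they are hereditarily countable and lie below $\omega_1$), and since ``$a\in\cy$'' is expressible by a first-order formula of set theory true in $H(\theta)$, elementarity gives $N\models a\in\cy$; similarly $N\models{\zfc}^-+{}$``there is a Woodin cardinal'' (the collapse of $\lambda$) and $\gamma<\omega_1^N$. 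Because $N_0$ contains the measurable $\mu$ above $\lambda$, the model $N$ is iterable in the sense defined above. This is exactly (ii).

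For (ii)$\Rightarrow$(i) I would iterate the given $N$ through $\omega_1^V$ steps, at each successor forming the generic ultrapower by the countable stationary tower $\Q_{<\lambda}$ at the image of the Woodin cardinal and taking direct limits at limits; iterability makes each iterate $N_\xi$ well-founded, hence transitive. Writing $j_{\xi\eta}\colon N_\xi\to N_\eta$ for the embeddings and $\kappa_\xi=\omega_1^{N_\xi}=\mathrm{crit}(j_{\xi,\xi+1})$, so that $\kappa_{\xi+1}=j_{\xi,\xi+1}(\kappa_\xi)$, each $N_\xi$ with $\xi<\omega_1^V$ is countable, so $\langle\kappa_\xi:\xi<\omega_1^V\rangle$ is a strictly increasing, continuous sequence of countable ordinals and the final model $N^*=N_{\omega_1^V}$ satisfies $\omega_1^{N^*}=\omega_1^V$. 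As $\gamma<\omega_1^N=\mathrm{crit}(j_{0,\omega_1^V})$ and $y$ is a real, $j_{0,\omega_1^V}$ fixes $a$ and $y$, whence by elementarity $N^*\models a\in\cy$. To conclude $a\in\cy$ in $V$ it suffices to show $(\cy)^{N^*}\subseteq\cy$; since $\cy=L(\On_\omega,y)$ depends only on the predicate $\On_\omega$ of ordinals of cofinality $\omega$ and on $y$, the relativized form of Lemma~\ref{nice} reduces this to the single assertion that \emph{$N^*$ computes $\On_\omega$ correctly}: for all $\delta\in N^*\cap\On$, $\cof^{N^*}(\delta)=\omega\iff\cof^V(\delta)=\omega$. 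Granting this, an induction gives $(L'_\xi(y))^{N^*}=L'_\xi(y)$ for all $\xi\in N^*\cap\On$, so that $a\in(L'_\xi(y))^{N^*}$ yields $a\in L'_\xi(y)\subseteq\cy$.

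The correctness of $N^*$ is the main obstacle. One direction is immediate: a cofinal $\omega$-sequence witnessing $\cof^{N^*}(\delta)=\omega$ is, by transitivity of $N^*$, a genuine cofinal $\omega$-sequence in $V$. For the converse I would argue by contradiction, taking $\delta$ least with $\cof^V(\delta)=\omega$ but $\cof^{N^*}(\delta)=\theta>\omega$. Then $\theta\ge\omega_1^V$ is regular in $N^*$; fixing a $V$-cofinal $\omega$-sequence $\langle\eta_n\rangle$ in $\delta$ and an increasing cofinal $f\colon\theta\to\delta$ in $N^*$, each $\eta_n<f(\xi_n)$ with $\xi_n<\theta$, so if $\cof^V(\theta)>\omega$ then $\sup_n\xi_n<\theta$ bounds the $\eta_n$ strictly below $\delta$, a contradiction; hence $\cof^V(\theta)=\omega$, and minimality forces $\theta=\delta$, i.e.\ $\delta$ is regular in $N^*$. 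The remaining case --- $\delta$ regular in $N^*$ with $\cof^V(\delta)=\omega$ --- is where the special features of the stationary tower are essential: each step is an $\omega$-closed generic ultrapower (\cite{MR1359154}), so an ordinal has cofinality $\omega$ in $N_{\xi+1}$ precisely when it had cofinality $<\lambda^{N_\xi}$ in $N_\xi$, and tracking a preimage $\bar\delta_\xi$ of $\delta$ through the iteration, together with the analysis of $\cof^V$ along iterated ultrapowers as in the cofinality lemma used for Theorem~\ref{wepe}, rules it out. I expect this cofinality bookkeeping along the $\omega_1^V$-length iteration to be the technical heart of the argument.
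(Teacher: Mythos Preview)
Your direction (i)$\Rightarrow$(ii) is correct and matches the paper's argument exactly.

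For (ii)$\Rightarrow$(i), your setup of the length-$\omega_1^V$ iteration and the conclusion $\omega_1^{N^*}=\omega_1^V$ are right, as is the reduction to showing that $N^*$ and $V$ agree about cofinality $\omega$ on the relevant ordinals. The gap is in how you handle this last point. You set yourself the task of proving full cofinality-correctness of $N^*$ on \emph{all} its ordinals, and your sketch for the residual case (``$\delta$ regular in $N^*$ with $\cof^V(\delta)=\omega$'') is not an argument: the appeal to the $\le\omega$-closed stationary tower of \cite{MR1359154} is a confusion---that is a different forcing from the countable stationary tower $\Q_{<\lambda}$ being iterated here, and the latter does \emph{not} preserve uncountable cofinality (it sends $\omega_1$ to $\lambda$). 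The ``cofinality bookkeeping'' you allude to, modeled on Theorem~\ref{wepe}, concerns iterates of a measure, not generic ultrapowers by stationary towers over countable models, and does not obviously transfer.

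The paper avoids all of this with one observation you are missing: inside $N$, the real $a$ already appears at a level $\bar\beta<\omega_2^N$ of the $\cy$-hierarchy (this is the content of Theorem~\ref{5930a} applied in $N$). Hence in $N^*$ we only need $(L'_\beta(y))^{N^*}=(L'_\beta(y))^V$ for $\beta=j_{0,\omega_1}(\bar\beta)<\omega_2^{N^*}$. The point is that for every $\delta<\beta$, $N^*$ thinks $\cof(\delta)\in\{\omega,\omega_1\}$; and if $N^*\models\cof(\delta)=\omega_1$, then since $\omega_1^{N^*}=\omega_1^V$ has uncountable cofinality in $V$, the cofinal map $\omega_1^V\to\delta$ in $N^*$ forces $\cof^V(\delta)>\omega$ by the elementary argument you yourself give (a countable $V$-cofinal sequence would pull back to a bounded set in $\omega_1^V$). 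This two-line dichotomy replaces the entire ``technical heart'' you anticipated.
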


\begin{proof}
(i)$\to$(ii): Suppose first $a\in\cy$. 
%Then by the argument in the proof of Theorem~\ref{5930a}, $a\in L'_\beta(y)$ for some $\beta<\omega_2^V$.
 Pick a large enough $\theta$ and a countable $M\prec H_\theta$ such that $\gamma\cup\{\gamma,a,y\}\subset M$ and both the Woodin cardinal and the measurable above it are in  $M$. Then $M$ is iterable. Let $\pi:M\cong N$ with $N$ transitive. This $N$ is as required in (ii). In particular, $\pi(a)=a, \pi(y)=y$ and $N\models ``a\in {\cy}"$ since $M\prec H_\theta$.

(ii)$\to$(i): Suppose $N$ is as in (ii). Since $N\models ``a\in\cy"$, there is $\bar{\beta}<\omega_2^N$ such that $N\models a\in L'_{\bar{\beta}}(y)$. 
We  form an iteration sequence $\{N_\a:\a<\omega^V_1\}$ with elementary embeddings $\{\pi_{\a\beta}:\a<\beta<\omega_1\}$. Let $N_0=N$.   Let $N_{\g+1}$ be the transitive collapse of a generic ultrapower of the stationary tower on the image of $\lambda$ in $N_\g$. Let $\pi_{\a{\a+1}}$ be the canonical embedding $N_\a\to N_{\a+1}$.
For limit $\a\le\omega_1^V$, the model $N_\a$ is the transitive collapse of the direct limit of  the models $N_\beta$, $\beta<\a$, under the mappings $\pi_{\beta\g}$, $\beta<\g<\a$. By the iterability condition each $N_\a$ is well-founded, so the transitive collapse exists. 
Since $\pi_{0\g}(\omega_1^N)$ is extended in each step of this iteration of length $\omega_1$ of countable models, $\pi_{0\omega_1}(\omega_1^N)=\omega_1^V$. 
Moreover, 
$\pi_{0\omega_1}(y)=y$ and $\pi_{0\omega_1}(a)=a$, as $a\subseteq\gamma$ and $\gamma<\omega_1^N$. Now
by elementarity, $$N_{\omega_1}\models ``a\in L'_{\beta}(y)",$$ where
$\beta=\pi_{0\omega_1}(\bar{\beta})$.\medskip
 We now show \begin{equation}\label{sama}
(L'_\beta(y))^{N_{\omega_1}}=(L'_\beta(y))^{V}.
\end{equation}
This is proved level by level. If $N_{\omega_1}\models``\cof(\delta)=\omega"$, then of course $\cof(\delta)=\omega$. Suppose then $N_{\omega_1}\models``\cof(\delta)>\omega"$, where $\delta<\beta$. Let $\bar{\delta}<\bar{\beta}$ such that $\pi_{0\omega_1}(\bar{\delta})=\delta$. Then  $\bar{\delta}<\omega_2^N$. Thus $N\models\cof(\bar{\delta})=\omega_1$. By elementarity, $N_{\omega_1}\models\cof(\delta)=\omega_1$. But $\omega_1^{N_{\omega_1}}=\omega_1^V$. Hence $\cof(\delta)=\omega_1^V$. Equation~(\ref{sama}) is proved.

Now we can prove (i): Since $a\in (L'_\beta(y))^{N_{\omega_1}}$,  equation~(\ref{sama}) implies $a\in L'_\beta(y)\subset \cy$.  

\end{proof}

Note that condition (ii) above is a $\Sigma^1_3$-condition. Thus, if there is a Woodin cardinal and a measurable above, then the set of reals of $C^*$ is a countable $\Sigma^1_3$-set with a $\Sigma^1_3$-well-ordering.

\begin{lemma}\label{epppow}Suppose there is a Woodin cardinal and a measurable cardinal above it.
Then the following conditions are equivalent:

\begin{description}
\item[(i)]  $C^*\hspace{-.5mm}(y)\models CH$. 

\item[(ii)]  There is a countable transitive iterable model $M$ of $ZFC^-$ plus ``{\it there is a Woodin cardinal}" such that $y\in M$, $M\models ``{\cy}\models CH"$, and $(\P(\omega)\cap \cy)^M=\P(\omega)\cap \cy $.
\end{description}
\end{lemma}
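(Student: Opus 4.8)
The plan is to prove the two implications separately, reusing the two techniques already deployed in Lemma~\ref{sublemma}: a reflect-and-collapse argument for (i)$\to$(ii), and an iterate-and-compare argument for (ii)$\to$(i). Throughout I will use that, since there is a Woodin cardinal with a measurable above it, the set $\P(\omega)\cap\cy$ is countable (the remark following Lemma~\ref{sublemma}), and that by the relativizations of Theorems~\ref{5930a} and~\ref{5930} every real of $\cy$, and any witness to $\cy\models CH$, already appears in some $L'_\beta(y)$ with $\beta<\omega_2$. The same bounds hold inside any $ZFC^-$-model with a Woodin cardinal, since the relevant construction takes place on objects of size $\le\omega_1$, well below the Woodin.

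For (i)$\to$(ii), assume $\cy\models CH$. Fix a large regular $\theta$ and a countable $M_0\prec H_\theta$ containing $y$, the Woodin cardinal, the measurable above it, and all of the (countably many) reals of $\cy$. Let $\pi\colon M_0\cong M$ with $M$ transitive, and note $\pi$ fixes $\omega$ and hence every real in $M_0$, so $y\in M$ and $\P(\omega)\cap\cy\subseteq M$. Then $M$ is iterable by the argument given just before the lemma (its measurable-iteration embeds into that of $V$), and $M\models ZFC^-+$``there is a Woodin cardinal''. Since $H_\theta$ is closed under $\omega$-sequences it computes cofinality $\omega$, hence the hierarchy $(L'_\beta(y))$ and the reals of $\cy$, correctly; as the $CH$-witness of $\cy$ lies below $\omega_2^V<\theta$, we get $H_\theta\models$``$\cy\models CH$''. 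Transporting these facts back through $M\cong M_0\prec H_\theta$ (using $\pi(y)=y$ and that $\pi$ fixes reals) yields $M\models$``$\cy\models CH$'' and, for every real $a\in M$, that $M\models$``$a\in\cy$'' iff $a\in\cy$. With $\P(\omega)\cap\cy\subseteq M$ this gives $(\P(\omega)\cap\cy)^M=\P(\omega)\cap\cy$, so $M$ witnesses (ii).

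For (ii)$\to$(i), let $M$ be as in (ii) and iterate it by stationary tower forcing exactly as in Lemma~\ref{sublemma}: form $\langle M_\alpha:\alpha\le\omega_1^V\rangle$ with generic ultrapower embeddings $\pi_{\alpha\beta}$, where iterability guarantees well-foundedness (the $\omega_1$-limit is well-founded since any descending $\omega$-sequence is captured at a countable stage), $\omega_1^{M_{\omega_1}}=\omega_1^V$, and $\pi_{0\omega_1}$ fixes $y$ and every real. The level-by-level cofinality computation of Lemma~\ref{sublemma} (equation~(\ref{sama})) gives $(L'_\beta(y))^{M_{\omega_1}}=(L'_\beta(y))^V$ for all $\beta<\omega_2^{M_{\omega_1}}$. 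By elementarity $M_{\omega_1}\models$``$\cy\models CH$'', so (using that the $CH$-witness $F$ of $(\cy)^M$ appears below $\omega_2^M$ by the relativized Theorem~\ref{5930}) there is a bijection $F'=\pi_{0\omega_1}(F)\in(L'_\beta(y))^{M_{\omega_1}}$ with $\beta<\omega_2^{M_{\omega_1}}$, between $\omega_1^{(\cy)^{M_{\omega_1}}}$ and $\P(\omega)\cap(\cy)^{M_{\omega_1}}$; by level agreement $F'\in\cy$ as computed in $V$. Because $\pi_{0\omega_1}$ fixes reals and preserves ``$a\in\cy$'', the given equality $(\P(\omega)\cap\cy)^M=\P(\omega)\cap\cy$ upgrades to $\P(\omega)\cap(\cy)^{M_{\omega_1}}=\P(\omega)\cap\cy$: the reals of $(\cy)^{M_{\omega_1}}$ lie in $\rng(F')\subseteq(L'_\beta(y))^{M_{\omega_1}}=(L'_\beta(y))^V$ and hence in $\cy$, while the reals of $\cy$ embed into them by elementarity. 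A coding argument, using that the reals now agree, shows the two models have the same countable ordinals below $\omega_1^V$, whence $\omega_1^{(\cy)^{M_{\omega_1}}}=\omega_1^{\cy}$. Thus $F'\in\cy$ is a bijection between $\omega_1^{\cy}$ and $\P(\omega)\cap\cy$, i.e.\ $\cy\models CH$.

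The main obstacle is controlling the target model $(\cy)^{M_{\omega_1}}$ after the iteration: a priori a generic ultrapower can throw new reals into $\cy$, and the internal $\omega_1$ of $\cy$ could shift. What makes it work is the level-agreement equation together with the boundedness of all reals and of the $CH$-witness below $\omega_2$: everything relevant appears at a level where $M_{\omega_1}$ and $V$ literally compute the same set, so no spurious reals survive and $\omega_1^{\cy}$ is pinned down. The one point requiring care is verifying that this boundedness (Theorems~\ref{5930a} and~\ref{5930}) is genuinely available inside the $ZFC^-$-models $M$ and $M_{\omega_1}$, which holds because the relevant construction takes place below their Woodin cardinal.
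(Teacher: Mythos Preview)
Your proof is correct, but it takes a different and more laborious route than the paper's. The paper does \emph{not} re-run the reflection/collapse and iterate-and-compare arguments from Lemma~\ref{sublemma}; instead it uses Lemma~\ref{sublemma} as a black box via a coding trick. For (i)$\to$(ii), the paper packages the entire $CH$-data into a single set $a\subseteq\gamma$ (where $\gamma=\omega_1^{\cy}<\omega_1^V$) that simultaneously codes an enumeration of $\P(\omega)\cap\cy$ in order-type $\gamma$ and well-orderings of $\omega$ of every type $<\gamma$; one application of Lemma~\ref{sublemma} produces an $N$ with $N\models``a\in\cy"$, and unpacking $a$ inside $N$ yields both $N\models``\cy\models CH"$ and the reals-agreement clause. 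For (ii)$\to$(i), the paper takes the witnessing $N$, lets $\gamma=(\omega_1^{\cy})^N$, codes $(\P(\omega)\cap\cy)^N$ as a single $a\subseteq\gamma$ with $N\models``a\in\cy"$, applies Lemma~\ref{sublemma} once to get $a\in\cy$, and then the reals-agreement hypothesis immediately gives that $a$ enumerates $\P(\omega)\cap\cy$ and that $\gamma=\omega_1^{\cy}$.

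What your approach buys is directness: you never need to invent the coding set $a$, and you see explicitly how the iterated model $M_{\omega_1}$ pins down $\omega_1^{\cy}$ and $\P(\omega)\cap\cy$. What the paper's approach buys is brevity and modularity: once Lemma~\ref{sublemma} is available, each direction is a few lines, and all the delicate cofinality and level-agreement bookkeeping you re-verified (that $(\P(\omega)\cap\cy)^{M_{\omega_1}}=\P(\omega)\cap\cy$, that $(\omega_1^{\cy})^{M_{\omega_1}}=\omega_1^{\cy}$, and that the boundedness below $\omega_2$ survives inside the $ZFC^-$-models) is hidden inside the single invocation of Lemma~\ref{sublemma}. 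One small imprecision in your write-up: you cite Theorem~\ref{5930} for the claim that the $CH$-witness appears below $\omega_2$, but that theorem bounds cardinalities, not levels; what you actually need is the \emph{proof method} of Theorem~\ref{5930a}, applied to subsets of $\omega_1^{\cy}$ rather than of $\omega$.
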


\begin{proof}
(i)$\to$(ii): Since we assume the existence of a Woodin cardinal, there are only countably many reals in ${\cy}$. Let $\gamma=\omega_1^{{\cy}}$. Thus $\gamma<\omega_1^V$. By (i) we may find a subset $a\in\cy$ of  $\gamma$ that codes an enumeration of $\P(\omega)\cap {\cy}$ in order-type $\gamma$ together with a well-ordering of $\omega$ of each order-type $<\gamma$. By lemma~\ref{sublemma} there is a countable transitive iterable model $N$ of $ZFC^-$+``{\it there is a Woodin cardinal}" such that  $\{a,y\}\subset N$, $\gamma<\omega_1^N$ and $N\models ``a\in {\cy}"$. We show $N\models``{\cy}\models CH"$. Suppose $b\in N$ is real such that $N\models``b\in \cy"$. By Lemma~\ref{sublemma}, $b\in\cy$. Hence $b$ is coded by $a$. The length of the sequence $a$ is $\gamma$, so we only have to show that $N\models ``\gamma\le\omega_1^{{\cy}}"$. Suppose $N\models``\gamma > \omega_1^{{\cy}}"$. In such a case, by assumption, $a$ codes a well-ordering $R$ of $\omega$ of order-type $(\omega_1^{\cy})^N$. But $N\models ``a\in\cy"$, whence $N\models``R\in\cy"$, a contradiction. 
The proof that
$(\P(\omega)\cap \cy)^N=\P(\omega)\cap \cy$ is similar.

Assume then (ii). Let $N$ be as in (ii).  Let $\gamma=(\omega_1^{\cy})^N$. Since $N\models``\cy\models CH"$, we can let some $a\subseteq\gamma$ code $(\P(\omega)\cap\cy)^N$ as a sequence of order-type $\gamma$. By lemma~\ref{sublemma}, $a\in\cy$. Since $(\P(\omega)\cap \cy)^N=\P(\omega)\cap \cy$,  $a$ is an enumeration of all the reals in $\cy$ and $\gamma=\omega_1^{\cy}$. 
Hence $\cy\models CH$.
\end{proof}

Note that condition (ii) above is a $\Sigma^1_4$-condition. Also, forgetting $y$, $``C^*\models CH"$ itself is a $\Sigma^1_4$-sentence of set theory.

\def\cx{{C^*\hspace{-.5mm}(x)}}

%\end{proof}

\medskip

Using the above two Lemmas, we now prove a result which seems to lend support to the idea that $C^*$ satisfies CH, at least assuming large cardinals. Let $\le_T$ be the Turing-reducibility relation between reals.
The {\em cone} of a real $x$ is the set of all reals $y$ with $x\le_T y$. A set of reals is called a {\em cone} if it is the cone of some real. Suppose $A$ is a projective set of reals closed under Turing-equivalence. If we assume PD, then by  a result of D. Martin \cite{MR0227022} there is a cone which is included in $A$ or is disjoint from $A$. 

\begin{theorem}\label{coneresult}
If there are three  Woodin cardinals and a measurable cardinal above them, then there is a cone of reals $x$ such that $C^*\hspace{-.5mm}(x)$ satisfies the Continuum Hypothesis.
\end{theorem}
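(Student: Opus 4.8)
The plan is to deduce the theorem from Martin's cone theorem (\cite{MR0227022}), exactly as set up in the paragraph preceding the statement, applied to the set
$$A=\{x\in\oR : \cx\models CH\}.$$
The whole argument then splits in two: verifying that $A$ satisfies the hypotheses of the cone theorem, and showing that $A$ meets every Turing cone, which upgrades the dichotomy ``$A$ or its complement contains a cone'' to ``$A$ contains a cone''.

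First I would check the two structural facts about $A$. By the remark following Lemma~\ref{epppow} the predicate ``$\cx\models CH$'' is $\Sigma^1_4$ in the parameter $x$, so $A$ is projective. Moreover $A$ is invariant under Turing equivalence: if $x\equiv_T y$ then each of $x,y$ is recursive in, hence an element of, the other's model, so $C^*(x)=L(\On_\omega,x)=L(\On_\omega,y)=C^*(y)$ and therefore $x\in A\iff y\in A$. The three Woodin cardinals with a measurable above them give $\Pi^1_4$-determinacy by Martin--Steel, equivalently $\Sigma^1_4$-determinacy, so in particular the game attached to the projective, Turing-invariant set $A$ is determined, and Martin's theorem yields that $A$ contains a cone or is disjoint from a cone.

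It remains to rule out the second alternative, and this is the heart of the matter: I must show that for every real $z$ there is $x\ge_T z$ with $\cx\models CH$, so that $A$ meets every cone and hence $\oR\setminus A$ contains no cone. To manufacture such an $x$ I would invoke the characterization in Lemma~\ref{epppow}: it suffices to produce $x\ge_T z$ together with a countable transitive iterable model $M\models ZFC^-+$``there is a Woodin cardinal'' with $x\in M$, $M\models$``$\cx\models CH$'', and $(\P(\omega)\cap\cx)^M=\P(\omega)\cap\cx$. A countable elementary submodel of a large $H_\theta$ containing $z$ and both the Woodin cardinal and the measurable above it collapses to an iterable $M$, just as in the iterability discussion preceding Lemma~\ref{sublemma}; passing first to a forcing extension satisfying $CH$ (harmless, since this preserves the large cardinals and, by Theorem~\ref{515}, the relevant theory of $C^*$) lets me take $M\models CH$, which drives ``$\cx\models CH$'' inside $M$. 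The correctness clause $(\P(\omega)\cap\cx)^M=\P(\omega)\cap\cx$ comes from Lemma~\ref{sublemma}: every real of $\cx$ is certified by a countable iterable witness, and since these reals form a $\Sigma^1_3(x)$ set, a sufficiently correct $M$ containing $x$ computes the collection exactly.

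The genuine obstacle is the circularity in the previous step: the model $M$ must contain the very real $x$ that is built from $M$. I expect to resolve this by a fixed-point construction, choosing $x$ via the recursion theorem so that $x$ simultaneously codes $z$, a real for $M$, and the data $M$ needs to recognize $\cx\models CH$ (an enumeration of $\P(\omega)\cap\cx$ of length $\omega_1^{\cx}$); alternatively one iterates $M$ to absorb $x$ and appeals to the iterability hypothesis as in the proof of Lemma~\ref{sublemma}. Checking that the resulting $x$ verifies all three clauses of Lemma~\ref{epppow}(ii) --- in particular that $M$'s internal computation of $\omega_1^{\cx}$ and of $CH$ is correct --- is where the real work lies; granting it, $x\ge_T z$ lies in $A$, the cofinality step is complete, and the cone dichotomy delivers the desired cone on which $CH$ holds in $\cx$.
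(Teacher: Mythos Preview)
Your setup through Martin's cone theorem is correct and matches the paper exactly: $A$ is Turing-invariant, $\Sigma^1_4$ by Lemma~\ref{epppow}, and three Woodins with a measurable above yield $\Sigma^1_4$-determinacy via Martin--Steel, so the cone dichotomy applies.

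The gap is in the ``$A$ meets every cone'' step, and your own diagnosis of the circularity is accurate --- but the proposed fixes do not work. First, forcing $CH$ over $V$ and taking $M$ in the extension does not help: even if $M\models CH$, there is no reason $M\models``\cx\models CH$'' (CH in the ambient model says nothing about CH in the internal submodel $(\cx)^M$), and Theorem~\ref{515} concerns the parameter-free theory of $C^*$, not truth in $\cx$ for a specific $x$ that must live in $V$. Second, the recursion-theorem idea is not a genuine fix: the requirement ``$x\in M$ and $M$ is an iterable transitive model with $M\models`\cx\models CH$' and $(\P(\omega)\cap\cx)^M=\P(\omega)\cap\cx$'' is not a computable operator on indices to which Kleene's theorem applies, and iterating $M$ to absorb $x$ changes $M$, reintroducing the circularity.

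The paper avoids all of this by abandoning Lemma~\ref{epppow}(ii) as the target and instead \emph{constructing} $y\ge_T x$ directly by forcing over $\cx$. First force (over $\cx$) a set $B\subseteq\omega_1^{\cx}$ coding a surjection $\omega_1^{\cx}\to\P(\omega)\cap\cx$; this adds no reals. Then almost-disjoint-code $B$ by a real $y'$ via a CCC forcing $\oQ$ of size $\aleph_1$ in $\cx[B]$, using a sequence of almost disjoint reals chosen in $\cx$. Set $y=x\oplus y'$. One recovers $B\in\cy$ from $y'$ and the almost disjoint sequence, so $\cx\subseteq\cx[B]\subseteq\cy$; since $\cx[B]\models CH$ and $\oQ$ has size $\aleph_1$ there, $\cy\models CH$. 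This is the missing idea: rather than searching for a witness $M$ to Lemma~\ref{epppow}(ii), one manufactures $y$ so that $\cy$ visibly contains an enumeration of its own reals in type $\omega_1^{\cy}$.
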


\begin{proof}
We first observe that if two reals $x$ and $y$ are Turing-equivalent, then $C^*\hspace{-.5mm}(x)=C^*\hspace{-.5mm}(y)$. Hence the set 
$$A=\{y\subseteq \omega:C^*\hspace{-.5mm}(y)\models CH\}$$
 is closed under Turing-equivalence, and therefore by \cite{MR955605} amenable to the above mentioned result by Martin on cones. We already know from Lemma~\ref{epppow} that the set $A$ is projective, in fact $\Sigma^1_4$. Now we need to show
that for every real $x$ there is a real $y$ such that $x\le_T y$ and $y$ is in the set.
Fix $x$. Let $\oP$ be the standard forcing which, in $\cx$, forces a subset $B$ of $\omega_1^\cx$, such that $B$ codes, via the canonical pairing function in $\cx$, an onto mapping $\omega_1^\cx \to \P(\omega)\cap\cx$.  Let $B$ be $\oP$-generic over $\cx$. Note that $\oP$ does not add any new reals. Now we code $B$ by a real by means of almost disjoint forcing. Let $Z_\a$, $\a<\omega_1^\cx$, be a sequence in $\cx$ of almost disjoint subsets of $\omega$. Let $\oQ$ be the standard CCC-forcing, known from \cite{MR0270904}, for adding a real $y'$ such that for all  $\a<\omega_1^\cx$:
$$|z_\a\cap y'|\ge\omega\iff \a\in B.$$ Let $y=x\oplus y'$. Of course, $x\le_T y$. Now $$\cx\subseteq \cx[B]\subseteq C^*\hspace{-.5mm}(y).$$ By the definition of $B$, $\cx[B]\models CH$. The forcing $\oQ$ is of cardinality $\aleph_1$ in $\cx[B]$, hence $C^*\hspace{-.5mm}(y)\models CH$.
\end{proof}

Assuming large cardinals, the set of reals of $C^*$ seems like an interesting countable $\Sigma^1_3$-set with a $\Sigma^1_3$-well-ordering. It might be interesting to have a better understanding of this set. This set is contained in the reals of the so called $M^\sharp_1$, the smallest inner model for a Woodin cardinal (M. Magidor and R. Schindler, unpublished).

In Part 2 of this paper we will consider the so-called {\em stationary logic} \cite{MR486629}, a strengthening $\L(\aaq)$ of $\L(Q^{\cof}_{\omega})$, and the arising inner model $C(\aaq)$, a supermodel of $C^*$. We will show that, assuming a proper class of measurable Woodin cardinals, uncountable regular cardinals are measurable in $C(\aaq)$, and the theory of $C(\aaq)$ is absolute under set forcing. These results remain true if we enhance the expressive power of $\L(\aaq)$ slightly, and then the inner model arising from the enhanced stationary logic satisfies the Continuum Hypothesis, assuming again a proper class of measurable Woodin cardinals.  

\section{Consistency results about $C^*$}\label{namba}

We define a version of Namba forcing that we call {\em modified Namba forcing} and then use this to prove consistency results about $C^*$.

 Suppose $S=\{\l_n : n<\omega\}$ is a  sequence of regular cardinals $>\omega_1$ such that every $\l_n$ occurs infinitely many times in the sequence. 
Let $\la B_n : n<\om\ra$ be a partition of $\omega$ into infinite sets. 

\begin{definition}\label{strees}
The forcing $\P$ is defined as follows: Conditions are  trees $T$ with $\omega$ levels, consisting of finite sequences of ordinals, defined as follows: If $(\a_0,\ldots,\a_{i})\in T$, let 
$$\suc_T((\a_0,\ldots,\a_{i}))=\{\beta: (\a_0,\ldots,\a_{i},\beta)\in T\}.$$ 
The forcing $\P$ consists of trees, called $S$-trees, such that if $(\a_0,\ldots,\a_i)\in T$
and $i\in B_n$, then
\begin{enumerate}
 
\item $|\suc_T((\a_0,\ldots,\a_{i-1}))|\in\{1,\l_n\}$, 

\item For every $n$ there are $\a_i,\ldots, \a_k$ such that $k\in B_n$ and $|\suc_T((\a_0,\ldots,\a_{k}))|=\l_n$.

\end{enumerate}

\noindent If $|\suc_T((\a_0,\ldots,\a_{i-1}))|=\l_n$, we call $(\a_0,\ldots,\a_{i-1})$ a {\em splitting point} of $T$. Otherwise $(\a_0,\ldots,\a_{i-1})$ is a {\em non-splitting point} of $T$. The stem $\stem(T)$ of $T$ is the maximal (finite) initial segment that consists of non-splitting points. If $s=(\a_0,\ldots,\a_i)\in T$, then $$T_s=\{(\a_0,\ldots,\a_i,\a_{i+1},\ldots,\a_n) \in T: i\le n<\omega\}.$$
 A condition $T'$ {\em extends} another condition $T$, $T'\le T$, if $T'\subseteq T$. If $\la T_n : n<\omega \ra\}$ is a generic sequence of conditions, then the stems of the trees $T_n$ form a sequence $\la\a_n : n<\omega\ra$ such that $\la\a_i: i\in B_n\ra$ is cofinal in $\l_n$. Thus in the generic extension $\cof(\l_n)=\omega$ for all $n<\omega$. 

\end{definition} 
 
We shall now prove that no other regular cardinals get cofinality $\omega$.

\begin{proposition}Suppose $\kappa\notin S\cup\{\omega\}$ is regular. Then $\P\force\cof(\kappa)\ne\omega$.
\end{proposition}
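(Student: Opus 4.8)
The plan is to show that $\P$ adds no new cofinal $\omega$-sequence in $\kappa$; since $\kappa$ is regular and uncountable in $V$, this yields $\P\force\cof(\kappa)\ne\omega$. It suffices to establish the following \emph{bounding property}: for every condition $T$ and every name $\dot f$ with $T\force\dot f\colon\omega\to\kappa$ there are $S\le T$ and $\gamma<\kappa$ with $S\force\rng(\dot f)\subseteq\gamma$. Indeed, a cofinal $\dot f$ would be unbounded below every condition, contradicting this; and the passage from "each value controlled" to "a single bound" uses the regularity of $\kappa$, namely that a subset of $\kappa$ of size $<\kappa$ is bounded.

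First I would isolate the one-step lemma for a single ordinal: if $T\force\dot\alpha<\kappa$, then some $S\le T$ forces $\dot\alpha<\gamma$ for a fixed $\gamma<\kappa$. For $\gamma<\kappa$ set $D_\gamma=\{s\in T:\exists\ S\text{-tree } U\subseteq T_s \text{ with } s\in U \text{ and } U\force\dot\alpha<\gamma\}$, and let $\beta(s)=\min\{\gamma:s\in D_\gamma\}$, with $\beta(s)=\kappa$ if no such $\gamma$ exists. Since one is always free to make a splitting node non-splitting inside $U$ without destroying the $S$-tree requirements (cofinal splitting of each type is inherited from above), one gets $\beta(s)=\beta(t)$ at a non-splitting node with successor $t$, and $\beta(s)=\min_i\beta(s{}^\frown\alpha_i)$ at a splitting node. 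Hence if $\beta(\stem(T))=\kappa$, then $\beta\equiv\kappa$ on all of $T$, i.e.\ no subtree below any node forces $\dot\alpha$ below any $\gamma<\kappa$; but $T\force\dot\alpha<\kappa$, so some $U\le T$ forces $\dot\alpha=\xi<\kappa$, whence $\stem(U)\in D_{\xi+1}$, a contradiction. Thus $\beta(\stem(T))<\kappa$, which is the lemma.

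The main work is a fusion upgrading this to the whole sequence $\dot f$. I would construct $S\le T$ by a fusion that, at a splitting node $s$ of type $n$, recursively bounds $\rng(\dot f)$ below each successor $t$ by a single ordinal $\eta_t<\kappa$ and then uses the hypothesis $\kappa\ne\lambda_n$ decisively. If $\lambda_n<\kappa$ one keeps all $\lambda_n$ successors and sets the bound at $s$ to $\sup_t\eta_t$, which is $<\kappa$ by regularity of $\kappa$. If $\lambda_n>\kappa$ one pigeonholes: the map $t\mapsto\eta_t$ sends the $\lambda_n$ successors into $\kappa$, and since $\kappa<\cof(\lambda_n)=\lambda_n$ there is $\gamma<\kappa$ with $\lambda_n$-many successors satisfying $\eta_t<\gamma$; one keeps exactly those, retaining full splitting degree $\lambda_n$ and a common bound $\gamma<\kappa$. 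The case $\lambda_n=\kappa$ cannot occur precisely because $\kappa\notin S$. Finally, $\sup$ over the $\omega$ fusion stages stays $<\kappa$ because $\cof(\kappa)>\omega$, producing the single $\gamma<\kappa$ with $S\force\rng(\dot f)\subseteq\gamma$.

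The hard part, and where I expect the real difficulty, is organizing this so that two things hold simultaneously. First, the fusion limit $S$ must again be a genuine $S$-tree: every splitting node has to survive with full degree $\lambda_n$ through the $\omega$ stages, which I would guarantee by touching each node at only finitely many stages and invoking the regularity of $\lambda_n$ to keep a $\lambda_n$-sized successor set, while preserving condition (2) of Definition~\ref{strees} (cofinal $\lambda_n$-splitting for every $n$) stage by stage. Second, the per-node bounds must cohere into one $\gamma<\kappa$ even through splitting nodes of degree $>\kappa$, whose frontiers have size $\ge\kappa$; this is exactly why the recursion bounds the \emph{entire tail} of $\dot f$ below each successor \emph{before} pigeonholing, so that the pigeonhole is applied to single ordinals $\eta_t$ rather than to unbounded families of values. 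The genuine crux is the well-foundedness of this "bound the tail, then pigeonhole" recursion, which must terminate although the trees have height $\omega$; I would control it by induction on the single-ordinal rank $\beta$ of the least coordinate of $\dot f$ not yet decided below the current node, the one-step lemma above guaranteeing that this rank is always $<\kappa$. With these two points in place, $S\force\rng(\dot f)\subseteq\gamma$ contradicts cofinality, and the proposition follows.
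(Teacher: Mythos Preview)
Your one-step lemma for a single ordinal name is correct and cleanly argued: the rank $\beta(s)$ does satisfy $\beta(s)=\min_i\beta(s^\frown\alpha_i)$ at splitting nodes (because one may always make a node non-splitting without violating condition~(2) of Definition~\ref{strees}), so $\beta(\stem T)=\kappa$ would propagate to every node of $T$, contradicting density of deciding conditions.

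The gap is exactly where you yourself locate ``the genuine crux''. Your fusion wants, at a splitting node $s$, to already have for each successor $t$ a bound $\eta_t<\kappa$ on the \emph{entire} range of $\dot f$ below $t$, and only then sup (if $\lambda_n<\kappa$) or pigeonhole (if $\lambda_n>\kappa$). Computing $\eta_t$ presupposes having carried out the same construction on the infinite tree below $t$; this is a bottom-up recursion on a tree with no leaves, and your proposed well-founding by ``the rank $\beta$ of the least coordinate of $\dot f$ not yet decided'' does not work: that quantity is attached to a single coordinate, need not decrease when passing to a successor, and does not measure progress toward bounding the whole tail. The alternative top-down fusion, processing $\dot f(0),\dot f(1),\ldots$ in turn, fails for a different reason: as soon as a splitting level with $\lambda_n>\kappa$ is crossed, the fusion front has size $\ge\lambda_n>\kappa$, the per-node bounds at the next stage cannot be combined by a sup, and pigeonholing them forces you to re-thin levels you have already frozen; after $\omega$ stages the intersection need not be an $S$-tree. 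Your remark that each node is ``touched at only finitely many stages'' is exactly what breaks here.

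This obstacle is real, and the paper circumvents it by a genuinely different device. After preparing $T$ so that along every branch each $\beta_n$ is eventually decided at a node, the paper introduces for each $\delta<\kappa$ a game $G_\delta$ along branches of $T$: at type-$n$ nodes with $\lambda_n<\kappa$ the opponent (``Bad'') chooses the successor, while at type-$n$ nodes with $\lambda_n>\kappa$ Bad names a set of size $<\lambda_n$ to avoid and ``Good'' then picks a successor outside it; Good loses if some decided $\beta_n$ exceeds $\delta$. An elementary-substructure argument then shows that for $\delta=M\cap\kappa$ (with $M\prec H_\theta$ of size $<\kappa$ and $M\cap\kappa\in\kappa$) Good wins: at $\lambda_n<\kappa$ nodes Bad's move lies in $M$ since $\lambda_n\subseteq M$, and at $\lambda_n>\kappa$ nodes Good can stay in $M$ because the union, over all $\nu<\kappa$, of the avoidance-sets prescribed by the strategies $\tau_\nu$ still has size $<\lambda_n$ and is definable in $M$. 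The tree of all plays following Good's winning strategy is then an $S$-subtree forcing the sequence below $\delta$. The asymmetric roles of Bad and Good at $\lambda_n\lessgtr\kappa$ are precisely the dichotomy your two cases aim at, but the game fixes the single bound $\delta$ \emph{up front} rather than trying to assemble it through the tree, which is what your fusion cannot do.
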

 
\begin{proof}
Let us first prove that if $\tau$ is a name for an  ordinal, then for all $T\in\P$ there is $T^*\le T$ such that $\stem(T^*)=\stem(T)$ and if  $T^{**}\le T^*$ decides which ordinal $\tau$ is, and $s=\stem(T^{**})$, then $T^*_{s}$ decides $\tau$. Suppose $T$ is given and the length of its stem is $l\in B_n$. Let us look at the level $l+1$ of $T$.  
%If there are $\l_n$ nodes on this level such that the claim holds for these extensions of $T$ we can take a fusion and this is the desired $T^*$.  
Let us call a node $s$ on level $l+1$ of $T$ {\em good} if the claim is true when $T$ is taken to be $T_s$. Suppose first there are $\l_n$ good nodes. For each good $s$ we choose $T^*(s)\le T_s$ such that $\stem(T^*(s))=\stem(T_s)$ and if  some $T^{**}\le T^*(s)$ decides which ordinal $\tau$ is, and $s'=\stem(T^{**})$, then already $T^*(s)_{s'}$ decides $\tau$. W.l.o.g. the length of such $s'$ is a fixed $k$. 
%W.l.o.g.  all $T^*_s$ have the same stem length $k$. 
We get the desired $T^*$ by taking the fusion.
 %of all these $(T^*(s))_{s'}$. 
 Suppose then there are not $\l_n$ many good nodes. So there must be $\l_n$ bad nodes. We repeat this process on the next level. Suppose the process does not end. We get  $T'\le T$ consisting of bad nodes.  Since $T$ forces that $\tau$ is an ordinal, there is $T''\le T'$ such that $T''$ decides which ordinal $\tau$ is. We get a contradiction: the node of the stem of $T''$, which is also a node of $T'$, cannot be a bad one.

Suppose now $\la\beta_n : n<\omega\ra$ is a name for an  $\omega$-sequence of ordinals below $\kappa$,  and $T\in\P$ forces this.
We construct $T^*\le T$ and an ordinal $\delta<\kappa$ such that $T^*$ forces the sequence $\la\beta_n : n<\omega\ra$ to be bounded below $\kappa$ by $\delta$.
% there is $T^*\le T$ such that $\stem(T^*)=\stem(T)$ and $T^*$ decides which ordinal each $\tau_n$ is. 
For each $n$ we have a partial function $f_n$ defined on  $s\in T$ of  such that if $T_s$ decides a value for $\beta_n$ and then the value is defined to be $f_n(s)$. Let us call $T$ {\em good for
 $\la\beta_n : n<\omega\ra$} if for all infinite  branches $B$
through $T$ and all $n$ there is $k$ such that $f_n$ restricted to the initial segment of $B$ of length $k$ is defined. It follows from the above  that we can build, step by step a $T^*\le T$ with the same stem as $T$ such that $T^*$ is good for  $\la\beta_n : n<\omega\ra$.

Without loss of generality,  $T$ itself is good for $\la\beta_n:n<\omega\ra$. Fix $\delta<\kappa$. We consider the following game $G_\delta$. During the game the players determine an infinite branch through $T$. If the game has reached node $t$ on height $k$ with $k+1\in B_n$ we consider two cases:
\smallskip

\noindent Case 1: $\kappa>\l_n$. Bad moves by giving an immediate successor of $t$.
\smallskip

\noindent Case 2: $\kappa <\l_n$. First Bad plays a subset $A$ of (not necessarily immediate) successors  of $t$ such that $|A|<\l_n$. Then Good moves a successor not in the set. 

\smallskip
\noindent Good player loses this game if at some stage of the game a member of the sequence $\beta_n$ is forced to go above $\delta$. Note that the game is determined.
\medskip

\noindent{\bf Main Claim: }There is $\delta<\kappa$ such that Bad does not win $G_\delta$ (hence Good wins).
%\smallskip

\begin{proof}
\noindent Assume the contrary, i.e. that Bad wins for all $\delta<\kappa$. Let $\tau_\delta$ be a strategy for Bad for any given $\delta<\kappa$. 
Let $\theta$ be a large enough cardinal and $M\prec H_\theta$ such that $T,\P,\{\beta_n:n<\omega\},\{\lambda_n:n<\omega\}, \{(\delta,\tau_\delta):\delta<\kappa\}$ etc are in $M$, $\alpha\subseteq M$ whenever $\alpha\in M\cap\kappa$ and $|M|<\kappa$. Let $\delta=M\cap\kappa$. We define a play of $G_\delta$ where Bad uses $\tau_\delta$ but all the individual moves are in $M$. Suppose we have reached a node $t$ of $T$ such that $\len(t)=k$ and $k+1\in B_n$. If $\kappa>\lambda_n$, $\lambda_n\subseteq M$, so the move of Bad is in $M$. Suppose then $\kappa<\lambda_n$. The strategy $\tau_\delta$ tells Bad to play a set $A$ of successors (not necessarily immediate) of $t$ such that $|A|<\lambda_n$. The next move of Good has to avoid this set $A$. Still we want the move of Good to be in $M$. We look at all the possibilities according to all the strategies $\tau_\nu$, $\nu<\kappa$. If the play according to $\tau_\nu$ has reached $t$ the strategy $\tau_\nu$ gives a set $A_\nu$ of size $<\lambda_n$ of  successors (not necessarily immediate) of $t$. Let $B$ be the union of all these sets. Still $|B|<\lambda_n$, as $\lambda_n$ is regular. By elementarity, $B\in M$, hence Good can play a successor of $t$ staying in $M$. Since Bad is playing the winning strategy $\tau_\delta$, he should win this play. However, Good can play all the moves inside $M$ without losing. This is a contradiction. \end{proof}

Now we return to the main part of the proof. By the Main Claim there is $\delta$ such that Good wins $G_\delta$.
Let us look at the subtree of all plays of $G_\delta$ where Good plays her winning strategy. A subtree $T^*$ of $T$ is generated and $T^*$ forces the sequence  $\la\beta_n:n<\omega\ra$  to be bounded by $\delta$. \end{proof}

The above modified Namba forcing permits us to carry out the following basic construction: Suppose $V=L$. Let us add a Cohen real $r$. 
We can code this real with the above modified Namba forcing so that in the end for all  $n<\omega$:
$${\cof}^V(\aleph^L_{n+2})=\omega\iff n\in r.$$ Thus in the extension $r\in C^*$.

\begin{theorem}\label{2479}
$Con(ZF)$ implies $Con({(C^*)}^{{C^*}})\ne{C^*})$.
\end{theorem}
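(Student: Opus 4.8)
The plan is to carry out the construction described just before the theorem and then compute the resulting $C^*$ exactly. Begin with $V=L$, add a Cohen real $r$, and then force with the modified Namba forcing $\P$, taking $S$ to enumerate $\{\aleph^L_{n+2}:n\in r\}$ with each such cardinal repeated infinitely often. This yields a model $W=L[r][G]$ in which $\cof^W(\aleph^L_{n+2})=\omega$ iff $n\in r$. As already observed, $r$ is recovered inside $(C^*)^W$ from the cofinality oracle, so $r\in C^*$, where $C^*:=(C^*)^W$; in particular $C^*\ne L$. The theorem will follow once I show $(C^*)^{C^*}=L$, for then $(C^*)^{C^*}=L\ne C^*$.

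The key step is the identification $C^*=L[r]$. For $C^*\subseteq L[r]$ it is enough to check that the predicate $\On_\omega^W=\{\alpha:\cof^W(\alpha)=\omega\}$ is a definable class of $L[r]$, since then $C^*=L(\On_\omega^W)\subseteq L[r]$. Fix $\alpha$ and put $\kappa=\cof^{L[r]}(\alpha)$; a cofinal map $\kappa\to\alpha$ in $L[r]\subseteq W$ gives $\cof^W(\alpha)=\cof^W(\kappa)$. If $\kappa=\omega$ then $\cof^W(\alpha)=\omega$; if $\kappa\in S$ then $\cof^W(\kappa)=\omega$ by the genericity of $G$, so again $\cof^W(\alpha)=\omega$; and if $\kappa$ is regular in $L[r]$ with $\kappa\notin S\cup\{\omega\}$, then the preceding Proposition yields $\P\force\cof(\kappa)\ne\omega$, so $\cof^W(\alpha)=\cof^W(\kappa)\ne\omega$. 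Hence $\On_\omega^W=\{\alpha:\cof^{L[r]}(\alpha)\in\{\omega\}\cup S\}$, which is definable over $L[r]$ from the definable function $\cof^{L[r]}$ and the parameter $S=\{\aleph^L_{n+2}:n\in r\}\in L[r]$. This proves $C^*\subseteq L[r]$; and since $L\subseteq C^*$ and $r\in C^*$, also $L[r]\subseteq C^*$, so $C^*=L[r]$.

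It remains to compute $(C^*)^{C^*}$. As $C^*$ and $L[r]$ are the same transitive class, $(C^*)^{C^*}=(C^*)^{L[r]}=L(\On_\omega^{L[r]})$. The passage from $L$ to $L[r]$ is by the ccc Cohen forcing, which preserves all cofinalities, so $\On_\omega^{L[r]}=\On_\omega^L$, an $L$-definable class; this is precisely the situation of the corollary to Lemma~\ref{nice}. Consequently $(C^*)^{C^*}=L(\On_\omega^L)=L$, whereas $C^*=L[r]\ne L$ because $r\notin L$. Thus $W\models(C^*)^{C^*}\ne C^*$, and as $W$ is a set-generic extension of $L$, we conclude $\mathrm{Con}(ZF)\to\mathrm{Con}(\zfc+(C^*)^{C^*}\ne C^*)$.

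The only step with genuine content is the equality $C^*=L[r]$, i.e. the reconstructibility of the entire cofinality-$\omega$ predicate of $W$ inside $L[r]$. This is exactly where the preceding Proposition (no regular cardinal outside $S\cup\{\omega\}$ is driven to cofinality $\omega$) is indispensable, together with the elementary fact that cofinality $\omega$ propagates upward along ground-model cofinal maps; everything else is bookkeeping.
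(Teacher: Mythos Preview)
Your proof is correct and follows exactly the paper's approach: start from $V=L$, add a Cohen real $r$, code $r$ via modified Namba into the cofinalities of the $\aleph^L_{n+2}$, observe that in the final model $C^*=L[r]$ (using the preceding Proposition to see that the cofinality-$\omega$ predicate is recoverable in $L[r]$), and then compute $(C^*)^{L[r]}=L$ since Cohen forcing preserves cofinalities. The paper's own proof is a terse sketch of precisely these steps; you have simply filled in the details, in particular the explicit verification that $\On_\omega^W$ is $L[r]$-definable.
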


\begin{proof}
We start with $V=L$. We add a Cohen real $a$. In the extension ${C^*}=L$, for cofinalities have not changed, so to decide whether $\cof(\a)=\omega$ or not it suffices to decide this in $L$. With  modified Namba forcing we can change---as above---the cofinality of $\aleph^L_{n+2}$ to $\omega$ according to whether $n\in a$ or $n\notin a$. In the extension ${C^*}=L(a)$, for cofinality  $\omega$ has only changed from $L$  to the extent that the cofinalities of $\aleph^L_n$ may have changed, but this we know by looking at $a$. Thus ${(C^*)}^{{C^*}}={(C^*)}^{L(a)}=L$, while $C^*\ne L$. Thus ${(C^*)}^{{C^*}}\ne{C^*}$.
\end{proof}

 We now prepare ourselves to iterating this construction in order to code more sets into $C^*$.

\begin{definition}[Shelah]Suppose $S=\{\l_n : n<\omega\}$ is a  sequence of regular cardinals $>\omega_1$.
 A forcing notion $\P$ satisfies the {\em $S$-condition} if player II has a super strategy (defined below) in the following came $G$ in which the players contribute a tree of finite sequences of ordinals:
 
\begin{enumerate}
\item There are two players I and II and $\omega$ moves.
\item In the start of the game player I plays a tree $T_0$ of finite height  and a function $f:T_0\to\P$ such that for all $t,t'\in T_0$: $t<_{T_0} t'\Rightarrow f(t')<_{\P}f(t)$.
\item Then II decides what the successors of the top nodes of $T_0$ are and extends $f$.   
\item Player I extends the tree with non-splitting nodes of finite height and extends $f$.
\item Then II decides what the successors of the top nodes are and extends $f$.
\item etc, etc
\end{enumerate}
Player II wins if the resulting tree $T$ is an $S$-tree (see Definition~\ref{strees}), and for every $S$-subtree $T^*$ of $T$ there is a condition $B^*\in \P$ such that 
$$B^*\force``\mbox{The $f$-image of some branch through $T^*$ is included in the generic set}".$$
A {\em super strategy} of II is a winning strategy in which the moves depend only on the predecessors in $T$ of the current node, as well as on their $f$-images.\end{definition}

By \cite[Theorem 3.6]{MR1623206} (see also \cite[2.1]{MR2811288}), 
revised countable support iteration of forcing with the $S$-condition does not collapse $\aleph_1$. 

%\begin{proof}
% This is as the usual proof that, assuming CH, Namba forcing does not add new reals.
%\end{proof}

\begin{lemma}
 Modified Namba forcing satisfies the $S$-condition.
\end{lemma}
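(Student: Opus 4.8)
The plan is to exhibit an explicit super strategy for player II and then check that it is winning. Fix once and for all a well-ordering of all finite sequences of ordinals. Throughout the play II maintains the invariant that every top node $t$ of the tree constructed so far carries as its label an $S$-tree $f(t)\in\P$, and that along each branch the labels form a $\subseteq$-decreasing chain of $S$-trees (recall that in $\P$ the order is $T'\le T\iff T'\subseteq T$, and that the game rule forces $f$ to be order-reversing, so I is also constrained to play $\subseteq$-decreasing labels). The strategy will be memoryless, II's response at a top node $t$ depending only on the single condition $f(t)$; depending on this little data certainly makes it a super strategy in the required sense.

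I first describe II's reply. Suppose it is II's turn, $t$ is a top node at level $\ell$ of the tree built so far, and $\ell+1\in B_n$, so that to keep the tree an $S$-tree II must give $t$ either one or $\l_n$ successors. Put $B=f(t)$. By clause (2) of Definition~\ref{strees}, applied inside the $S$-tree $B_{\stem(B)}$, there is above $\stem(B)$ a node $u$ with $|\suc_B(u)|=\l_n$; let $u$ be the least such node in the fixed well-ordering. II then declares the successors of $t$ to be the sequences $t^\frown\gamma$ for $\gamma\in\suc_B(u)$ (so $\l_n$ many, as required) and sets $f(t^\frown\gamma)=B_{u^\frown\gamma}$. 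Each $f(t^\frown\gamma)$ is an $S$-tree $\subseteq B=f(t)$, so the invariant is preserved, and since every value $\l_n$ occurs infinitely often in $S$ while each $B_n$ is infinite, over the $\omega$ rounds every splitting type is realised cofinally along every branch. Hence the tree $T$ finally produced is an $S$-tree and $f$ is $\subseteq$-decreasing along its branches.

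It remains to verify the winning condition: given an arbitrary $S$-subtree $T^*\subseteq T$, we must produce $B^*\in\P$ forcing that the $f$-image of some branch of $T^*$ is contained in the generic set. The correct $B^*$ is not an intersection along one branch (which would collapse to a single branch and fail to be an $S$-tree) but the amalgamation of the labels over all of $T^*$. Concretely, for $t\in T^*$ let $u_t$ be the splitting node chosen by II at $t$, so that $f(t^\frown\gamma)=f(t)_{u_t^\frown\gamma}$, and let $B^*$ consist of every node of $f(t)$ lying on the stem of $f(t)$ from the point of entry up to $u_t$, together with the immediate successors $u_t^\frown\gamma$ corresponding to the children $t^\frown\gamma\in T^*$, glued together over $t\in T^*$. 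Since the labels nest along branches of $T^*$, these pieces fit into a single tree $B^*$ whose splitting nodes are exactly the $u_t$ for $t\in T^*$.

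Finally one checks that $B^*$ is a genuine condition and forces the required statement. The node $u_t$ splits with degree $\l_n$ in the $S$-tree $f(t)$, so by clause (1) of Definition~\ref{strees} its length satisfies $\len(u_t)+1\in B_{n'}$ with $\l_{n'}=\l_n$; thus every splitting of $B^*$ has admissible degree at an admissible level, and since $T^*$ is an $S$-tree, all splitting types occur cofinally along every branch of $B^*$ as well, so $B^*\in\P$. Given a generic $g$ through $B^*$, read off the branch $b$ of $T^*$ determined by the successive splitting choices of $g$ at the nodes $u_t$. For each $t_k\in b$, writing $w_k$ for the point where $g$ enters $f(t_k)$, the restriction $B^*_{w_k}$ satisfies $B^*_{w_k}\subseteq f(t_k)$, hence $B^*_{w_k}\le f(t_k)$, and $B^*_{w_k}$ lies in the generic filter since $g$ passes through $w_k$; by upward closure $f(t_k)$ is then generic. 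Thus the $f$-image of the branch $b$ of $T^*$ lies in the generic set, as required. The step I expect to be the main obstacle is exactly this last verification that the amalgamated tree $B^*$ satisfies both clauses of Definition~\ref{strees} and correctly identifies, for each generic branch, a branch of $T^*$ whose entire $f$-image is forced into the generic filter; this is the fusion argument for $\P$, and it is precisely here that the matching of levels and splitting degrees between the abstract tree $T$ and the $S$-trees $f(t)$ forces one to invoke clauses (1) and (2) of the definition of $S$-tree.
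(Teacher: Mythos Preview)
Your proof is essentially the same as the paper's. You describe the identical strategy for II (at a top node $t$ with label $B=f(t)$, find a $\lambda_n$-splitting node $u$ in $B$ and let the successors of $t$ correspond to the successors of $u$, labelling $t^\frown\gamma$ by $B_{u^\frown\gamma}$), and you build the same fusion condition $B^*$ as the union/amalgamation of the stems of the labels along $T^*$. The paper's write-up is terser: it simply says ``Let $B^*$ be the union of all the stems of the trees $B_\eta$, where $\eta$ is a splitting point of $T^*$. Clearly, $B^*$ is an $S$-tree,'' and then observes that a generic branch through $B^*$ traces out a branch of $T^*$ whose $f$-image is in the generic. You have usefully unpacked the ``clearly'' by checking the level--degree matching via clause~(1) of the definition of $S$-tree, and you have spelled out the upward-closure argument that puts each $f(t_k)$ in the generic filter; these are exactly the points the paper leaves to the reader.
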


\begin{proof}
Suppose the game has progressed to the following:

\begin{enumerate}
\item A tree $T$ has been constructed, as well as $f:T\to\P$.
\item Player I has played a non-splitting end-extension $T'$ of $T$. 
\end{enumerate}
 
\noindent Suppose $\eta$ is a maximal node in $T'$. We are in stage $n$. Now II adds $\l_n$ extensions to $\eta$. Let $E$ denote these extensions. Let $B$ be the $S$-tree $f(\eta)$. Find a node $\rho$ in $B$ which is a splitting node and splits into $\l_n$ nodes. Let $g$ map the elements of $E$ 1-1 to successors of $\rho$ in $B$. Now we extend $f$ to $E$ by letting the image of $e\in E$ be the subtree $B_\eta$ of $B$ consisting of $\rho$ and the  predecessors of $\rho$ extended by first $g(e)$ and then the subtree of $B$ above $g(e)$. 

We can easily show that  this is a super strategy. We show that II wins. Suppose $T$ is a tree resulting from II playing the above strategy. Let $T^*$ be any $S$-subtree of $T$. We construct an $S$-tree $B^*\in\P$ as follows. Let $B^*$ be the union of all the stems of the trees $B_\eta$, where $\eta$ is a splitting point of $T^*$. Clearly, $B^*$ is an $S$-tree. To see that  $$B^*\force``\mbox{The $f$-image of some branch through $T^*$ is included in the generic set}",$$ let $G$ be a generic containing $B^*$. This generic is a branch $\g$ through $B^*$. In view of the definition of $B^*$, there is a branch $\beta$ through $T^*$ such that $f``\beta=\g$.

\end{proof}

\begin{theorem}\label{wrep}
Suppose $V=L$ and $\kappa$ is a cardinal of cofinality $>\omega$. There is a forcing notion $\oP$ which forces $C^*\models 2^\omega=\kappa$ and preserves cardinals between $L$ and $C^*$. 
\end{theorem}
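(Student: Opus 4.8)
The plan is to lift the single-real coding of Theorem~\ref{2479} to $\kappa$ reals, realized inside $C^*$ as a pattern of cofinalities. Working in $V=L$, I would first use the hypothesis that $\cof(\kappa)>\omega$: under GCH this gives $\kappa^\omega=\kappa$, so the Cohen forcing $\mathrm{Add}(\omega,\kappa)$ adds a sequence $\vec c=\la c_\xi:\xi<\kappa\ra$ of mutually generic reals, forces $2^\omega=\kappa$, preserves all cardinals, and — being ccc — changes no cofinalities, so by Lemma~\ref{nice} it leaves $C^*=L$ untouched at this stage. That $\cof(\kappa)>\omega$ is indispensable is clear already from König's theorem, which demands $\cof^{C^*}((2^\omega)^{C^*})>\omega$.

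Over this Cohen extension I would then code $\vec c$ into the cofinality-$\omega$ predicate. Fix in $L$ a one-to-one assignment $(\xi,n)\mapsto\mu_{\xi,n}$ of regular cardinals $>\omega_1$ and perform a revised countable support iteration of the modified Namba forcing of Definition~\ref{strees}, arranged so that $\mu_{\xi,n}$ is singularized to cofinality $\omega$ exactly when $n\in c_\xi$. Since modified Namba forcing satisfies the $S$-condition, \cite[Theorem 3.6]{MR1623206} guarantees that the iteration preserves $\aleph_1$, and since no regular cardinal outside the chosen set is forced to have cofinality $\omega$, in $V[G]$ the predicate $\On_\omega$ is $\On_\omega^L$ together with precisely the intended marks. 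Hence in $C^*=L(\On_\omega)$ each $c_\xi=\{n:\mu_{\xi,n}\in\On_\omega\}$ is definable from the parameter $\la\mu_{\xi,n}\ra\in L$, so $\vec c\in C^*$ and $(2^\omega)^{C^*}\ge\kappa$.

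For the reverse inequality and for cardinal preservation I would re-run the condensation argument of Theorem~\ref{5930a}: every real of $C^*$ lies in some $L'_\zeta$, and a transitive-collapse argument respecting the cofinality-$\omega$ pattern shows it is already computed from countably much of the coded data, bounding $\P(\omega)^{C^*}$ by the reals of $L[\vec c]$ and so giving $(2^\omega)^{C^*}=\kappa$. The point behind ``preserves cardinals between $L$ and $C^*$'' is that singularizing a cardinal to cofinality $\omega$ collapses it in $V[G]$ but not in $C^*$: the collapsing $\omega$-sequence is the Namba generic, which is absent from $C^*$, and (just as in Theorem~\ref{2479}) marking a single ordinal-definable cardinal adds nothing to $L(\On_\omega)$. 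A level-by-level condensation then shows $C^*$ contains no surjection onto an $L$-cardinal from below, so $L$ and $C^*$ have the same cardinals and $\kappa$ remains the cardinal witnessing $2^\omega=\kappa$ there.

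The chief obstacle I foresee is reconciling the two cardinalities. Theorem~\ref{5930a} is a theorem of ZFC, hence holds in $V[G]$, giving $|\P(\omega)^{C^*}|^{V[G]}\le\aleph_2^{V[G]}$; so whenever $\kappa>\aleph_2$ the iteration is forced to collapse $\kappa$ (and all the coding cardinals) in $V[G]$ even while $C^*$ continues to count them as cardinals. Orchestrating the iteration so that these collapses are entirely invisible to $C^*$ — i.e.\ so that the only discrepancy between $\On_\omega^{V[G]}$ and $\On_\omega^L$ is the intended pattern on the $\mu_{\xi,n}$, and so that reading off $L(\On_\omega)$ introduces no unintended collapsing map — is the delicate core of the argument; it is also exactly what makes the reals $c_\xi$ mutually generic and symmetric, furnishing the promised example of a $C^*$ with no $V$-first-order-definable well-ordering.
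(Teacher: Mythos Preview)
Your overall architecture---add $\kappa$ Cohen reals, then run an RCS iteration of modified Namba to mark cofinalities according to the reals---is exactly the paper's.  You also correctly isolate the crucial fact that in $V[G]$ the class $\On_\omega$ is $\On_\omega^L$ together with precisely the intended marks on the $\mu_{\xi,n}$.

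Where you diverge is in not cashing that observation in.  From $C^*=L(\On_\omega)$ and the fact that $\On_\omega^{V[G]}$ is computable from $\vec c$ (and vice versa), the paper concludes in one line that in the extension
\[
C^* \;=\; L[\{c_\xi:\xi<\kappa\}],
\]
i.e.\ $C^*$ is literally the Cohen extension of $L$.  This single identification discharges everything you spend two paragraphs on: $(2^\omega)^{C^*}=\kappa$ holds because it holds in $L[\vec c]$, and cardinals are preserved between $L$ and $C^*$ because adding Cohen reals preserves cardinals over $L$.  The Namba stage contributes nothing to $C^*$ beyond what the Cohen stage already put there.

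So your condensation route for the upper bound, and your worry about ``orchestrating the iteration so that these collapses are entirely invisible to $C^*$'', are not wrong but are unnecessary detours.  The ``delicate core'' you anticipate is not delicate: once you have $C^*=L[\vec c]$, the fact that the Namba iteration collapses cardinals in $V[G]$ is irrelevant, since $C^*$ never sees any of that---it is exactly the pre-Namba model.
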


\begin{proof}Suppose $V=L$. Let us add $\kappa$ Cohen reals $\{r_\a:\a<\kappa\}$. We code these reals with revised countable support (see \cite{MR1623206}) iterated modified Namba forcing so that in the end we have a forcing extension in which for $\a<\kappa$ and $n<\omega$:
$${\cof}^V(\aleph^L_{\omega\cdot\alpha+n+2})=\omega\iff n\in r_\a.$$ Thus in the extension $r_\a\in C^*$ for all $\a<\kappa$.
We can now note that in the extension $C^*=L[\{r_\a:\a<\kappa\}]$.
First of all, each $r_\a$ is in $C^*$. This gives $``\supseteq"$. For the other direction, we note that whether an ordinal has cofinality $\omega$ in $V$ can be completely computed from the set $\{r_\a:\a<\kappa\}$.  
\end{proof}

Note that the above theorem gives a model in which, e.g. $C^*\models 2^\omega=\aleph_3$, but then in the extension $|\aleph_3^{C^*}|=\aleph_1$, so certainly $V\ne C^*$. Note also, that the above theorem starts with $V=L$, so whether large cardinals, beyond those consistent with $V=L$, decide CH in $C^*$, remains open.

\begin{theorem}\label{chfalse}The following conditions are equivalent:

\begin{description}
\item[(i)] ZF+``there is an inaccessible cardinal" is consistent. 
\item[(ii)] ZFC+``$V=C^*$ and  $2^{\aleph_0}=\aleph_2$" is consistent.

\end{description}
\end{theorem}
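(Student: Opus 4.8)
The plan is to prove the two implications separately, handling $\textbf{(ii)}\Rightarrow\textbf{(i)}$ by a condensation argument carried out inside $L$, and $\textbf{(i)}\Rightarrow\textbf{(ii)}$ by a cofinality-coding forcing.

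For $\textbf{(ii)}\Rightarrow\textbf{(i)}$, assume $V=C^*$ and $2^{\aleph_0}=\aleph_2$; I will show that $\aleph_2^V$ is inaccessible in $L$, which exhibits an inner model (namely $L$) with an inaccessible and hence gives $\mathrm{Con}(\mathrm{ZF}+\text{there is an inaccessible})$. The first observation is that for every limit ordinal $\alpha$ one has $\cof^V(\alpha)=\cof^V(\cof^L(\alpha))$, since a cofinal increasing map $\cof^L(\alpha)\to\alpha$ lying in $L$ stays cofinal in $V$. As $\cof^L$ is a function of $L$ taking values among the $L$-regular cardinals, the class $\On_\omega=\{\alpha:\cof^V(\alpha)=\omega\}$ is computable in $L$ from the single parameter $R=\{\nu:\nu\text{ is an uncountable $L$-regular cardinal and }\cof^V(\nu)=\omega\}$. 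Consequently $C^*=L(\On_\omega)=L(R)$, and more locally $L'_\gamma\subseteq L[R\cap\gamma]$ for every $\gamma$.

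Now suppose toward a contradiction that $\aleph_2^V$ is not inaccessible in $L$. It is regular in $L$ (being regular in $V$), so it must be a successor cardinal there, say $\aleph_2^V=(\mu^+)^L$ with $\mu$ an $L$-cardinal; since $(\mu^+)^L=\aleph_2^V$ is uncountable of $V$-cardinality $\aleph_2$, necessarily $|\mu|^V=\aleph_1$. Then the uncountable $L$-regular cardinals below $\aleph_2^V$ are among the $L$-cardinals $\le\mu$, so the set $R\cap\aleph_2^V$ has $V$-cardinality at most $\aleph_1$ and can be coded by some $B\subseteq\omega_1$ with $B\in V=C^*$. By Theorem~\ref{5930a} every real of $C^*$ lies in some $L'_\zeta$ with $\zeta<\aleph_2^V$, hence in $L[R\cap\aleph_2^V]=L[B]$. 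But a standard condensation argument shows $L[B]\models\mathrm{CH}$ whenever $B\subseteq\omega_1$: every real of $L[B]$ already appears in $L_{\omega_1}[B]$, which has size $\aleph_1$. Therefore $|\P(\omega)\cap C^*|\le\aleph_1$, contradicting $2^{\aleph_0}=\aleph_2$. Hence $\aleph_2^V$ is inaccessible in $L$.

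For $\textbf{(i)}\Rightarrow\textbf{(ii)}$ I may assume, passing to $L$ (inaccessibility is downward absolute to $L$, so $\mathrm{Con}(\mathrm{ZF}+\text{inaccessible})$ gives $\mathrm{Con}(\mathrm{ZFC}+\text{inaccessible})$), that $V=L$ and $\kappa$ is inaccessible; the goal is a forcing extension $W$ with $W=C^*$ and $2^{\aleph_0}=\aleph_2^W=\kappa$. Guided by the previous paragraph, I aim to produce $W=L[R]$ where $R$ is a set of former $L$-regular cardinals below $\kappa$ whose $W$-cofinality has been driven down to $\omega$, chosen so that $R$ simultaneously codes a surjection $\kappa\to\P(\omega)\cap W$ and a global well-ordering, making every set of $W$ definable from $\On_\omega^W$ and thus forcing $W=C^*$. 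The cofinalities are pushed to $\omega$ by the revised countable support iteration of the modified Namba forcing of Definition~\ref{strees}, exactly as in the proof of Theorem~\ref{wrep}; the inaccessibility of $\kappa$ provides both the $\kappa$-many $L$-regular coding sites below $\kappa$ needed to reach $2^{\aleph_0}=\kappa$ and the chain condition preserving $\kappa$ as the new $\aleph_2$, while the $S$-condition established for modified Namba forcing preserves $\aleph_1$ throughout the iteration via \cite[Theorem 3.6]{MR1623206}.

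The main obstacle is the bootstrapping in $\textbf{(i)}\Rightarrow\textbf{(ii)}$: one must interleave the cofinality-coding so that the entire generic object is recoverable from $\On_\omega^W$ alone (yielding $W=C^*$, and not merely $W\supseteq C^*$), while at the same time pinning the continuum of $W$ at exactly $\kappa$ and guaranteeing that $\kappa$ survives as a regular cardinal equal to $\aleph_2^W$ with no collapsing above it. This is a ``coding the universe into the cofinality function'' construction in the spirit of Jensen coding, and the delicate points are the exact preservation of $\aleph_1$ and $\aleph_2$ and the verification that the decoding map is itself an element of the resulting $C^*$.
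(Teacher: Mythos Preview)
Your treatment of $\textbf{(i)}\Rightarrow\textbf{(ii)}$ matches the paper's: both start from $V=L$ with an inaccessible $\kappa$ and run a revised countable support iteration of length $\kappa$ alternating Cohen reals with modified Namba coding, relying on the $S$-condition for preservation of $\aleph_1$; the resulting model has $V=C^*$, $\kappa=\aleph_2$, and $2^{\aleph_0}=\aleph_2$.

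For $\textbf{(ii)}\Rightarrow\textbf{(i)}$ you take a genuinely different route. The paper fixes $A\subseteq\omega_1^V$ coding a bijection $\omega_1\to\lambda$ (where $\aleph_2^V=(\lambda^+)^L$), then argues that $L[A]$ computes cofinality $\omega$ correctly by splitting on whether $0^\sharp$ exists: if it does, $\omega_2^V$ is already inaccessible in $L$; if not, the Jensen Covering Lemma for $L$ gives the cofinality agreement. This yields $(C^*)^{L[A]}=C^*$, and since $L[A]\models\mathrm{CH}$ one gets the contradiction. Your argument bypasses both $0^\sharp$ and Covering entirely: the identity $\cof^V(\alpha)=\cof^V(\cof^L(\alpha))$ reduces $\On_\omega$ to the class $R$ of uncountable $L$-regular cardinals of $V$-cofinality $\omega$, and then the localisation to levels below $\aleph_2^V$ (via the proof of Theorem~\ref{5930a}) lets you bound the reals of $C^*$ inside $L[R\cap\aleph_2^V]$. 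This is strictly more elementary than the paper's argument. Two small points to tighten: the level-by-level claim $L'_\gamma\subseteq L[R\cap\gamma]$ is cleanest if you pass through the $L(\On_\omega)$ hierarchy (where the paper's club $E$ gives $L'_\zeta\subseteq L_\beta(\On_\omega)$ for some $\beta<\aleph_2^V$, and $L_\beta(\On_\omega)\subseteq L[R\cap\beta]$ is immediate from your reduction); and the equality $L[R\cap\aleph_2^V]=L[B]$ should be the inclusion $\subseteq$, obtained by letting $B\subseteq\omega_1$ also encode a bijection $\omega_1\to\mu$.
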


\begin{proof}(i)$\to$(ii):
We start with an inaccessible $\kappa$ and $V=L$. We iterate over $\kappa$ with revised countable support forcing adding Cohen reals and coding generic sets using modified Namba forcing. Suppose we are at a stage $\alpha$ and we need to code a real $r$. We choose $\omega$ uncountable cardinals below $\kappa$ and code the real $r$ by changing the cofinality of some of these cardinals to $\omega$. We do this only if at stage $\a$ we already have enough reals in order to code the new $\omega$-sequences by reals. In the end all the reals are coded by changing cofinalities to $\omega$, and at the same time the $\omega$-sequences witnessing the cofinalities are coded by reals. In consequence we have in the end $V=C^*$. The iteration satisfies the $S$-condition, hence $\aleph_1$ is preserved, but the cardinals used for coding the reals all collapse to $\aleph_1$. Hence $\kappa$ is the new $\aleph_2$. In the extension $2^{\aleph_0}=\aleph_2$ and $V=C^*$.

\noindent(ii)$\to$(i): Suppose $V=C^*$ and $C^*\models``2^\omega\ge\omega_2^V"$ but $\omega_2^V$ is not inaccessible in $L$. Then $\omega^V_2=(\lambda^+)^L$ for some $L$-cardinal $\lambda$. Let $A\subseteq\omega_1^V$ such that $A$ codes the countability of all ordinals $<\omega_1^V$ and also codes a well-ordering of $\omega_1$ of order-type $\lambda$. Now $L[A]\models\omega_1^V=\omega_1\wedge\omega_2^V=\omega_2$. We show now that $(C^*)^{L[A]}=C^*$. For this to hold it suffices to show that $L[A]$ agrees with $V$ about cofinality $\omega$. If $\a$ has cofinality $\omega$ in $L[A]$, then trivially it has  cofinality $\omega$ in $V$. Suppose then $L[A]\models\cof(\a)>\omega$. If $\a<\omega_2^V$, then $L[A]\models \a<\omega_2$, whence $L[A]\models\cof(\a)=\omega_1$. Since $\omega_1^{L[A]}=\omega_1^V$, we obtain  $\cof(\a)=\omega_1$. Suppose therefore $\a\ge\omega_2^V$, but $\cof^V(\a)=\omega$. Note that we can assume $\neg 0^\sharp$, because otherwise $\omega_2^V$ is inaccessible in $L$ already by the general properties of $0^\sharp$. By the Covering Lemma, a consequence of $\neg 0^\sharp$, we have $L[A]\models\cof(\a)\le\omega_1$. Since $L[A]\models\cof(\a)>\omega$, we obtain $L[A]\models\cof(\a)=\omega_1$, and since $\omega_1^{L[A]}=\omega_1$, we have $\cof^V(\a)=\omega_1$. This finishes our proof that $(C^*)^{L[A]}=C^*$. Note that  $L[A]$ satisfies $CH$. On the other hand, we have assumed that there
are $\aleph_2^V$ reals in $C^*$. Thus there are $\aleph_2$ reals in $(C^*)^{L[A]}\subseteq L[A]$, a contradiction.
\end{proof}

\section{Higher order logics}
%
%
%
%\subsection{Ordinal definable sets}

The basic result about higher order logics, proved in \cite{MR0281603}, is that they give rise to the inner model $\hod$ of hereditarily ordinal definable sets. In this section we show that this result enjoys some robustness, i.e. ostensibly much weaker logics than second order logic still give rise to $\hod$.

\begin{theorem}[Myhill-Scott \cite{MR0281603}]\label{msc}
$C(\sol)=\hod$.
\end{theorem}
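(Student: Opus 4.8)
The plan is to establish the two inclusions $C(\sol)\subseteq\hod$ and $\hod\subseteq C(\sol)$ separately.

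The inclusion $C(\sol)\subseteq\hod$ is immediate from the earlier Proposition asserting that $C(\L^*)\subseteq\hod$ whenever the set-theoretic predicates $S^*$ and $T^*$ are parameter-free and every formula of $\L^*$ is a finite string with finitely many free variables. I would only need to check that $\sol$ meets these hypotheses: second-order formulas are finite strings with finitely many free variables; the syntactic predicate $S^*$ is recursive, hence parameter-free first-order definable; and second-order satisfaction $T^*(\mm,\phi)$ is a parameter-free first-order predicate of set theory, being defined by recursion on $\phi$ with the second-order quantifiers interpreted as ranging over the sets $\mathcal{P}(M^n)$. Consequently each level $L'_\alpha$ of the $\sol$-hierarchy is ordinal definable, uniformly in $\alpha$, and so $C(\sol)\subseteq\hod$.

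For the reverse inclusion I would argue by induction on rank that every $a\in\hod$ lies in $C(\sol)$. Fix $a\in\hod$ and assume, as induction hypothesis, that every element of $\hod$ of rank below $\rank(a)$ is already in $C(\sol)$; since $\hod$ is transitive this yields $a\subseteq C(\sol)$. As $a$ is ordinal definable, there are a first-order formula $\phi$ and ordinals $\vec\beta$ with $a=\{x:\phi(x,\vec\beta)\}$, and by the Reflection Theorem there is an ordinal $\gamma$ with $a\in V_\gamma$ and $a=\{x\in V_\gamma:(V_\gamma,\in)\models\phi(x,\vec\beta)\}$. Choosing $\delta$ large enough that $a\subseteq L'_\delta$, $\gamma\in L'_\delta$ and $|L'_\delta|\ge|V_\gamma|$, the heart of the argument is to produce a single second-order formula $\Theta(x,\gamma,\vec\beta)$ with $a=\{x\in L'_\delta:(L'_\delta,\in)\models\Theta(x,\gamma,\vec\beta)\}$, which then places $a$ in $L'_{\delta+1}$. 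The formula $\Theta$ would assert: ``there is a binary relation $W$ on $L'_\delta$ coding a well-founded, extensional, rank-correct model whose transitive collapse is $V_\gamma$ (expressible second-order, using the ordinal parameter $\gamma$ to pin down the height and a second-order rank function), there is an element $w$ in the field of $W$ such that the $\in$-structure $W$ assigns to $w$ and its $W$-predecessors is isomorphic to $(\TC(\{x\}),\in)$ under a map sending $w$ to $x$, and $W$, with $w$ coding $x$ and the obvious nodes coding $\vec\beta$, satisfies $\phi$.''

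The step I expect to be the main obstacle is precisely this internalization of $(V_\gamma,\in)$: since $V_\gamma$ itself need not belong to $C(\sol)$, one cannot use it directly as the structure, and one must instead quantify second-order over a coding relation $W\subseteq L'_\delta\times L'_\delta$. What makes this legitimate is the feature stressed after Definition~\ref{defin}, that $T^*$ is computed in the sense of $V$: the second-order quantifiers over $(L'_\delta,\in)$ range over all subsets of $L'_\delta$ existing in $V$, not merely those in $C(\sol)$. Hence, as soon as $|L'_\delta|\ge|V_\gamma|$, a relation $W$ on $L'_\delta$ isomorphic to $(V_\gamma,\in)$ genuinely exists in $V$ and is available to the quantifier, while the well-foundedness, extensionality and rank-correctness clauses (together with the parameter $\gamma$) determine its collapse uniquely to be $V_\gamma$. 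This is exactly where the full strength of second-order quantification is used, and the technical crux is to verify that each clause---especially rank-correctness and the matching of $w$ with $x$ through $\TC(\{x\})$---is genuinely second-order expressible over $L'_\delta$.
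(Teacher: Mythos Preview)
Your proposal is correct and follows essentially the same approach as the paper: induction on rank, Levy Reflection to localize the ordinal definition to some $V_\alpha$, and then a second-order formula over a sufficiently large level $L'_\delta$ that quantifies over a binary relation coding an isomorphic copy of $V_\alpha$ (available precisely because, as you note, the second-order quantifiers range over all subsets in $V$). The paper packages the matching of $a$ with its image slightly differently---by requiring $\TC(\{a\})\cup(\alpha+1)\cup\vec{\beta}\cup\{\vec{\beta}\}$ to be literally contained in the domain $M$ and performing a partial Mostowski collapse so that $E$ agrees with $\in$ on that set---but this is equivalent to your rank-correctness and $\TC(\{x\})$-matching clauses.
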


\begin{proof}We give the proof for completeness.
We show $\hod\subseteq C(\sol)$. Let $X\in \hod$. There is a first order $\phi(x,\vec{y})$ and ordinals $\vec{\beta}$ such that for all $a$ $$a\in X\iff \phi(a,\vec{\beta}).$$ By Levy Reflection there is an $\alpha$ such that $X\subseteq V_\alpha$ and for all $a\in V_\alpha$
$$a\in X\iff V_\alpha\models\phi(a,\vec{\beta}).$$
Since we proceed by induction, we may assume $X\subseteq C(\sol)$. Let $\gamma$ be such that $X\subseteq L'_{\gamma}$. We can choose $\gamma$ so big that $|L'_{\gamma}|\ge|V_\alpha|$. We show now that $X\in L'_{\gamma+1}$. We give a second order formula $\Phi(x,y,\vec{z})$ such that 
$$X=\{a\in L'_{\gamma}: L'_{\gamma}\models\Phi(a,\alpha,\vec{\beta})\}.$$
We know
$$X=\{a\in L'_{\gamma}: V_\alpha\models\phi(a,\vec{\beta})\}.$$
Intuitively, $X$ is the set of
$a\in L'_{\gamma}$ such that in $L'_{\gamma}$ some $(M,E,a^*,\alpha^*,\vec{\beta^*})\cong (V_\alpha,\in,a,\alpha,\vec{\beta})$ satisfies $\phi(a^*,\vec{\beta^*})$.
Let $\theta(x,y,\vec{z})$ be a second order formula of the vocabulary $\{E\}$ such that 
for any $M$, $E\subseteq M^2$ and $a^*,\alpha^*,\vec{\beta^*}\in M$:
$(M,E)\models\theta(a^*,\alpha^*,\vec{\beta^*})$
iff 
there are an  isomorphism $\pi:(M,E)\cong (V_\delta,\in)$ such that $\pi:(\alpha^*,E)\cong (\delta,\in)$,  and $(V_\delta,\in)\models\phi(\pi(a^*),\pi(\vec{\beta}))$. %

We conclude 
 $X\in L'_{\gamma+1}$
by proving the:
\medskip

\noindent{\bf Claim} The following are equivalent for $a\in L'_{\gamma}$:
\begin{description}
\item[(1)] $a\in X$.
\item[(2)] $L'_{\gamma}\models\exists M,E(\TC(\{a\})\cup\alpha+1\cup\vec{\beta}\cup\{\vec{\beta}\}\subseteq M\wedge(M,E)\models\theta(a,\alpha,\vec{\beta}))\}.$
\end{description}

\noindent $(1)\to(2):$
Suppose  $a\in X$. Thus $V_\alpha\models\phi(a,\vec{\beta})$. Let $M\subseteq L'_{\gamma}$ and $E\subseteq M^2$ such that $\alpha+1,\TC(a),\vec{\beta}\in M$ and there is an isomorphism $$f:(V_\alpha,\in,\alpha,a,\vec{\beta})\cong(M,E,\alpha^*,a^*,\vec{\beta^*}).$$
We can assume $\alpha^*=\alpha$, $a^*=a$ and $\vec{\beta^*}=\vec{\beta}$ by doing a partial Mostowski collapse for $(M,E)$. So then $(M,E)\models\phi(a,\vec{\beta})$, whence $(M,E)\models\theta(a,\alpha,\vec{\beta})$. We have proved (2).
\smallskip

\noindent $(2)\to(1):$
Suppose $M\subseteq L'_{\gamma}$ and $E\subseteq M^2$ such that $\TC(\{a\})\cup\alpha+1\cup\vec{\beta}\cup\{\vec{\beta}\}\subseteq M$ and $(M,E)\models\theta(a,\alpha,\vec{\beta})$. We may assume 
$E\restriction \TC(\{a\})\cup\alpha+1\cup\vec{\beta}\cup\{\vec{\beta}\}=\in\restriction \TC(\{a\})\cup\alpha+1\cup\vec{\beta}\cup\{\vec{\beta}\}$. There is an isomorphism $\pi:(M,E)\cong (V_\alpha,\in)$ such that  $(V_\alpha,\in)\models\phi(\pi(a),\pi(\vec{\beta}))$. But $\pi(a)=a$ and $\pi(\vec{\beta})
=\vec{\beta}$. So in the end  $(V_\alpha,\in)\models\phi(a,\vec{\beta})$. We have proved (1).

\end{proof}

In second order logic $\L^2$ one can quantify over arbitrary subsets of the domain.
A more general logic is obtained as follows: 

\begin{definition}
Let $F$ be any class function on cardinal numbers. The logic $\L^{2,F}$ is like $\L^2$ except that the second order quantifiers range over a domain $M$  over subsets of $M$ of cardinality $\le \kappa$ whenever  $F(\kappa) \le |M|$. 
\end{definition}

Examples of possible functions are $F(\kappa)=0, \kappa$, $\kappa^+$, $2^\kappa$, $\aleph_\kappa$, $\beth_\kappa$, etc.
Note that $\L^2 = \L^{2,F}$ whenever $F(\kappa)\le \kappa$ for all $\kappa$.
The logic $\L^{2,F}$ is  weaker the bigger values $F(\kappa)$ takes on. For example, if $F(\kappa)=2^{2^\kappa}$, the second order variables of $\L^{2,F}$ range over ``tiny" subsets of the universe. Philosophically second order logic is famously marred by the difficulty of imagining how a universally quantified variable could possibly range over {\em all} subsets of an infinite domain. If the universally quantified variable ranges only over ``tiny" size subsets, one can conceivably think that there is some coding device which uses the elements of the domain to code all the ``tiny" subsets.  

Inspection of the proof of Theorem~\ref{msc} reveals that actually the following more general fact holds:

\begin{theorem}\label{ms}
For all $F$: $C(\L^{2,F})=\hod$.
\end{theorem}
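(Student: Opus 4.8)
The plan is to establish the two inclusions $C(\L^{2,F})\subseteq\hod$ and $\hod\subseteq C(\L^{2,F})$ separately, the first being essentially immediate and the second being a careful rerun of the Myhill--Scott argument in the proof of Theorem~\ref{msc}, with one new point concerning the level at which the coding is performed.

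For $C(\L^{2,F})\subseteq\hod$ I would invoke the earlier Proposition stating that if every formula of a logic $\L^*$ is a finite string and the predicates $S^*,T^*$ use only (hereditarily) ordinal definable parameters, then every set of $C(\L^*)$ is hereditarily ordinal definable. The syntax of $\L^{2,F}$ is the same finite syntax as that of $\sol$, and its truth predicate is obtained from the usual second order truth predicate by restricting, at a model $M$, the second order quantifiers to subsets of $M$ of cardinality $\le\kappa$ for those $\kappa$ with $F(\kappa)\le|M|$. This restriction is definable using $F$ as a parameter, so provided $F$ is ordinal definable the hypotheses of that Proposition are met and the inclusion follows.

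The substance is $\hod\subseteq C(\L^{2,F})$, where I would follow the proof of Theorem~\ref{msc} almost verbatim. Given $X\in\hod$, fix a first order $\phi(x,\vec{y})$ and ordinals $\vec{\beta}$ with $a\in X\iff\phi(a,\vec{\beta})$, and by Levy reflection an $\alpha$ with $X\subseteq V_\alpha$ and $a\in X\iff V_\alpha\models\phi(a,\vec{\beta})$ for all $a\in V_\alpha$. Proceeding by induction we may assume $X\subseteq C(\L^{2,F})$ and pick $\gamma$ with $X\subseteq L'_\gamma$; the crucial extra demand is to take $\gamma$ so large that not only $|L'_\gamma|\ge|V_\alpha|$ but also $F(|V_\alpha|)\le|L'_\gamma|$, which is possible since the cardinalities $|L'_\gamma|$ grow unboundedly. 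With this choice, the second order quantifiers of $\L^{2,F}$ evaluated in $(L'_\gamma,\in)$ range over \emph{all} subsets of $L'_\gamma$ of cardinality $\le|V_\alpha|$. Now every second order object that the Myhill--Scott formula quantifies over --- a copy $(M',E')\cong(V_\alpha,\in)$ inside $L'_\gamma$ carrying the relevant ordinal and parameters, the witnessing isomorphism $\pi$, and the auxiliary subsets of $M'$ used to express extensionality and well-foundedness of $E'$ --- has cardinality at most $|V_\alpha|$, hence lies in the restricted range of the $\L^{2,F}$ quantifiers at level $L'_\gamma$. Thus the very same formula that works for $\sol$ defines $X$ over $(L'_\gamma,\in)$ in $\L^{2,F}$, giving $X\in L'_{\gamma+1}\subseteq C(\L^{2,F})$ and completing the induction.

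The main obstacle, and the only place where the argument genuinely differs from Theorem~\ref{msc}, is precisely this cardinality bookkeeping: one must verify that no clause of the Myhill--Scott formula secretly requires a second order quantifier to range over a subset of size exceeding $|V_\alpha|$, so that all coding objects stay ``small'', and then match that bound against the threshold $F(|V_\alpha|)\le|L'_\gamma|$ which governs what subsets $\L^{2,F}$ is permitted to see at that level. Once the coding is confined to subsets of size $\le|V_\alpha|$, the restriction imposed by $F$ becomes harmless at sufficiently high levels, and this is exactly why weakening $\sol$ to $\L^{2,F}$ does not shrink the resulting inner model.
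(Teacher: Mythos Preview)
Your proposal is correct and is exactly the argument the paper has in mind: the paper gives no separate proof of Theorem~\ref{ms} but simply remarks that ``inspection of the proof of Theorem~\ref{msc}'' yields it, and you have spelled out precisely that inspection. The one new ingredient beyond the Myhill--Scott proof is your observation that $\gamma$ should be chosen with $F(|V_\alpha|)\le|L'_\gamma|$ (not merely $|V_\alpha|\le|L'_\gamma|$), so that every second order object needed for the coding---the copy $(M,E)$ of $V_\alpha$, the isomorphism, and the subsets witnessing well-foundedness and the power-set property---falls within the restricted range of the $\L^{2,F}$ quantifiers at that level; this is exactly the point.
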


Let $\L^{2}_{\kappa}$ denote the modification of $\sol$ in which the second order variables range 
over subsets (relations, functions, etc) of cardinality at most $\kappa$. 

\begin{theorem} Suppose $0^\sharp$ exists. Then
$0^\sharp\in C(\L^{2}_{\kappa})$ 
\end{theorem}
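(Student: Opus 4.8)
The plan is to imitate the proof of Proposition~\ref{cof} almost verbatim, replacing the cofinality quantifier by a formula of $\L^{2}_{\kappa}$ that detects cofinality. That argument rested on two facts: (a) every $\xi$ that is a regular cardinal in $L$ and has $\cof^V(\xi)>\omega$ lies in the canonical class $I$ of Silver indiscernibles; and (b) the relevant cofinality property of $\xi$ is recognizable in the construction. Fact (a) is already proved inside Proposition~\ref{cof}, so I will simply quote it, and the only thing I need to supply is an $\L^{2}_{\kappa}$-substitute for (b).

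The key observation is that $\L^{2}_{\kappa}$ can express ``$\cof^V(\xi)>\kappa$''. In a structure $(M,\in)$ whose domain contains $\xi$ together with all its predecessors, the statement ``there is $C\subseteq\xi$ cofinal in $\xi$'', with the set variable $C$ interpreted by the semantics of $\L^{2}_{\kappa}$ as ranging over subsets of $M$ of cardinality $\le\kappa$, holds iff some subset of $\xi$ of $V$-cardinality $\le\kappa$ is cofinal in $\xi$, i.e. iff $\cof^V(\xi)\le\kappa$. (Here ``$C\subseteq\xi$'' and ``$C$ is cofinal in $\xi$'' are first order; the only second-order ingredient is the restricted existential set quantifier.) Negating this, $\L^{2}_{\kappa}$ defines ``$\cof^V(\xi)>\kappa$''. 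Since $\kappa\ge\aleph_0$, any such $\xi$ has $\cof^V(\xi)>\omega$, so when it is also a regular cardinal in $L$ (a first-order property relativized to the class $L$, available in the construction exactly as in Proposition~\ref{cof}), fact (a) places $\xi$ in $I$.

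Write $\kappa=\aleph_\alpha$, let $(L'_\beta)$ be the hierarchy defining $C(\L^{2}_{\kappa})$, and take $\beta=\aleph_{\alpha+\omega}+1$, so that $L'_\beta$ contains every ordinal $\le\aleph_{\alpha+\omega}$ and $|L'_\beta|>\kappa$. Set
$$X=\{\xi\in L'_\beta : (L'_\beta,\in)\models \mbox{``$\xi$ is a regular cardinal in $L$''}\wedge \neg\exists^{2}C\,(C\subseteq\xi\wedge \mbox{$C$ is cofinal in $\xi$})\},$$
where $\exists^{2}$ is the second-order existential of $\L^{2}_{\kappa}$. By the previous paragraph $X$ consists of $L$-regular cardinals with $\cof^V>\kappa$, hence $X\subseteq I$, and $X\in L'_{\beta+1}\subseteq C(\L^{2}_{\kappa})$. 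Moreover $X$ is infinite: each $\aleph_{\alpha+n}$ with $1\le n<\omega$ is a successor cardinal of $V$, hence regular in $V$ and therefore in $L$, with $\cof^V(\aleph_{\alpha+n})=\aleph_{\alpha+n}>\kappa$, and all of these lie below $\aleph_{\alpha+\omega}$. Finally, exactly as in Proposition~\ref{cof}, $0^\sharp$ is recovered from $X$ inside $C(\L^{2}_{\kappa})$ by
$$0^\sharp=\{\ulcorner\phi(x_1,\ldots,x_n)\urcorner : (L_{\aleph_{\alpha+\omega}},\in)\models\phi(\gamma_1,\ldots,\gamma_n)\ \mbox{for some}\ \gamma_1<\cdots<\gamma_n\ \mbox{in}\ X\};$$
this is well-defined because the members of $X$ are Silver indiscernibles and $\aleph_{\alpha+\omega}$ is a limit of them, and it is obtained by a first-order recursion over the sets $X,L_{\aleph_{\alpha+\omega}}\in C(\L^{2}_{\kappa})$, so $0^\sharp\in C(\L^{2}_{\kappa})$.

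The only genuinely new work is the expressibility claim of the second paragraph, and the single point there that needs care is that the restricted set quantifier of $\L^{2}_{\kappa}$, evaluated by $T^*$ \emph{in the sense of $V$}, ranges over all $V$-subsets of the domain of size $\le\kappa$, not merely over those present in the model being built. This is what forces the computed cofinality to coincide with $\cof^V$, and it is precisely the phenomenon flagged in the remark after Definition~\ref{defin}; establishing it cleanly is the main obstacle, while everything else is a transcription of Proposition~\ref{cof}.
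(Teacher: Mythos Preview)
Your proposal is correct and takes essentially the same approach as the paper, whose proof consists of the single line ``As in the proof of Proposition~\ref{cof}.'' You have supplied exactly the adaptation that line calls for: replacing the cofinality quantifier by the $\L^{2}_{\kappa}$-expressible condition $\neg\exists^{2}C\,(C\subseteq\xi\wedge C\mbox{ cofinal in }\xi)$, which captures $\cof^V(\xi)>\kappa$, and then running the rest of the argument verbatim. The only cosmetic difference is that the paper (in the $\kappa>\omega$ case of Proposition~\ref{cof}) isolates ordinals of cofinality \emph{equal to} $\kappa$, whereas you isolate those of cofinality $>\kappa$; both yield an infinite subset of the Silver indiscernibles, and your choice is arguably the more natural one for $\L^{2}_{\kappa}$. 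The parenthetical remark that $|L'_\beta|>\kappa$ is harmless but unnecessary---the semantics of $\L^{2}_{\kappa}$ ranges over all $V$-subsets of the domain of size $\le\kappa$ regardless of the domain's cardinality, and what you actually need (and have) is that $\xi\subseteq L'_\beta$ for the relevant $\xi$.
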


\begin{proof} As in the proof of Theorem~\ref{cof}. \end{proof}

A consequence of  Theorem \ref{ms} is the following:
\medskip

\noindent{\bf Conclusion:} The second order constructible hierarchy $C(\L^2)=\hod$ is unaffected if second order logic is modified in any of the following ways:
\begin{itemize}
\item Extended in any way to a logic definable with hereditarily ordinal definable parameters. This includes third order logic, fourth order logic, etc.

\item Weakened by allowing second order quantification in domain $M$ only over subsets $X$ such that $2^{|X|}\le|M|$.

\item Weakened by allowing second order quantification in domain $M$ only over subsets $X$ such that $2^{2^{|X|}}\le|M|$.

\item Any combination of the above.

\end{itemize}
Thus G\"odel's $\hod=C(\L^2)$ has  some robustness as to the choice of the logic $\L^2$. It is the common feature of the logics that yield $\hod$ that they are able to express quantification over all subsets of some part of the universe the size of which is not a priori bounded. We can perhaps say, that this is the essential feature of second order logic that results in $C(\L^2)$ being $\hod$. What is left out are logics in which one can quantify over, say all countable subsets. Let us call this logic $\L^2_{\aleph_0}$.  Consistently\footnote{Assume $V=L$ and add a Cohen subset $X$ of $\omega_1$. Now code $X$ into $\hod$ with countably closed forcing using \cite{MR0292670}. In the resulting model $C(\L^2_{\aleph_0})=L\ne\hod$.}, $C(\L^2_{\aleph_0})\ne\hod$. Many would call a logic such as  $\L^2_{\aleph_0}$ second order.

Let $\Sigma^1_n$ denote the fragment of second order logic in which the formulas have, if in prenex normal form with second order quantifiers preceding all first order quantifiers, only $n$ second order quantifier alternations, the first second order quantifier being existential.
Note that trivially $C(\Sigma^1_n)=C(\Pi^1_n)$.  Let us write 
$$\hod_n=_{\rm \tiny df}C(\Sigma^1_n).$$ 
The Myhill-Scott proof shows that $\hod_n=\hod$ for $n\ge 2$. What about $\hod_1$?
Note that for all $\beta$ and $A\in \hod_1$:

\begin{itemize}
 \item $\{\a<\beta : \cof^V(\a)=\omega\}\in\hod_1$
 \item $\{(a,b)\in A^2 : |a|^V\le|b|^V\}\in\hod_1$
 \item $\{\a<\beta : \a\mbox{ cardinal in $V$}\}\in\hod_1$
 \item $\{(\a_0,\a_1)\in \beta^2 : |\a_0|^V\le (2^{|\a_1|})^V\}\in\hod_1$
 \item $\{\a<\beta : (2^{|\a|})^V=(|\a|^{+})^V\}\in\hod_1$
 \end{itemize}
These examples show that $\hod_1$ contains most if not all of the inner models considered above. In particular we have:

\begin{lemma}
\begin{enumerate}
\item ${C^*}\subseteq \hod_1$.
\item $C(Q^{\MMa,<\omega}_1)\subseteq \hod_1$ 
%\item $C(I)\subseteq \hod_1$.
\item If $0^\sharp$ exists, then  $0^\sharp\in \hod_1$\end{enumerate}
 \end{lemma}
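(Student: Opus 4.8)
The plan is to reduce all three parts to one observation: over transitive $\in$-structures that contain the relevant canonical ordinals, both the cofinality quantifier $Q^{\cof}_\omega$ and the Magidor--Malitz quantifier $Q^{\MMa,n}_1$ are expressible by $\Sigma^1_1$-formulas (with an ordinal parameter), and the existential second-order quantifier in the $C(\Sigma^1_1)$-construction ranges over \emph{all} subsets of a level that exist in $V$ --- exactly the range in which the witnesses for $Q^{\cof}_\omega$ and $Q^{\MMa,n}_1$ are sought. Throughout I write $\hod_1=C(\Sigma^1_1)$, which is a model of $\zfc$ containing all ordinals.

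For (1) I would use the identity $\cofmodel{\omega}=L(\On_\omega)$ recalled above, where $\On_\omega=\{\alpha:\cof^V(\alpha)=\omega\}$. The crucial point is that $\On_\omega\cap\beta\in\hod_1$ for every $\beta$ (the first bulleted observation preceding the Lemma): in a level $L'_\gamma$ of the $\Sigma^1_1$-hierarchy with $\beta,\omega\in L'_\gamma$, the set $\{\alpha<\beta:\cof^V(\alpha)=\omega\}$ is defined by the $\Sigma^1_1$-formula asserting ``$\alpha$ is an ordinal $<\beta$ and there is a function $f$ from the ordinal $\omega$ cofinally into $\alpha$'', the second-order existential over $f$ having as its range all binary relations on $L'_\gamma$ in $V$, which contains a genuine cofinal $\omega$-sequence precisely when $\cof^V(\alpha)=\omega$. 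Since every bounded piece of $\On_\omega$ is a member of $\hod_1$ and $\hod_1\models\zfc$, the relativized hierarchy $L_\alpha(\On_\omega)$ can be carried out level by level inside $\hod_1$ (at stage $\alpha$ only $\On_\omega$ restricted to the ordinals of the current level is needed, and that set is in $\hod_1$). An induction on $\alpha$ gives $L_\alpha(\On_\omega)\in\hod_1$, whence $\cofmodel{\omega}=L(\On_\omega)\subseteq\hod_1$.

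For (2) I would run the level-by-level induction used in the proof of Theorem~\ref{abs}, showing $L'_\alpha\in\hod_1$ and $L'_\alpha\subseteq\hod_1$, where $(L'_\alpha)$ is now the hierarchy behind $C(Q^{\MMa,<\omega}_1)$. At a successor step a set $X\in L'_{\alpha+1}$ is defined over $L'_\alpha$ by a formula of $\L(Q^{\MMa,<\omega}_1)$; inducting on that formula, the only new case is $Q^{\MMa,n}_1$, which I would replace by
$$Q^{\MMa,n}_1 x_1\ldots x_n\,\phi(x_1,\ldots,x_n)\ \rightsquigarrow\ \exists h\big(h:\omega_1^V\to M\ \mbox{injective}\ \wedge\ \forall a_1,\ldots,a_n\in\rng(h)\,\phi(a_1,\ldots,a_n)\big),$$
a $\Sigma^1_1$-formula with the ordinal parameter $\omega_1^V$ (available since $\hod_1$ contains all ordinals); the witnessing injection exists in $V$ exactly when there is an uncountable homogeneous set. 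Although $Q^{\MMa,n}_1$ is not literally a sublogic of $\Sigma^1_1$ over arbitrary structures, this translation is valid over every transitive $\in$-structure with $\omega_1^V$ as a parameter, and that is all the construction ever uses; so, using that $\Sigma^1_1$ is adequate to truth in itself (Definition~\ref{4r67}) to absorb ``$(L'_\alpha,\in)\models\cdots$'' into a single $\Sigma^1_1$-definition over an admissible level of the $\Sigma^1_1$-hierarchy, the proof of the monotonicity principle $\L^*\le\L^+\Rightarrow C(\L^*)\subseteq C(\L^+)$ applies to yield $X\in\hod_1$, hence $C(Q^{\MMa,<\omega}_1)\subseteq\hod_1$.

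Finally, (3) is immediate: by Proposition~\ref{cof} (with $\kappa=\omega$) we have $0^\sharp\in C(Q^{\cf}_\omega)={C^*}$, and by (1) ${C^*}\subseteq\hod_1$, so $0^\sharp\in\hod_1$. The main obstacle to keep in view is that the naive readings of ``$\cof^V(\alpha)=\omega$'' and ``$|X|\ge\aleph_1$'' are $\Pi^1_1$, and, prefixed by the existential set quantifier, $\Sigma^1_2$ rather than $\Sigma^1_1$; the whole argument hinges on trading ``there is a subset of order type $\omega$ (resp.\ $\omega_1$)'' for ``there is a function out of the genuine ordinal $\omega$ (resp.\ $\omega_1^V$)'', which collapses the complexity to $\Sigma^1_1$, together with the fact that the second-order witnesses in the $C(\L^*)$-construction are sought in $V$ and so really exist.
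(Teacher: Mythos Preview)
Your approach is correct and is precisely the fleshing-out the paper intends: the paper states the lemma without proof, relying on the bullet-point observations just before it (e.g.\ $\{\alpha<\beta:\cof^V(\alpha)=\omega\}\in\hod_1$), and your argument supplies exactly those details.

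One point in Part~(2) deserves tightening. You end by invoking ``the monotonicity principle $\L^*\le\L^+\Rightarrow C(\L^*)\subseteq C(\L^+)$'', but $\L(Q^{\MMa,<\omega}_1)$ is \emph{not} a sublogic of $\Sigma^1_1$: your displayed translation of $Q^{\MMa,n}_1\vec{x}\,\phi$ is $\Sigma^1_1$ only when $\phi$ is first order, and once you allow negation and nesting of $Q^{\MMa}$ the syntactic translation climbs through all of $\Sigma^1_n$. So the appeal to monotonicity, as a statement about logics, does not go through. What does go through---and what your phrase ``inducting on that formula'' already signals---is the \emph{semantic} induction: for each subformula $\psi(\vec{x},\vec{b})$ show that its extension $R_\psi=\{(\vec{c},\vec{b}):(L'_\alpha,\in)\models\psi(\vec{c},\vec{b})\}$ lies in $\hod_1$, treating the extension of the immediate subformula as a parameter. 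First-order connectives and quantifiers advance one level in the $\hod_1$-hierarchy; at a $Q^{\MMa,n}_1$ step the new extension is $\Sigma^1_1$-definable from $R_\psi$ and $\omega_1^V$ (exactly your displayed formula, but now with $R_\psi$ as a set parameter rather than a translated subformula), hence again in $\hod_1$. The recursion has finite depth, so the full satisfaction relation over $L'_\alpha$, and therefore $L'_{\alpha+1}$, lands in $\hod_1$. This is the argument you want; drop the reference to monotonicity and to ``a single $\Sigma^1_1$-definition'' and the proof is clean.
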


Naturally, $\hod_1=\hod$ is consistent, since we only need to assume $V=L$. So we focus on  $\hod_1\ne\hod$.

\begin{theorem}\label{hkloiuk}
It is consistent, relative to the consistency of infinitely many weakly compact cardinals that for some $\lambda$: $$\{\kappa <\lambda: \kappa\mbox{ weakly compact (in $V$)}\}\notin \hod_1,$$ and, moreover, $\hod_1=L\ne \hod$.  
\end{theorem}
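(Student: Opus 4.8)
The plan is to build the required universe by forcing over $L$. Assume in $L$ that $\langle\kappa_n:n<\omega\rangle$ enumerates all weakly compact cardinals below $\lambda=\sup_n\kappa_n$; since weak compactness ($\Pi^1_1$-indescribability) is downward absolute to $L$, in every outer model the weakly compact cardinals below $\lambda$ form a subset of $\{\kappa_n:n<\omega\}$. First I would add a Cohen real $c\subseteq\omega$, and then, working in $L[c]$, force with the Easton-support product $\mathbb{Q}=\prod_{n\in c}\mathrm{Add}(\kappa_n,1)$; set $V=L[c][H]$. Each factor $\mathrm{Add}(\kappa_n,1)$ is ${<}\kappa_n$-closed and $\kappa_n^+$-c.c., so it leaves $\kappa_n$ inaccessible but adds a $\kappa_n$-Souslin tree and thereby destroys its weak compactness. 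Splitting $\mathbb{Q}$ at $\kappa_n$ into a small part and a $\kappa_n$-closed tail and invoking L\'evy--Solovay together with closure, one checks that $\kappa_n$ remains weakly compact in $V$ exactly when $n\notin c$. Being an Easton product of closed homogeneous forcings, preceded by one Cohen real, the whole forcing preserves all cofinalities and cardinals and the $\mathrm{GCH}$ of $L$.

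Granting this, in $V$ the set $W=\{\kappa<\lambda:\kappa\mbox{ weakly compact}\}$ is exactly $\{\kappa_n:n\notin c\}$, which codes $c$ and so is not constructible. As ``$\kappa$ is weakly compact'' is definable with parameter $\kappa$, the set $W$ is ordinal definable and hence $W\in\hod$; since $W\notin L$ this already yields $\hod\ne L$. The two remaining assertions, $W\notin\hod_1$ and $\hod_1=L$, both reduce to proving $\hod_1=L$, and since first-order logic is a sublogic of $\Sigma^1_1$ the inclusion $L\subseteq\hod_1$ is automatic; the whole content is the reverse inclusion $\hod_1\subseteq L$.

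Proving $\hod_1\subseteq L$ is the step I expect to be the main obstacle. I would argue by induction that the $C(\Sigma^1_1)$-levels satisfy $L'_\alpha=L_\alpha$, the issue being the successor step: a set $X=\{a\in L_\alpha:(L_\alpha,\in)\models\phi(a,\vec b)\}$ with $\phi=\exists R\,\psi\in\Sigma^1_1$ and $\vec b\in L_\alpha$ must be shown to lie in $L$. The governing idea is that the only extra-first-order data a $\Sigma^1_1$ formula can extract from $(L_\alpha,\in)$ are the cofinality, cardinality and power-set--size facts used in the bulleted observations preceding the Lemma above, and by the preservation properties of the forcing all of these coincide with their values in $L$; in particular $C^*=L$ in $V$. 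What must be excluded is that an existential second-order quantifier secretly captures a non-constructible witness --- the real $c$ or a Cohen subset of some $\kappa_n$. Here I would use the weak homogeneity of $\mathbb{Q}$ over $L[c]$: for constructible parameters the truth value in $V$ of $\exists R\,(L_\alpha,\in,R)\models\psi(a,R,\vec b)$ is decided by $1_{\mathbb{Q}}$, so $X\in L[c]$, and then verify that this forced value is independent of $c$, using that $c$ is Cohen-generic and that the combinatorial feature actually separating the $\kappa_n$, namely the existence of a $\kappa_n$-Aronszajn tree, is a $\Sigma^1_2$ and not a $\Sigma^1_1$ property and hence invisible to the construction. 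Carrying out this last absoluteness computation --- that each structure $(L_\alpha,\in)$ has the same $\Sigma^1_1$-theory in $V$ as in $L$ --- is where the real work lies.
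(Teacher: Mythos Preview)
Your overall strategy and the reduction to proving $\hod_1=L$ are correct, and your forcing does produce the intended weak-compactness pattern. The gap is that $\hod_1=L$ actually \emph{fails} in your model. Your forcing treats the $\kappa_n$ asymmetrically---for $n\in c$ you add a Cohen subset of $\kappa_n$, for $n\notin c$ you do not---and this asymmetry is $\Sigma^1_1$-detectable, not merely $\Sigma^1_2$-detectable as you suggest. Concretely, for $\alpha>(\kappa_n^+)^L$ consider the $\Sigma^1_1$ statement over $(L_\alpha,\in)$: ``there exists $X\subseteq\kappa_n$ such that $X\cap\gamma$ is an element of $L_\alpha$ for every $\gamma<\kappa_n$, yet $X$ is not equal to any element of $L_\alpha$'' (a \emph{fresh} subset). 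For $n\in c$ the Cohen generic $H_n$ witnesses this. For $n\notin c$, every subset of $\kappa_n$ in $V$ lies in an extension of $L$ by a poset of size $<\kappa_n$; a standard argument (any generic filter has size $<\kappa_n$, so by regularity of $\kappa_n$ some single condition decides cofinally many initial segments and hence all of $X$) shows such a poset adds no fresh subset of $\kappa_n$. Thus this $\Sigma^1_1$ formula recovers $c$, so $c\in\hod_1$ and $\hod_1\ne L$ in your model. Your homogeneity reduction to $L[c]$ is fine, but the second step---eliminating $c$---cannot succeed, since the iteration $\mathrm{Add}(\omega,1)\ast\dot{\mathbb Q}_c$ is not homogeneous.

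The paper's fix is a sandwiching argument. One first does a preparatory Easton iteration so that in a model $V_1$ \emph{every} $\kappa_n$ already carries a Cohen subset (this also preserves their weak compactness). One also arranges an outer model $V_3\supseteq V_1$, obtained from $L$ by the \emph{same} homogeneous forcing, in which each $\kappa_n$ gets a further Cohen subset $H_n$. The asymmetric model $V_2$ sits strictly between them: for $n\notin a$ one adds only a $\kappa_n$-Souslin tree via Kunen's forcing $\mathbb S_n$ (killing weak compactness), for $n\in a$ the full $H_n$; Kunen's equivalence $\mathbb S_n\ast\mathbb T_n\cong\mathbb D_{\kappa_n}$ is what makes $V_1\subseteq V_2\subseteq V_3$ possible. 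Since $\Sigma^1_1$-truth over a fixed $L$-structure is upward absolute in the universe, and $V_1,V_3$ agree on it (both being homogeneous extensions of $L$ by the same poset, so $\hod^{V_1}=\hod^{V_3}=L$), $V_2$ is squeezed to agree as well, giving $\hod_1^{V_2}=L$. The preparatory Cohen subsets are precisely what neutralise the ``fresh subset'' formula above; without them no such sandwich exists.
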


\begin{proof}
Let us assume $V=L$. Let $\kappa_n, n<\omega$ be  a sequence of  weakly compact cardinals. Let $\mathbb{D}_\delta$ be the forcing notion for adding a Cohen subset of the regular cardinal $\delta$. Let $\lambda=\sup_n \kappa_n $.   We proceed as in \cite{MR495118}. Let $\eta<\kappa$  be two regular cardinals. We denote by  $\mathbb{R}_{\eta,\kappa}$  the Easton support iteration of $\mathbb{D}_\delta$ for  $\eta\leq\delta\leq\kappa$.  The forcing $\mathbb{R}_{\kappa_{n-1}^+,\kappa_n}$, where for $n=0$ we take $\kappa_{-1}=\omega_1$, we denote by $\mathbb{P}_n$. Note that forcing with $\mathbb{P}_n$ preserves the weak compactness of $\kappa_n$. Let $\mathbb{D}_n$ be the name for the forcing $\mathbb{D}_{\kappa_n}$ defined in $V^{\mathbb{P}_n}$. Note that $\mathbb{P}_n\ast \mathbb{D}_{\kappa_n}$ is forcing equivalent to $\mathbb{P}_n$.

Let $\mathbb{Q}$ be the full support product of $\mathbb{P}_n,n<\omega$. Let $V^\ast=V^{\mathbb{Q} }$.

\begin{claim} For every $n<\omega$ the cardinal $\kappa_n$ is weakly compact in $V^\ast$.
\end{claim}

The argument uses the fact that for each $n$ the forcing $\mathbb{Q}$ can be decomposed as $\mathbb{Q}_n\times\mathbb{P}_n\times \mathbb{Q}^n$ where $\mathbb{Q}_n$ has cardinality $\kappa_{n-1}$ and $\mathbb{Q}^n$ is $\kappa_n^+$ closed. Hence $\mathbb{Q}_n$ and $\mathbb{Q}^n$ do not change the weak compactness of $\kappa_n$,  which is preserved by $\mathbb{P}_n$. 

As in \cite{MR495118},  we define in $V^{\mathbb{P}_n}$ a forcing $\mathbb{S}_n$ to be the canonical  forcing which introduces a $\kappa_n$ homogeneous Soulin tree. In particular it kills the weak compactness of $\kappa_n$. Let $\mathbb{T}_n$ be the forcing which introduces a branch through the tree forced by $\mathbb{S}_n$. As in \cite{MR495118} we can show that $ \mathbb{S}_n\ast \mathbb{T}_n$ is forcing equivalent to $\mathbb{D}_{\kappa_n}$. Therefore if we force with $\mathbb{T}_n$ over $V^{\mathbb{P}_n\ast\mathbb{S}_n}$, we regain the weak compactness of $\kappa_n$. Also a generic object for $\mathbb{D}_{\kappa_n}$ introduces a generic object for $\mathbb{T}_n$.

We are going to describe three models $V_1\subseteq V_2\subseteq V_3$. Let first $V_1^\ast=L^\mathbb{Q}$ . Let $G_n$ be the generic filter in $\mathbb{P}_n$, introduced by $\mathbb{Q}$. The model $V_3^\ast$ is the model one gets from $V^*_1$ by forcing over it with the full support product of $\mathbb{D}_{\kappa_n}$. ($\mathbb{D}_n$ is as realized according to $G_n$.). Let $H_n\subseteq \mathbb{D}_n$ be the generic filter introduced by this forcing. Note that $V_3^\ast$ can also be obtained from $L$ by forcing with $\mathbb{Q}$. In particular both in $V_1$ and in $V_3^\ast$ the cardinals $\kappa_n$ are weakly compact for every $n<\omega$. Let $V_3$ be an extension of $V_3^\ast$ by adding a Cohen real $a\subseteq\omega$. Let $\mathbb{A}$ be the Cohen forcing on $\omega$. Then define $V_1=V_1^\ast(a)$. Both $V_1$ and $V_3$ are obtained by forcing over $L$ with $\mathbb{Q}\times \mathbb{A}$ which is a homogenous forcing notion.  Hence $HOD^{V_1}=HOD^{V_3}=L$. Again we did not kill the weak compactness of the cardinals $\kappa_n$.

  Now we define $V_2$. Each $H_n$ introduces a generic filter for the forcing $\mathbb{S}_n$ (As defined according to $G_n$). Let $K_n\subseteq \mathbb{S}_n$ be this generic filter.  We define $$V_2=V_1[a,\langle K_n | n\not\in a \rangle, \langle H_n|n\in a\rangle ].$$

For  $n<\omega$ we define an auxiliary universe $W_n$ as follows: $$W_n=L(a,\langle G_i|i\leq n \rangle, \langle K_i |i\leq n, i\not\in a\rangle, \langle H_i|i\leq n,i\in a \rangle). $$ If $n\in a$ then $W_n$ is obtained from $L$ by a product of $\mathbb{P}_n\ast \mathbb{D}_n$ and some forcings of size $<\kappa_n$. Since $\mathbb{P}_n\ast \mathbb{D}_n$ preserves the weak compactness of $\kappa_n$,  $\kappa_n$ is weakly compact in $W_n$.
If $n\not\in a$ then $K_n$ generates a tree on $\kappa_n$ which is still Souslin in $W_n$. (Small forcings do not change the Souslinity of a tree.), So $\kappa_n$ is not weakly compact in $W_n$. We proved:

\begin{claim} $\kappa_n$ is weakly compact in $W_n$ iff $n\in a$.
\end{claim}

The following claim follows from the standard arguments analysing the power-set of a cardinal $\delta$ under a forcing which is the product of a forcing of size $\mu<\delta$, a forcing of size $\delta$ which is $\mu^+$-distributive, and a forcing which is $\delta^+$-distributive.

\begin{claim} For $n<\omega$ $P(\kappa_n)^{V_2}=P(\kappa_n)^{W_n}$.
\end{claim}

From the last two claims it follows that $$V_2\models a=\{n<\omega | \kappa_n\quad \mbox{is weakly compact} \}.$$ Therefore $a\in HOD^{V_2}$.

The proof of the Theorem will be finished if we show that $HOD_1^{V_2}=L$. 
 For an ordinal $\alpha$ let $L^1_\alpha, L^2_\alpha, L^3_\alpha$ be the $\alpha$-th  step of the construction of $({C}(\Sigma^1_1))^{V_1}$, $({C}(\Sigma^1_1))^{V_1}$,
     $(C(\Sigma^1_1))^{V_3}$ respectively.
\begin{lemma} For every $\alpha$ $L^1_\alpha=L^2_\alpha =L^3_\alpha$.
\end{lemma}
 The proof of the lemma is by induction on $\alpha$ where the cases $\alpha=0$ and $\alpha$ limit are obvious. 
 So given $\alpha$, by the induction assumption on $\alpha$ we can put  $M=L^1_\alpha=L^2_\alpha=L^3_\alpha$. Note that $M\in L$ since $M\in HOD^{V_1}=L$. Let $\Phi(\vec{x})$ be a $\Sigma^1_1$ formula and let $\vec{b}$ be a vector of elements of $M$. 
 \begin{lemma} The following are equivalent
 \begin{enumerate}
   \item  $(M\models \Phi(\vec{b}))^{V_1}$
   \item $(M\models \Phi(\vec{b}))^{V_3}$
   \item $(M\models \Phi(\vec{b}))^{V_2}$
 \end{enumerate}
 \end{lemma}
  
  Without loss of generality, $\Phi(\vec{x})$ has the form $\exists X\Psi(X,\vec{x})$, where $X$ is a second order variable and  all the quantifiers of $\Psi$ are first order. Both $V_1$ and $V_3$ are obtained form $L$ by forcing over $L$ with $\mathbb{Q}\times \mathbb{A}$. This forcing is homogeneous. $M$ and all the elements of the vector $\vec{b}$ are in $L$. So (1) is clearly equivalent to (2). 
  
  Now suppose that  $(M\models \Phi(\vec{b}))^{V_2}$. Let $Z\subseteq M$ be the witness for the existential quantifier of $\Phi$. Then $(M\models \Psi(Z,\vec{b}))^{V_2}$. But all the quantifiers of $\Psi$ are first order,  so $(M\models\Psi(Z,\vec{b}))^{V_3}$. So (3) implies (2),  and hence (1).  For the other direction, if $(M\models \Phi(\vec{b}))^{V_3}$, then we know that $(M\models \Phi(\vec{b}))^{V_1}$. Let $Z\in V_1$ satisfy $M\models \Psi(Z,\vec{b})$. So  $(M\models \Psi(Z,\vec{b}))^{V_2}$, and therefore $(M\models \Phi(\vec{b}))^{V_2}$.
  
  It follows from the lemma that  every $\Sigma^1_1$ formula defines the same subset of $M$ in $V_1$, $V_2$ and  $V_3$. It follows that $L^1_{\alpha+1}=L^2_{\alpha+1}=L^3_{\alpha+1}$. 
  
  This proves the lemma and the theorem.
\end{proof}

The above proof works also with ``weakly compact" replaced by other large cardinal properties, e.g. ``measurable" or ``supercompact".  We can start, for example, with a $\omega$ supercompact cardinals, code each one of them into cardinal exponentiation, detectible by means of $\hod_1$, above all of them, without losing their supercompactness or introducing new supercompact cardinals, and then proceed as in the proof of Theorem~\ref{hkloiuk}. Note that we can also start with a supercompact cardinal and code, using the method of \cite{MR0540771}, every set into cardinal exponentiation, detectible by means of $\hod_1$, without losing the supercompact cardinal. In the final model there is a super compact cardinal while $V=\hod_1$.

We shall now prove an analogue of Theorem~\ref{hkloiuk} without assuming any large cardinals. Let $\oC(\kappa)$ be Cohen forcing for adding a subset for a regular cardinal $\kappa$. Let $R(\kappa)$ be the statement that  there is a bounded subset $A\subseteq \kappa$ and a  set $C\subseteq\kappa$ which is $\oC(\kappa)$-generic over $L[A]$, such that $\P(\kappa)\subseteq L[A,C]$.

\begin{theorem}\label{hkloiukk}
It is consistent, relative to the consistency of {\zfc} that: $$\{n <\omega: R(\aleph_n)\}\notin \hod_1,$$ and, moreover, $\hod_1=L\ne \hod$.  
\end{theorem}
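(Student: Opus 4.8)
The plan is to mimic the proof of Theorem~\ref{hkloiuk}, replacing the large cardinals $\kappa_n$ by the cardinals $\aleph_n$ and the property ``$\kappa_n$ weakly compact'' by the property $R(\aleph_n)$. I start from $V=L$, add a Cohen real $a\subseteq\omega$, and then perform a product forcing which at each $\aleph_n$ ($n\ge 1$) forces with $\oC(\aleph_n)$ when $n\in a$ and with a forcing $\mathbb{S}_n$ when $n\notin a$. Here $\mathbb{S}_n$ is to be the canonical $<\aleph_n$-closed forcing of size $\aleph_n$ adding a homogeneous $\aleph_n$-Souslin tree, chosen exactly as in Theorem~\ref{hkloiuk} so that $\mathbb{S}_n$ is a complete subforcing of $\oC(\aleph_n)$, i.e. $\oC(\aleph_n)\equiv \mathbb{S}_n\ast \mathbb{T}_n$ where $\mathbb{T}_n$ adds a branch; such trees exist in the intermediate models since $\diamondsuit_{\aleph_n}$ holds there. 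Call the resulting model $V_2$. Because each factor over $n\ge 1$ is $<\aleph_n$-closed of size $\aleph_n$ and $\aleph_{n+1}$-cc, the product over $n\ge 1$ is $\sigma$-closed while the initial Cohen real forcing is ccc, so a routine factorization preserves all cardinals and the $\aleph_n$ are computed correctly.

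The two things to check in $V_2$ are that $R(\aleph_n)$ holds iff $n\in a$, and that $\hod_1=L$. For the first, fix $n$ and note that, since every later factor is $<\aleph_{n+1}$-closed and the \emph{finitely} many earlier factors are coded by a single bounded $A\subseteq\aleph_n$, one has $\P(\aleph_n)^{V_2}=\P(\aleph_n)\cap L[A][G_n]$, where $G_n$ is the stage-$n$ generic. If $n\in a$ then $G_n$ is $\oC(\aleph_n)$-generic over $L[A]$, and $R(\aleph_n)$ holds with this $A$ and $C=G_n$.

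The \textbf{main obstacle} is the case $n\notin a$: proving $\neg R(\aleph_n)$, i.e. that the $\mathbb{S}_n$-extension's $\P(\aleph_n)$ is \emph{not} of the form $\P(\aleph_n)\cap L[A',C']$ for any bounded $A'$ and any $C'$ that is $\oC(\aleph_n)$-generic over $L[A']$. This is the genuinely new point: in Theorem~\ref{hkloiuk} it sufficed that a Souslin tree destroys weak compactness, but at a non-weakly-compact $\aleph_n$ that distinction is vacuous, so one must argue directly that $L[A][G_n]$ is provably not a single Cohen extension of \emph{any} bounded inner model. The key lemma I would aim for is: if $\P(\aleph_n)^{L[A][G_n]}$ were captured by some $L[A',C']$, then — Souslinity of the added tree $T$ depending only on $\P(\aleph_n)$ — $T$ would remain Souslin there, whereas the homogeneity and genericity of $\oC(\aleph_n)=\mathbb{S}_n\ast\mathbb{T}_n$ over $L[A]$ force a cofinal branch through $T$, producing a contradiction. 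Making this contradiction precise (uniformly over all admissible $A',C'$) is the heart of the argument.

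For $\hod_1=L$, set $V_1=L[a]$ and $V_3=L[a][\langle H_n:n\ge 1\rangle]$, where each $H_n$ is $\oC(\aleph_n)$-generic and projects onto $G_n$ (possible precisely because $\mathbb{S}_n$ is a factor of $\oC(\aleph_n)$). Then $V_1\subseteq V_2\subseteq V_3$, and both $V_1$ and $V_3$ arise from $L$ by a weakly homogeneous, ordinal-definable forcing, so $\hod^{V_1}=\hod^{V_3}=L$. Now I repeat verbatim the sandwich argument of Theorem~\ref{hkloiuk}: by induction the levels of $C(\Sigma^1_1)$ agree in $V_1,V_2,V_3$, because for a formula $\exists X\,\Psi(X,\vec b)$ with $\Psi$ first order over a common level $M\in L$, truth in $V_1$ and $V_3$ agrees by homogeneity, a witness found in $V_2$ works in $V_3$ (as $\Psi$ is first order and $V_2\subseteq V_3$), and a witness in $V_3$ pulls back to $V_1\subseteq V_2$. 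Hence $\hod_1^{V_2}=L$. Finally $\{n:R(\aleph_n)\}=a$ is ordinal definable in $V_2$, so it lies in $\hod^{V_2}$ but, being Cohen-generic, not in $L=\hod_1^{V_2}$; thus $\{n:R(\aleph_n)\}\notin\hod_1$ and $\hod_1=L\neq\hod$.
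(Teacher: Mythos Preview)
Your sandwich argument for $\hod_1^{V_2}=L$ is correct and is exactly the one the paper imports from Theorem~\ref{hkloiuk}. The difference lies in how you arrange the level-$\aleph_n$ forcings and, in particular, in the argument for $\neg R(\aleph_n)$ when $n\notin a$; this is where your proposal has a genuine gap.

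Your contradiction for $n\notin a$ does not go through as written. You observe that the generic tree $T$ would remain Souslin in a hypothetical $L[A'][C']$, and then invoke the factorisation $\oC(\aleph_n)\cong\mathbb{S}_n\ast\mathbb{T}_n$ to produce a branch through $T$. But that factorisation is a statement about Cohen forcing over $L[A]$, producing \emph{some} tree $T'$ and a branch through $T'$; it says nothing about the particular tree $T$ unless $C'$ happens to factor through $T$, which you have no control over. A Cohen subset of $\aleph_n$ certainly can add an $\aleph_n$-Souslin tree without adding a branch through it, so ``Souslin in $L[A'][C']$'' is not contradictory. You correctly flagged this step as the main obstacle, but the sketch you give does not resolve it.

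The paper organises the forcing differently in a way that sidesteps this difficulty. It first adds Cohen subsets $C_n$ at \emph{every} $\aleph_n$ (this is $V_1$), and then, for $n\notin C_0$, adds a further $\oP_n$-generic non-reflecting stationary set $A_n$ (giving $V_2$); the ``killer'' forcing $\oQ_n$ shoots a club through the complement of $A_n$, and $\oP_n\ast\oQ_n\cong\oC_n$ yields the upper model $V_3$. Thus for $n\notin C_0$ the power set of $\aleph_n$ in $V_2$ comes from a genuine product $\oC_n\times\oP_n$ over the bounded part, not from a single subforcing of $\oC_n$ as in your setup. The paper then reduces $\neg R(\aleph_n)$ to the assertion that a $\oP_n$-extension and a $\oC_n$-extension never yield the same $\P(\aleph_n)$, which it says follows from a back-and-forth argument between the two kinds of generics. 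So the paper uses non-reflecting stationary sets (with club-shooting) rather than Souslin trees, and---more importantly---layers Cohen plus the auxiliary forcing at the bad coordinates rather than replacing Cohen by the auxiliary forcing there; this is what makes the separation of the two cases tractable.
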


\begin{proof} The proof is very much like the proof of Theorem~\ref{hkloiuk} so we only indicate the necessary modifications.
Let us assume $V=L$. As a preliminary forcing $\oC$ we apply Cohen forcing $\oC_n=\oC(\aleph_n)$ for each $\aleph_n$ (including $\aleph_0$) adding a Cohen subset $C_n\subseteq \aleph_n\setminus\{0,1\}$.  W.l.o.g. $\min(C_{n+1})>\aleph_n$. Let $V_1$ denote the extension. Let $\oP$ be the product forcing in $V_1$ which adds a non-reflecting stationary set $A_n$ to $\kappa_n$, $n\notin C_0$, by means of:
\begin{eqnarray*}
 \oP_n&=&\{p:\g\to 2 : \aleph_{n-1}<\g<\aleph_n, \forall \a<\g(\cof(\a)>\om\to \\
&& \hspace{2cm }\{\beta<\a : p(\beta)=0\}\mbox{ is non-stationary in }\a)\}.
 \end{eqnarray*}
 Let us note that $\oP_n$ is strategically $\aleph_{n-1}$-closed, for the second player can play systematically at limits in such a way that during the game a club is left out of $\{\beta : p(\beta)=0\}$.
 Let $V_2$ denote the extension of $V_1$ by $\oP$. Now 
 $$V_2\models C_0=\{n<\omega : R(\aleph_n)\},$$ for if $n\in C_0$, then $R(\aleph_n)$ holds in $V_2$ by construction, and on the other hand, if $n\notin C_0$, then $R(\aleph_n)$ fails in $V_2$ because one can show with a back-and-forth argument that with $\oP$ and $\oC$ as above, we always have
$V^{\oP}\ne V^{\oC}$.

Let $\oQ$ force in $V_2$ a club into $A_n$, $n\in C_\omega$, by closed initial segments with a last element. The crucial observation now is that
$\oP_n\star\oQ_n$ is the same forcing as $\oC_n$. To see this,
it suffices to find a dense $\aleph_{n-1}$-closed subset of $\oP_n\star\oQ_n$ of cardinality $\aleph_n$. Let $D$ consist of pairs $(p,A)\in\oP_n\star\oC_n$ such that $\dom(p)=\max(A)+1$ and $\forall\beta\in A(p(\beta)=1)$. This set is clearly  $\aleph_{n-1}$-closed.

\end{proof}

\begin{proposition}\label{adpquiowrry}
 If $0^\sharp$ exists, then $0^\sharp\in C(\Delta^1_1)$, hence $C(\Delta^1_1)\ne L$. 
\end{proposition}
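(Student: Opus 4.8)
The plan is to follow the strategy of Proposition~\ref{cof}, exhibiting an infinite set of Silver indiscernibles that is recognized inside the model, but now arranging that the crucial predicate defining $0^\sharp$ is expressible by a $\Delta^1_1$-formula rather than merely by a $\Sigma^1_1$- or $\Pi^1_1$-formula. Fix the canonical club $I$ of Silver indiscernibles obtained from $0^\sharp$. By the argument in the proof of Proposition~\ref{cof}, every $\xi$ that is a regular cardinal of $L$ with $\cof^V(\xi)>\omega$ lies in $I$; hence the set of $\xi<\aleph_\omega$ which are regular cardinals of $L$ with $\cof^V(\xi)>\omega$ is an infinite subset of $I$, as it contains every $\aleph_n^V$, $1\le n<\omega$ (regular in $V$, hence regular in $L$, and of uncountable cofinality). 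Exactly as in Proposition~\ref{cof}, since these are indiscernibles and there are infinitely many of them, $0^\sharp$ is the set of those $\ulcorner\phi\urcorner$ for which $(L_{\aleph_\omega},\in)\models\phi(\gamma_1,\ldots,\gamma_n)$ for some, equivalently every, increasing tuple $\gamma_1<\ldots<\gamma_n$ from this set.

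I would work over a level $(L'_\beta,\in)$ of the $C(\Delta^1_1)$-hierarchy large enough that $L_{\aleph_\omega}$ occurs in $L'_\beta$ as an element. With $L_{\aleph_\omega}$ available as a parameter, first-order satisfaction in $L_{\aleph_\omega}$ and the predicate ``$\xi$ is a regular cardinal of $L$'' are both first-order over $(L'_\beta,\in)$: any $L$-map collapsing or cofinalizing some $\xi<\aleph_\omega$ already appears in $L_{(\xi^+)^L}\subseteq L_{\aleph_\omega}$, so it is an element ranged over by a first-order quantifier. The only genuinely second-order ingredient is cofinality in $V$: the predicate ``$\cof^V(\xi)=\omega$'' is $\Sigma^1_1$, being witnessed by a countable cofinal subset of $\xi$ (a subset of $L'_\beta$, supplied by a second-order existential quantifier). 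Consequently ``$\xi\notin X$'' is $\Sigma^1_1$, where $X$ abbreviates the first-order-plus-cofinality predicate isolating our indiscernibles.

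The main point, which I expect to be the technical heart of the argument, is that the predicate $\Theta(\ulcorner\phi\urcorner)$ given by
$$\forall\gamma_1<\ldots<\gamma_n\Big[\bigwedge_i(\gamma_i\in X)\to (L_{\aleph_\omega},\in)\models\phi(\gamma_1,\ldots,\gamma_n)\Big]$$
is in fact $\Sigma^1_1$ over $(L'_\beta,\in)$, notwithstanding its leading universal quantifier. Rewriting the implication, its matrix becomes a disjunction of the $\Sigma^1_1$ predicates ``$\gamma_i\notin X$'' with a first-order formula, hence has the form $\exists\vec Y\,\theta(\vec\gamma,\vec Y)$ with $\theta$ first-order; thus $\Theta$ has the shape $\forall\vec\gamma\,\exists\vec Y\,\theta$ with $\vec\gamma$ first-order (element) variables and $\vec Y$ second-order. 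Using the axiom of choice in $V$ to bundle the witnesses $\vec Y$ into the sections of a single subset $Z\subseteq L'_\beta$, this is equivalent to $\exists Z\,\forall\vec\gamma\,\theta(\vec\gamma,Z_{\vec\gamma})$, a genuine $\Sigma^1_1$-formula. This contraction of a first-order universal quantifier against a second-order existential one is precisely what upgrades the construction from $\Sigma^1_1$ (which only gives membership in $\hod_1$) to $\Delta^1_1$.

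Finally, $\Theta$ is a correct $\Sigma^1_1$-definition of $0^\sharp$ over $(L'_\beta,\in)$. Since $0^\sharp$ is a complete theory, $\ulcorner\phi\urcorner\in 0^\sharp$ iff $\ulcorner\neg\phi\urcorner\notin 0^\sharp$, so $0^\sharp$ is also defined by $\neg\Theta(\ulcorner\neg\phi\urcorner)$, which is $\Pi^1_1$ (the map $\ulcorner\phi\urcorner\mapsto\ulcorner\neg\phi\urcorner$ being recursive). Possessing both a $\Sigma^1_1$ and a $\Pi^1_1$ definition of the same subset of $L'_\beta$, the real $0^\sharp$ is $\Delta^1_1$-definable over $(L'_\beta,\in)$, whence $0^\sharp\in L'_{\beta+1}\subseteq C(\Delta^1_1)$. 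Since $0^\sharp\notin L$, this yields $C(\Delta^1_1)\ne L$.
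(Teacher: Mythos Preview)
Your argument is correct and follows the route the paper signals by writing ``As Proposition~\ref{cof}.'' You have supplied the details that the paper leaves implicit: since the predicate ``$\cof^V(\xi)=\omega$'' is only $\Sigma^1_1$ (not $\Delta^1_1$) over $(L'_\beta,\in)$, one cannot directly place the set $X$ of indiscernibles in $C(\Delta^1_1)$; instead one must define $0^\sharp$ itself. Your two key moves---the AC-based contraction of the first-order universal quantifier $\forall\vec\gamma$ across the second-order existential (yielding a genuine $\Sigma^1_1$ definition of the ``for all tuples in $X$'' predicate $\Theta$), and the appeal to completeness of $0^\sharp$ to obtain the dual $\Pi^1_1$ definition via $\neg\Theta(\ulcorner\neg\phi\urcorner)$---are exactly what is needed and are correctly executed. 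The encoding of the Skolem witness $Z$ as a subset of $L'_\beta$ works because all components live below $\aleph_\omega$ and hence the relevant tuples lie in $L_{\aleph_\omega}\subseteq L'_\beta$.
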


\begin{proof}
As Proposition~\ref{cof}.
\end{proof}

\section{Semantic extensions of {\zfc}}

For another kind of application of extended logics in set theory we consider the following concept:

\begin{definition}
Suppose $\L^*$ is an abstract logic. We use ${\zfc}(\L^*)$ to denote the usual ${\zfc}$-axioms in the vocabulary $\{\in\}$ with the modification that the formula $\phi(x,\vec{y})$ in the Schema of Separation $$\forall x\forall x_1...\forall x_n\exists y\forall z(z\in y\leftrightarrow(z\in x\wedge \phi(z,\vec{x})))$$ and the formula $\psi(u,z,\vec{x})$ in the Schema of Replacement 
$$\begin{array}{ll}
\forall x\forall x_1...\forall x_n
(&\hspace{-3mm}\forall u\forall z\forall z'((u\in x\wedge\psi(u,z,\vec{x})\wedge\psi(u,z',\vec{x}))\to z=z')\\
&\to\exists y\forall z
(z\in y\leftrightarrow\exists u(u\in x\wedge \psi(u,z,\vec{x})))).
\end{array}$$
is allowed to be taken from $\L^*$.
\end{definition}

The concept of a a model $(M,E)$, $E\subseteq M\times M$, satisfying the axioms ${\zfc}(\L^*)$ is obviously well-defined. Note that ${\zfc}(\L^*)$ is at least as strong as ${\zfc}$ in the sense that every model of ${\zfc}(\L^*)$ is, a fortiori, a model of ${\zfc}$.

The class of (set) models of ${\zfc}$ is, of course, immensely rich, ${\zfc}$ being a first order theory. If ${\zfc}$ is consistent, we have countable models, uncountable models, well-founded models, non-well-founded models etc. We now ask the question, what can we say about the  models of ${\zfc}(\L^*)$ for various logics $\L^*$? Almost by definition, the inner model $C(\L^*)$ is a class model of ${\zfc}(\L^*)$:
$$C(\L^*)\models {\zfc}(\L^*).$$
But ${\zfc}(\L^*)$ can very well have other models.

\begin{theorem}
A model of ${\zfc}$ is a model of ${\zfc}(\L(Q_0))$ if and only if it is an $\omega$-model.
\end{theorem}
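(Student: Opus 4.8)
The plan is to prove both directions by exploiting the single fact that the quantifier $Q_0$, under the semantics of Definition of $\L(Q)$, holds of a definable family exactly when that family is \emph{genuinely} (externally) infinite. Throughout, let $(M,E)$ be a model of $\zfc$, and recall that $(M,E)$ is an $\omega$-\emph{model} iff $(\omega^M,E)$ is isomorphic to the true $\omega$, i.e. $\omega^M$ has no nonstandard elements. The pivotal observation I would record first is that for a model of $\zfc$ this is equivalent to the statement that $M$'s notion of finiteness is correct: every $a\in M$ with $(M,E)\models\text{``}a\text{ is finite''}$ has genuinely finite extension $\{b\in M:b\,E\,a\}$, and conversely. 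Indeed, if $M$ is an $\omega$-model and $M$ thinks $|a|=n^M$, then $n^M$ is standard and an internal bijection witnesses that $a$ has exactly $n$ external elements; while if some $a\in M$ with infinite internal cardinality had finite external extension $\{a_1,\dots,a_k\}$, then the finite object $\{(0,a_1),\dots,(k-1,a_k)\}$ would exist in $M$ and witness internally that $a$ is finite, a contradiction.

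For the direction $\Leftarrow$, I would assume $M$ is an $\omega$-model and prove the \emph{reduction lemma}: for every $\phi(x_1,\dots,x_n)\in\L(Q_0)$ there is a first order $\phi^{*}(x_1,\dots,x_n)$ with $(M,E)\models\phi(\vec a)\iff(M,E)\models\phi^{*}(\vec a)$ for all $\vec a\in M$. The proof is by induction on $\phi$, the Boolean and first order quantifier cases being immediate. The only substantive case is $\phi=Q_0y\,\psi(y,\vec x)$: by induction $\{b:(M,E)\models\psi(b,\vec a)\}=\{b:(M,E)\models\psi^{*}(b,\vec a)\}$, and this set is genuinely infinite iff the class $\{y:\psi^{*}(y,\vec a)\}$ is, in $M$, either a proper class or an infinite set. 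The first alternative forces genuine infinity by the finiteness-correctness just established (a proper class cannot have finite external extension, else $M$ would collect it into a finite set), and the second is handled directly by the same correctness; so I set $\phi^{*}(\vec x)$ to be the first order formula ``$\{y:\psi^{*}(y,\vec x)\}$ is a proper class, or it is a set that is infinite.'' With the reduction lemma in hand, every instance of $\L(Q_0)$-Separation and $\L(Q_0)$-Replacement collapses to the corresponding first order instance for $\phi^{*}$ (functionality of the replacement formula is preserved since $\psi$ and $\psi^{*}$ agree pointwise), and these hold because $(M,E)\models\zfc$. Hence $(M,E)\models\zfc(\L(Q_0))$.

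For the direction $\Rightarrow$, I would argue contrapositively: suppose $(M,E)$ is not an $\omega$-model, so $\omega^M$ contains nonstandard elements, and exhibit a failing instance of $\L(Q_0)$-Separation. Consider the $\L(Q_0)$-formula $\phi(m):\equiv m\in\omega\wedge\neg Q_0k\,(k\in m)$. For $m\in\omega^M$, the external set $\{k:k\,E\,m\}$ is finite precisely when $m$ is standard (a nonstandard $m$ has all standard integers below it, hence infinitely many predecessors), so $\phi$ defines exactly the standard part $S=\{n^M:n\in\omega\}$ of $\omega^M$. Applying $\L(Q_0)$-Separation to $x=\omega^M$ and $\phi$ yields a set $S\in M$ whose extension is the standard integers. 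But $0\in S$ and $S$ is closed under the successor operation of $M$ (the $M$-successor of a standard integer is again standard), so $(M,E)\models\text{``}S\subseteq\omega\wedge 0\in S\wedge\forall n(n\in S\to n+1\in S)\text{''}$; since $M\models\zfc$, internal induction gives $(M,E)\models S=\omega$, i.e. $S=\omega^M$. This contradicts the fact that $S$ is the proper initial segment of standard integers. Thus no non-$\omega$-model satisfies $\zfc(\L(Q_0))$, completing the equivalence.

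The main obstacle, and the place to be most careful, is the interface between internal and external finiteness: the whole argument turns on the equivalence ``$\omega$-model $\iff$ $M$-finite means externally finite,'' and on its upgrade to definable proper classes in the $Q_0$-step of the reduction lemma. I expect the routine parts (the Boolean induction, the bookkeeping for Replacement) to be unproblematic, whereas verifying that a class $M$ regards as proper must be externally infinite, and that the separated set $S$ is genuinely closed under successor so that $M$'s induction applies, are the steps that require the finiteness-correctness dichotomy to be stated and used with precision.
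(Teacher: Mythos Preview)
Your proof is correct and follows essentially the same approach as the paper's: eliminate $Q_0$ over an $\omega$-model for the forward direction, and for the converse use $\L(Q_0)$-Separation to internalize the standard/nonstandard cut on $\omega^M$ and derive a contradiction. The only cosmetic difference is that the paper separates the set of \emph{nonstandard} integers and takes its minimum, whereas you separate the set of \emph{standard} integers and invoke internal induction; these are dual formulations of the same contradiction.
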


\begin{proof}
Suppose first $(M,E)$ is an $\omega$-model of ${\zfc}$. Then we can eliminate $Q_0$ in $(M,E)$: Given a first order formula $\phi(x,\vec{a})$ with some parameters $\vec{a}$ there is, by the Axiom of Choice, either a one-one function from 
\begin{equation}
\label{q0}
\{b\in M:(M,E)\models\phi(b,\vec{a})\}
\end{equation} onto a natural number of $(M,E)$ or onto an ordinal of $(M,E)$ which is infinite in $(M,E)$. Since $(M,E)$ is an $\omega$-model, these two alternatives correspond exactly to (\ref{q0}) being finite (in V) or infinite (in V). So $Q_0$ has, in $(M,E)$, a first order definition. For the converse, suppose $(M,E)$ is a model of ${\zfc}(\L(Q_0))$ but some element $a$ in $\omega^{(M,E)}$ has infinitely many predecessors in $V$. By using the Schema of Separation, applied to $\L(Q_0)$, we can define the set $B\in M$ of elements $a$ in $\omega^{(M,E)}$ that have infinitely many predecessors in $V$. Hence we can take the smallest element of $B$ in $(M,E)$. This is clearly a contradiction.     
\end{proof}

In similar way one can show that a model of ${\zfc}$ is a model of ${\zfc}(\L(Q_1))$ if and only if it its set of ordinals is $\aleph_1$-like or it has an $\aleph_1$-like  cardinal.

\begin{theorem}
A model of ${\zfc}$ is a model of ${\zfc}(\L(Q^{\MMa}_0))$ if and only if it is well-founded.
\end{theorem}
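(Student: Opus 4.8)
The plan is to prove both directions by comparing the two readings of the quantifier $Q^{\MMa}_0$: its official interpretation (an infinite $\psi$-homogeneous subset of the domain existing in $V$) and its internal first-order reading inside the model, and to show that these agree exactly when the model is well-founded. Since any model of $\zfc$ is extensional and satisfies Foundation, a well-founded $(M,E)$ is isomorphic to a transitive $(N,\in)$, and the relation $\models$ for $\L(Q^{\MMa}_0)$ is invariant under isomorphism; so for the ``if'' direction I may assume $(M,E)=(N,\in)$ is transitive.

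First I would prove the absoluteness lemma: for transitive $N\models\zfc$, every first-order $\psi(x,y,\vec z)$, and $\vec b\in N$,
$$N\models Q^{\MMa}_0 xy\,\psi(x,y,\vec b)\iff N\models\exists X\,(|X|\ge\aleph_0\wedge\forall x,y\in X\,\psi(x,y,\vec b)).$$
The left-to-right inclusion is the only content. If an infinite homogeneous $X\subseteq N$ exists in $V$ while $N$ believes none does, then by the tree characterisation of $Q^{\MMa}_0$ recalled in the footnote, $N$ thinks the tree $T$ of finite $\psi$-homogeneous sequences (ordered by extension) has no infinite branch, hence $N$ carries a genuine rank function $\rho\colon T\to\On^N$ that strictly decreases along extension; feeding the initial segments of $X$ into $\rho$ produces an infinite $E$-descending sequence of ordinals of $N$, which, by transitivity of $N$, is a real descending sequence of ordinals, contradicting well-foundedness in $V$. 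An induction on formula complexity, replacing each $Q^{\MMa}_0$ by the internal $\exists X(\dots)$, then reduces external $\L(Q^{\MMa}_0)$-satisfaction in $N$ to ordinary first-order satisfaction, so each $\L(Q^{\MMa}_0)$ instance of Separation and Replacement becomes a $\zfc$ instance and holds; thus $N\models\zfc(\L(Q^{\MMa}_0))$.

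For the converse I would argue contrapositively. If $(M,E)$ is ill-founded then, replacing an $E$-descending chain by the chain of its ranks, there is an infinite sequence $\langle\alpha_n:n<\omega\rangle$ of ordinals of $(M,E)$ with $\alpha_{n+1}\mathrel E\alpha_n$. Let $\chi(u,v,\beta)$ be the first-order formula asserting that $u,v$ are finite strictly decreasing sequences of ordinals below $\beta$, one an end-extension of the other (equality allowed); an infinite $\chi(\cdot,\cdot,\beta)$-homogeneous set is then precisely an infinite descending sequence of ordinals below $\beta$. Using $\L(Q^{\MMa}_0)$-Separation inside $\alpha_0$ I form
$$B=\{\beta\in\alpha_0 : (M,E)\models Q^{\MMa}_0 uv\,\chi(u,v,\beta)\},$$
where $\models$ is the external interpretation used in $\zfc(\L^*)$, so $B\in M$. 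Because only standard-length initial segments of the tails $\langle\alpha_n:n>k\rangle$ are needed, and these lie in $M$, each tail externally witnesses $\alpha_k\in B$, so $B\neq\emptyset$; and whenever $\beta\in B$ the first term of a descending sequence below $\beta$ is an element of $B$ strictly $E$-below $\beta$, so $B$ has no $E$-least element. Hence $(M,E)$ contains a nonempty set of ordinals with no $E$-least element, contradicting $\zfc$; therefore $(M,E)$ is well-founded.

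The main obstacle, and the conceptual heart, is the absoluteness lemma together with its sharp failure in the ill-founded case: everything hinges on the fact that the rank/tree argument needs the genuine well-ordering of $\On^N$, so that the two readings of $Q^{\MMa}_0$ coincide exactly for transitive models and are forced apart for ill-founded ones, the discrepancy being precisely what a single application of $\L(Q^{\MMa}_0)$-Separation converts into a least-element-free set of ordinals. Two points require care: in the ``if'' direction the internalization must handle nested occurrences of $Q^{\MMa}_0$, which the induction on subformulas supplies; in the ``only if'' direction one must check that the homogeneous witnesses genuinely belong to $M$, which holds because they are built from finite sequences of \emph{standard} length, so that the external truth of $Q^{\MMa}_0 uv\,\chi(u,v,\beta)$ is correctly computed and Separation indeed places $B$ in $M$.
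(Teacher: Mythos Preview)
Your argument is correct and follows essentially the same strategy as the paper: absoluteness of $Q^{\MMa}_0$ in transitive models via the tree/rank characterisation for the ``if'' direction, and $\L(Q^{\MMa}_0)$-Separation to produce a nonempty set of ill-founded ordinals with no $E$-least element for the converse.

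The one noteworthy difference is in the converse. The paper first invokes the preceding theorem (that $Q_0$ is definable from $Q^{\MMa}_0$, forcing $(M,E)$ to be an $\omega$-model) and then uses pairs $\langle n,\alpha\rangle$ with $n\in\omega^{(M,E)}=\omega$ as the objects in the homogeneous set. You instead work directly with $M$-finite decreasing sequences of ordinals and end-extension, which lets you bypass the $\omega$-model reduction entirely: your verification that each $\beta\in B$ has a strict $E$-predecessor in $B$ (chop off the common first term $\gamma$ of the homogeneous family and observe the tails witness $\gamma\in B$) goes through even when $M$ has nonstandard integers, since any infinite family of pairwise end-comparable nonempty sequences shares a first term regardless. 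This is a mild streamlining rather than a genuinely different route; the paper's detour through $\omega$-models is what allows its simpler pairing formula, while your sequence formulation trades that detour for a slightly more involved homogeneity witness.
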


\begin{proof}
Suppose first $(M,E)$ is a well-founded model of ${\zfc}$. Then we can eliminate $Q^{\MMa}_0$ in $(M,E)$ because it is absolute: The existence of an infinite set $X$ such that every pair from the set satisfies a given first-order formula can be written as the non-well-foundedness of a relation in $M$ and non-well-foundedness is an absolute property in transitive models. For the converse, suppose $(M,E)$ is a model of ${\zfc}(\L(Q_0^{\MMa}))$. Since $Q_0$ is definable from $Q_0^{\MMa}$ we can assume $(M,E)$ is an $\omega$-model and $\omega^{(M,E)}=\omega$. Suppose  some ordinal $a$ in ${(M,E)}$ is non-well-founded. To reach a contradiction it suffices to show that the set of  such $a$ is $\L(Q^{\MMa}_0)$-definable in $(M,E)$. Let $\phi(x,y,z)$ be the first order formula of the language of set theory which says: \begin{itemize}

\item $x=\la x_1,x_2\ra$, $y=\la y_1,y_2\ra$
\item $x_1,x_2<\omega$, $y_1,y_2< a$

\item $x_1\ne x_2$

\item $x_1<x_2\to y_2<y_1$.

\end{itemize}     
Let us first check that $Q_0^{\MMa}xy\phi(x,y,a)$ holds in $(M,E)$. Let $(a_n)$ be a decreasing sequence (in $V$) of elements of $a$. Let $X$ be the set of pairs $\la n,a_n\ra$, where $n<\omega$. By construction, any pair $\la x,y\ra$ in $[X]^2$ satisfies $\phi(x,y,a)$. Thus $Q_0^{\MMa}xy\phi(x,y,a)$ holds in $(M,E)$. For the converse, suppose $Q_0^{\MMa}xy\phi(x,y,b)$ holds in $(M,E)$. Let $Y$ be an infinite set such that every $\la x,y\ra$ in $[Y]^2$ satisfies $\phi(x,y,b)$. Every two pairs in $Y$ have a different natural number as the first component. So we can choose pairs from $Y$ where the first components increase. But then the second components decrease and $b$ has to be non-well-founded.
\end{proof}

\begin{theorem}
A structure is a model of ${\zfc}(\looo)$ if and only if it is isomorphic to a transitive a model $M$ of ${\zfc}$ such that $M^\omega\subseteq M$.
\end{theorem}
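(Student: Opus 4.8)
The plan is to prove both implications, using throughout that $\looo$-formulas are permitted to carry infinitely many free variables (equivalently, countably many parameters), exactly as in the treatment of the Chang model above; this is the feature that makes closure under $\omega$-sequences available.

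For the direction from $\zfc(\looo)$ to a transitive $\omega$-closed model, I would first establish well-foundedness. Suppose $(M,E)\models\zfc(\looo)$ and, toward a contradiction, that $\langle a_n:n<\omega\rangle$ is an $E$-descending chain in $M$. Each $a_n$ lies in $\TC^{(M,E)}(\{a_0\})\in M$, so applying the instance of the Separation Schema for the $\looo$-formula $\phi(z)=\bigvee_n z=a_n$ (with the $a_n$ as parameters) produces $b\in M$ with $(M,E)\models z\in b\leftrightarrow\bigvee_n z=a_n$. Then $b$ is nonempty with no $E$-least element, contradicting Foundation in $(M,E)$. Hence $E$ is well-founded and extensional, so by the Mostowski collapse $(M,E)\cong(N,\in)$ for a transitive $N\models\zfc$, and $N\models\zfc(\looo)$ as well. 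To see $N^\omega\subseteq N$, fix $\langle b_n:n<\omega\rangle\in V$ with all $b_n\in N$; since $N$ is transitive it is an $\omega$-model, so the numerals $\underline n$ are the genuine natural numbers, and the $\looo$-formula $\psi(u,z)=\bigvee_n(u=\underline n\wedge z=b_n)$ defines a function with domain $\omega\in N$. Replacement for $\looo$ applied to $\omega$ yields $\{b_n:n<\omega\}\in N$, and a further first-order Separation recovers the graph $\langle b_n:n<\omega\rangle\in N$. Thus $N^\omega\subseteq N$.

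For the converse, let $N$ be transitive with $N\models\zfc$ and $N^\omega\subseteq N$. Then $\oR\subseteq N$ and indeed $\mathrm{HC}\subseteq N$, so every $\looo$-formula $\phi$ and every countable parameter tuple $\vec b$ are elements of $N$. Given $\phi$, $\vec b$, and $a\in N$, I would invoke $\looo$-reflection inside $N$: since $\phi$ has only countably many subformulas and each node carries only finitely many quantifiers, $\zfc$ proves (with $\phi\in\mathrm{HC}$ as a parameter) that $\phi$ reflects to a club of levels, so there is $\alpha\in\mathrm{Ord}^N$ with $a,\vec b\in(V_\alpha)^N$ and, for all $z\in(V_\alpha)^N$, $N\models\phi(z,\vec b)\leftrightarrow (V_\alpha)^N\models\phi(z,\vec b)$. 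Because $N^\omega\subseteq N$, satisfaction of the fixed $\mathrm{HC}$-formula $\phi$ in the set structure $(V_\alpha)^N$ is absolute between $N$ and $V$ (the defining recursion is $\Delta_1$, and all the countable sequences it needs lie in $N$). Consequently $\{z\in a:N\models\phi(z,\vec b)\}=\{z\in a:(V_\alpha)^N\models\phi(z,\vec b)\}$, and the right-hand side is cut out by a first-order condition on $z$ with parameters $(V_\alpha)^N,\phi,\vec b\in N$; ordinary Separation in $N$ then places it in $N$. The Replacement Schema for $\looo$ is verified in the same way, so $N\models\zfc(\looo)$.

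The main obstacle is the bookkeeping in the converse: one must carefully match the external ($V$-level) $\looo$-satisfaction over the class $N$ with $N$'s internal satisfaction of set structures, and it is precisely the hypothesis $N^\omega\subseteq N$ that simultaneously licenses the reflection step and the absoluteness of set-satisfaction. In the forward direction the only subtlety is the standard but essential convention that $\looo$-instances of Separation and Replacement may use countably many parameters; without it neither well-foundedness via descending chains nor $\omega$-closure would be forced.
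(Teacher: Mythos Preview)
Your proof is correct and follows essentially the same strategy as the paper, with two small points worth noting. First, the step ``a further first-order Separation recovers the graph $\langle b_n:n<\omega\rangle$'' actually needs another $\looo$-instance, not a first-order one: from $\{b_n:n<\omega\}$ alone there is no first-order formula picking out the pairs $(n,b_n)$. Simply apply $\looo$-Replacement once with $\psi(u,z)=\bigvee_n(u=\underline n\wedge z=(\underline n,b_n))$ to get the function directly. Second, your reflection for the converse is really carried out externally, in $V$, about the \emph{set} $N$: since $N$ is a set, $(N,\in)\models\phi[\vec a]$ is a $\Delta_1$ relation in $V$, and one proves by induction on the $\looo$-formula $\phi$ that the set of $\alpha<\mathrm{Ord}^N$ with $\phi$ absolute between $(V_\alpha)^N$ and $N$ contains a club. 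The phrase ``inside $N$'' is slightly misleading, since $N$ cannot uniformly express ``$\phi$ holds in me'' with $\phi$ as a parameter; but the argument you sketch is the right one once relocated to $V$.

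The paper orders the forward direction differently---first observing the model is an $\omega$-model via the $\looo$-definability of $Q_0$, then obtaining $\omega$-closure by Replacement, and reading off well-foundedness as a corollary---whereas you prove well-foundedness first by Separation on a descending chain. For the converse the paper says only that ``the semantics of $\looo$ is absolute in transitive models''; your explicit reflection-plus-absoluteness argument is what that sentence unpacks to.
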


\begin{proof}
Suppose first $M$ is a transitive a model of ${\zfc}$ such that $M^\omega\subseteq M$. Then we can eliminate $\Loo$ because the semantics of $\Loo$ is absolute in transitive models and the assumption $M^\omega\subseteq M$ guarantees that all the $\Loo$-formulas of the language of set theory are elements of $M$. For the converse, suppose $(M,E)$ is a model of ${\zfc}(\Loo)$. Since $Q_0$ is definable in $\Loo$, we may assume that $(M,E)$ is an $\omega$-model  and $\omega^{(M,E)}=\omega$. Suppose $(a_n)$ is a sequence (in $V$) of elements of $M$. Let $$\phi(x,y,u_0,u_1,\ldots,z_0,z_1,\ldots)$$ be the $\Loo$-formula
$$\bigwedge_n(x=u_n\to y=z_n).$$ Note that $(M,E)$ satisfies $$\forall x\in\omega\exists y\phi(x,y,0,1,\ldots,a_0,a_1,\ldots).$$If we apply the Schema of Replacement of ${\zfc}(\Loo)$, we get an element $b$ of $M$ which has all the $a_n$ as its elements. By a similar application of the Schema of Separation we get $\{a_n:n\in\omega\}\in M$. Thus $M$ is closed under $\omega$-sequences and in particular it is well-founded.     
\end{proof}

By a similar argument one can see that the only model of the class size theory ${\zfc}(\lio)$ is the class size model $V$ itself. This somewhat extreme example shows that by going far enough along this line eventually gives everything. One can also remark that the class of models of ${\zfc}(\L_{\omega_1\omega_1})$ is exactly the same as the class of models of ${\zfc}(\looo)$. This is because in transitive models $M$ such that $M^\omega\subseteq M$ also the truth of $\L_{\omega_1\omega_1}$-sentences is absolute. So despite their otherwise huge difference, the logics $\L_{\omega_1\omega}$ and $\L_{\omega_1\omega_1}$ do not differ in the current context.

Second order logic is again an interesting case. Note that ${\zfc}(L^2)$ is by no means the same as the so-called second order ${\zfc}$, or ${\zfc}^2$ as it is denoted. We have not changed the Separation and Replacement Schemas into a second order form, we have just allowed second order formulas to be used in the schemas  instead of first order formulas. So, although the models of ${\zfc}^2$ are, up to isomorphism, of the form $V_\kappa$, and are therefore, a fortiori, also models of ${\zfc}(L^2)$, we shall  see below that models of ${\zfc}(L^2)$ need not be of that form.

\begin{theorem}
Assume $V=L$. A structure is a model of ${\zfc}(L^2)$ if and only if it is isomorphic to a  model $M$ of ${\zfc}$ of the form $L_\kappa$ where $\kappa$ is inaccessible.
\end{theorem}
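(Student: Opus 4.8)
The plan is to prove both implications, with the forward direction (every model of $\zfc(L^2)$ has the stated form) carrying essentially all the weight.

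\textbf{The easy direction.} I would first check that if $\kappa$ is inaccessible then $L_\kappa\models\zfc(L^2)$. Since $V=L$ and $\kappa$ is inaccessible, $L_\kappa=V_\kappa$, so $L_\kappa$ is closed under arbitrary subsets of its elements. For second-order Separation, any set $\{z\in a:(L_\kappa,\in)\models\phi(z,\vec b)\}$ with $a\in L_\kappa$ is a subset of $a$, hence lies in $V_{\rank(a)+1}\subseteq V_\kappa=L_\kappa$ no matter how complex $\phi$ is; the whole point is that the \emph{value} of a second-order definition over $L_\kappa$ is automatically present because $V_\kappa$ contains all subsets of its elements. For second-order Replacement, a definable function $F$ with domain $a\in L_\kappa$ has image of size $\le|a|<\kappa$ consisting of sets of rank $<\kappa$; by regularity these ranks are bounded below $\kappa$, so the image lies in some $V_\beta$ with $\beta<\kappa$ and is therefore an element of $V_\kappa=L_\kappa$. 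Thus every instance of both schemas holds.

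\textbf{Reduction to $L_\kappa$.} For the converse, let $(M,E)\models\zfc(L^2)$. I would first observe that ill-foundedness is expressible by the single $\Sigma^1_1$ condition ``$x$ is ill-founded'', namely $\exists B\,(x\in B\wedge\forall y\in B\,\exists z\in B\,(zEy))$. Using second-order Separation I can form, for any given $a$, the set of ill-founded $E$-predecessors of $a$; were $(M,E)$ ill-founded I would take an ordinal $a$ above some ill-founded point and extract an $E$-minimal ill-founded element, contradicting Foundation in $M$ exactly as in the treatment of $Q^{\MMa}_0$ above. Hence $(M,E)$ is well-founded, and after Mostowski collapse I may assume $M$ is transitive with $E={\in}$. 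Since $V=L$, the constructible hierarchy is absolute, so $L_\alpha\subseteq M$ for every $\alpha<\kappa:=M\cap\On$ while every element of $M$ is constructible of $L$-rank $<\kappa$; therefore $M=L_\kappa$ and $M\models V=L$.

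\textbf{The crux: inaccessibility of $\kappa$.} The main work is to show $\kappa$ is inaccessible, and the key tool is the following \emph{Lemma}: over $(L_\kappa,\in)$ second-order logic defines, with no parameters, the restriction of the global well-order $<_L$ to the \emph{external} subsets of $L_\kappa$. I would prove this by noting that any two subsets $A,B\subseteq L_\kappa$ lie in a common level $L_\gamma$ with $\kappa\le\gamma<\kappa^+$, so $|\gamma|=\kappa=|L_\kappa|$ and $(L_\gamma,\in)$ is coded by some subset $W\subseteq L_\kappa$; that $W$ codes a well-founded, extensional model of $V=L$ end-extending $L_\kappa$ is a second-order condition, the transitive collapse of such a $W$ is necessarily an $L_\gamma$, and on it $<_L$ is absolutely definable, so comparing $A,B$ inside such a $W$ yields $<_L\restriction\P(L_\kappa)$. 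With the Lemma each remaining obstruction produces a \emph{definable} counterexample to the axioms: (i) if some $A\subseteq x\in M$ had $A\notin M$, then the $<_L$-least such $A$ is second-order definable, so second-order Separation would place it in $M$, a contradiction; hence $M$ is closed under subsets and $M=V_\kappa$, making $\kappa$ a strong limit. (ii) If $\kappa$ were not a cardinal, or were singular in $V$, then the $<_L$-least surjection $\mu\to\kappa$ (where $\mu=|\kappa|^V$), respectively the $<_L$-least cofinal increasing map from $\cof^V(\kappa)$ into $\On$, would be second-order definable with domain a set of $M$, and second-order Replacement would then make $\kappa$ itself a set of $M$, contradicting $M\models\zfc$. (iii) First-order Power Set in $M$ together with GCH (i.e.\ $V=L$) rules out $\kappa$ being a successor cardinal $\nu^+$, since $\P(\nu)^{L}$ has cardinality $\nu^+$ and so cannot be an element of $L_{\nu^+}$. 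Combining (i)--(iii), $\kappa$ is a regular limit cardinal, hence inaccessible under $V=L$, and $M=L_\kappa$ as required.

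\textbf{Expected main obstacle.} The delicate point is the Lemma: verifying that second-order logic over $(L_\kappa,\in)$ can canonically well-order its own power set. The cardinality bookkeeping here --- that $A,B$ already occur by a level $L_\gamma$ of size exactly $\kappa$, hence codeable by a single subset of the domain, whereas all of $L_{\kappa^+}$ is too large to code --- is precisely what legitimizes the canonical $<_L$-choices used in (i) and (ii), and is where $V=L$ is used most essentially, beyond its mere consequence GCH.
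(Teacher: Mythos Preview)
There is a gap in your ``Reduction to $L_\kappa$'' step. You assert that once $M$ is transitive and $V=L$, every element of $M$ has $L$-rank $<\kappa$, hence $M=L_\kappa$. This does not follow from transitivity and first-order $\zfc$ alone: if $\kappa$ is least with $L_\kappa\models\zfc$ (so $\kappa$ is countable in $L$) and $c\in L$ is Cohen-generic over $L_\kappa$, then $L_\kappa[c]$ is a transitive model of $\zfc$ of ordinal height $\kappa$ that is not $L_\kappa$ and does not satisfy $V=L$. Ruling this out requires the second-order schemas, which you have not yet invoked; moreover your Lemma is stated over $(L_\kappa,\in)$, so it cannot be applied until $M=L_\kappa$ is already known, making the argument circular as organised.

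The repair is to state and prove the Lemma for $M$ itself (the coding still works: any subset of $M$ lies in some $L_\gamma$ with $|L_\gamma|\le|M|$, hence is codeable by a subset of $M$), then run your (i) to get $M=V_\kappa$, deduce that $\kappa$ is a strong limit directly from $V_\kappa\models\zfc$ (this subsumes your (iii), which as written again presupposes $M=L_\kappa$), run (ii) for regularity, and only then conclude $M=V_\kappa=L_\kappa$. The paper takes a shorter route to $M=L_\kappa$: for $x\subseteq\alpha$ in $M$, the $L$-rank $\beta$ of $x$ satisfies $|\beta|\le|\alpha|$, so a single well-order of $\alpha$ of order type $\beta$ is second-order definable over $M$ with parameters $x,\alpha$; second-order Separation puts it in $M$, whence $\beta<\kappa$ and $M\models x\in L$. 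This avoids your global $<_L$-on-$\P(M)$ Lemma and reaches $M=L_\kappa$ first, after which inaccessibility is dismissed as ``easy to see''.
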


\begin{proof}
First of all, if $V=L$ and $L_\kappa\models {\zfc}$, where $\kappa$ is inaccessible, then trivially $L_\kappa\models {\zfc}(L^2)$. For the converse, suppose $(M,E)\models {\zfc}(L^2)$. Because $Q^{\MMa}_0$ is definable in $L^2$, we may assume $(M,E)$ is a transitive model $(M,\in)$.

We first observe that the model $M$ satisfies $V=L$. To this end,  suppose  $\alpha\in M$ and $x\in M$ is a subset of $\a$. Let $\beta$ be minimal $\beta$ such that  $x\in L_\beta$. There is a binary relation on $\a$, second order definable over $M$, with order type $\beta$. By the second order Schema of Separation this relation is in the model $M$. So $M\models ``{x\in L_\beta }"$. Hence $M\models V=L$. Let $M=L_\a$. It is easy to see that $\a$ has to be an inaccessible cardinal.

\end{proof}

Note that if $0^\#$ exists, then $0^\#$ is in every transitive model of ${\zfc}(L^2)$.
%
%
%\begin{proposition}

If there is an inaccessible cardinal $\kappa$ and we add a Cohen real, then ${\zfc}(L^2)$ has a transitive model $M$ which is not of the form $V_\alpha$, namely the $V_\kappa$ of the ground model. By the homogeneity of Cohen-forcing this model is a model of ${\zfc}(L^2)$ but, of course, it is not $V_\kappa$ of the forcing extension. 
This is a consequence of the homogeneity of Cohen forcing.
Note that $M$ is not a model of $\zfc^2$, the {\em second order} $\zfc$, in which the Separation and Replacement Schemas of $\zfc$ are replaced by their second order  versions, making $\zfc^2$ a finite second order theory. Here we have an example where $\zfc^2\ne {\zfc}(L^2)$.

\section{Open Questions}

This topic abounds in open questions. We mention here what we think as the most urgent:

\begin{enumerate}

\item Can ${C^*}$ contain  measurable cardinals? Note that there are no measurable cardinals if $V={C^*}$ (Theorem~\ref{wpoi}).

\item Does ${C^*}$ satisfy CH, if $V$ has  large cardinals? Note that if there are large cardinals then the relativized version $C^*(x)$ of $C^*$ satisfies CH for a cone of reals $x$ (Theorem~\ref{coneresult}).

\end{enumerate}

}%This is needed to close { opened on line 437

%\bibliographystyle{plain}
%\bibliography{KMVbib}

%\printindex
\bigskip

Juliette Kennedy

Department of Mathematics and Statistics

University of Helsinki
\medskip

Menachem Magidor

Department of Mathematics

Hebrew University Jerusalem
\medskip

Jouko V\"a\"an\"anen 

Department of Mathematics and Statistics

University of Helsinki

and

Institute for Logic, Language and Computation

University of Amsterdam

\end{document}